\newcommand\bR{\mathbb{R}}
\newcommand\bS{\mathbb{S}}
\newcommand\bE{\mathbf{E}}
\newcommand\bB{\mathbf{B}}
\newcommand\cC{\mathcal{C}}
\newcommand\cD{\mathcal{D}}
\newcommand\cE{\mathcal{E}}
\newcommand\cI{\mathcal{I}}
\newcommand\cS{\mathcal{S}}
\newcommand\cR{\mathcal{R}}
\newcommand\nt{\mathcal{NT}}
\newcommand\cs{\text{Cauchy-Schwarz inequality}}
 \theoremstyle{definition}
\newtheorem{theorem}{Theorem}[section]
\newtheorem{lemma}[theorem]{Lemma}
\newtheorem{corollary}[theorem]{Corollary}
\newtheorem{proposition}[theorem]{Proposition}
\newtheorem{definition}{Definition}[section]
\theoremstyle{remark}
\newtheorem{remark}[theorem]{Remark}
\newtheorem{assumption}[theorem]{Assumption}
\numberwithin{equation}{section}
\newcommand{\hathat}[1]{%
\begingroup%
  \let\macc@kerna\z@%
  \let\macc@kernb\z@%
  \let\macc@nucleus\@empty%
  \hat{\raisebox{.4ex}{\vphantom{\ensuremath{#1}}}\smash{\hat{#1}}}%
\endgroup%
}
\newcommand{\dtilde}[1]{%
\begingroup%
  \let\macc@kerna\z@%
  \let\macc@kernb\z@%
  \let\macc@nucleus\@empty%
  \tilde{\raisebox{.4ex}{\vphantom{\ensuremath{#1}}}\smash{\tilde{#1}}}%
\endgroup%
}
\begin{document}

\title[Asymptotic stability for RVML]{Asymptotic Stability for the Relativistic Vlasov-Maxwell-Landau System in a Bounded Domain}

\author[H. Dong]{Hongjie Dong}
\address[H. Dong]{Division of Applied Mathematics, Brown University, 182 George Street, Providence, RI 02912, USA}
\email{Hongjie\_Dong@brown.edu }
\thanks{H. Dong was partially supported by a Simons fellowship, grant no. 007638 and the NSF under agreement DMS-2350129. Y. Guo's research was supported in part by NSF Grant  DMS-2405051.}

\author[Y. Guo]{Yan Guo}
\address[Y. Guo]{Division of Applied Mathematics, Brown University, 182 George Street, Providence, RI 02912, USA}
\email{Yan\_Guo@brown.edu}

\author[T. Yastrzhembskiy]{Timur Yastrzhembskiy}
\address[T. Yastrzhembskiy]{Department of Mathematics,  University of Wisconsin-Madison, WI 53706, USA}
\email{yastrzhembsk@wisc.edu}

\subjclass[2010]{35Q83, 35Q84, 35Q61, 35K70, 35H10, 34A12}
\keywords{Relativistic Vlasov-Maxwell-Landau system,  collisional plasma, perfect conductor boundary condition, specular reflection boundary condition, div-curl estimate, velocity averaging lemma}

\begin{abstract}
The control of plasma-wall interactions is crucial to fusion devices from both physical and mathematical perspectives.
It is well known that a magnetic field satisfying the classical perfect conducting conditions at the wall,
$$
    \bE \times n_x = 0, \quad  \bB \cdot n_x = 0,
$$
plays an important role in fusion plasma dynamics studies.
Since the early 1990s \cite{G_94}, it has been understood that the Lorentz force can penetrate into the domain at the boundary and create a singularity. Consequently, the uniqueness for any nonlinear kinetic plasma models in the presence of a perfectly conducting boundary remained open until our recent local well-posedness result \cite{VML}. In this paper, we finally establish a global well-posedness theory for the relativistic Vlasov-Maxwell-Landau system in a general $3D$ domain
with a specularly reflective, perfectly conducting boundary.
\end{abstract}

\maketitle

\tableofcontents

\section{Introduction}
\label{section 1}
The main impetus for plasma studies is nuclear fusion, 
with the tokamak serving as a key device that confines charged particles within a toroidal (doughnut-shaped) boundary in the presence of a magnetic field. Even though the plasma-wall interaction is extremely complex and challenging to control (see \cite{St_81}), a classical perfect conductor boundary condition is often imposed  on the electromagnetic field:
\begin{align}
    \label{eq1.5}
     (\bE \times n_x)|_{\partial \Omega} = 0, \quad (\bB \cdot n_x)|_{\partial \Omega} = 0.
\end{align}

It is well known that the kinetic description is fundamental  to the study of plasma in fusion devices,   which is formulated by a system of PDEs  for the density distribution functions of ions and electrons $F^{\pm} (t, x, p), x \in \Omega, p \in \bR^3,$  as well as for the electromagnetic field $\bE (t, x), \bB (t, x)$: 
\begin{align}
        \label{RVML}
&
\partial_t F^{+} +  \frac{p}{p_0^{+}} \cdot \nabla_x F^{+} + e_{+} (\bE + \frac{p}{p_0^{+}} \times \bB) \cdot \nabla_p F^{+} = \cC (F^{+}, F^{+} + F^{-}),\\
&
	\partial_t F^{-} +  \frac{p}{p_0^{+}} \cdot \nabla_x F^{-} - e_{-} (\bE + \frac{p}{p_0^{-}} \times \bB) \cdot \nabla_p F^{-} = \cC (F^{-}, F^{-}+F^{+}),\notag\\
%
			&\partial_t \bE -  \nabla_x \times \bB = -   4 \pi \int  (e_{+} \frac{p}{p_0^{+}}F^{+} - e_{-} \frac{p}{p_0^{-}} F^{-}) \, dp, \notag\\
 &
	\partial_t \bB +   \nabla_x \times \bE = 0. \notag\\
 &
	\nabla_x \cdot \bE = 4 \pi \int (e_{+} F^{+} - e_{-} F^{-}) \, dp,
	   \quad \nabla_x \cdot \bB = 0, \notag
\end{align}
In this model, the speed of light is set to $1$  for convenience.
Here,
$m_{\pm}$ and $e_{\pm}$ are masses and magnitudes of charges of electrons and ions,  $p$ is the momentum variable, and $p_0^{\pm}  = \sqrt{m_{\pm}^2 + |p|^2}$.
The terms $\cC (\cdot, \cdot)$ denote the relativistic Landau collision operators, which characterize the collision rates of charged particles  (see \eqref{eq0.1}).
 We refer to \eqref{RVML} as the relativistic Vlasov-Maxwell-Landau system (RVML).

Motivated by the tokamak device,  an essential  PDE problem is to study the well-posedness theory for the system \eqref{RVML} 
under the perfect conductor boundary condition \eqref{eq1.5} in a non-convex domain. Unfortunately, due to the presence of a notorious singularity from the grazing set
$$
   \gamma_0 : =  \{(x, p) \in \partial \Omega \times \bR^3: p\cdot n_x  = 0\}
$$
in a non-convex domain (see \cite{K_11}), such as a tokamak,  there has not been  
a single local well-posedness result until recently  \cite{VML} for any nonlinear kinetic models in $3D$ with any boundary conditions for the charged plasma in the presence of the perfect conductor condition \eqref{eq1.5}. The primary objective of this paper is to extend the solutions constructed in \cite{VML} globally in time by establishing global well-posedness and asymptotic stability of  Maxwellians for the relativistic Landau collision
\begin{align}
    \label{eq0.1}
     \cC (F^{\pm}, G^{\pm}) (p) =  \nabla_p \cdot \int_{\bR^3}  \Phi (P_{\pm}, Q_{\pm}) \big(\nabla_p F^{\pm} (p) G^{\pm} (q) -   F^{\pm} (p) \nabla_q G^{\pm} (q)\big) \, dq
\end{align}
(see \eqref{eq1.8} - \eqref{eq1.11})  with the specular reflection boundary condition (SRBC) for charged particles
\begin{align*}
    F|_{\gamma_{-}} (t, x, p) = F|_{\gamma_{+}} (t, x, R_x p), \quad R_x p: = p - 2 (p \cdot n_x) n_x,
\end{align*}
where
\begin{align*}
     \gamma_{\pm}  = \{(x, p) \in \partial \Omega \times \bR^3: \pm p\cdot n_x  > 0\} \notag
\end{align*}
are the outgoing $(\gamma_{+})$ and the incoming $\gamma_{-}$ boundaries. Concerning nonlinear collisional kinetic models with self-consistent magnetic effects in the absence of spatial boundaries,  global well-posedness was first established in \cite{VMB_03} and \cite{GS_03} for the non-relativistic Vlasov-Maxwell-Boltzmann and RVML systems, respectively, under periodic boundary conditions. Further related studies can be found in  \cite{YY_12}, \cite{Du_14}, \cite{LZ_14}, and \cite{X_15} (see also references therein).

Spatial boundaries are natural in kinetic models, and understanding boundary value problems is one of the critical aspects of modern kinetic PDE theory. 
However, the investigation of hyperbolic kinetic models presents a significant challenge due to the intricate behavior near the grazing set $\gamma_0$ associated with the free streaming operator $\partial_t + p \cdot \nabla_x$. Close to this set, the solution's regularity diminishes, leading to mathematical complexities which cannot be addressed by the standard energy techniques.

More precisely, singularities arising from the grazing set in non-convex domains \cite{K_11} highlight an expected limitation in hyperbolic kinetic PDEs, where solutions may, at best, exhibit bounded variation \cite{GKTT_16} under the diffuse boundary condition. 
Additionally, the inclusion of \textit{self-consistent magnetic effects} can induce singular behavior, even in a half-space domain. 
The example of singularity arising in the $3D$ relativistic Vlasov-Maxwell (RVM) system under the perfect conductor boundary condition on a half-space (see \cite{G_94}--\cite{G_95}) shows the limited extent of our current understanding, as only the global existence of a weak solution has been established for the RVM system in a $3D$ bounded domain \cite{G_93}.

For Vlasov-type equations in \textit{convex domains},  it is known that the characteristics can be controlled near the grazing set via the so-called velocity lemma, leading to significant progress on regularity and well-posedness for Vlasov-Poisson, Vlasov-Maxwell, and Boltzmann equations in convex domains  (see  \cite{CKL_19},  \cite{CaoK_21},  \cite{CK_22},  \cite{CKL_20}, \cite{CK_24}, \cite{DW_19}, \cite{DKL_23}, \cite{EGKM_18}, \cite{G_94}, \cite{G_95}, \cite{G_10}, \cite{GKTT_17}, \cite{H_04}, \cite{HV_10} among others). Unfortunately, the inclusion of magnetic effects and a non-convex geometry (such as a torus for a tokamak) has remained elusive. On the other hand, \cite{CaoK_23} recently discovered a striking application of the Vlasov-Maxwell system for modeling the exospheric solar wind. The model incorporates crucial self-consistent magnetic effects along with essential external electromagnetic and gravitational forces that satisfy the renowned Pannekoek-Rosseland condition, resulting in particles' acceleration directed outward at the boundary, which differs significantly from behavior in a tokamak. By leveraging these external forces, the authors of \cite{CaoK_23} establish a novel variant of the velocity lemma, leading to local Lipschitz regularity and well-posedness.

In the context of the hard-sphere Boltzmann equation, an important problem in kinetic theory is understanding the well-posedness and regularity of solutions in the presence of specularly reflecting boundaries in general non-convex domains. The prominence of the specular reflection boundary condition in studies of the  hard-sphere Boltzmann equation stems from it being the only boundary problem that has been rigorously derived from  particle systems. In the absence of convexity, achieving  well-posedness is challenging due to the possibility of characteristics propagating inside the domain and infinite bouncing in finite time, creating difficulties in achieving the crucial $L_{\infty}$ control of the density function. Significant advancement has recently been made in several papers examining specific, physically important cases of non-convex geometries  \cite{AL_24}, \cite{KL_18ARMA}, \cite{KL_24},    \cite{KKL_23}.  Interestingly, the authors of \cite{KKL_23} and \cite{AL_24} demonstrate that, under certain geometric conditions, the characteristic flow exhibits H\"older-type continuity, enabling them to establish  H\"older regularity of the solution to the  Boltzmann equation.

In contrast to hyperbolic models, solutions to kinetic velocity diffusive PDEs are expected to exhibit higher regularity near the grazing set
due to a hypoelliptic gain \cite{S_22}. The specifics of this regularity depend on the boundary conditions imposed on the outgoing boundary $\gamma_{-}$. In particular, for a linear kinetic Fokker-Planck equation with the inflow (Dirichlet) boundary conditions, the solutions are merely H\"older continuous in both spatial and velocity variables \cite{HJL_14}.
Remarkably, in the presence of the specular reflection boundary condition, solutions have higher regularity, which is established by using a flattening and extension method combined with the $S_p$ estimates on the whole space (see \cite{GHJO_20}, \cite{DGO_22}, \cite{DGY_21}). The possibility of such an extension argument for other boundary conditions in kinetic theory remains unknown.

Recently, the $L_2$ to $L_{\infty}$ framework has been developed for nonlinear collisional kinetic models in bounded domains  \cite{G_10}, \cite{EGKM_13}, \cite{EGKM_18},  \cite{GKTT_17}, \cite{KL_18ARMA}, \cite{KL_18}, \cite{KGH_20}, \cite{GHJO_20}, \cite{DGO_22}. The approach is based on interpolating between the natural energy or entropy bound and a `higher regularity' estimate. Specifically, it employs the velocity averaging lemma for the Boltzmann equation and a hypoelliptic gain in the context of the Landau equation. Extending this method to the  RVML system poses a formidable challenge due to the anticipated derivative loss at the highest order caused by the perfect conductor boundary condition.
To overcome this difficulty, we devise an intricate scheme based on propagating temporal derivatives. By capitalizing on the rich structure of the RVML system, we precisely identify the aforementioned derivative loss, demonstrating that it affects only the electromagnetic field and the macroscopic density. For the closure of the energy estimate, we establish an \textit{unexpected} $W^1_3$ estimate of velocity averages. Finally, to conclude the argument, we use a delicate  \textit{descent} strategy by leveraging the $S_p$  and the div-curl estimates.

\section{Notation and conventions}
                                            \label{section 2}
\begin{itemize}
\item Geometric notation.
\begin{align}
 \label{eq1.1}
  &	 p_0^{\pm} = \sqrt{m_{\pm}^2 + |p|^2},  \quad  p_0 = \sqrt{ 1 + |p|^2},  \\
  & P_{\pm}=  (p_0^{\pm}, p), \quad  P_{\pm} \cdot Q_{\pm} = p_0^{\pm} q_0^{\pm} - p \cdot q, \notag\\
 &  B_r  (x_0) = \{x \in \bR^3: |x - x_0| < r\}, \notag \\
& \Sigma^T = (0, T)\times \Omega \times \bR^3,\quad \Sigma^T_{\pm} = (0,T) \times \gamma_{\pm}, \notag \\
&\mathcal{R} (\Omega) =\{\bm{u} = A x + \bm{b}: (\bm{u} \cdot n_x)|_{\partial \Omega} = 0, \, A - \text{skew symmetric matrix}, \bm{b} \in \bR^3\}, \notag\\
&R_k \, \text{is a basis vector of} \, \mathcal{R} (\Omega), k = 1, 2, \ldots. \notag
\end{align}

\item Matrix notation.
\begin{align*}
    \bm{1}_d = (\delta_{i j}, i, j = 1, \ldots d), \quad \bm{R} = \text{diag} (1, 1, -1).
\end{align*}

\item  We define the (global) J\"uttner's solution  as
\begin{equation}
    \label{juttner}
    J^{\pm} (p) = \big(4 \pi e_{\pm} m_{\pm}^2  k_b T K_2 (\frac{m_{\pm}}{k_b T})\big)^{-1} e^{- p_0^{\pm}/(k_b T)},
\end{equation}
where $T$ is the temperature, $k_b$ is the Boltzmann constant, and
$$
    K_2 (s) = \frac{s^2}{3} \int_1^{\infty} e^{-s t} (t^2-1)^{3/2} \, dt
$$
is the Bessel function (see \cite{GS_03}). 
Both $J^{\pm}$ are normalized so that
\begin{align}
    \label{eq1.1.20}
        e_{+} \int_{\bR^3} J^{+} \, dp = 1 =  e_{-} \int_{\bR^3} J^{-} \, dp.
\end{align}

\item Relativistic Landau-Belyaev-Budker kernel. Let $L_{+, -}$ be the Coulomb logarithm for the ion and electron scattering. We introduce
 \begin{align}
 \label{eq1.8}
&	\Lambda (P_{+}, Q_{-}) = \big(\frac{P_{+}}{ m_{+}}\cdot \frac{Q_{-}}{ m_{-} }\big)^2 \bigg(\big(\frac{P_{+}}{ m_{+} }\cdot \frac{Q_{-}}{ m_{-} }\big)^2 - 1\bigg)^{-3/2},\\
 \label{eq1.9}
	&S (P_{+}, Q_{-})  =  \bigg(\big(\frac{P_{+}}{ m_{ + }}\cdot \frac{Q_{-}}{ m_{-} }\big)^2  - 1\bigg) \bm{1}_3 \\
	&\quad \quad   \quad \quad - (\frac{p}{ m_{+}}-\frac{q}{ m_{-}  }) \otimes (\frac{p}{ m_{+} }-\frac{q}{ m_{-}  }) \notag\\
	& \quad \quad  \quad \quad+  (\frac{P_{+}}{m_{+}} \cdot \frac{Q_{-}}{m_{-}} - 1)(\frac{p}{ m_{+} } \otimes \frac{q}{ m_{-}  }  + \frac{q}{ m_{-}  } \otimes \frac{p}{ m_{+} }),\notag\\
&
\label{eq1.11}
	\Phi (P_{+}, Q_{-}) = 2 \pi e_{+} e_{-} L_{+, -} \Lambda (P_{+}, Q_{-})\frac{m_{+} }{p_0^{+}} \frac{m_{-}}{ q_0^{-}} S (P_{+}, Q_{-}).
\end{align}
The rest of the kernels  $\Phi (P_{-}, Q_{+})$, $\Phi (P_{+}, Q_{+})$, and $\Phi (P_{-}, Q_{-})$ are defined in the same way.

\item Function spaces.
\begin{itemize}[--]



\item[--] \textit{Anisotropic H\"older space.}
For  an open set $D \subset \bR^6$ and $\alpha \in (0, 1]$, 
by $C^{\alpha/3, \alpha}_{x, p} (D)$,
we denote the set of all bounded functions
$f = f (x, p)$ such
that
\begin{align*}
        &[f]_{C^{\alpha/3, \alpha}_{x, p} (D)  } \\
        &:= \sup_{ (x_i, p_i) \in \overline{D}:  (x_1, p_1) \neq (x_2, p_2) } \frac{|f (x_1, p_1)- f (x_2, p_2)|}{(|x_1-x_2|^{1/3}+|p_1 - p_2|)^{\alpha}} < \infty.
\end{align*}
Furthermore, the norm is given by
\begin{equation}
			\label{1.2.0}
    \|f\|_{C^{\alpha/3, \alpha}_{ x, p } (D)  } : = \|f\|_{  L_{\infty} (D) }  +  [f]_{C^{\alpha/3, \alpha}_{x, p} (D)  }.
\end{equation}


\item \textit{Traces}.
For a function $u$ such that
\begin{align}
    \label{eq1.2.25}
    u, (\partial_t + \frac{p}{ p_0^{+} } \cdot \nabla_x)  u \in L_r ((0, T) \times \Omega \times \bR^3), r \in [1, \infty),
\end{align}
 one can define traces of $u$. See the details in \cite{BP_87}  or \cite{U_86}. In particular, there exist functions  $(u (t, \cdot)$, $u (0, \cdot)$, $u|_{\gamma_{\pm}})$, which we call traces of $u$, such that a variant of Green's identity for the operator $(\partial_t +  \frac{p}{ p_0^{+} } \cdot \nabla_x)$ holds (see \eqref{eqE.A.1.1}).

\item
\textit{Weighted Lebesgue space.}
For $G \subset \bR^3_x \times \bR^3_p$,  $\theta \in \bR$, and $r \in [1, \infty]$, by $L_{r, \theta} (G)$
we denote the set of all  measurable functions $u$ such that
$$
    \|u\|_{ L_{r, \theta} (G) }:= \|p_0^{\theta} u\|_{L_r (G)} < \infty.
$$

\item \textit{Weighted Sobolev spaces.}
For $r \in [1, \infty]$, by $W^1_{r, \theta} (\bR^3)$ we denote the Banach space of functions  $u  \in L_{r, \theta} (\bR^3_p)$ with the norm
$$
    \|u\|_{ W^1_{r, \theta} (\bR^3)} :=   \||u|+|\nabla_p u|\|_{ L_{r, \theta} (\bR^3) } < \infty.
$$
For $\theta = 0$, we set $W^1_r (\bR^3): = W^1_{r, 0} (\bR^3)$.

\item \textit{Dual Sobolev space.} Let
$W^{-1}_r (\bR^3), r \in (1, \infty)$  be the space of all distributions  $u$
such that
$$
    u =  \partial_{p_i}  \eta_i + \xi
$$
for some $\xi, \eta_i \in L_r (\bR^3), i = 1, 2, 3$.  Furthermore, for $u \in W^{-1}_2 (\bR^3)$ and $f \in W^1_{2} (\bR^3)$, by
\begin{align}
      \label{eq1.2.16}
    \langle u, f \rangle =  \int_{\bR^3}  (- \eta_i  \partial_{p_i} f+ \xi f) \, dp,
\end{align}
we denote the duality pairing between $W^{-1}_2 (\bR^3)$ and $W^{1}_{2} (\bR^3)$, which is independent of the choice of $\eta_i$ and $\xi$.  We note that if $u, f \in L_2 (\bR^3)$ then the l.h.s. of \eqref{eq1.2.16} is the  $L_2$ interior product. 


\item \textit{Fractional Sobolev spaces.} For $r \in (1, \infty)$,  we set
\begin{itemize}
    \item[--] $H^s_r (\bR^d) = (1-\Delta)^{-s/2} L_r (\bR^d)$, $s \in \bR$, to be the Bessel potential space
    with the norm
    \begin{align}
        \label{eq1.2.20}
            \|u\|_{H^s_r (\bR^d)} = \|(1-\Delta)^{s/2} u\|_{L_r (\bR^d)},
        \end{align}
    \item[--] $W^s_r (\Omega), s \in (0, 1)$ to be the Sobolev-Slobodetskii space with the norm
    \begin{align}
            \label{eq1.2.21}
      &          \|u\|_{W^s_r (\bR^d)} = \|u\|_{L_r (\bR^d)} + [u]_{W^s_r (\bR^d)}, \\
     & [u]^r_{W^s_r (\bR^d)}: = \int_{\Omega} \int_{\Omega} \frac{ |u(x) -  u (y)|^r}{|x- y|^{d +  s r}} \, dx dy.  \notag
        \end{align}
\end{itemize}

\item \textit{Mixed-norm spaces.} For normed spaces $X$ and $Y$, we write $u = u (x, y) \in X Y$ if for each $x \in X$, we have $u (x, \cdot) \in Y$, and
$$
    \|u\|_{ X Y  }  :=   \| \|u (x, \cdot)\|_{ Y } \|_X  < \infty.
$$



\item
 \textit{Steady $S_r$ spaces.}   For $r \in (1, \infty)$, by $S_{ r, \theta} (\Omega \times \bR^3)$, we denote the set of all functions $u = (u^{+}, u^{-})$ on $\Omega \times \bR^3$ such that
\begin{equation}
                \label{eq1.2.7.0}
  \begin{aligned}
 & u, \frac{p}{p_0^{\pm}} \cdot  \nabla_x u^{\pm}, \nabla_p u, D^2_p u \in   L_{r, \theta} (\Omega \times \bR^3).
 \end{aligned}
 \end{equation}
 The norm is given by
\begin{equation}
			\label{eq1.2.7}
\begin{aligned}
	& \quad \quad \quad  \|u\|_{ S_{ r, \theta} (\Omega \times \bR^3) }  =  \||u| + |\frac{p}{p_0^{\pm}} \cdot \nabla_x u^{\pm}| + |\nabla_p u| + |D^2_p u|\|_{ L_{r, \theta} ( \Omega \times \bR^3 ) }.
\end{aligned}
\end{equation}
 We also define the steady Newtonian (non-relativistic) $S_r$ space  as
\begin{align}
       \label{eq14.C.20}
       S_r^N (\bR^{2d}) = \{ u, \nabla_v u, D^2_v u,  v \cdot \nabla_x u \in L_r (\bR^{2d})\}
\end{align}
with the norm 
$$
    \|u\|_{  S_r^N (\bR^{2d}) } : = \||u| + |v \cdot \nabla_x u| + |\nabla_v u| + |D^2_v u|\|_{ L_r (\bR^{2d}) }.
$$

\item 
 \textit{Unsteady $S_r$ spaces.} 
    \begin{align}
        \label{eq36.0.1}
         S_r (\Sigma^T) : = \{u: u, \nabla_p u, D^2_p u, (\partial_t +  \frac{p}{p_0^{\pm}} \cdot \nabla_x) u^{\pm} \in L_r (\Sigma^T)\}.
    \end{align}
\end{itemize}

\item \textit{Vector fields.} We use boldface letters to denote vector fields. We write $\bm{u} \in  X$, where $X$ is a functional space if each component of $\bm{u}$ belongs to $X$.

\item \textit{Stress tensor.}
We set
\begin{align}
    \label{eq1.2.23}
    S_{i j} (\bm{u}):= \frac{1}{2} (\partial_{x_i} \bm{u}_j +  \partial_{x_j} \bm{u}_i)
\end{align}
to be the stress tensor of $\bm{u}$.

\item
\textit{Conventions.}
\begin{itemize}
\item We assume the summation over repeated indices.

\item  By $N = N (\cdots)$, we denote a constant depending only on the parameters inside the
parentheses. The constants $N$ might change from line to line. Sometimes, when it is clear what parameters $N$  depends on, we omit them.

\item Whenever the relationships among physical constants are not relevant to the argument, we set all such constants to $1$ and drop the subscripts and superscripts $\pm$ in  $p_0^{\pm}$, $P_{\pm}$, $J^{\pm}$.
\end{itemize}
\end{itemize}

\section{Main results}
\label{section 3}
Let $f^{\pm}$ be the perturbations of $F^{\pm}$ near the J\"uttner's solution defined as
\begin{align}
    \label{eq36.20}
        F^{\pm} = J^{\pm} + \sqrt{J^{\pm}} f^{\pm}.
\end{align}
Then, the perturbation $f = (f^{+}, f^{-})$ satisfies the following system, which we also call the  RVML system (see \cite{GS_03}):
\begin{align}
\label{eq36.13}
& \partial_t f^{+}  +    \frac{p}{p_0^{+}} \cdot \nabla_x f^{+}  + e_{+} (\bE + \frac{p}{p_0^{+}} \times \bB) \cdot \nabla_p f^{+}
- \frac{e_{+}  }{k_b T} \frac{p}{p_0^{+}}  \cdot \mathbf{E} \sqrt{J^{+}} \\
&-  \frac{e_{+}  }{2 k_b T} \frac{p}{p_0^{+}}  \cdot \mathbf{E} f^{+}
+ L_{+} f = \Gamma_{+} (f, f), \notag \\
\label{eq36.14}
& \partial_t f^{-}  +  \frac{p}{p_0^{-}} \cdot \nabla_x f^{-} - e_{-} (\bE + \frac{p}{p_0^{-}} \times \bB) \cdot \nabla_p f^{-}
+ \frac{e_{-}  }{k_b T} \frac{p}{p_0^{-}}  \cdot \mathbf{E} \sqrt{J^{-}}\\
&+   \frac{e_{-}  }{2 k_b T} \frac{p}{p_0^{-}}  \cdot \mathbf{E} f^{-}
+ L_{-} f = \Gamma_{-} (f, f),  \notag \\
 \label{eq36.19}
  &   f_{-} (t, x, p) = f_{+} (t, x, R_x p), \quad f (0, \cdot) = f_0, \\
\label{eq36.16}
 &\partial_t \bE -  \nabla_x \times \bB = - 4 \pi \bm{j}: =  - 4 \pi \int   (e_{+}\frac{p}{p_0^{+}} f^{+} \sqrt{J^{+}} -  e_{-}\frac{p}{ p_0^{-} } f^{-}\sqrt{J^{-}} ) \, dp, \\
 &
 \label{eq36.16.1}
 \partial_t \bB  +  \nabla_x \times \bE = 0, \\
 &
 \label{eq36.17}
	\nabla_x \cdot \bE = 4 \pi \rho:=  4 \pi \int   (e_{+} f^{+}  \sqrt{J^{+}}  - e_{-} f^{-} \sqrt{J^{-}}) \, dp, \quad \nabla_x \cdot \bB = 0, \\
 &
  \label{eq36.18}
  (\bE \times n_x)|_{\partial \Omega} = 0, \quad (\bB \cdot n_x)|_{\partial \Omega} = 0,  \quad \bE (0, \cdot) = \bE_0 (\cdot), \, \, \bB (0, \cdot) = \bB_0 (\cdot),
\end{align}
where
\begin{align}
    \label{eq36.30}
   &  L_{\pm}  =A_{\pm} - K_{\pm}, \\
       \label{eq36.31}
   & A_{\pm} = (J^{\pm})^{-1/2} \cC (\sqrt{J^{\pm}} g^{\pm}, J^{+} + J^{-}), \\
       \label{eq36.32}
   & K_{\pm} g = (J^{\pm})^{-1/2} \cC (J^{\pm}, \sqrt{J^{+}} g^{+} + \sqrt{J^{-}} g^{-}), \\
       \label{eq36.33}
   & \Gamma_{\pm} (g, h) = (J^{\pm})^{-1/2} \cC (\sqrt{J^{\pm}} g^{\pm}, \sqrt{J^{+}} h^{+} + \sqrt{J^{-}} h^{-}), \\
       \label{eq36.34}
   & L = (L_{+}, L_{-}), \quad \Gamma (g, h) = (\Gamma_{+} (g, h), \Gamma_{-} (g, h)).
\end{align}

\textbf{Steady state solution.}
To guarantee that $F = (J^{+}, J^{-})$, $\bE  = 0 = \bB$ is a steady state of the RVML system \eqref{RVML}, we impose the \textbf{global neutrality condition}
\begin{equation}
		\label{eq0}
e_{+} \int_{\bR^3} J^{+} \, dp
	= e_{-} \int_{\bR^3} J^{-} \, dp
\end{equation}
(see \eqref{eq1.1.20}).
We denote
\begin{equation}
    \label{eq6.30}
        M_{\pm} = \int_{\bR^3} J^{\pm} \, dp.
\end{equation}
Note that, due to our choice of normalization in \eqref{eq1.1.20}, we have $M_{\pm} = e_{\pm}^{-1}$.

\textbf{Macro-micro decomposition.}
Recall that the linearized Landau operator $L$ has the following null space (see \cite{GS_03}):
$$
 \text{span} \,  \{(\sqrt{J^{+}}, 0), (0, \sqrt{J^{-}}), (p_i \sqrt{J^{+}}, p_i \sqrt{J^{-}}), (p_0^{+} \sqrt{J^{+}}, p_0^{-} \sqrt{J^{-}}), i = 1, 2, 3\}.
$$
  Its orthonormal basis can be chosen as follows:
\begin{align}
    \label{eq6.22}
& \chi_{1}= (M_{+})^{-1/2}(\sqrt{J^{+}}, 0), \quad \chi_{2} = (M_{-})^{-1/2}  (0, \sqrt{J^{-}}),\\
    \label{eq6.23}
& \chi_{i+2} =  \kappa_1 (p_i\sqrt{J^{+}}, p_i\sqrt{J^{-}}), i = 1, 2, 3, \\
    \label{eq6.24}
&  \chi_{6} = \kappa_3 \big((p_0^{+} - \kappa_2^{+})  \sqrt{J^{+}}, (p_0^{-}  - \kappa_2^{-})\sqrt{J^{-}}\big),
\end{align}
where
\begin{align}
\label{eq6.11}
&\kappa_1 =  \bigg(\int p_1^2 (J^{+} + J^{-}) \, dp\bigg)^{-1/2},\\
&\label{eq6.10}
 \kappa^{\pm}_2 = \frac{ \int J^{\pm} p_0^{\pm} \, dp }{  \int J^{\pm} \, dp},\\
 \label{eq6.12}
 & \kappa_3 = \bigg(\int |p_0^{+} - \kappa_2^{+}|^2 J^{+} \, dp + \int |p_0^{-}  - \kappa_2^{-}|^2 J^{-} \, dp\bigg)^{-1/2}.
\end{align}
By $\chi_i^{+}, \chi_i^{-}$, we denote the first and the second components of  $\chi_i$, respectively.
The  constants $\kappa_2^{\pm}$ were chosen  so that
$$
	\int J^{\pm} (p_0^{\pm}  - \kappa_2^{\pm}) \, dp = 0,
$$
which yields
\begin{equation*}
\langle \chi_6^{+}, \chi_1^{+} \rangle = 0
 = \langle \chi_6^{-}, \chi_2^{-} \rangle.
\end{equation*}

 The projection operator $P = (P^{+}, P^{-})$  onto  the kernel of $L$   is defined as follows (see p. 308 in \cite{GS_03}):
\begin{align}
 \label{eq6.16}
  &  P^{+} f
      = a^{+} \chi_1^{+}  + b_i \chi_{i+2}^{+} + c \chi_6^{+}\\
&
     =  [(M_{+})^{-1/2} a^{+}   + \kappa_1 b_i \cdot p_i + \kappa_3 c (p_0^{+} - \kappa_2^{+}) ] \sqrt{J^{+}},\notag \\
 \label{eq6.17}
&      P^{-} f
          = a^{-} \chi_2^{-}  + b_i \chi_{i+2}^{-} + c \chi_6^{-}\\
&
     =  [(M_{-})^{-1/2} a^{-}   + \kappa_1 b_i \cdot p_i + \kappa_3 c (p_0^{-} - \kappa_2^{-}) ] \sqrt{J^{-}},  \notag
     \end{align}
where
\begin{align}
 \label{eq6.18}
  &    a^{\pm} = (M_{\pm})^{-1/2} \int f^{\pm}  \sqrt{J^{\pm}} \, dp,\\
 \label{eq6.19}
  & b_i = \kappa_1 \int p_i (f^{+}\sqrt{J^{+} } + f^{-}\sqrt{J^{-}}) \, dp, i  = 1, 2, 3, \\
 \label{eq6.20}
  & c = \kappa_3 \int \big(p_0^{+} - \kappa_2^{+})  f^{+} \sqrt{J^{+}} +  (p_0^{-}  - \kappa_2^{-}) f^{-} \sqrt{J^{-}} \, dp.
\end{align}

\textbf{Initial data.}
\begin{definition} 
\label{definition 34}
\index{$[f_{0, k}, \bE_{0, k}, \bB_{0, k}]$} We set $[f_{0, 0}, \bE_{0, 0}, \bB_{0, 0}] = [f_0, \bE_0, \bB_0]$.
Furthermore, given $f_{0, j} (x, p)$, $\bE_{0, j} (x)$, $\bB_{0, j} (x), j = 0, \ldots, k$,    we set  
\begin{align}
  \label{eq33.2}
   &  f^{\pm}_{0, k+1} =   -  \frac{p}{p_0^{\pm}} \cdot \nabla_x f_{0, k}^{\pm} -  L_{\pm} f_{0, k}  \pm \frac{e_{\pm} }{k_b T} (\frac{p}{p_0^{\pm}} \cdot \bE_{0, k}) \sqrt{J^{\pm}}\\
   &+ \sum_{j =0}^k \binom{k}{j} \bigg(\mp e_{\pm} (\bE_{0, j} + \frac{p}{p_0^{\pm}} \cdot \bB_{0, j}) \cdot \nabla_p  f_{0, k-j}^{\pm}  \pm \frac{e_{\pm} }{2k_b T} (\frac{p}{p_0^{\pm}} \cdot \bE_{0, j}) f_{0, k-j}^{\pm} + \Gamma_{\pm} (f_{0, j}, f_{0, k-j})\bigg), \notag\\
   \label{eq33.3}
    &  \bE_{0, k+1}  :=  \nabla_x  \times \bB_{0, k}  - 4 \pi \int_{\bR^3} \big(e_{+}\frac{p}{p_0^{+}} f^{+}_{0, k} \sqrt{J^{+}} - e_{-}\frac{p}{p_0^{-}} f^{-}_{0, k}  \sqrt{J^{-}}\big) \, dp,\\
      \label{eq33.4}
    & \bB_{0, k+1} = -  \nabla_x \times \bE_{0, k}.
\end{align}
\end{definition}

\textbf{Controls.} Let  $m \ge 20$ be an even number, which is the maximal number of $t$-derivatives we control in our scheme.
We introduce the `natural' instant   energy and the dissipation
\begin{align}
    \label{eq14.0.10}
 &   \cI_{||}  (\tau) = \sum_{k = 0}^{m}  \big(\|\partial_t^k f (\tau, \cdot)\|^2_{ L_{2} (\Omega \times \bR^3) } + \|\partial_t^k [\bE, \bB] (\tau, \cdot)\|^2_{ L_{2} (\Omega) }\big), \\
    \label{eq14.0.11}
&
    \cD_{||} (\tau) = \sum_{k = 0}^{m} \|(1-P) \partial_t^k f (\tau, \cdot)\|^2_{ L_{2} (\Omega) W^1_2 (\bR^3) }.
\end{align}

\textit{Total energy functionals.}
Let $\Delta r \in (0, \frac{1}{42})$ be a constant,  and $r_1, \ldots, r_4$ be  numbers satisfying the  conditions
 \begin{equation}
     \label{eq14.0.3}
     \begin{aligned}
 &  r_1 = 2,  \quad    \frac{1}{r_{i}} = \frac{1}{r_{i-1}} -  \big(\frac{1}{6} - \Delta r\big), i = 2, 3, 4, \\
  & r_2 \in (2, 3), \, r_3 \in  (3, 6),  \, r_4 > 36.
  \end{aligned}
 \end{equation}
 Formally,
 $r_2 = 3-$, $r_3 =6-$, $r_4 =  36+$.

For $\theta > 0$,
  the  total instant
  functional  $\cI$ is defined as 
\begin{align}
    \label{eq14.0.5}
      & \cI  (\tau) =  \cI_{||} (\tau) + \sum_{k = 0}^{m-4} \|\partial_t^k f (\tau, \cdot)\|^2_{   L_{2, \theta/2^k} (\Omega \times \bR^3)  } 
     + \sum_{k=0}^{m-1} \|\partial_t^k  [\bE, \bB] (\tau, \cdot)\|^2_{ W^1_{ 2 } (\Omega) }  \\
      & + \sum_{i=1}^4 \sum_{k=0}^{  m-4-i } \|\partial_t^k f (\tau, \cdot)\|^2_{  S_{r_i, \theta/2^{ k + 2i   } } (\Omega \times \bR^3) }      
+ \sum_{i=2}^3 \sum_{k=0}^{ m  - 4 -  i} \|\partial_t^k  [\bE, \bB] (\tau, \cdot)\|^2_{ W^1_{r_i} (\Omega) }. \notag
 \end{align}
 Furthermore, we define $\cI_{||} (0), \cI (0)$ by replacing $\partial_t^k [f, \bE, \bB] (\tau)$ with $[f_{0, k}, \bE_{0, k}, \bB_{0, k}]$ in \eqref{eq14.0.10} and \eqref{eq14.0.5}, respectively.

Next, we define the total dissipation  as
 \begin{align}
&    \label{eq12.0.1}
    \cD (\tau)
= \cD_{||} (\tau) + \sum_{k = 0}^{m-2} \|\partial_t^k [a^{+}, a^{-}] (\tau, \cdot)\|^2_{L_2 (\Omega)} +   \sum_{k = 0}^m \|\partial_t^k  [b, c] (\tau, \cdot)\|^2_{ L_2 (\Omega) } \\
   &+  \sum_{k = 0}^{m-3}  \|\partial_t^k  \bB (\tau, \cdot)\|^2_{ L_2 (\Omega) }
   + \sum_{k = 0}^{m-4} \|\partial_t^k  \bE (\tau, \cdot)\|^2_{ L_2 (\Omega) }  \notag\\
& + \sum_{k = 0}^{m-4}  \|\partial_t^k f (\tau, \cdot)\|^2_{
 L_2 (\Omega) W^1_{2, \theta/2^k} (\bR^3) } \notag\\
 & + \sum_{i=1}^4 \sum_{k=0}^{ m-4-i }  \|\partial_t^k f (\tau, \cdot)\|^2_{ S_{r_i, \theta/2^{k+2i} } (\Omega \times \bR^3) }
 + \sum_{i=2}^3 \sum_{k=0}^{ m  - 4 -  i} \|\partial_t^k  [\bE, \bB] (\tau, \cdot)\|^2_{ W^1_{r_i} (\Omega) }. \notag
 \end{align}
\begin{remark}
            \label{remark 12.1}
We note that by the macro-micro decomposition  $P f + (1-P) f$ (cf. \eqref{eq6.16}--\eqref{eq6.17}),
\begin{align}
    \label{eq12.0.4}
    \sum_{k =0}^{m-2}    \|\partial_t^k f (\tau, \cdot)\|^2_{ L_2 (\Omega) W^1_2 (\bR^3) }  \lesssim \cD (\tau).
\end{align}
\end{remark}



\subsection{Finite energy and strong solutions }
\begin{definition}[finite energy solution]
    \label{definition 27.1}
    We say  that 
    \begin{align}
        \label{eq27.2.5}
    f \in C ([0, T]) L_2 (\Omega \times \bR^3) \cap L_2 ((0, T) \times \Omega) W^1_2 (\bR^3)
    \end{align}
    is a finite energy   solution to 
   the problem 
      \begin{align}
   			\label{eq1.0}
	&	(\partial_t +  \frac{p}{p_0^{\pm}} \cdot \nabla_x) f  - \nabla_p \cdot (A \nabla_p f) = \eta, \\
\label{eq1.0.0}
 &f (t, x, p) = f (t, x, R_x p), \, z \in \Sigma^T_{-}, \, \, f (0, \cdot) = f_{0} (\cdot), 
  \end{align}
    if for any test function $\phi$ satisfying
        \begin{align}
   \label{eq27.2.1}
    & 
       \phi  \in L_2 ((0, T) \times \Omega) W^1_2 (\bR^3),   (\partial_t +  \frac{p}{p_0^{\pm}} \cdot \nabla_x) \phi \in  L_2 (\Sigma^T), \\
       \label{eq27.2.2}
           & \phi \in  C ([0, T]) L_2 (\Omega \times \bR^3), \\
      \label{eq27.2.3}
    &   \phi (t, x, p) = \phi (t, x, R_x p), \, (t, x, p) \in \Sigma^T_{-} \, \, \text{(in the trace sense)},
        \end{align}
and all $t \in [0, T]$, one has
\begin{align}
    \label{eq27.2}
    &   \int_{\Omega \times \bR^3} (f \phi) (t, x, p) - f_0 (x, p) \phi (0, x, p) \, dx dp\\
   & + \int_{ (0, t)  \times \Omega \times \bR^3 }  \bigg(-f (\partial_t  \phi +  \frac{p}{p_0^{\pm}} \cdot \nabla_x \phi) +(\nabla_p \phi)^T   A \nabla_p f\bigg) 
   \, dz   = \int_{(0, t)  \times \Omega }  \langle \eta (\tau, x, \cdot), \phi (\tau, x, \cdot) \rangle \, dx d\tau,  \notag
\end{align}
where $\langle \cdot, \cdot \rangle$ is the duality pairing between $W^{-1}_2 (\bR^3)$  and $W^{1}_2 (\bR^3)$ (see \eqref{eq1.2.16}).

Furthermore, if $A$,  $f$, and $\eta$ do not depend on  $t$, we say that $f \in L_2 (\Omega) W^1_2 (\bR^3)$ is a finite energy  solution to the steady equation
\begin{equation}
\begin{aligned}
    \label{eq27.1.2}
    	& \frac{p}{p_0^{\pm}} \cdot \nabla_x f  - \nabla_p \cdot (A \nabla_p f)  = \eta, \\
 &f (x, p) = f (x, R_x p), \, z \in \gamma_{-}, 
 \end{aligned}
\end{equation}
if for any test function $\phi =  \phi (x, p)$ satisfying the conditions analogous to \eqref{eq27.2.1}--\eqref{eq27.2.3}, the  `steady' counterpart of the identity \eqref{eq27.2} holds.  
\end{definition}

\begin{definition}[strong solution]
    \label{definition 3.3}
    We say that $f \in S_2 (\Sigma^T)$ (see \eqref{eq36.0.1}) is a strong solution to \eqref{eq1.0}--\eqref{eq1.0.0} if the identity \eqref{eq1.0} holds a.e., and the initial and boundary conditions are understood in the sense of traces (see \eqref{eq1.2.25}). Furthermore, if $T = \infty$, we replace $S_2 (\Sigma^T)$ with
    $\cap_{\tau > 0} S_2 (\Sigma^{\tau})$ in the  above definition.
\end{definition}

\begin{remark}
The Landau equation can be rewritten as \eqref{eq1.0} with
 certain $A$ and $\eta$ depending on $f$. See the details in the proof of Proposition \ref{proposition 4.0}. 
\end{remark}

\begin{definition}[cf. Definition 3.1 in \cite{VML}]
                \label{definition 36.0}
We say that the RVML system \eqref{eq36.13}--\eqref{eq36.18} has a  strong solution  $[f^{\pm}, \bE, \bB]$ on  $[0, T]$ if
\begin{itemize}
   \item[--] $f = (f^{+}, f^{-})$ is a strong solution to the  Landau equations \eqref{eq36.13}--\eqref{eq36.19}.  
    \item[--]  $\bE, \bB \in C^1 \big([0, T], L_2 (\Omega))$,
  \item[--] for any $t \in [0, T]$,  $\bE (t, \cdot), \bB (t, \cdot) \in   W^1_2 (\Omega)$,  and
  $(\bE (t, \cdot) \times n_x)_{|\partial \Omega} \equiv 0$, $(\bB (t, \cdot) \cdot n_x)_{|\partial \Omega} \equiv 0$,
\item[--] the identities \eqref{eq36.16}--\eqref{eq36.17} hold in the $L_2 ((0, T) \times \Omega)$ sense.
\end{itemize}
\end{definition}

\subsection{Assumptions}


\begin{assumption}[Compatibility conditions]
                    \label{assumption 3.3}
We assume 
\begin{align}
  \label{eq33.7}
 & f_{0, k} \, \text{is a finite energy  solution to \eqref{eq33.2} with the SRBC}, \, k \le m-1,\\
  \label{eq33.9}
  &  f_{0, k} (x, p) = f_{0, k} (x, R_x p), \,  (x, p) \in \gamma_{-} \, \text{(in the trace sense)}, \, \, k \le m-8, \\
  \label{eq33.8}
  &  (\bE_{0, k} \times n_x)|_{\partial \Omega} \equiv 0, \, \, (\bB_{0, k} \cdot n_x)_{\partial \Omega} \equiv 0, \, k \le m-1,\\
          \label{eq33.10}
&\nabla \cdot \bB_{0, k} \equiv 0, \,   k \le m-1, \\
\label{eq33.10.1}
&\nabla \cdot \bE_{0, k} (x) =4 \pi \int   (e_{+} f^{+}_{0, k}  \sqrt{J^{+}}  - e_{-} f^{-}_{0, k} \sqrt{J^{-}}) \, dp, \,   k \le m-1,
\end{align}
where in \eqref{eq33.9}, we implicitly assume that $f_{0, k}, \displaystyle \frac{p}{p_0^{\pm}} \cdot \nabla_x f_{0, k}^{\pm} \in L_2 (\Omega \times \bR^3)$, so that the trace is well defined.
\end{assumption}

\begin{remark}
  Assumption \ref{assumption 3.3} is easily satisfied if $f_0, \bE_0$, and $\bB_0$ are smooth, compactly supported functions  away from $\partial \Omega$, $f_0$ decays sufficiently fast for large $p$, and \eqref{eq33.10}--\eqref{eq33.10.1} hold with $k = 0$ (see Remark 3.8 in \cite{VML}).
\end{remark}

\begin{assumption}
    \label{assumption 1.2}
   We assume that the initial densities $F^{\pm}_0$ have the same total mass as  J\"uttner's solution $J^{\pm}$ and that the initial data $[F^{\pm}_0, \bE_0, \bB_0]$ possess the same total energy as the steady state $F^{\pm} = J^{\pm}$, $\bE = 0 = \bB$. On the level of  initial perturbations $f^{\pm}_0$ (see \eqref{eq36.20}) and $[\bE_0, \bB_0]$, we formulate this condition  as follows:
       \begin{align}
            \label{eq6.1.0}
    &  \int_{ \Omega \times \bR^3} f^{\pm}_0 \sqrt{J^{\pm}} \, dx dp = 0,\\
   \label{eq6.1.2}
    & \int_{\Omega \times \bR^3}  (p_0^{+} f^{+}_0 \sqrt{J^{+}}  + p_0^{-} f^{-}_0 \sqrt{J^{-}}) \, dx dp
    +   \frac{1}{8 \pi} \int_{\Omega} (|\bE_0|^2 + |\bB_0|^2) \, dx  =  0.
   \end{align}
Furthermore, if $\Omega$ is an axisymmetric domain, we additionally assume that the total angular momentum of the initial data is the same as that of the steady state. In particular, if an  axis  of rotation contains $x_0$ and is parallel to $\omega$, we assume that
    \begin{align}
     \label{eq6.1.3}
        &        \int_{\Omega} \int_{\bR^3} p \cdot \big(\omega \times (x-x_0)\big) (f^{+}_0 \sqrt{J^{+}}  + f^{-}_0\sqrt{J^{-}}) \, dp dx  \\
        &  + \frac{1}{4 \pi } \int_{\Omega} \big(\omega \times (x-x_0)\big) \cdot (\bE_0 \times \bB_0) \, dx = 0. \notag
        \end{align}
\end{assumption}

\begin{assumption}[cf. Hypothesis 1.1 in \cite{AS_13}]
    \label{assumption E.1}
    We assume that $\partial \Omega$ is connected and that there exist open connected  surfaces $\Sigma_j, j = 1, \ldots, L$, which we call ``cuts", such that

    \begin{enumerate}
    \item[(i)] each $\Sigma_j$ is an open part of a smooth manifold $M_j$,

    \item[(ii)] $\partial \Sigma_j \subset \partial \Omega$ for each $j$,

    \item[(iii)] $\overline{\Sigma_i} \cap \overline{\Sigma_j} = \emptyset, i \neq j$,

    \item[(iv)]
    $$
        \tilde \Omega    =  \Omega \setminus \bigcup_{j = 1}^L \Sigma_j
    $$
    is a simply connected $C^{1, 1}$ domain.
    \end{enumerate}
\end{assumption}

\begin{remark}
    We note that a solid torus $B_1 \times S^1$  satisfies Assumption \ref{assumption E.1} because it requires only a single cut to obtain a simply connected $C^{ 1, 1}$ domain.      
\end{remark}

\begin{assumption}
                \label{assumption E.2}
We assume that $\bB_0$ satisfies the following `vanishing flux' condition
\begin{align}
    \label{eqE.2.52}
    \int_{\Sigma_j} \bB_0  \cdot n_x\, dS_x = 0, \, \,  j = 1, \ldots, L,
\end{align}
(see Assumption \ref{assumption E.1}).
\end{assumption}

\subsection{Main results}

\begin{theorem}[global  well-posedness]
    \label{theorem 3.1}
Let  $r_1, \ldots, r_4$ be numbers satisfying \eqref{eq14.0.3} and $m \ge  20$  be an even integer. We impose    Assumptions \ref{assumption 3.3}, \ref{assumption 1.2}--\ref{assumption E.2}.
  Then, there exist numbers 
  \begin{align*}
      \theta (m, r_1, \ldots, r_4) > 1,  \quad C_0 =  C_0  (m, r_1, \ldots, r_4, \Omega) \in (0, 1), 
          \end{align*}
  such that if $\cI (0) < \infty$ and 
  \begin{align}
      \label{eq14.0.12}
    &  I_0 := \sum_{k = 0}^{m}  \big(\|f_{0, k}\|^2_{ L_{2} (\Omega \times \bR^3) } + \|[\bE_{0, k}, \bB_{0, k}]\|^2_{ L_{2} (\Omega) }\big) \\
    & + \sum_{k = 0}^{m-4} \|f_{0, k}\|^2_{   L_{2, \theta/2^k} (\Omega \times \bR^3)  } < C_0 \notag
  \end{align}
  (see \eqref{eq33.2}), then, the following assertions hold. 

  $(i)$ The RVML system \eqref{eq36.13}--\eqref{eq36.18}   has a strong solution $[f, \bE, \bB]$   on $[0, \infty)$ (see Definition \ref{definition 36.0}) such that
  \begin{align}
        \label{eq14.0.13}
    \cI (t) + \int_0^t \cD (\tau) \, d\tau < C I_0, \, \, t >  0
  \end{align}
  (see \eqref{eq14.0.5}--\eqref{eq12.0.1}), where $C = C (m, r_1, \ldots, r_4, \Omega)$.

    $(ii)$ For $k \le m-5$, $\partial_t^k f$ is a strong   solution (see Definition \ref{definition 3.3})  to \eqref{eq36.13}--\eqref{eq36.14} differentiated $k$ times in $t$ with the initial data $\partial_t^k f (0, \cdot) = f_{0, k} (\cdot)$ and SRBC, while for  $m-4 \le k \le m$, $\partial_t^k f$ is a finite energy solution (see Definition \ref{definition 27.1}).
   
     $(iii)$ For $k \le m-1$, $\partial_t^k [\bE, \bB] \in C ([0, \infty)) L_2 (\Omega) \cap L_{\infty, \text{loc}} ((0, \infty)) W^1_2 (\Omega)$ is a  strong solution to Maxwell's equations \eqref{eq36.16}--\eqref{eq36.16.1} differentiated $k$ times with the initial data $[\bE_{0, k}, \bB_{0, k}]$ and the perfect conductor BC, whereas $\partial_t^m [\bE, \bB] \in C ([0,  \infty)) L_2 (\Omega)$ is a  weak solution (see \cite{DL_76}).
     In addition, the identities 
     $$\nabla_x \cdot \partial_t^k \bE = 4 \pi \partial_t^k  \rho, \quad \nabla_x \cdot \partial_t^k \bB = 0, \, k \le m$$ hold 
    due to the compatibility conditions \eqref{eq33.10}--\eqref{eq33.10.1} and the continuity equations
     $$
        \partial_t (\partial_t^k \rho) + \nabla_x \cdot \partial_t^k \bm{j} = 0, \, k \le m.
     $$

$(iv)$ If $[f_i, \bE_i, \bB_i], i = 1, 2,$ are   strong solutions to the RVML system on $[0, \infty)$ satisfying the bound \eqref{eq14.0.13}, then, we have $f_1 = f_2$ on $\Sigma^{\infty}$ and $\bE_{1} = \bE_{2}$, $\bB_{1} = \bB_{2}$ on $(0, \infty) \times \Omega$. 
\end{theorem}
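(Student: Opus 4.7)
The proof follows the standard \emph{local existence + a priori bound + continuation} template. Local strong well-posedness on some interval $[0, T_*]$ is granted by \cite{VML}, so the entire content of the theorem is the uniform-in-time energy-dissipation inequality
$$
\cI(t) + \int_0^t \cD(\tau)\,d\tau \le C\,\cI(0) + C \sup_{\tau \le t} \sqrt{\cI(\tau)}\,\Bigl(\cI(t) + \int_0^t \cD(\tau)\,d\tau\Bigr),
$$
which, for $\cI(0) < C_0$ small enough, self-absorbs and yields \eqref{eq14.0.13} on any compact subinterval of existence; a standard bootstrap then gives the global solution in (i). Uniqueness (iv) will follow by testing the equation for the difference of two solutions against itself and using \eqref{eq14.0.13} to absorb the nonlinear commutators via Gronwall.

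\textbf{Step 1 (basic $L_2$ identity for $\partial_t^k$).} Apply $\partial_t^k$, $k = 0, \ldots, m$, to \eqref{eq36.13}--\eqref{eq36.17}. The crucial point is that $\partial_t$ preserves both the SRBC \eqref{eq36.19} and the perfect-conductor BC \eqref{eq36.18}, which is \emph{not} the case for spatial derivatives in a non-convex domain---this is the whole reason the scheme propagates only temporal derivatives. Testing the $\partial_t^k$-kinetic equation against $\partial_t^k f$ and the $\partial_t^k$-Maxwell pair against $[\partial_t^k \bE, \partial_t^k \bB]$, the transport boundary terms vanish via the SRBC, the Lorentz--current couplings cancel with the field energy, and the standard coercivity of $L$ produces
$$
\tfrac{d}{dt}\cI_{||}(t) + c_0\,\cD_{||}(t) \le (\text{nonlinear commutators from } \Gamma),
$$
the right-hand side being controllable by $\sqrt{\cI}\,\cD$. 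The macro components $[a^{\pm}, b, c]$ and the fields $[\bE, \bB]$ are not dissipated by this identity directly; the Guo--Strain macroscopic equations \cite{GS_03} (testing against moment polynomials) combined with Maxwell's system supply Poincar\'e-type bounds for them, with the harmonic constants pinned down by the conservation laws \eqref{eq6.1.0}--\eqref{eq6.1.3} (mass, energy, angular momentum) and the cohomological part of $\bB$ killed by the vanishing-flux Assumption \ref{assumption E.2} in the multiply-connected geometry of Assumption \ref{assumption E.1}.

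\textbf{Step 2 (descent to higher integrability---the main obstacle).} Step 1 yields only $L_2$-level bounds, whereas $\cI$ and $\cD$ demand the $S_{r_i}$ and $W^1_{r_i}$ norms along the chain $r_1 = 2 \to r_2 \to r_3 \to r_4$ of \eqref{eq14.0.3}. The main difficulty is precisely here: the perfect-conductor BC creates a derivative loss at the top order, so one cannot simply differentiate spatially. Instead, at the $i$-th descent step the equation for $\partial_t^k f$ with $k \le m-4-i$ is recast as a linear Landau equation of the form \eqref{eq1.0}, whose forcing is estimated in $L_{r_i}$ by the previous level's $S_{r_{i-1}}$ and $W^1_{r_{i-1}}$ norms through Sobolev embedding in $p$ (the $\tfrac{1}{6}-\Delta r$ gain); a steady $S_{r_i}$ estimate obtained by a flatten-and-extend argument as in \cite{DGO_22, DGY_21} then upgrades to $\|\partial_t^k f\|_{S_{r_i}}$. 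In parallel, Maxwell's system combined with the div-curl estimate permitted by Assumption \ref{assumption E.1} expresses $\|\partial_t^k[\bE,\bB]\|_{W^1_{r_i}}$ through $\|\partial_t^k \rho\|_{L_{r_i}}$ and $\|\partial_t^{k+1}[\bE,\bB]\|_{L_{r_i}}$---this trade of one spatial derivative for one temporal derivative is precisely what forces the decreasing range of admissible $k$ as $i$ grows, and it is what links the $m-4-i$ pattern in \eqref{eq14.0.5}--\eqref{eq12.0.1} to the descent. The novel $W^1_3$ velocity-average estimate is what supplies $\nabla_x \rho$ in $L_3$ without spatially differentiating $f$ itself, which would otherwise reopen the boundary loss.

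\textbf{Step 3 (wrap-up).} Combining Steps 1--2 closes the displayed master inequality, hence (i). The dichotomy in (ii) reflects the regularity class in which each $\partial_t^k f$ has been obtained: for $k \le m-5$ the $S_{r_1}$ bound holds, so $(\partial_t + \tfrac{p}{p_0^{\pm}}\cdot\nabla_x)\partial_t^k f^{\pm}$ and $D^2_p \partial_t^k f$ lie in $L_2$, fulfilling Definition \ref{definition 3.3}; for $m-4 \le k \le m$ only the $L_2 W^1_2$ bound from Step 1 survives, yielding finite-energy solutions per Definition \ref{definition 27.1}. Claim (iii) follows from the $W^1_{r_i}$ bounds on $[\bE, \bB]$, the identities $\nabla_x\cdot \partial_t^k \bE = 4\pi \partial_t^k \rho$ and $\nabla_x\cdot \partial_t^k \bB = 0$ being propagated from $k = 0$ via the continuity equation $\partial_t (\partial_t^k\rho) + \nabla_x\cdot \partial_t^k \bm{j} = 0$ and the compatibility conditions \eqref{eq33.10}--\eqref{eq33.10.1}. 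Uniqueness (iv) is the short $L_2$ Gronwall argument on the difference of two solutions indicated above.
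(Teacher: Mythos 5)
Your high-level template (local existence from \cite{VML} plus a global a priori estimate plus continuation) matches the paper's Step 1 and Step 2, and the ingredients you name---$\partial_t^k$ energy identity, macroscopic estimates, div-curl, the $S_{r_i}$ descent chain---are all present. But there are genuine gaps in where the technical weight actually falls.

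First, your Step 1 glosses over the central obstacle of the whole paper. You write that the Guo--Strain macroscopic equations combined with Maxwell's system ``supply Poincar\'e-type bounds'' for $[a^{\pm},b,c,\bE,\bB]$, but the paper's Section \ref{section 4.1} is precisely devoted to explaining why this fails: the perfect-conductor condition produces the surface integral \eqref{eq1.2.103} when one tests against $\nabla_x\phi$, so the $\mathbb{T}^3$ argument of \cite{VMB_03} breaks down, and the coupling of $a^+-a^-$ with $\bE$ via the Gauss law and of $\bE$ with $\bB$ via Faraday's law creates a derivative loss at the top order that a direct energy identity cannot absorb. The resolution is the paper's main invention: (a) the continuity equation \eqref{eqE.2.50}, which gives $\|\partial_t^{m+1}a^{\pm}\|_{W^{-1}_2}\lesssim\sqrt{\cD}$ and lets one repeatedly integrate by parts in $t$ to open a ``derivative gap'' between $a^{\pm}$ and $\bE$ (Lemmas \ref{lemma E.7}--\ref{lemma E.2}); (b) a duality argument with a Helmholtz-type decomposition of $\bE$ for the non-derivative level (Lemma \ref{lemma 9.2}); and (c) the acceptance of an asymmetric derivative count in $\cD$ itself---$a^{\pm}$ only up to order $m-2$, $\bB$ to $m-3$, $\bE$ to $m-4$. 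None of this appears in your Step 1, and without it the ``master inequality'' you display does not close.

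Second, you misplace the $W^1_3$ velocity-average estimate \eqref{eq15.0}. You put it in your descent step as supplying $\nabla_x\rho\in L_3$, but the paper uses it inside the top-order energy estimate, Proposition \ref{proposition 12.2}: after integrating by parts in $t$ and $x$ in the cubic Lorentz term $\int(\partial_t^m\bE)\cdot\bm{j}^{\pm}\,\partial_t^m a^{\pm}$, one is left with $\|D_x\bm{j}^{\pm}\|_{L_2^tL_3^x}$, and the $W^1_3$ bound on the current $\bm{j}$ (not $\rho$) is what controls it. The descent proper (Section \ref{section 4.4}) is driven by the $S_p$ estimate of Theorem \ref{theorem 14.C.6} and the div-curl bounds \eqref{eq4.11}--\eqref{eq4.12}, and you also omit the third ingredient of Step~1 in the paper's proof: the lower-order weighted energy estimate \eqref{eq14.1.1.0}, which compensates the polynomial momentum loss in those $S_p$ estimates and is indispensable for the chain to start. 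As written, your Step 2 would not be able to initiate the descent.

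Parts (ii)--(iv) of your wrap-up are reasonable as far as they go, though the paper derives uniqueness from the local well-posedness of \cite{VML} inside the continuity argument rather than a separate Gronwall step; that is a minor stylistic difference.
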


\begin{remark}
    \label{remark 3.13}
This remark is to clarify the relation between Theorem \ref{theorem 3.1} and the local well-posedness (LWP) result in \cite{VML} (see Theorem 3.10 therein).
We note that 
$$
    \cI (\tau) = \cI_f (\tau), \tau \ge 0,  \quad I_0 = \cE_f (0),
$$
where 
\begin{itemize}
    \item $\cI$ and $\cI_f$ are the total instant functionals in the present work (see \eqref{eq14.0.5}) and in \cite{VML} (see the formulas $(3.31)$--$(3.32)$  therein), respectively,
    \item $I_0$   is the sum of the instant `baseline' and weighted energies at $t=0$  (see \eqref{eq14.0.12}), whereas $\cE_f (0)$ is the same object in \cite{VML} defined in $(3.37)$ therein.
\end{itemize}
Hence, the smallness condition on the initial data \eqref{eq14.0.12} coincides with that in the LWP theorem in \cite{VML} (see $(3.37)$ therein).

Thus, to prove the global existence part in Theorem \ref{theorem 3.1}, it suffices to show that for a strong solution to the RVML system on $[0, \tau]$, $\tau > 0,$ satisfying $(ii)$-$(iii)$, the estimate  \eqref{eq14.0.13} holds for all $t \in [0, \tau]$ with $C$ independent of $\tau$ provided that $\sup_{t \le \tau} \cI (t)$ and $I_0$ are sufficiently small.
\end{remark}


We note that the global estimate \eqref{eq14.0.13} gives
$$
   \sum_{k=0}^{m-4}   \int_0^{\infty} \|\partial_t^k [\bE, \bB] (t, \cdot)\|^2_{ L_2 (\Omega) } \, dt < \infty.
$$
 The next result establishes the pointwise temporal decay of the $t$-derivatives strictly below the $(m-4)$ order.
\begin{theorem}[temporal decay]
    \label{theorem 3.2}
Let $m \ge 24$, invoke the assumptions of Theorem   \ref{theorem 3.1}, and let $f$ be the global strong solution satisfying the properties $(i)$-$(iii)$ therein.
Furthermore, for any  integer $0 < n \le m-5$, we denote by $\cI_n$ and $\cD_n$ the expressions defined by \eqref{eq14.0.5}--\eqref{eq12.0.1} with $m$ replaced with $n$. 
Then,   we have  for all $t > 0$,
\begin{align}
    \label{eq132.3}
\cI_n (t) \lesssim_{n, m, \theta, \Omega, r_1, \ldots, r_4} I_0 (1+t)^{- \frac{m-4}{n} }.
\end{align}
\end{theorem}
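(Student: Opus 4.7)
The plan is to establish the polynomial decay via a Kawashima--Guo-type interpolation argument tailored to the present energy--dissipation framework. The strategy has three steps: extract an energy--dissipation inequality at each truncation level $n$; interpolate $\cI_n$ between $\cD_n$ and the uniformly bounded top-level energy $\cI_{m-4}$; and close the bootstrap by an ODE comparison.

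First, I would revisit the proof of Theorem \ref{theorem 3.1} to extract, for each integer $n \in \{1, \ldots, m-5\}$, an inequality of the form
\[
\frac{d}{dt}\cI_n(t) + c\,\cD_n(t) \le 0,
\]
with the nonlinear errors absorbed by the smallness of $\sup_{\tau \ge 0}\cI(\tau)$ provided by Theorem \ref{theorem 3.1}. Since the closure in the proof of Theorem \ref{theorem 3.1} is performed level by level in the number of time derivatives, replacing $m$ by $n$ throughout yields the desired $n$-level inequality without any genuinely new ingredient. In particular, integrating in time and using $\cI_n(0) \le I_0$ produces the uniform bound $\sup_{t \ge 0}\cI_n(t) + \int_0^{\infty}\cD_n(\tau)\,d\tau \lesssim I_0$; the choice $n = m-4$ in particular gives $\cI_{m-4}(t) \lesssim I_0$ uniformly in $t$.

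The core of the argument is the interpolation
\[
\cI_n(t) \lesssim \cD_n(t)^{\frac{m-4}{m-4+n}} \cdot \cI_{m-4}(t)^{\frac{n}{m-4+n}},
\]
which I would verify term by term according to the definitions \eqref{eq14.0.5} and \eqref{eq12.0.1}. For each building block of $\cI_n$, namely the microscopic norm $\|(1-P)\partial_t^k f\|_{L_2 W^1_2}$, the macroscopic moments $\|\partial_t^k(a,b,c)\|_{L_2}$, the electromagnetic components $\|\partial_t^k[\bE,\bB]\|_{W^1_2}$, and the weighted or $S_{r_i}$ contributions, the quantity at time-derivative index $k \le n$ admits a convex-combination bound between its counterpart in $\cD_n$ (with the shift of at most four indices dictated by the derivative-loss pattern in \eqref{eq12.0.1}) and its counterpart in $\cI_{m-4}$ at index $m-4$. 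The weight $\frac{m-4}{m-4+n}$ is precisely what interpolates the two indices back to $k$ uniformly across all building blocks. Substituting this bound into the dissipation inequality and using $\cI_{m-4}(t) \lesssim I_0$ produces the super-linear ODE
\[
\frac{d}{dt}\cI_n(t) + C\, I_0^{-n/(m-4)} \cI_n(t)^{1 + n/(m-4)} \le 0,
\]
which integrates to the claimed decay $\cI_n(t) \lesssim I_0 (1+t)^{-(m-4)/n}$.

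The main obstacle I anticipate is the rigorous verification of the interpolation for those ingredients of $\cI_n$ that have no direct counterpart in $\cD_n$. Comparing \eqref{eq14.0.5} with \eqref{eq12.0.1} shows that $\cD$ lacks the top two time derivatives of the macroscopic density $a$, the top three and four derivatives of $\bB$ and $\bE$ in $L_2$, and the $W^1$-norms of $[\bE,\bB]$ in every $L_r$ scale---precisely the derivative-loss phenomenon emphasized in the introduction. For these terms the interpolation must be performed indirectly, trading a slightly higher-order derivative that is present in $\cD_{m-4}$ (or, via the Maxwell and continuity structure, in the bounded part of $\cI_{m-4}$) against the bounded top-level norm. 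This is exactly why Theorem \ref{theorem 3.2} demands $m \ge 24$ rather than the $m \ge 20$ of Theorem \ref{theorem 3.1}: the extra four levels of temporal regularity provide the buffer needed so that every missing piece in $\cD_n$ for $n \le m-5$ has both a dissipative partner one level up and a bounded top-level partner in $\cI_{m-4}$, making the interpolation closable uniformly in $n$.
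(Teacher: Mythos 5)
The high-level strategy is right (extract a Gronwall-type relation, interpolate lower-order energies between dissipation and bounded top-level energy, then solve the resulting superlinear ODE), and you have correctly identified the decay exponent. However, the two central steps as you have written them are not achievable, and the paper resolves them in a way you have not anticipated.

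First, the inequality $\frac{d}{dt}\cI_n + c\,\cD_n \le 0$ is not available. Proposition \ref{proposition 12.2} and the a priori estimate \eqref{eq17.1} only yield an \emph{integral} inequality on intervals, namely $\|\cI_n\|_{L_\infty((s,t))} + \int_s^t \cD_n\,d\tau \lesssim I_{0,n}(s)$ (this is \eqref{eq15.8}). The paper explicitly flags this as a point of departure from \cite{GS_06}: the global estimate here is weaker than a genuine Lyapunov inequality.

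Second, the pointwise-in-time interpolation $\cI_n(t) \lesssim \cD_n(t)^{\frac{m-4}{m-4+n}}\,\cI_{m-4}(t)^{\frac{n}{m-4+n}}$ cannot hold. The terms in $\cI_n$ that are absent from $\cD_n$ (such as $\|\partial_t^k a^\pm\|$, $\|\partial_t^k\bE\|$ and $\|\partial_t^k\bB\|$ for $k$ near $n$) would have to be controlled at a fixed time $t$ by lower-index analogues in $\cD_n(t)$ and higher-index analogues in $\cI_{m-4}(t)$ evaluated at the \emph{same} instant; no Gagliardo--Nirenberg-type estimate does this for a single time-slice. Interpolation in time-derivative count is a statement about temporal Sobolev norms and only holds after integration in $\tau$. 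This is precisely why the paper interpolates in the spaces $W^k_2((s,t))$ with $t-s \ge 1$ (to keep the interpolation constant uniform), obtaining the bound \eqref{eq15.25},
$$\int_s^t \cI_n\,d\tau \le N\, I_0^{\gamma}\Bigl(\int_s^t\cD_n\,d\tau\Bigr)^{1-\gamma},\qquad \gamma=\tfrac{n}{m-4},$$
rather than any pointwise statement. Note also that the exponents differ from yours: the paper interpolates each $\partial_t^k$ ($k\le n$) between the $0$-th derivative in $\cD_n$ and the $(m-4)$-th in $\cI_{m-4}$, giving weight $\gamma=n/(m-4)$, not $n/(m-4+n)$.

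Because only an integral inequality is available, the paper does not solve an ODE for $\cI_n$ directly. It introduces $\mathcal{Z}(s)=\int_s^\infty\cI_n(\tau)\,d\tau$, which by the two ingredients above satisfies a genuine superlinear differential inequality $\mathcal{Z}'(s) \le -N\,I_0^{-\gamma/(1-\gamma)}\,\mathcal{Z}(s)^{1/(1-\gamma)}$. This gives polynomial decay of $\mathcal{Z}$, and the final pointwise decay of $\cI_n$ is then recovered by the mean-value-type step $\mathcal{Z}(s)\ge s\,\inf_{s\le\tau\le 2s}\cI_n(\tau)\gtrsim s\,\cI_n(3s)$, using the uniform-in-time bound \eqref{eq15.8} again to pass from the infimum to $\cI_n(3s)$. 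Your proposal has no analogue of this final conversion step, and it is essential once one accepts that the differential inequality for $\cI_n$ itself is not available.
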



\begin{remark}
Our scheme is designed to manage significant decay losses 
 in the momentum variable and enables seamless adaptation to the non-relativistic Vlasov-Maxwell-Landau system. 
However, the relativistic VML system proves more intricate, with the complexity of the relativistic Landau kernel introducing technical challenges, especially in establishing hypoelliptic smoothing near the spatial boundary (cf. \cite{VML}). 
\end{remark}

\begin{remark}
For the sake of convenience, in the sequel, we omit the dependence of constants on the r.h.s. of a priori estimates on the physical constants and the total number of $t$-derivatives $m$.
\end{remark}

\section{Method of the proof and the organization of the paper}
\label{section 4}

In this section, all the physical constants are set to $1$ unless stated otherwise. 

\subsection{Key difficulty}
\label{section 4.1}

It is well known that  controlling  $P f$ and $[\bE, \bB]$
    via the dissipation rate $\cD_{||}$ is crucial (see \cite{VMB_03}, \cite{GS_03}).
       Unfortunately, due to the hyperbolic nature of Maxwell's equations, the approach used to control the macroscopic densities $a^{\pm}$ and the electrostatic field $\bE$ of the Vlasov-Poisson-Landau system in Lemma 4.3 and Corollary 4.3.1 in  \cite{DGO_22}  fails entirely in the present setting. 

    \textit{Control of $a^{+}-a^{-}$.} 
    Due to the presence of the perfect conductor BC \eqref{eq1.5}, the argument developed to control $a^{+}-a^{-}$ in \cite{VMB_03} for the Vlasov-Maxwell-Boltzmann model on $\mathbb{T}^3$  cannot be applied.
   To elaborate, we recall  the macroscopic equations  (see \cite{GS_03})
    \begin{align}
\label{mac1}
  &  \partial_t c = l_{c}+ h_c, \\
  \label{mac2}
  & \partial_{x_i} c + \partial_t b_i = l_i + h_i, \\
    \label{mac3}
  &  2 (1-\delta_{i j}) S_{i j} (b)  = l_{i j} + h_{i j}, \\
      \label{mac4}
  & \partial_{x_i} [a^{\pm} + \rho \, c]  \mp \bE_i = l_{a i}^{\pm} + h_{a i}^{\pm}, \\
      \label{mac5}
  & \partial_t [a^{\pm} +  \rho \, c]  = l_a^{\pm} + h_a^{\pm},
\end{align}
where the $l$- and $h$-terms are certain weighted momentum averages of $(\partial_t + \frac{p}{p_0} \cdot \nabla_x + L_{\pm}) (1-P)$ and nonlinear terms, respectively.
   We first note the important relation 
   \begin{align}
    \label{mac6}
    \nabla_x (a^{+}-a^{-}) - 2 \bE  = (l\text{- and}\,  h\text{- terms}),
   \end{align}
   which follows from \eqref{mac4}.
   Furthermore, let $\phi$ be the solution to the Neumann problem
   \begin{align}
    \label{eq1.2.102}
        -\Delta_x \phi = a^{+}-a^{-}, \quad \frac{\partial \phi}{\partial n_x} = 0,
   \end{align}
   satisfying $\int_{\Omega} \phi \, dx = 0$.
   Then, formally taking the $L_2$ inner product of the l.h.s. of \eqref{mac6} with $\nabla_x \phi$ yields the terms
      \begin{align}
          \label{eq1.2.103}
         \|a^{+}-a^{-}\|^2_{L_2 (\Omega)} + 2 \|\nabla_x \phi\|^2_{L_2 (\Omega)},
   \end{align}
   thanks to the Gauss law $\nabla_x \cdot \bE = a^{+}-a^{-}$ and the Neumann BC.  However, due to the perfect conductor BC, 
 $\bE \times n_x = 0, \bB \cdot n_x = 0,$ such a procedure also produces the surface integral 
   \begin{align}
\label{eq1.2.103}
        \int_{\partial \Omega} (\bE \cdot n_x) \phi \, dS_x, 
   \end{align}
   which 
creates a major obstacle for the control of  $a^{+}-a^{-}$ at the top order, as explained in the next paragraph.

\textit{Control of the electromagnetic field.} It can be seen from Maxwell's equations
\begin{equation}
\begin{aligned}
     \label{eq1.2.51}
&
\begin{cases}
	  \nabla_x \times  \bE  = - \partial_t \bB, \\
	 \nabla_x \cdot  \bE = a^{+}-a^{-},\\
  (\bE \times n_x)|_{\partial \Omega} = 0,
 \end{cases} \\
& \begin{cases}
 	\nabla_x \times   \bB =  \partial_t \bE  + \int \frac{p}{p_0} (\sqrt{J}, \sqrt{J}) \cdot (1-P) f  \, dp,\\
  \nabla_x  \cdot \bB  = 0, \\
 (\bB \cdot n_x)|_{\partial \Omega} = 0,
\end{cases}
\end{aligned}
\end{equation}
and \eqref{mac6} that any direct attempt to estimate $[a^{+}-a^{-}, \bE, \bB]$  from solving both the Landau and Maxwell's equations leads to the derivative loss and non-closure.
The main mathematical achievement of this paper is to circumvent this key challenge and establish global well-posedness. 

\subsection{Top-order control of $a^{+}-a^{-}$ and $\bE$}
\label{section 4.10} 
To manage the derivative loss at the top order, we make use of the continuity equations
\begin{align}
    \label{eqE.2.50}
    \partial_t a^{\pm} + \nabla_x \cdot \bm{j}^{\pm} = 0, \quad  \bm{j}^{\pm} := \int_{\bR^3} \frac{p}{p_0}  f^{\pm}  \sqrt{J}\, dp,
\end{align}
and the  macro-micro decomposition to observe the bound 
 \begin{align}
    \label{eqE.2.15.1}
    &\|\partial_t^{m+1} a^\pm\|_{ W^{-1}_2 (\Omega) } \lesssim  \|\partial_{t}^{m} \bm{j}^{\pm}\|_{ L_2 (\Omega)}  \\
    &\lesssim \|\partial_t^m b\|_{ L_2 (\Omega) } + \|(1-P) \partial_t^m f\|_{  L_2 (\Omega \times \bR^3)} \lesssim \sqrt{\cD}.\notag
\end{align}
Despite the weak control of the $W^{-1}_2$ norm, the maximum derivative count in such a fundamental bound enables us to perform (repeated) integration by parts in time, which creates a sufficient gap in the number of temporal derivatives, compensating for the derivative loss and allowing us to close the $[a^{\pm}, \bE]$ estimate with lower derivative counts. 
This idea is also helpful in circumventing the derivative loss in the lower-order estimate of   $[a^{+}-a^{-}, \bE, \bB]$ via $\cD_{||}$, where it is used in combination with a duality argument and a weighted trace estimate (see Section \ref{section 4.11}).  

In the rest of this section, we demonstrate how the continuity equation \eqref{eqE.2.50} combined with a novel $W^1_3 (\Omega)$ velocity averaging estimate enables us to establish the top-order  energy estimate of $[a^{+}-a^{-}, \bE]$. For the remainder of Section \ref{section 4},  we assume that $0 \le s < t$ are arbitrary, and $f$ is a strong solution to the RVML system on $[s, t]$ satisfying properties $(ii)$--$(iii)$ in Theorem \ref{theorem 3.1} with a sufficiently small amplitude. Due to the aforementioned derivative loss,  the most difficult  terms in the energy estimate are the `cubic Lorentz' integrals, such as
\begin{align*}
    \int_s^t \int_{\Omega} (\partial^m_t \bE) \cdot \bm{j}^{\pm} \, (\partial_t^m a^{\pm}) \, dx d\tau,
\end{align*}
where $\bm{j}^{\pm}$ are defined in \eqref{eqE.2.50}. 
To start, a formal integration by parts   in $t$ gives
$$
    -\int_s^{t} \int_{\Omega } (\partial_{t}^{m-1} \bE) \cdot  \bm{j}^{\pm}  \,  (\partial_{t}^{m+1} a^{\pm}) \, dx d\tau + \text{good terms}.
$$
By using the continuity equation \eqref{eqE.2.50}, we may replace  $\partial_t^{m+1} a^{\pm}$ with $- \nabla_x \cdot \partial_t^{m} \bm{j}^{\pm}$.
Furthermore, due to the SRBC,
\begin{equation}
        \label{eqE.2.16}
       \bm{j}^{\pm} \cdot n_x = 0 \, \, \text{on} \, \, \partial \Omega,
\end{equation}
which,  combined with the fact that $(\partial_t^{m-1} \bE) \times n_x = 0$, gives
\begin{align}
        \label{eq12.2.19}
    (\partial_t^{m - 1} \bE) \cdot    \bm{j}^{\pm} = 0\, \, \text{on} \, \, \partial \Omega.
\end{align}
By integrating by parts in $x$, we reduce the problem to estimating  the integral
\begin{align*}
 & \mathfrak{I}: = \int_s^{t} \int_{\Omega } (\partial_{t}^{m-1} \bE_i) \,  (\partial_{x_l}  \bm{j}^{\pm}_i)  \,   (\partial_t^{m} \bm{j}^{\pm}_l)  \, dx d\tau.
\end{align*}
Applying   H\"older's inequality,  we obtain
\begin{align}
    \label{eq2.2.100}
  &  |\mathfrak{I}|  \le \|\partial_t^{m-1} \bE\|_{ L_{\infty} ((s, t)) L_6 (\Omega) }
  \|D_x \bm{j}^{\pm}\|_{ L_2 ((s, t)) L_3 (\Omega) }  \|\partial_{t}^{m} \bm{j}^{\pm}\|_{ L_2 ((s, t) \times \Omega)}.
\end{align}
Using  the  $W^{1}_2$ div-curl estimate (cf. \eqref{eq1.2.41}) and the Sobolev embedding $W^1_2  \subset L_6$, we conclude that the first factor on the r.h.s. is bounded by
$$\|\cI_{||}\|^{1/2}_{L_{\infty} ((s, t))}$$
(see \eqref{eq14.0.10}).
Furthermore, by \eqref{eqE.2.15.1},  the third factor in \eqref{eq2.2.100} is dominated by $\big(\int_s^t \cD \,  d\tau\big)^{1/2}$ (see \eqref{eq12.0.1}).
Then, for the closure, we need
\vskip0.1in
\noindent\fbox{\begin{minipage}{4.9in}
\paragraph{
\textbf{Gradient estimate of a velocity average} (see Proposition \ref{proposition 4.0}) }
\begin{align}
    \label{eq15.0}
       \int_s^t \|D_{x}  \bm{j}^{\pm}\|^2_{  L_3 (\Omega) } \,d\tau \lesssim
    \int_s^t \cD \,  d\tau.
\end{align}
\end{minipage}}
\vskip.1in

Thus, we reduced the task of estimating the top-order cubic Lorentz terms to establishing higher spatial regularity of lower-order terms.

\textit{Gradient estimate of a velocity average.} We list the key highlights of the proof of the crucial estimate \eqref{eq15.0}.

\begin{itemize}
    \item By using the mirror extension argument in \cite{VML}, near the boundary, one can locally extend the solution $f^{\pm}$ across the boundary and, after a change of variables, obtain a non-relativistic kinetic Fokker-Planck equation on the whole space
\begin{align}
                \label{eq1.2.11}
   &    v \cdot \nabla_y  \mathfrak{F} (t, y, v) - \nabla_v \cdot (\mathfrak{A} (t, y, v)  \nabla_v\mathfrak{F} (t, y, v))\\
   & =   \text{good terms} + \nabla_v \cdot (\mathfrak{B} (y, v)   \mathfrak{F} (t, y, v)). \notag
\end{align}

    \item The resulting equation has a ``geometric drift term" $\nabla_v \cdot (\mathfrak{B}   \mathfrak{F})$ with a discontinuous vector-valued coefficient $\mathfrak{B}$, which is a sum of terms related to the curvature of $\partial \Omega$.  \\

    \item Our key observation is that the discontinuity stems from the oddness of certain terms in $\mathfrak{B}$ in the normal direction $y_3$.\\

    \item Despite the discontinuity, it is well known that any odd function is in $W^{(1/p)-}_{p, \text{loc}}$ (see Lemma \ref{lemma F.2}), which provides a slight gain in spatial regularity of $\mathfrak{F}$ via a differentiation argument. It is quite striking that such a modest increase in regularity is crucial in achieving the desired $W^1_3$ estimate of the current densities $\bm{j}^{\pm}$. \\

    \item Applying formally $\nabla_x^{(1/3)-}$ to  \eqref{eq1.2.11} and using the steady $S_3^N (\bR^6)$ estimate \eqref{eq14.C.20}, we gain $\nabla_x^{2/3}$ via  hypoelliptic smoothing in addition to $\nabla_x^{(1/3)-}$, which  is due to the $W^{(1/3)-}_{3, \text{loc}}$ regularity of the geometric term. Thus,  we establish  $f^{\pm} \in L_2^t L_3 (\bR^3_p) W^{1-}_{3} (\Omega)$, which  is still short of the bound in \eqref{eq15.0}.\\

    \item To overcome the small gap in regularity, we apply a variant of the DiPerna-Lions-Meyer  $L_p$ velocity averaging lemma \cite{DLM_91}, which
    allows us to gain additional $\nabla_x^{(1/9)-}$ smoothness for the velocity average and deduce \eqref{eq15.0}.
\end{itemize}

\subsection{Lower-order control of $a^{+}-a^{-}$ and $\bE$} 
\label{section 4.11}
We delineate the key components of a multi-step argument,  where the derivative loss \eqref{eq1.2.51}  is managed via the continuity equation \eqref{eqE.2.50}. For the sake of simplicity,  terms involving small constants and quantities such as $[a^{+}+a^{-}, b, c, \cD_{||}]$ will be displayed as ``good terms". For the details of the estimate of $[a^{+}+a^{-}, b, c]$  in terms of $\cD_{||}$, see Section \ref{section 4.2}.
\begin{itemize}

    \item Step 1: \textit{creating a derivative gap between $a^{\pm}$ and $\bE$}. First, we  integrate by parts  twice in $t$ and use the continuity equation \eqref{eqE.2.50} to create a `\textit{derivative gap}' between $a^{\pm}$ and $\bE$ to decouple $a^{\pm}$ from $\bE$:
    \begin{align}
        \label{eq1.2.40}
                \int_s^{t} \|\partial_t^3 a^{\pm}\|^2_{L_2 (\Omega) } \, d\tau
        \lesssim \varepsilon_a \int_s^{t} \|\partial_t \bE\|^2_{L_2 (\Omega)} \, d\tau + \text{good terms},
        \end{align}
        where $\varepsilon_a \in (0, 1)$. \\

\item Step 2: \textit{electric field estimate}.  Applying the div-curl inequality  gives
    \begin{align}
        \label{eq1.2.41}
     \int_s^{t} \|\partial_t \bE\|^2_{L_2 (\Omega)} \, d\tau  \lesssim   \int_s^{t}  \|\partial_t^2 \bB\|^2_{L_2 (\Omega)}  \, d\tau
    +  \int_s^{t} \|\partial_t a^{\pm}\|^2_{L_2 (\Omega)} \, d\tau.
        \end{align}
        For additional details on the div-curl estimates, see Section \ref{section 4.4} and \eqref{eq4.11}--\eqref{eq4.12}. \\

\item Step 3: \textit{magnetic field estimate}. We derive an estimate of $\partial_t^2 \bB$ from the Landau equation via a duality argument.
In particular, due to the vanishing flux assumption \eqref{eqE.2.52}, there exists  a unique solution $w \in W^{1}_2 (\Omega)$ to
\begin{align*}
    \nabla_x \times w = \bB, \quad \nabla_x \cdot w = 0, \quad (w \times n_x)|_{\partial \Omega} = 0,
\end{align*}
which satisfies 
\begin{align}
    \label{eq1.2.50}
    \|w\|_{W^1_2 (\Omega) } \lesssim \|\bB\|_{L_2 (\Omega)}.
\end{align}
Then, integrating by parts in $x$ and using the Amper\'e-Maxwell law (see \eqref{eq36.16}), we get
\begin{align}
  & \int_s^t  \int_{\Omega} \|\partial_t^2 \bB\|^2_{L_2 (\Omega)} \, dx d\tau =   \int_s^t  \int_{\Omega} \partial_t^2 \bB \cdot \nabla_x \times \partial_t^2 w \, dx d\tau \notag\\
\label{eq1.2.100}
  & = \int_s^t \int_{\Omega} (\partial_t^3 \bE) \cdot (\partial_t^2 w) \, dx d\tau+  \int_s^t \int_{\Omega}  (\partial_t^2 \bm{j}) \cdot (\partial_t^2 w) \, dx d\tau.
\end{align}
Since the $L_2 (\Omega)$ norm of $\partial_t^2 \bm{j}$ is estimated via $\cD_{||}$ (see \eqref{eqE.1.18}--\eqref{eqE.1.19}), we only need to handle the first term in \eqref{eq1.2.100}. To this end, we rewrite the Landau equation for $f^{+}$ as
\begin{align*}
   \frac{p}{p_0} \cdot  \bE \sqrt{J} = \partial_t f^{+}  +   \frac{p}{p_0} \cdot  \nabla_x f^{+} + L_{+} f + \text{nonlinear terms},
\end{align*}
 differentiate it three times in $t$,     test the resulting identity with $\partial_t^2 w \sqrt{J}$, and use the estimate of the test function \eqref{eq1.2.50}. This approach, combined with the identity \eqref{eq1.2.100}, yields
\begin{align}
        \label{eq1.2.42}
  & \int_s^t \|\partial_t^2 \bB\|^2_{L_2 (\Omega)} \, d\tau \lesssim  \int_s^t \|\partial_t^3 a^{+}\|^2_{  L_2 (\Omega) } \, d\tau  \\
  & +  \int_s^t  \bigg\| \frac{|p \cdot n_x|}{p_0}  J^{1/4} \partial^{3}_t f^{+}\bigg\|^2_{  L_2 (\gamma_{+}) }   \, d\tau +  \text{good terms}.\notag
\end{align}


    \item  Step 4: \textit{weighted trace estimate}. 
   To estimate the integral over $\gamma_{+}$ on the r.h.s. of \eqref{eq1.2.42},  we test the Landau equation with a multiplier $\psi$ satisfying 
    $$
        \psi (t, x, p) = \frac{(p \cdot n_x) 1_{ p \cdot n_x > 0 }}{p_0} \sqrt{J} f^{+}   \,  \text{on} \, \partial \Omega
    $$
   and integrate by parts in $t$ in the term containing  $\frac{p}{p_0} \cdot \bE \sqrt{J}$ (cf. \eqref{eq1.2.40}). We obtain 
    \begin{align}
                \label{eq1.2.43}
      & \int_s^t  \bigg\| \frac{|p \cdot n_x|}{p_0}  J^{1/4} \partial^{3}_t f^{+}\bigg\|^2_{  L_2 (\gamma_{+}) }   \, d\tau  \\
        & \lesssim \int_s^t  \|\partial_t^{3} a^{\pm}\|^2_{L_2 (\Omega)} \, d\tau + \varepsilon_a \int_s^{t} \|\partial_t \bE\|^2_{L_2 (\Omega)} \, d\tau
         + \text{good terms}. \notag
      \end{align}
By following  this reasoning, we derive the desired estimates of all the derivative terms $\partial_t^k a^{\pm}, 1 \le k \le m-2$,
    $\partial_t^k \bE, 1 \le k \le m-4$,  and all the magnetic field  terms $\partial_t^k \bB, 0 \le k \le m-3$.\\

\item Step 5: \textit{Estimate of the non-derivative $a^{+}-a^{-}$ and $\bE$ terms.}
Unfortunately, the argument used in Steps 1--4  does not apply here because the aforementioned derivative gap (cf. \eqref{eq1.2.40}) cannot be created for the non-derivative terms $a^{\pm}$ and $\bE$. 
Instead, we refine the duality argument outlined in Section \ref{section 4.1} (see \eqref{mac6}--\eqref{eq1.2.103}). The key technical ingredient is a  Helmholtz-type decomposition of the electric field, which enables us to avoid the problematic surface integral \eqref{eq1.2.103} at the cost of a derivative loss. 
This approach yields 
\begin{align}
  \label{eq1.2.45}
 &   \int_s^t \big(\|a^{+} - a^{-}\|^2_{ L_2 (\Omega) } + \|\bE\|^2_{ L_2 (\Omega) }\big) \, d\tau \\
 &   \lesssim  \int_s^t \|\partial_t \bB\|^2_{ L_2 (\Omega) } \, d\tau
    + \int_s^t \|a^{+} + a^{-}\|^2_{ L_2 (\Omega) } \, d\tau + \text{good terms}. \notag
\end{align}
\end{itemize}

   Gathering \eqref{eq1.2.40}--\eqref{eq1.2.45}, we obtain 
    \vskip.1in

    \noindent\fbox{\begin{minipage}{4.9in}
\paragraph{
\textbf{Lower-order derivative control  of $[a^{\pm}, \bE]$} (see Proposition \ref{proposition E.4})
}
\begin{align}
    \label{eq1.2.46}
   &   \int_s^t   \|\text{lower-order} \,  [a^{\pm}, \bE, \bB]\|^2_{ L_2 (\Omega) }\, d\tau:=\sum_{k=0}^{m-2} \int_s^t \|\partial_t^k a^{\pm}\|^2_{ L_2 (\Omega) } \, d\tau\\
     &    + \sum_{k=0}^{m-3} \int_s^t \| \partial_t^k  \bB\|^2_{ L_2 (\Omega) } \, d\tau   + \sum_{k=0}^{m-4} \int_s^t \| \partial_t^k  \bE\|^2_{ L_2 (\Omega) } \, d\tau
         \lesssim \text{good terms}. \notag
\end{align}
\end{minipage}}

\subsection{Positivity of $L$ ($a, b, c$ estimates)}
                    \label{section 4.2}

It is well known that the coercivity estimate of the linearized collision operator $L$  plays a key role in establishing the asymptotic stability near a  Maxwellian.  


\vskip.1in

\noindent\fbox{\begin{minipage}{5.05in}
\paragraph{\textbf{Positivity estimate of $L$} (see  Proposition \ref{proposition 12.1}) }
There exists a small constant $\delta_0$ independent of $s$ and $t$ such that for any $\delta \in (0, \delta_0)$, we have
\begin{align}
    \label{eq122}
 & \sum_{k=0}^m \int_s^{t} \int_{  \Omega }  \langle L (\partial_t^k f), (\partial_t^k f)\rangle \,  dx   d\tau   + \delta \, (\text{good terms})\\
 & \ge \delta \int_s^t  \bigg(\sum_{k=0}^m \|\partial_t^k [b, c]\|^2_{ L_2 (\Omega)  } + \|\text{lower-order} \,  [a^{\pm}, \bE, \bB]\|^2_{ L_2 (\Omega) }\bigg) \, d\tau. \notag 
\end{align}
\end{minipage}}

\vskip.1in

By using the positivity estimate of $L$ \eqref{eq122} combined with the argument of Section \ref{section 4.10} and the $W^1_3$ velocity averaging estimate \eqref{eq15.0}, we establish the crucial energy estimate.

\vskip.1in

\noindent\fbox{\begin{minipage}{5.05in}
\paragraph{
\textbf{Top-order energy estimate} (see Proposition \ref{proposition 12.2})
}
    \begin{align}
            \label{eq12.2.1.0}
& \sum_{k=0}^m \bigg(\|\partial_t^k f (t, \cdot)\|^2_{L_2 (\Omega \times \bR^3)} - \|\partial_t^k f (s, \cdot)\|^2_{L_2 (\Omega \times \bR^3)} \\
& + \|\partial_t^k [\bE, \bB] (t, \cdot)\|^2_{ L_2 (\Omega)  } - \|\partial_t^k [\bE, \bB] (s, \cdot)\|^2_{ L_2 (\Omega)  }  \notag \\
&+  \int_s^t  \| (1-P) \partial_t^k  f\|^2_{ L_2 (\Omega) W^1_2 (\bR^3)}\, d\tau\bigg) \notag\\
& +   \int_s^t  \bigg(\sum_{k=0}^m \|\partial_t^k [b, c]\|^2_{ L_2 (\Omega)  } + \|\text{lower-order} \,  [a^{\pm}, \bE, \bB]\|^2_{ L_2 (\Omega)  }\bigg) \, d\tau  \notag\\
&\lesssim  
\text{good terms}.  \notag
\end{align}
\end{minipage}}
\vskip.1in

 The positivity estimate \eqref{eq122} is proved by estimating $P f$ and $[\bE, \bB]$ via $\cD_{||}$. 
 In addition to our major effort to control the key coupling of $[a^{+}-a^{-}, \bE]$, there are several other novelties in estimating of the rest of $P f$ and the electromagnetic field $[\bE, \bB]$  (see also Section \ref{section 4.4}).

\textit{Estimate of $b$, $c$, and $a^{+}+a^{-}$.}
By using a duality argument, we obtain the following bounds.

\vskip.1in

\noindent\fbox{\begin{minipage}{4.9in}
\paragraph{
\textbf{Preliminary estimates of $b$ and $c$}  (see Lemmas \ref{lemma 3.1} and \ref{lemma 7.1})
}
   There exist
   $\varepsilon_b \in (0, 1)$ independent of $s$ and $t$ such that 
\begin{align}
\label{eq3.1.0.1}
 &   \sum_{k = 0}^m \int_s^{t} \|\partial_{t}^k  b\|^2_{ L_2 (\Omega) } \, d\tau   \lesssim  \varepsilon_b \sum_{k = 0}^m \int_s^{t} \|\partial_{t}^k   c\|^2_{ L_2 (\Omega) } \, d\tau  \\
&  +   \varepsilon_b \sum_{k = 0}^m \int_s^{t} \|\partial_{t}^k   (a^{+} + a^{-})\|^2_{ L_2 (\Omega) } \, d\tau + \text{good terms},\notag \\
\label{eq3.1.0.10}
&   \sum_{k = 0}^m \int_s^{t} \|\partial_{t}^k  c\|^2_{ L_2 (\Omega) } \, d\tau  \lesssim  \sum_{k = 0}^m \int_s^{t} \|\partial_{t}^k  b\|^2_{ L_2 (\Omega) } \, d\tau +  \text{good terms}.
 \end{align}
\end{minipage}}

\vskip.1in

 Although the coupling between $[b, c]$  and $a^{+}+a^{-}$ is expected from the macroscopic equations \eqref{mac1}--\eqref{mac5}, it could lead to  non-closure if $a^{+}+a^{-}$ is coupled with $\bE$, for which the derivative loss is anticipated (see \eqref{eq1.2.51} and Section \ref{section 4.1}). 
Fortunately, 
by adding the macroscopic equations for $a^{\pm}$ (see \eqref{mac4}), one can cancel the problematic linear electric field term $\displaystyle \frac{p}{p_0}  \cdot \mathbf{E} \sqrt{J}$, thereby decoupling $a^{+}+a^{-}$  from $\bE$.
Interestingly,  this cancellation of the linear electric field term, given by 
$\displaystyle\frac{e_{\pm}  }{k_b T} \frac{p}{p_0^{\pm}}  \cdot \mathbf{E} \sqrt{J^{\pm}}$, holds for general physical constants, provided that the global neutrality condition \eqref{eq0} is imposed.
To justify the cancellation rigorously, we test the Landau equations with a special test function $\psi$ and obtain the following:

\vskip.1in

\noindent\fbox{\begin{minipage}{4.9in}
\paragraph{
\textbf{Estimate of $a^{+}+a^{-}$} (see Lemma \ref{lemma 3.2})
}
\begin{align}
	\label{eq3.2.0.1}
&    \sum_{k = 0}^m \int_s^{t}  \|\partial_{t}^k (a^{+} + a^{-})\|^2_{ L_2 (\Omega) } \, d\tau\\
& \lesssim  \sum_{k = 0}^m \int_s^{t} \|\partial_{t}^k [b, c]\|_{ L_2 (\Omega) }^2 \, d\tau + \text{good terms}. \notag
\end{align}
\end{minipage}}

\vskip.1in


Due to the presence of a small constant in \eqref{eq3.1.0.1}, we can decouple 
$b$, $c$, and $a^{+}+a^{-}$ from each other.

\vskip.1in

\noindent\fbox{\begin{minipage}{4.9in}
\paragraph{
\textbf{Final estimate of $b$ and $c$ and $a^{+}+a^{-}$} (cf. Lemmas \ref{lemma 3.3} and \ref{lemma 7.1})
}
\begin{align}
\label{eq3.3.0.1}
    \sum_{k = 0}^m \int_s^{t} \|\partial_{t}^k  [b, c, a^{+}+a^{-}]\|^2_{ L_2 (\Omega) } \, d\tau
    \lesssim \int_s^t \cD_{||} \, d\tau +  \text{good terms}.
\end{align}
\end{minipage}}

\vskip.1in




\vskip.1in

\subsection{Div-curl and $S_p$ estimates. Descent argument}
\label{section 4.4}

\textit{Div-curl estimates.} The crucial control of the $L_2^t L_r^x$ and $L_{\infty}^t L_r^x$ norms of the $t$-derivatives of $[\bE, \bB]$ is established by rewriting Maxwell's equations as two div-curl systems \eqref{eq1.2.51}, differentiating   these  equations with respect to $t$, and using a variant of the  $W^1_r (\Omega)$  div-curl estimate.

We emphasize that in the  $W^1_r$ div-curl estimate in a bounded domain, there is an additional $0$-order term, namely the $L_r$ norm of a solution to a div-curl system (see \cite{AS_13}). This particular term serves as a fundamental obstacle in establishing the temporal decay estimate for the electromagnetic field. In a general domain, the presence of such a norm is inevitable due to the existence of nontrivial divergence-free and curl-free vector fields  $\bm{u}$ satisfying $\bm{u} \times n_x = 0$ or $\bm{u} \cdot n_x = 0$  (see, for example,  Section 9  in \cite{CDG_02}) unless specific geometric conditions are imposed on both the domain and the initial data. 
To remove this challenging $0$-order term, we enforce Assumptions \ref{assumption E.1} and \ref{assumption E.2}.
We note the last condition is preserved in time, that is, for any $t > 0$, we have
\begin{align}
            \label{eq4.10}
        \int_{\Sigma_j} \bB (t, x) \cdot n_x \, dS_x = 0, \, \, \forall j = 1, \ldots, L,
\end{align}
which is proved by integrating Faraday's law \eqref{eq36.16.1} over $\Sigma_j$ and using the Stokes theorem combined with the boundary condition $\bE \times n_x = 0$.

Thus, under the aforementioned geometric conditions, for any $r \in (1, \infty)$, thanks to the results of \cite{AS_13} (see Corollaries 3.2 and 3.4 therein), we have
\begin{align}
    \label{eq4.11}
  &  \|\bE\|_{W^1_r (\Omega) } \lesssim_{r, \Omega}   \|\partial_t \bB\|_{ L_r (\Omega) } + \|\rho\|_{ L_r (\Omega) }, \\
      \label{eq4.12}
  & \|\bB\|_{W^1_r (\Omega) }  \lesssim_{r, \Omega}   \|\partial_t \bE\|_{ L_r (\Omega) } +  \|\bm{j}\|_{ L_r (\Omega) }.
\end{align}

\textit{$S_p$ estimate and lower-order energy.} 

For the closure of our argument, we need a higher regularity control of the lower-order $t$-derivatives of the velocity gradient, which is derived via the $S_p$ estimate in a domain. For the relativistic linear Landau equation, such a bound was established in \cite{VML} by using the mirror-extension method from \cite{GHJO_20}. 
Furthermore, the aforementioned $S_p$ estimate requires control of a weighted $L_2$-norm due to the presence of the spatial boundary and the relativistic transport operator $\frac{p}{p_0} \cdot \nabla_x$. See the details in \eqref{eq14.C.6.6} and in \cite{VML}. To handle such a momentum loss, we establish the following weighted estimate of the lower-order energy: 
    \begin{align}
        \label{eq14.1.1.0}
     &  \sum_{k = 0}^{m-4} \bigg(\|\partial_t^k f \|^2_{  L_{\infty} ((s, t))  L_{2, \theta/2^k} (\Omega \times \bR^3) }
     + \int_s^t \|\partial_t^k f\|^2_{ L_2 (\Omega)
 W^1_{2, \theta/2^k} (\bR^3) } \, d\tau \bigg) \\
   &  \lesssim  \sum_{k=0}^{m-4} \bigg(\|\partial_t^k f (s, \cdot)\|^2_{   L_{2, \theta/2^k} (\Omega \times \bR^3) } +  \int_s^t  \|\partial_t^k f \|^2_{   L_{2} (\Omega \times \bR^3) }    \, d\tau  \notag \\ 
   & +  \int_s^t \|\partial_t^k \bE\|^2_{ L_2 (\Omega) } \, d\tau\bigg)  +  \text{good terms}. \notag
           \end{align}

\textit{Descent argument.} Due to the $t$-derivative loss in the elliptic estimates \eqref{eq4.11}--\eqref{eq4.12},  it becomes essential to incorporate the aforementioned $S_p$ and div-curl bounds into a descent argument
(see the details in Proposition 6.3 in \cite{VML}). To illustrate our scheme,  we consider a  simplified version of the Landau equations \eqref{eq36.13}--\eqref{eq36.14}
\begin{align}
        \label{eq1.3.0}
         (\frac{p}{p_0} \cdot \nabla_x) f^{\pm} - \Delta_p f^{\pm} = - \partial_t f^{\pm} \pm   \frac{p}{p_0}  \cdot \mathbf{E} \sqrt{J},
\end{align}
where $\bE$ satisfies  Maxwell's  equations  \eqref{eq1.2.51}.
Here are the main highlights of the proof.
\begin{itemize}
    \item Given the $L_2^{t, x}$ and $L^{t, x, p}_2$-control of  the $t$-derivatives of $[f^{\pm}, \bE, \bB]$ up to the order $m-4$  as in the lower-order energy estimate \eqref{eq14.1.1.0}, we apply the steady $S_2$ estimate in \eqref{eq14.C.6.6} to Eq. \eqref{eq1.3.0} to gain the $L_2^t S_2$ control of $\partial_t^k f, k \le m-5$.\\
    \item We descend to the $(m-6)$-th level 
    and use the $W^1_2$ div-curl estimates \eqref{eq4.11}--\eqref{eq4.12} and the Sobolev embedding $W^1_2 \subset L_6$ to control the $L_2^t L_6^{x}$ norm of $\partial_t^k [\bE, \bB],  k \le m-5$.
    We then apply the steady $S_{r_2}$ estimate in \eqref{eq14.C.6.6}  to  Eq. \eqref{eq1.3.0}, where the constant $r_2$ (see \eqref{eq14.0.3}) is determined by the embedding $S_2 \subset L_{r_2}$ (see \eqref{eq14.C.6.7}).\\
    \item This process is repeated until we achieve    the $L_2^t L_{\infty}$ control of $f^{\pm}$ and $\bE, \bB$.
\end{itemize}
By using the above reasoning, we establish estimates of certain $L_{2}^t X$ and $L_{\infty}^t X$-norms of the lower-order $t$-derivatives of the solution provided that the weight parameter $\theta$ is sufficiently large.

\vskip.1in

\noindent\fbox{\begin{minipage}{4.9in}
\paragraph{
\textbf{Higher regularity norms in the dissipation functional} (see \eqref{eq17.9}) }
  \begin{align}
     \label{eq14.2.20}
   &    \int_s^t  |\text{$S_{r_i}$-norms in $\cD$ up to the order $m-4$}|^2  \, d\tau \\
      & + \int_s^t |\text{$W^1_{r_i}$-norms of $[\bE, \bB]$ in $\cD$ up to the order $m-4$}|^2 \, d\tau \notag  \\
  &   \lesssim  \text{(lower-order energy in \eqref{eq14.1.1.0})}  \notag  \\
  & +\sum_{k=0}^{m-4} \int_s^t \|\partial_t^k [\bE, \bB]\|^2_{ L_2 (\Omega) } \, d\tau+ \text{good terms}.  \notag 
    \end{align}
\end{minipage}}
\noindent\fbox{\begin{minipage}{4.9in}
\paragraph{
\textbf{Higher regularity norms in the instant functional}  (see \eqref{eq17.8}) }
  \begin{align}
    \label{eq14.2.1}
   &   \|\text{$S_{r_i}$-norms in $\cI$ up to the order $m-4$}\|^2_{L_{\infty} ((s, t)) } \\
   &+ \|\text{$W^1_{r_i}$-norms of $[\bE, \bB]$ in $\cI$ up to the order $m-4$}\|^2_{ L_{\infty} ((s, t)) } \notag \\
   &  \lesssim
     \text{(lower-order energy in \eqref{eq14.1.1.0})}  \notag \\
    & +\sum_{k=0}^{m-4} \|\partial_t^k [\bE, \bB]\|^2_{ L_{\infty} ((s, t)) L_2 (\Omega) }  + \text{good terms}, \notag \\
    \label{eq14.2.21}
    & \sum_{k=0}^{m-1} \|\partial_t^k [\bE, \bB] (\tau, \cdot)\|^2_{ W^1_2 (\Omega) } \lesssim \cI_{||} (\tau).
    \end{align}
\end{minipage}}

We note that the \eqref{eq14.2.21} is an immediate consequence of the div-curl estimates \eqref{eq4.11}--\eqref{eq4.12}.





 \subsection{Organization of the paper}
\begin{itemize}

 \item  In  Sections \ref{section 7}--\ref{section 9}, we establish the $L_2$-estimates of $a^{\pm}$, $b$, $c$, and $\bE, \bB$.

\item The gradient estimate of  velocity averages $\bm{j}^{\pm}$ (see \eqref{eq15.0}) is proved in Section \ref{section 10}.

 \item The proof of the positivity estimate of $L$ is given in Section \ref{section 15}.

 \item In Section \ref{section 12}, we verify   the top-order energy estimate.

 \item Finally, in Section \ref{section 14}, we prove the main results,  Theorems \ref{theorem 3.1} and  \ref{theorem 3.2}.

 \item We collect auxiliary results in Appendices \ref{appendix B}--\ref{appendix I}.
\end{itemize}

\section{Estimate of $b$}
    \label{section 7}
In this section, we rigorously state and justify the estimates \eqref{eq3.1.0.1} and \eqref{eq3.2.0.1}.

  \begin{assumption}
    \label{assumption 3.5}
        Invoke the assumptions of Theorem \ref{theorem 3.1}. Let $[f, \bE, \bB]$ be a strong solution  to the RVML system on $[s, t]$  satisfying  the assertions $(ii)$--$(iii)$  in Theorem \ref{theorem 3.1} such that
   \begin{align}
    \label{eq3.5.1}
     \cI (\tau) \le \varepsilon, \, \tau \in [s, t],   \quad
    \int_s^{t} \cD (\tau) \, d\tau < \infty,
    \end{align}
    where $\varepsilon \in (0, 1)$ is a small constant which we will choose later.
    \end{assumption}

Let $\eta$ be a function satisfying 
    \begin{align}
     \label{eq3.1.2}
   |\eta (\tau)| \lesssim_{\Omega,  \theta,  r_3, r_4} \cI_{||} (\tau), \, \tau \in [s, t].
   \end{align}
 The precise expression of $\eta$ is not important in our argument, and it might change from line to line.

\begin{lemma}[preliminary estimate of $b$, cf. \eqref{eq3.1.0.1}]
\label{lemma 3.1}
  Under Assumption \ref{assumption 3.5}, there exists a sufficiently small constant $\varepsilon_b =\varepsilon_b (\Omega,  r_3, r_4, \theta) > 0$  such that, one has
\begin{align}
\label{eq3.1.1}
 &   \sum_{k = 0}^m \int_s^{t} \|\partial_t^k  b\|^2_{ L_2 (\Omega) } \, d\tau
    \lesssim_{\Omega,   r_3, r_4, \theta} (\eta (t)  - \eta (s))   \\
    & +  \varepsilon_b \sum_{k = 0}^m \big(\int_s^{t} \|\partial_t^k   c\|^2_{ L_2 (\Omega) } \, d\tau
    + \notag  \int_s^{t} \|\partial_t^k   (\sqrt{M_{+}} a^{+} + \sqrt{M_{-}} a^{-})\|^2_{ L_2 (\Omega) } \, d\tau\big) \\
    &
 + \varepsilon_b^{-1} \big(\int_s^t \cD_{||} \, d\tau   + \varepsilon \int_s^t \cD \, d\tau\big). \notag
 \end{align}
\end{lemma}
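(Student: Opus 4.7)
The plan is to extract $\partial_t^k b$ from the Landau equations \eqref{eq36.13}--\eqref{eq36.14} differentiated $k$ times in $t$ by testing them against a multiplier $\Psi^{\pm,b}_k$ whose velocity moment isolates the $b$-component of the macroscopic projection $P\partial_t^k f$. Guided by the macroscopic relations \eqref{mac2}--\eqref{mac3}, I set
\[
\Psi^{\pm,b}_k(t,x,p) = \bigl[p_i p_j - \tfrac{|p|^2}{3}\delta_{ij}\bigr]\sqrt{J^{\pm}}\,\partial_{x_i}\phi^j_k(t,x),
\]
where for each $t$ and each $j=1,2,3$, the scalar $\phi^j_k(t,\cdot)$ solves an elliptic boundary value problem (e.g., a Poisson problem with homogeneous Neumann data) with right-hand side $\partial_t^k b_j(t,\cdot)$ and satisfies the elliptic regularity bound $\|\phi^j_k\|_{W^2_2(\Omega)} \lesssim \|\partial_t^k b\|_{L_2(\Omega)}$. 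The boundary condition on $\phi^j_k$ is chosen precisely so that $\Psi^{\pm,b}_k\rvert_{\partial\Omega}$ is invariant under $p \mapsto R_xp$, ensuring that all boundary integrals over $\gamma_\pm$ arising from integration by parts against the relativistic transport term $\frac{p}{p_0^\pm}\cdot\nabla_x(\partial_t^kf^\pm)$ cancel under the SRBC.

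Substituting the macro--micro decomposition $\partial_t^kf^\pm = P\partial_t^kf^\pm + (1-P)\partial_t^kf^\pm$ into the weak formulation and integrating by parts in both $x$ and $t$, the dominant contribution of the transport term acting on $P\partial_t^k f^\pm$ reproduces $\|\partial_t^k b\|^2_{L_2(\Omega)}$ by the construction of the moment weight $p_i p_j - \tfrac{|p|^2}{3}\delta_{ij}$. The coupling terms produced by the $a^\pm$- and $c$-components of $P\partial_t^k f^\pm$ yield the scalars $\partial_t^k c$ and $\partial_t^k(\sqrt{M_+}a^+ + \sqrt{M_-}a^-)$, which I absorb by Cauchy--Schwarz with a small parameter $\varepsilon_b$, placing $\varepsilon_b\|[c,\sqrt{M_+}a^++\sqrt{M_-}a^-]\|^2$ on the RHS and compensating the reciprocal factor $\varepsilon_b^{-1}$ onto the dissipation side. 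Integration by parts in $t$ on the $\partial_t(\partial_t^k f)$ contribution reduces the maximal temporal derivative count from $k+1$ to $k$ and produces the boundary-in-time term $\eta(t)-\eta(s)$ bounded as in \eqref{eq3.1.2} via $\|\partial_t^k b(\tau)\|_{L_2(\Omega)} \|\phi^j_k(\tau)\|_{W^2_2(\Omega)} \lesssim \cI_{||}(\tau)$. The terms involving $(1-P)\partial_t^kf$ and $\nabla_p(1-P)\partial_t^kf$, together with the linearized collisional contributions from $L\partial_t^kf$, are estimated by $\varepsilon_b^{-1}\int_s^t \cD_{||}\,d\tau$ using \eqref{eq12.0.4} and the $W^2_2$-bound on $\phi^j_k$.

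The remaining nonlinear Lorentz contributions $\bE\cdot\nabla_p\partial_t^kf^\pm$, $(p/p_0^\pm\times\bB)\cdot\nabla_p\partial_t^kf^\pm$, the quadratic collision term $\Gamma_\pm(f,f)$ and the linear electric source $\pm\frac{e_\pm}{k_bT}\frac{p}{p_0^\pm}\cdot\bE\sqrt{J^\pm}$ are distributed between $P$ and $1-P$ and bounded by $\varepsilon_b^{-1}\varepsilon\int_s^t\cD\,d\tau$ using the smallness assumption \eqref{eq3.5.1}, which supplies the amplitude factor $\varepsilon$. The main technical obstacle is ensuring that the test function $\Psi^{\pm,b}_k$ is simultaneously SRBC-compatible (which dictates the parity of the normal component of $\nabla_x\phi^j_k$ on $\partial\Omega$) and produces no uncontrolled $\partial_t^k\bE$ contribution when paired with the linear electric source: the parity of the tensor $[p_ip_j-\tfrac{|p|^2}{3}\delta_{ij}]\sqrt{J^\pm}$ in $p$ is precisely what kills the momentum integral $\int \frac{p_l}{p_0^\pm}[p_ip_j-\tfrac{|p|^2}{3}\delta_{ij}]J^\pm\,dp$ for the offending index combinations, so that the electric-field coupling is diverted into the benign $a^\pm$-channel handled in Lemma \ref{lemma 3.2}. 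A secondary subtlety is the correct choice of $\varepsilon_b$: it must be small enough to allow the subsequent decoupling from $c$ and $a^++a^-$ in Lemmas \ref{lemma 3.2}--\ref{lemma 3.3}, but independent of the final smallness parameter $\varepsilon$ for the solution amplitude.
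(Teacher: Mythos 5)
Your proposal captures the broad duality strategy (test the Landau equation against a macroscopic multiplier built from an elliptic potential of $b$), but the specific implementation has several gaps, and in particular your choice of momentum weight and of elliptic problem do not fit together.

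\textbf{Mismatch between the momentum weight and the elliptic operator.} You define the multiplier as $[p_ip_j-\frac{|p|^2}{3}\delta_{ij}]\sqrt{J^\pm}\,\partial_{x_i}\phi^j_k$ and take $\phi^j_k$ to solve scalar Poisson problems $-\Delta \phi^j_k=\partial_t^k b_j$. When you integrate the transport term by parts in $x$ and pair it with the $b$-part of $P\partial_t^k f$, the resulting $p$-moment is of Wick type: with $\nu:=\int \tfrac{p_1^2p_2^2}{p_0}J\,dp$ one computes that the contraction produces
\[
\nu\Bigl(\Delta\phi^m_k+\tfrac{1}{3}\partial_{x_m}(\nabla_x\!\cdot\phi_k)\Bigr)\,\partial_t^k b_m,
\]
i.e., a \emph{Lam\'e-type} operator, not the Laplacian. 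Plugging in your Poisson equations kills only the first term, leaving an uncontrolled contribution $\frac{\nu}{3}\int(\nabla_x(\nabla_x\!\cdot\phi_k))\cdot \partial_t^k b\,dx$ of the same order as $\|\partial_t^k b\|^2_{L_2}$. This is exactly the reason the paper solves the full Lam\'e system $-\nabla_x\!\cdot S(\bm\phi)=b-\text{corrector}$ (equation \eqref{lame}) rather than component-wise Poisson equations, and moreover chooses a nontrivial radial profile $h(|p|)$ in $B_{ij}(p)=(p_ip_j-\delta_{ij})h(|p|)$ (constructed in Section \ref{section J}) so that the moment conditions \eqref{eq3.1.6}--\eqref{eq3.1.8} hold and the spurious $\nabla_x(\nabla_x\!\cdot\phi)$ term is annihilated. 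With $h=\sqrt J$ the condition \eqref{eq51.5}, namely $\int \tfrac{p_1^2p_2^2}{p_0}h\sqrt J\,dp=\int\tfrac{p_1^2}{p_0}h\sqrt J\,dp$, fails, so \eqref{eq3.1.8} does not hold and your ``dominant contribution reproduces $\|\partial_t^k b\|^2$'' is incorrect as stated.

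\textbf{SRBC compatibility.} The homogeneous Neumann condition on scalar $\phi^j_k$ does not make the multiplier invariant under $p\mapsto R_xp$. The tensor $\Psi^{\pm,b}_k$ is quadratic in $p$, and splitting $p=P_{||}p+p_\perp n_x$ produces cross terms $p_\perp (P_{||}p)^T M n_x$ with $M_{ij}=\partial_{x_i}\phi^j_k$ that flip sign under $p_\perp\to -p_\perp$. Killing them requires that $(M+M^T)n_x$ be parallel to $n_x$, i.e., the \emph{Navier} boundary condition $(S(\bm\phi)n_x)\times n_x=0$ on a \emph{vector} potential, together with $\bm\phi\cdot n_x=0$. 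This is precisely the Navier BC for the Lam\'e system in \eqref{lame}; the scalar Neumann data you impose does not produce this.

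\textbf{Missing ingredients.} The Lam\'e system with Navier BC has the nontrivial kernel $\mathcal{R}(\Omega)$ of infinitesimal rigid motions on axisymmetric domains. The paper therefore subtracts a corrector \eqref{proj}, controls it via the conservation of angular momentum identity \eqref{eq4.3.4} (see also Lemma \ref{lemma 12.A.4} and Appendix \ref{appendix I}), and ensures unique solvability via Lemma \ref{lemma A.1}. Your proposal is silent on this. You also omit the entire second step of the paper's proof: the $I_2$-term generated by $\partial_t$ acting on the test function produces $\int_s^t\|\partial_t^{m+1}\phi\|_{W^1_2}^2\,d\tau$, which cannot be bounded by $\cI_{||}$ and requires testing against $\chi_{i+2}(p)\,\partial_t^{m+1}\phi_i$ together with Korn's inequality (Lemma \ref{lemma A.2}) and a second use of the angular momentum identity to close. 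The bound \eqref{eq3.1.21} is precisely what your draft does not establish.

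In short, the high-level duality idea and the observation that the $I_4$ electric-field term vanishes by parity are correct, but the choice of scalar Poisson problems with Neumann data, the choice of momentum weight $\sqrt J$, and the absence of the corrector/Korn/angular-momentum machinery and of the $\partial_t^{m+1}\phi$-estimate are genuine gaps that would not close without reconstructing the paper's actual test function.
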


\begin{proof}

\textbf{Step 1: an initial estimate of $b$.}
We employ a duality argument to estimate $\partial_t^k b$ from the integral formulation of the Landau equations.   
In particular, since  $\partial_t^k f$ is a finite energy solution to equations \eqref{eq36.13}--\eqref{eq36.19}  differentiated $k$ times with respect to $t$ (see Definition \ref{definition 27.1}), one can rewrite the equation for $\partial_t^k f$ as \eqref{eq1.0} with certain  $\eta \in L_2 ((0, T) \times \Omega) W^{-1}_2 (\bR^3)$ and $A$   (see the proof of Proposition \ref{proposition 4.0}).  
 Then, by the integral formulation \eqref{eq27.2},  for any test function $\psi = (\psi^{+}, \psi^{-})$ satisfying \eqref{eq27.2.1}--\eqref{eq27.2.3}, one has 
\begin{align}
		\label{eq2.3}
 &
 \underbrace{-  \int_{s}^t \int_{\Omega} \int_{\bR^3} \bigg(\frac{p}{p_0^{+}} \cdot   (\nabla_x \psi^{+})  (\partial_t^k f^{+})
 + \frac{p}{p_0^{-}} \cdot   (\nabla_x \psi^{-})  (\partial_t^k f^{-})\bigg)
 \, dz}_{ = I_1}\\
  &    =   \underbrace{ \int_{s}^t \int_{\Omega} \int_{\bR^3} (\partial_t   \psi) \cdot (\partial_t^{k} f) \, dz}_{I_2}\notag \\
 & \underbrace{ -  \int_{\Omega \times \bR^3} [(\psi \cdot \partial_t^{k} f) (t, x, p)- (\psi  \cdot \partial_t^{k} f) (s, x, p)] \, dx dp}_{= I_3 } \notag \\
 & \underbrace{ + \frac{ 1}{k_b T} \int_{s}^t \int_{\Omega} \int_{\bR^3} (\partial_t^k \mathbf{E}_i) 
  \big( e_{+}\frac{p_i}{p_0^{+}}   \sqrt{J^{+}} \psi^{+}
 - e_{-} \frac{p_i}{p_0^{-}} \sqrt{J^{-}} \psi^{-}\big) \, dz }_{= I_4 } \notag\\
 & \underbrace{  - \int_{s}^t \int_{\Omega}    \langle   L \partial_t^k  f, \psi \rangle \,  dx d\tau }_{ = I_5}
 +  \underbrace{    \int_{s}^t \int_{\Omega}     \langle \partial_t^k H, \psi \rangle \, dx d\tau}_{= I_6}, \notag
 \end{align}
 where 
\begin{align}
    \label{eq2.3.1}
  H^{\pm} =   \Gamma_{\pm} (f, g) \mp e_{\pm} (\bE  +  \frac{p}{p_0^{\pm}} \times \bB) \cdot \nabla_p f^{\pm}
\pm  \frac{e_{\pm}  }{2 k_b T} (\frac{p}{p_0^{\pm}}  \cdot \mathbf{E}) f^{\pm}.
\end{align}
We note that the integrals over the kinetic boundaries $\Sigma^T_{\pm}$ are absent in the above identity since $\psi$ satisfies the SRBC (see \eqref{eq27.2.3}). 

We will focus on the top derivative term $\partial_t^m b$ since the remaining ones are handled similarly.



\textit{Test function.} 
Let us consider  the Lam\'e system with the Navier boundary condition
\begin{equation}
			\label{lame}
\left\{\begin{aligned}
 &- \partial_{x_j} S_{i j} (\bm{\phi}) = b_i - \text{corrector},\\
 & (\bm{\phi} \cdot n_x)_{| \partial \Omega} = 0,\\
 & \big((S (\bm{\phi}) n_x) \times n_x\big)_{| \partial \Omega} = 0,
\end{aligned}
\right.
\end{equation}
where $S_{i j} (\bm{\phi})$ is the stress tensor defined in \eqref{eq1.2.23}.
 To ensure   the existence of a solution to \eqref{lame} (see Lemma \ref{lemma A.1}), we set
 the corrector to be the $L_2$-projection of $b$ onto the kernel of the operator $-\nabla_x \cdot S (u)$ acting on the space of vector fields $\bm{u} \in W^2_2 (\Omega)$ satisfying the Navier boundary condition.
 It is easily seen that this kernel is the subspace of infinitesimal rigid motions $\mathcal{R} (\Omega)$ (see \eqref{eq1.1} and Remark \ref{remark A.1}), which can be characterized as follows.
 \begin{itemize}
     \item If $\Omega$ is an irrotational domain, then $\mathcal{R} (\Omega) = \emptyset$.
     \item If $\Omega$ is an axisymmetric domain with a single axis directed along $\omega$ and passing through $x_0$, then  $\mathcal{R} (\Omega) = \text{span} \, \{\omega \times (x-x_0)\}$. 
     \item If $\Omega$ is a ball centered at $x_0$, then   $\mathcal{R} (\Omega) =\text{span} \, \{e_i \times (x-x_0), i = 1, 2, 3\}$.
 \end{itemize}
  The corrector term might obstruct the temporal decay of the perturbations $f^{\pm}$. Fortunately, by   the conservation of the angular momentum 
  \begin{align}
    \label{eq4.3.4}
 \int_{\Omega} \int_{\bR^3} R \cdot p (F^{+} + F^{-})  \, dp  dx + \frac{1}{4 \pi } \int_{\Omega}
 R \cdot (\bE \times \bB) \, dx = \text{const}, \, \, R \in \mathcal{R} (\Omega),
\end{align}
which is verified in Appendix \ref{appendix I}, 
   we have
  \begin{align}
    \label{proj}
           &    \text{corrector} =    \text{$L_2$-projection of} \, b \,  \, \text{onto $\mathcal{R} (\Omega)$}  \\
           & =  \varkappa \,  \sum \big(\int_{\Omega} R_i   \cdot (\bE  \times \bB) \, dx\big) R_i (x), \notag
     \end{align}
    where $\varkappa \in \bR,$ and $\{R_1, \ldots\}$ is  an orthonormal basis  of $\mathcal{R} (\Omega)$.  If $\Omega$ is an irrotational domain, then the corrector term vanishes. 
Hence, by Lemma \ref{lemma A.1}, and \eqref{proj}, there exists a unique strong solution $\bm{\phi} \in W^2_2 (\Omega)$ to \eqref{lame} 
satisfying
\begin{align}
    \label{eq3.1.32}
    \text{$\bm{\phi} \perp \mathcal{R} (\Omega)$ in  $L_2 (\Omega)$.}  
\end{align}
 In addition, by the estimate \eqref{eqA.1.10} in the same lemma, 
\begin{align}
    \label{eq3.1.5}
    \|\bm{\phi} \|_{ W^2_2 (\Omega) } \lesssim_{\Omega} \|b\|_{ L_2 (\Omega)  } + \sum_{i} \bigg|\int_{\Omega} R_i \cdot (\bE \times \bB) \, dx\bigg|.
\end{align}

Next, we set
$$
    \psi (t, x, p) = (B_{i j} (p), 0)  S_{i j} (\partial_t^m  \bm{\phi}) (t, x),
$$
where $B_{i j}$ is a Schwartz function satisfying the following conditions:
\begin{align}
  \label{eq3.1.6}
   &B_{i j} \perp \sqrt{J^{ + }}, p_k \sqrt{J^{ + }}, p_0^{+} \sqrt{J^{ + }},\\
  \label{eq3.1.7}
   &\frac{p_k}{ p_0^{ + }  } B_{i j} \perp \sqrt{J^{ + }}, p_0^{  + } \sqrt{J^{ + }},
   \\
     \label{eq3.1.8}
   &\sum_{i, j, k, l = 1}^3 \langle  \frac{p_k}{p_0^{+}} B_{i j}, \chi_{l+2}^{+} \rangle \big(\partial_{x_k} S_{i j} (\bm{\phi})\big) \xi_l
    = \xi \cdot \nabla_x \cdot S (\bm{\phi}) \,\, \forall \xi \in \bR^3,
\end{align}
where $\perp$ means orthogonality in $L_2 (\bR^3)$, and $\chi_{i+2}, i=1, 2, 3$ are defined in \eqref{eq6.23}. We set
\begin{align}
    \label{eq3.1.40}
 &   B_{i j} (p) :=
                   (p_i p_j    - \delta_{i j})  h (|p|),\\
     \label{eq3.1.41}
  &  h (|p|): = \mu (|p|)  (\sqrt{J^{+}})^{-1} p_0^{+}  (k_1 |p|^2 +  k_2 |p|^4 + k_3 |p|^6 + k_4 |p|^8),  \\
    \label{eq3.1.42}
  & \mu (r): = \frac{1}{\sqrt{2\pi}}  e^{-r^2/2},
\end{align}
where the constants $k_1, \ldots, k_4$ are chosen so that \eqref{eq3.1.6}--\eqref{eq3.1.8} are satisfied. See the details in Section \ref{section J}. We note that in the non-relativistic case, the construction of the function $B_{i j}$  requires less effort   (see \cite{CK_23}).


Let us check if $\psi$ satisfies the admissibility conditions \eqref{eq27.2.1}--\eqref{eq27.2.3} in the integral formulation of the Landau equations. 
By mollifying the Landau equation in $t$ with $\eta_{\delta} (t) = \delta^{-1} \eta (t/\delta)$, where $\eta \in C^{\infty}_0 ((-1, 0))$, we may assume that the   conditions \eqref{eq27.2.1}--\eqref{eq27.2.2} hold for $\psi$. This mollification argument can also be  used to justify calculations involving $t$-derivatives of order higher than  $m$. 
To verify \eqref{eq27.2.3}, for $x \in \partial \Omega$, we denote $p_{\perp} = p \cdot n_x$ and we set $P_{||}$ to be the projection operator onto the plane orthogonal to $n_x$.
 Then, by using the identity
 \begin{align}
    \label{eq3.1.4}
    p =  P_{||} p + p_{\perp} n_x
 \end{align}
 and the Navier boundary condition in \eqref{lame}, we conclude for  $x \in \partial \Omega$,
 \begin{align*}
     & \psi^{  + } (t, x, p) =   
        \bigg((P_{||} p)^T   S (\partial_t^m  \bm{\phi}) P_{||} p
        + p_{\perp}^2 n_x^T S (\partial_t^m  \bm{\phi}) n_x  -  (\text{tr}\, S))\bigg)  
        h (|p|).
 \end{align*}
Hence, by evenness and the spherical symmetry of $h (|p|)$, the function $\psi^{ + }$ satisfies the SRBC. Thus, $\psi$ is an admissible test function, and hence, the integral formulation \eqref{eq2.3} is valid. In the rest of the proof, we estimate the terms $I_1$--$I_6$ defined in \eqref{eq2.3}.

 \textit{Estimate of the key term.}
 The functions $B_{i j}$  were chosen so that 
 inner product between $\displaystyle \frac{p}{p_0^{  + }} \nabla_x \cdot \psi^{ + }$ and $P^{ +  } \partial_t^m f$ yields a function $(\partial_t^m b) \cdot \nabla_x \cdot S (\partial_t^m \phi)$ (see  \eqref{eq3.1.7}-\eqref{eq3.1.8}). Then, by using  Eq.  \eqref{lame}, the Cauchy-Schwarz inequality, and \eqref{eq3.1.5}, for $\varepsilon_b \in (0, 1)$, we obtain
 \begin{align}
        \label{eq3.1.15}
    I_1&  = \tilde I_{1, 1} + \tilde I_{1, 2},\\
    & \tilde I_1:= -     \sum_{i, j, k, l = 1}^3 \langle  \frac{p_k}{p_0^{+}} B_{i j}, \chi_{l+2}^{+} \rangle   \int_s^{t} \int_{\Omega} (\partial_t^m  b_l)  \partial_{x_k} S_{i j} (\partial_t^m  \bm{\phi}) \, dx d\tau  \notag\\
    &  = - \int_s^{t} \int_{\Omega} \partial_t^m b \cdot \nabla_x \cdot S  (\partial_t^m \bm{\phi}) \, dx d\tau \notag  \\
   &  = \int_s^{t} \|\partial_t^m b\|^2_{  L_2 (\Omega)} \, d\tau 
     - \varkappa  \sum_i \int_s^{t} \big(\int_{\Omega} \partial_t^m b \cdot R_i \, dx\big)  \big(\partial_{t}^m \int_{\Omega} R_i   \cdot (\bE  \times \bB) \, dx\big) d\tau \notag    \\
 &
\ge
 (1-\varepsilon_b) \int_s^{t} \|\partial_t^m  b\|^2_{  L_2 (\Omega)} \, d\tau  
 -    N \varepsilon_b^{-1} \sum_i \int_s^{t}  \bigg|\partial_{t}^m \int_{\Omega} R_i \cdot (\bE \times \bB) \, dx\bigg|^2 \, d\tau \notag,\\
&   \tilde I_2:=   -\sum_{i, j, k = 1}^3  \int_s^{t} \int_{\Omega}  \int_{\bR^3} (\partial_{x_k} S_{i j} (\partial_t^m  \bm{\phi}))  (\frac{p_k}{p_0^{+}} B_{i j}, 0)  \cdot  (1-P) (\partial_t^m f) \, dx dp d\tau \notag\\
&  \ge -  \varepsilon_b \int_s^{t} \| \partial_t^m  \bm{\phi}\|^2_{  W^2_2 (\Omega)} \, d\tau \notag  -  N \varepsilon_b^{-1}  \int_s^{t}   \| (1-P) \partial_t^m f\|^2_{ L_2 (\Omega \times \bR^3) } \, d\tau.
 \end{align}
By    \eqref{eq12.A.4.1} in Lemma \ref{lemma 12.A.4}, we have
\begin{align}
        \label{eq3.1.16}
  \int_s^{t}  \bigg|\partial_{t}^m \int_{\Omega} R_i \cdot (\bE \times \bB) \, dx\bigg|^2 \, d\tau   \lesssim  \varepsilon \int_s^t \cD \, d\tau.
\end{align}

 \textit{Estimate of the $t$-derivative term.}
Due to \eqref{eq3.1.6}, one has
\begin{align}
\label{eq3.1.17}
  &  I_2 =   \int_s^{t} \int_{\Omega}  \int_{\bR^3}  S_{i j} (\partial_t^{m+1} \bm{\phi}) B_{i j} \cdot  (1-P) (\partial_t^m f) \, dx dp d\tau\\
&  \leq \varepsilon_b \int_s^{t}  \| S (  \partial_t^{m+1}\bm{\phi})\|^2_{ L_2 (\Omega)} \, d\tau
+ N \varepsilon_b^{-1}  \int_s^{t}  \| (1-P) \partial_t^m f\|^2_{ L_2 (\Omega \times \bR^3) } \, d\tau. \notag
 \end{align}

 \textit{Estimate of the $t$-boundary term $I_3$.}
By the Cauchy-Schwarz inequality, the elliptic estimate \eqref{eq3.1.5} and \eqref{eq12.A.4.2} in Lemma \ref{lemma 12.A.4}, for any $\tau \in [s, t]$, one has
 \begin{align}
\label{eq3.1.20}
  & \bigg|\int_{\Omega \times \bR^3} S_{i j} (\partial_t^m \bm{\phi}) (B_{i j} \cdot \partial_t^{m} f^{ + }) (\tau, x, p) \, dx dp\bigg| \\
   & \lesssim_{\Omega} \|\partial_t^m f (\tau, \cdot)\|^2_{L_2 (\Omega \times \bR^3)}
    +   \sum_k\bigg|\partial_t^m \int_{\Omega} R_k \cdot (\bE \times \bB) (\tau, x) \, dx\bigg|^2 \notag \\
    &\lesssim_{\Omega} \cI_{||} (\tau) +  \cI^{2}_{||} (\tau) \lesssim \cI_{||} (\tau), \notag
 \end{align}
 where in the last inequality, we used the smallness assumption \eqref{eq3.5.1}.
 Hence, $I_3 = \eta (t) - \eta (s)$ with $\eta$ satisfying \eqref{eq3.1.2}.

\textit{Electric field term.} Due to \eqref{eq3.1.7},
\begin{equation}
    \label{eq3.1.9}
    I_4  = 0.
\end{equation}

\textit{Estimate of the linear collision term.} 
By using the fact that $L$ is a symmetric operator and the Cauchy-Schwarz inequality, we obtain
\begin{align}
		\label{eq3.1.14}
I_5 \le   \varepsilon_b \int_s^{t} \|\partial_t^m  \bm{\phi}\|^2_{  W^1_2 (\Omega)} \, d\tau +  N \varepsilon_b^{-1}  \int_s^{t} \| (1-P) \partial_t^m f\|^2_{ L_2 (\Omega \times \bR^3) } \, d\tau.
\end{align}

\textit{Estimate of the nonlinear term.}
By the Cauchy-Schwarz inequality,
\begin{align}
		\label{eq3.1.10}
    I_6  & \le
     \varepsilon_b \int_s^{t}   \|S (\partial_t^m \bm{\phi})\|^2_{  L_2 (\Omega)}  d\tau\\
     &+ N \varepsilon_b^{-1} \sum_{i, j = 1}^3 \int_s^{t} \int_{ \Omega}   |\langle  \partial_t^m H^{+}, B_{i j} \rangle|^2 \, dx   d\tau.\notag
\end{align}
By the definition of $H$ in \eqref{eq2.3.1} and the estimates   \eqref{eq12.A.2.3} in Lemma \ref{lemma 12.A.2}, and \eqref{eq12.A.3.2} in Lemma \ref{lemma 12.A.3}, 
\begin{align*}
    \text{the second term on the r.h.s. of  \eqref{eq3.1.10}} \le N \varepsilon_b^{-1}  \varepsilon \int_s^t \cD \, d\tau.
\end{align*}

\textit{Intermediate estimate of $b$.} Combining the identity \eqref{eq2.3} with the bounds \eqref{eq3.1.15}--\eqref{eq3.1.10}, we obtain
\begin{align*}
& \int_s^{t}  \|\partial_t^m  b\|^2_{ L_2 (\Omega)} \, d\tau   \le (\eta (t) - \eta (s))
 + N \varepsilon_b \int_s^{t} \| \partial_t^m  b\|^2_{  L_2 (\Omega)} \, d\tau
+  N \varepsilon_b \int_s^{t} \| \partial_t^m  \bm{\phi}\|^2_{  W^2_2 (\Omega)} \, d\tau \\
&
+ N \varepsilon_b  \int_s^t \| S (\partial_t^{m+1} \bm{\phi})\|^2_{ L_2 (\Omega) }  \, d\tau  
  + N \varepsilon_b^{-1}  \big(\int_s^t \cD_{||} \, d\tau    +  \varepsilon \int_s^t \cD \, d\tau\big).
\end{align*}
We note that by the elliptic estimate \eqref{eq3.1.5} and \eqref{eq3.1.16}, we may replace the third term on the r.h.s with
\begin{align*}
    N \big(\varepsilon_b \int_s^{t}  \|\partial_t^m  b\|^2_{ L_2 (\Omega)} \, d\tau
    + \varepsilon \int_s^t \cD \, d\tau\big).
\end{align*}
Choosing $\varepsilon_b$ sufficiently small, we may absorb the term containing $b$ on the r.h.s into the l.h.s..
Thus, to   obtain the desired estimate \eqref{eq3.1.1}, it suffices to prove that
\begin{align}
    \label{eq3.1.21}
 & \int_s^t \|S (\partial_t^{m+1} \bm{\phi})\|^2_{  L_2  (\Omega) } \, d\tau\\
 &  \lesssim_{\Omega, \theta, r_3, r_4} \int_s^t \| \sqrt{M_{+}} \partial_t^m a^{+} + \sqrt{M_{-}} \partial_t^m a^{-} \|^2_{ L_2 (\Omega) }  \, d\tau \notag\\
 & + \int_s^t \|\partial_t^m c \|^2_{ L_2 (\Omega) }  \, d\tau
  + \int_s^t \cD_{||}  \, d\tau +   \varepsilon \int_s^t \cD \, d\tau.\notag
\end{align}

\textbf{Step 2: estimate of $\partial_t^{m+1} \bm{\phi}$.}
The method is similar to that of Step 1. However, we rearrange the integrals as follows:
\begin{align}
    \label{eq3.73}
    -(I_2+I_3)=-I_1+ \sum_{j=4}^6 I_j,
\end{align}
so that the key term in this argument is  $-(I_2+I_3).$We use a different  test function given by
\begin{align}
    \label{eq3.72}
   \tilde  \psi (t, x, p) = \chi_{i+2} (p)  (\partial_t^{m+1}  \bm{\phi}_i) (t, x)
\end{align}
(see \eqref{eq6.23}).
Let us check that $\tilde \psi$ satisfies the admissibility condition \eqref{eq27.2.3}, which is the SRBC. We note that due to the boundary condition $\bm{\phi} \cdot n_x = 0$, one has $\bm{\phi} \cdot p  = \bm{\phi} \cdot P_{||} p$ (see \eqref{eq3.1.4}) for $x \in \partial\Omega$. Then, by the definition $\chi_{i+2}, i=1, 2, 3$ (see \eqref{eq6.23}), for $x \in \partial \Omega$, 
\begin{align*}
   \tilde \psi (t, x, p) =   \kappa_1  (\partial_t^{m+1} \bm{\phi}) \cdot (P_{||} p) \,    \big((p_0^{+})^{-1} \sqrt{J^{+}}, (p_0^{-})^{-1} \sqrt{J^{-}}\big),
\end{align*}
Hence, $\tilde \psi$ satisfies the SRBC, and we may use the integral formulation \eqref{eq2.3} of the Landau equation. Again, we will estimate $I_1$--$I_6$ in that integral identity.

 \textit{The key term.}
Integrating by parts in $t$ and using  the fact that $\bm{\phi}$ satisfies the Lam\'e system \eqref{lame} with the correction term given by \eqref{proj}, we obtain
\begin{align*}
  & -(I_2+I_3)  =  \int_s^t \int_{ \Omega  } (\partial_t^{m+1} \bm{\phi}) \cdot (\partial_t^{m+1} b) \,  dx d\tau \\
  & =  -\int_s^t  \int_{ \Omega  } (\partial_t^{m+1}  \bm{\phi}_i)  \, \partial_{x_j} S_{i j} (\partial_t^{m+1}  \bm{\phi})\,  dx d\tau \notag \\
 & + \varkappa  \sum_{k}   \int_s^t \bigg(\partial_{\tau}^{m+1} \int_{\Omega} R_k \cdot (\bE \times \bB)\, dx\bigg) \bigg(\int_{ \Omega  }  (\partial_t^{m+1}  \bm{\phi}) \cdot  R_k \, dx\bigg) \,  d\tau. \notag
\end{align*}

Next, by Green's formula for the deformation tensor (see \eqref{eqA.2.0}) and the Cauchy-Schwarz inequality, we get, for any $\tilde \varepsilon_b \in (0, 1)$,
\begin{align}
    \label{eq3.1.24}
 &  -(I_2+I_3)  \ge     \int_s^t \sum_{i, j = 1}^3 \int_{ \Omega  }  |S_{i j} (\partial_t^{m+1} \bm{\phi})|^2 \, dx d\tau   - \tilde \varepsilon_b \int_s^t \|\partial_t^{m+1} \bm{\phi}\|^2_{L_2 (\Omega)} \, d\tau \\
& - N \tilde \varepsilon_b^{-1} \sum_k \int_s^t \bigg|\partial_{\tau}^{m+1} \int_{\Omega} R_k \cdot (\bE \times \bB) \, dx\bigg|^2 \, d\tau. \notag
\end{align}
Despite the fact that the last term contains $m+1$ derivatives (above the highest order), thanks to \eqref{eq12.A.7.1} in Lemma \ref{lemma 12.A.7}, we can still replace it with
$$
    - N  \tilde \varepsilon_b^{-1}   \varepsilon \int_s^t \cD \, d\tau.
$$
We point out that the estimate \eqref{eq12.A.7.1} follows from the momentum identity for Maxwell's equations (see \eqref{eqH.4}).
 To justify integration by parts in $t$  rigorously, one can use a (forward-in-time) mollification argument by mollifying  the Landau equation in $t$ with $\eta_{\delta} (t) = \delta^{-1} \eta (t/\delta)$, $\eta \in C^{\infty}_0 ((-1, 0))$. 
To show that the temporal boundary terms in $I_3$ converge, we use the weak continuity of  $\partial_t^m f (t)$ in $L_2 (\Omega)$, which is true since the latter is a finite energy solution to the $k$ times differentiated in $t$ Landau equation (see \eqref{eq27.2.5} in Definition \ref{definition 27.1}). We will not mention this in the sequel.


\textit{The transport term.} 
By using the explicit form of the projection operator $P$ (see \eqref{eq6.16}--\eqref{eq6.17}) and that of functions $\chi_{i+2}, i=  1, 2, 3$ (see \eqref{eq6.23}), we obtain
\begin{align}
    \label{eq3.1.12}
   I_1 &= - \int_s^t \int_{\Omega \times \bR^3} \partial_{x_j}  (\partial_t^{m+1} \bm{ \phi}_i) \bigg(\frac{ p_j}{p_0^{+}} \chi_{i+2}^{+}       \partial_t^m f^{+}
  + \frac{ p_j}{p_0^{-}}  \chi_{i+2}^{-}   \partial_t^m f^{-}\bigg)
  \, dx dp d\tau\\
  &
  = -  \int_s^t  \int_{\Omega }  (\nabla_x \cdot  \partial_t^{m+1} \bm{\phi})   \big(\epsilon_{+} \partial_t^m a^{+}  + \epsilon_{-} \partial_t^m a^{-}\big)  \, dx d\tau \notag\\
  & -  \epsilon_1 \int_s^t  \int_{\Omega}  (\nabla_x \cdot \partial_t^{m+1} \bm{\phi})    \,  (\partial_t^m c)  \, dx d\tau \notag\\
  & -  \int_s^t  \int_{\Omega \times \bR^3} (\partial_{x_j} \partial_t^{m+1} \bm{ \phi}_i) \bigg((\frac{ p_j}{p_0^{+}} \chi_{i+2}^{+}, \frac{p_j}{p_0^{-}}  \chi_{i+2}^{-}) \cdot    (1-P)   (\partial_t^m f)\bigg) \, dx dp d\tau, \notag
\end{align}
where 
\begin{align}
    \label{eq3.1.50}
      \epsilon_{\pm} =  \kappa_1  \sqrt{M_{\pm}^{-1}}  \int \frac{p_{  1  }^2}{p_0^{\pm}} J^{\pm} \, dp, \quad \epsilon_1 = \langle \frac{p_{ 1 } }{p_0^{+}} \chi_{3}^{+}, \chi_6^{+} \rangle + \langle \frac{p_1 }{p_0^{-}} \chi_{3}^{-}, \chi_6^{-} \rangle.
\end{align}
Furthermore, by the definition of $J^{\pm}$ (see \ref{juttner}),  
\begin{align}
    \label{eq3.1.70}
       \frac{p}{p_0^{\pm}} J^{\pm} =  - k_b T \nabla_{p} J^{\pm}
\end{align}
and integration by parts, we get
\begin{equation}
            \label{eq3.1.13}
     \int \frac{p_i^2}{p_0^{\pm}} J^{\pm} \, dp = k_b T \int J^{\pm} \, dp  =  k_b T M_{\pm}
\end{equation}
(see \eqref{eq6.30}).
This enables us to simplify $\epsilon_{\pm}$:
\begin{equation}
            \label{eq3.1.22}
      \epsilon_{\pm} = \kappa_1 k_b T \sqrt{(M_{\pm})^{-1}}  \int  J^{\pm} \, dp    = \kappa_1 k_b T  \sqrt{M_{\pm}}.
\end{equation}

Therefore, for any $\tilde \varepsilon_b \in (0, 1)$, we have
\begin{align*}
 |I_1| \lesssim  &  \tilde \varepsilon_b \int_s^t \|\partial_t^{m+1} \bm{ \phi}\|^2_{ W^1_2 (\Omega)} \, d\tau + \tilde \varepsilon_b^{-1}  \bigg(\int_s^t  \|\partial_t^m c \|^2_{ L_2 (\Omega) } \, d\tau +  \int_s^t \cD_{||} \, d\tau\\
 &  + \int_s^t \|\sqrt{M_{+}} \partial_t^m a^{+} + \sqrt{M_{-}} \partial_t^m a^{-}\|^2_{ L_2 (\Omega) }  \, d\tau\bigg).
\end{align*}

\textit{Electric field term.}  By the definition of $\psi$ (see \eqref{eq3.72}) and $I_4$ (see \eqref{eq2.3},
\begin{align}
    \label{eq3.1.23}
I_4 & = \frac{  \kappa_1 }{k_b T}
 \bigg(\int_{ \bR^3 } e_{+}\frac{p_i^2}{p_0^{+}}   J^{+} - e_{-} \frac{p_i^2}{p_0^{-}} J^{-} \, dp\bigg)
 \int_s^t \int_{ \Omega  } (\partial_t^m \mathbf{E}_i)  (\partial_t^{m+1} \bm{\phi}_i)  \, dx d\tau.
\end{align}
Thanks to the identity \eqref{eq3.1.13}  and the neutrality condition \eqref{eq0}, the first integral in \eqref{eq3.1.23} vanishes, and thus, 
$$
    I_4=0.
$$

\textit{Remaining terms.} We observe that for  the integral involving $L$, we have 
\begin{align}
\label{eq3.1.25.1}
    I_5 = 0,
\end{align} since $\chi_{i+2}, i = 1, 2, 3,$ belong to the kernel of $L$. 
Furthermore, proceeding as in  \eqref{eq3.1.10}, we conclude
\begin{align}
\label{eq3.1.25}
   I_6 \lesssim  \tilde \varepsilon_b \int_s^t \|\partial_t^{m+1} \bm{ \phi}\|^2_{ L_2 (\Omega)} \, d\tau
    + \tilde \varepsilon_b^{-1}  \varepsilon \int_s^t \cD \, d\tau.
\end{align}

Finally, gathering \eqref{eq2.3} and \eqref{eq3.1.24}--\eqref{eq3.1.25}, we get
\begin{align}
    \label{eq3.1.30}
 & \int_s^t \|S (\partial_t^{m+1} \bm{\phi})\|^2_{  L_2  (\Omega) } \, d\tau\\
 &  \lesssim_{\Omega, \theta, r_3, r_4}   \tilde \varepsilon_b \int_s^t \|\partial_t^{m+1} \bm{ \phi}\|^2_{ W^1_2 (\Omega)} \, d\tau  +  \tilde \varepsilon_b^{-1} \bigg(\int_s^t \| \sqrt{M_{+}} \partial_t^m a^{+} + \sqrt{M_{-}} \partial_t^m a^{-} \|^2_{ L_2 (\Omega) }  \, d\tau  \notag \\
 & + \int_s^t \|\partial_t^m c \|^2_{ L_2 (\Omega) }  \, d\tau + \int_s^t \cD_{||} \, d\tau +    \varepsilon \int_s^t \cD \, d\tau\bigg). \notag
\end{align}
 Since $\bm{\phi} \perp \mathcal{R} (\Omega)$ in $L_2 (\Omega)$ (see \eqref{eq3.1.32}), we may apply a variant of Korn's inequality in
\eqref{eqA.2.1} in Lemma \ref{lemma A.2} and  replace the first term on the r.h.s. of \eqref{eq3.1.30} with
$$
    \tilde \varepsilon_b \times (\text{the l.h.s. of \eqref{eq3.1.30}}).
$$ Finally, choosing $\tilde \varepsilon_b$ sufficiently small, we obtain \eqref{eq3.1.21}, which finishes the proof of the desired estimate \eqref{eq3.1.1}.
\end{proof}

\begin{remark}
We point out that in the absence of the electromagnetic field, a similar argument was also recently carried out in   \cite{CK_23} in the non-relativistic case. We also mention that the connections between the Korn inequality, macroscopic velocity, and the trend to equilibrium were highlighted previously in \cite{DV_05}.
\end{remark}



\begin{lemma}[estimate of a weighted average of $a^{\pm}$, cf. \eqref{eq3.2.0.1}]
\label{lemma 3.2}
Under Assumption \ref{assumption 3.5}, we have
\begin{align}
	\label{eq3.2.1}
&    \sum_{k = 0}^m \int_s^{t}  \|\partial_{t}^k (\sqrt{M_{+}} a^{+} + \sqrt{M_{-}} a^{-})\|^2_{ L_2 (\Omega) } \, d\tau \lesssim_{\Omega,   \theta, r_3, r_4 }  (\eta (t)  - \eta (s)) \\
&+ \sum_{k = 0}^m \int_s^{t} \|\partial_{t}^k [b, c]\|_{ L_2 (\Omega) }^2 \, d\tau + \int_s^t \cD_{||}  \, d\tau   + \varepsilon \int_s^t \cD \, d\tau, \notag
\end{align}
where $\eta$ satisfies  \eqref{eq3.1.2}.
\end{lemma}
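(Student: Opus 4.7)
The plan is to imitate the duality argument of Lemma \ref{lemma 3.1}, testing the integral formulation \eqref{eq2.3} against a carefully constructed multiplier that simultaneously extracts the weighted combination $\sqrt{M_{+}} a^{+} + \sqrt{M_{-}} a^{-}$ from the transport term $I_1$ and annihilates the electric-field term $I_4$ via the global neutrality condition \eqref{eq0}. For each $k \le m$, I would first solve the Neumann problem
\begin{equation*}
-\Delta_x \phi = \partial_t^k(\sqrt{M_{+}} a^{+} + \sqrt{M_{-}} a^{-}) - c_k, \quad \partial_{n_x}\phi|_{\partial\Omega} = 0, \quad \int_\Omega \phi\,dx = 0,
\end{equation*}
where $c_k$ is the spatial mean of the right-hand side (compatibility is automatic for $k\ge 1$ and follows from the conservation of mass \eqref{eq6.1.0} together with the SRBC for $k=0$). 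Standard elliptic regularity then yields $\|\phi\|_{W^2_2(\Omega)}\lesssim_{\Omega}\|\partial_t^k(\sqrt{M_{+}} a^{+} + \sqrt{M_{-}} a^{-})\|_{L_2(\Omega)}$.

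Next I would set $\psi^{\pm}(t,x,p) = A^{\pm}(p)\cdot\nabla_x\phi(t,x)$ with $A^{\pm}(p) = \alpha^{\pm}(|p|)\,p$, where $\alpha^{\pm}$ are Schwartz scalar weights built as finite linear combinations of $(J^{\pm})^{-1/2}\mu(|p|)|p|^{2j}$ in the spirit of \eqref{eq3.1.40}--\eqref{eq3.1.42}. The free coefficients are fixed by the single moment relation
\begin{equation*}
\frac{1}{M_+}\int_{\bR^3}\frac{|p|^2}{p_0^{+}}\alpha^{+}(|p|)\sqrt{J^{+}}\,dp \;=\; \frac{1}{M_-}\int_{\bR^3}\frac{|p|^2}{p_0^{-}}\alpha^{-}(|p|)\sqrt{J^{-}}\,dp,
\end{equation*}
which, thanks to the normalization $e_\pm M_\pm = 1$ from \eqref{eq1.1.20}, is equivalent to the \emph{neutrality cancellation} $e_+\int (|p|^2/p_0^+)\alpha^+\sqrt{J^+}\,dp = e_-\int (|p|^2/p_0^-)\alpha^-\sqrt{J^-}\,dp$ that forces $I_4 = 0$. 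The same relation is precisely the \emph{extraction} condition: using the macro--micro decomposition and the explicit formulas \eqref{eq6.22}--\eqref{eq6.24} for $\chi_1^+, \chi_2^-$, the inner products $\int \chi_1^+ (p_j/p_0^+) A_i^+\,dp$ and $\int \chi_2^- (p_j/p_0^-) A_i^-\,dp$ become the common scalar $c_\phi \delta_{ij}$, so after integration by parts in $x$ the $P f$ part of $I_1$ reproduces $c_\phi\int_s^t\!\int_\Omega (-\Delta\phi)\bigl(\sqrt{M_+}\partial_t^k a^+ + \sqrt{M_-}\partial_t^k a^-\bigr)\,dx\,d\tau$, which equals $c_\phi\int_s^t\|\partial_t^k(\sqrt{M_+}a^+ + \sqrt{M_-}a^-)\|_{L_2(\Omega)}^2\,d\tau$. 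The $\chi_{k+2}^{\pm}$ contributions vanish by parity, while the $\chi_6^{\pm}$ contributions produce controlled $\partial_t^k c$ terms. Admissibility of $\psi$ is automatic: the Neumann condition gives $\nabla_x\phi\cdot n_x = 0$ on $\partial\Omega$, and since $|R_x p| = |p|$ and $R_xp\cdot\nabla\phi = p\cdot\nabla\phi$ on $\partial\Omega$, one has $\psi^{\pm}(t,x,R_xp) = \psi^{\pm}(t,x,p)$.

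With these choices, the remaining terms $I_2, I_3, I_5, I_6$ are handled exactly as in Lemma \ref{lemma 3.1}. The $t$-boundary term $I_3$ contributes $\eta(t) - \eta(s)$ with $|\eta|\lesssim\cI_{||}$ via the elliptic bound on $\phi$ and the smallness \eqref{eq3.5.1}; the term $I_2$ yields $\varepsilon_b\|\partial_t^{k+1}\phi\|^2_{\cdot} + N\varepsilon_b^{-1}\|(1-P)\partial_t^k f\|^2_{L_2(\Omega\times\bR^3)}$, the first piece being absorbed after a companion Poisson estimate for $\partial_t^{k+1}\phi$ analogous to \eqref{eq3.1.21}; $I_5$ is controlled by the symmetry and coercivity of $L$ on $(1-P)\partial_t^k f$; and $I_6$ is bounded by $N\varepsilon_b^{-1}\varepsilon\int_s^t\cD\,d\tau$ via Lemmas \ref{lemma 12.A.2}--\ref{lemma 12.A.3}. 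Summing over $0\le k\le m$ and absorbing the $\varepsilon_b$-small pieces into the left-hand side produces \eqref{eq3.2.1}.

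The principal obstacle is the design of $\alpha^{\pm}$: one must simultaneously satisfy the extraction moment condition and the neutrality cancellation while preserving Schwartz decay and the radial-in-$|p|$ structure dictated by the SRBC. The miracle, which is what the weights $\sqrt{M_\pm}$ on the left-hand side of \eqref{eq3.2.1} encode, is that these two families of constraints collapse to a single linear relation on $\alpha^{\pm}$ once \eqref{eq1.1.20} is used; this is the precise rigorous incarnation of the informal cancellation described in Section \ref{section 4.2} and is the genuinely new ingredient beyond the $b$-argument.
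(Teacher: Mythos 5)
Your proposal follows the paper's strategy in all the essentials: test the integral formulation \eqref{eq2.3} against a multiplier of the form $\psi^\pm = A^\pm(p)\cdot\nabla_x\phi$ with $\phi$ solving a Neumann Poisson problem with the weighted source, and observe that the moment constraint ensuring the $a$-part of $I_1$ reproduces $\sqrt{M_+}a^+ + \sqrt{M_-}a^-$ coincides, via $e_\pm M_\pm = 1$ from \eqref{eq1.1.20}, with the one killing $I_4$. Your identification of this collapse as the analytic content of the global neutrality condition is exactly what the paper is after. Two remarks on where you deviate. First, the construction of $\alpha^\pm$ is over-engineered: the paper simply takes $\psi = \chi_{i+2}(p)\,\partial_{x_i}\phi$, i.e.\ $\alpha^\pm = \kappa_1\sqrt{J^\pm}$. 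This already satisfies your moment relation trivially, by the identity $\int (p_i^2/p_0^\pm) J^\pm\, dp = k_b T M_\pm$ from \eqref{eq3.1.13}, so no linear system in the spirit of \eqref{eq3.1.40}--\eqref{eq3.1.42} is needed; and because $\chi_{i+2} \in \ker L$ this choice also gives $I_5 = 0$ outright, whereas your generic $\alpha^\pm \notin \ker L$ forces a Cauchy--Schwarz estimate on $I_5$ (workable, but extra). Second, for the $\partial_t^{k+1}\phi$ term coming from $I_2$, the paper does \emph{not} run a companion duality argument analogous to \eqref{eq3.1.21}. Instead it exploits the continuity equation $\sqrt{M_\pm}\partial_t a^\pm + \nabla_x\cdot\bm{j}^\pm = 0$ and the boundary flux condition $\bm{j}^\pm\cdot n_x = 0$ to bound $\|\nabla_x\partial_t^{k+1}\phi\|_{L_2}$ directly by $\|\partial_t^k b\|_{L_2} + \|(1-P)\partial_t^k f\|_{L_2}$ (see \eqref{eq3.2.5.0}); this one-line elliptic estimate is much shorter than a second duality pass, and a companion estimate literally modeled on \eqref{eq3.1.21} would reintroduce $\sqrt{M_+}a^+ + \sqrt{M_-}a^-$ on the right-hand side and require a delicate small-constant absorption that you don't spell out. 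Your route could be made to close, but you should be aware that the paper's shortcut avoids all of that.
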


\begin{proof}
As in the proof of Lemma \ref{lemma 3.1}, we use the integral formulation of the Landau equation \eqref{eq2.3} and focus on the top-derivative term $\partial_t^m (\sqrt{M_{+}} a^{+} +  \sqrt{M_{-}} a^{-})$.

\textit{Test function.}
We consider the equation
\begin{equation}
    \label{eq3.2.2}
\left\{\begin{aligned}
 & -  \Delta_x \phi =  \sqrt{M_{+}} a^{+}
 +  \sqrt{M_{-}} a^{-},\\
 & \frac{\partial \phi }{\partial n_x} = 0 \, \, \text{on} \, \partial \Omega.
\end{aligned}
\right.
\end{equation}
Integrating the continuity equation 
\begin{align}
    \label{eqE.2.15}
        \sqrt{M_{\pm}} \partial_t a^{\pm} +  \nabla_x \cdot \bm{j}^{\pm} = 0, \quad \bm{j}^{\pm}: = \int_{\bR^3} \frac{p}{p_0^{\pm}} f^{\pm} \sqrt{J^{\pm}} \, dp
\end{align}
(cf. \eqref{eqE.2.50})
over $\Omega$ and using the SRBC, and the assumption on the initial data  \eqref{eq6.1.0}, we have
\begin{align}
    \label{eq3.1.51}
    \int_{\Omega \times \bR^3} f^{\pm} (t, x, p) \sqrt{J^{\pm}} \, dx dp = 0.
\end{align}
 By this and the definition of $a^{\pm}$ in \eqref{eq6.18}, one has,
$$
	\int_{\Omega} a^{\pm} \, dx = 0.
$$
Hence, the equation \eqref{eq3.2.2} has a unique strong solution $\phi \in W^2_2 (\Omega)$ satisfying
$$
    \int_{\Omega} \phi \, dx = 0,
$$
 and, furthermore,
\begin{equation}
				\label{eq3.2.3}
\|\phi\|_{ W^{ 2 }_2 (\Omega) } \lesssim_{\Omega}  \|\sqrt{M_{+}} a^{+}
 +  \sqrt{M_{-}} a^{-}\|_{  L_2 (\Omega) }.
\end{equation}

 We set
\begin{align}
    \label{eq3.2.10}
& \psi (t, x, p) = \chi_{i+2} (p) \partial_{x_i} \phi (t, x),\quad i=1,2,3
\end{align}
(see \eqref{eq6.23}). 
To check that $\psi$ satisfies the SRBC, we note that by the Neumann BC and the decomposition \eqref{eq3.1.4}, for any $x \in \partial \Omega$, 
\begin{align}
    \label{eq3.2.11}
    \psi (t, x, p) = \kappa_1 \nabla_x \partial_t^m \phi (t, x) \cdot (P_{||} p) \,  (\sqrt{J^{+}}, \sqrt{J^{-}}).
\end{align}
Thus, the SRBC holds and, hence, the integral formulation \eqref{eq2.3} is valid. Let $I_i, i = 1, \ldots, 6,$ be the integrals defined  therein.


\textit{The key term.} 
Inspecting   the calculations in \eqref{eq3.1.12}--\eqref{eq3.1.22}, we get
\begin{align}
        \label{eq3.2.4.0}
I_1 & =
- \kappa_1 k_b T \int_s^{t} \int_{\Omega} \big(\sqrt{M_{+}} \partial_t^m a^{+} + \sqrt{M_{-}}  \partial_t^m a^{-}\big)  \, (\Delta_x \partial_t^m \phi) \, dx d\tau
 \\
& - \epsilon_1 \int_s^{t} \int_{\Omega}
(\partial_t^m c) \, (\Delta_x \partial_t^m \phi) \, dx d\tau \notag\\
& - \sum_{i, j = 1}^3
\int_s^{t} \int_{\Omega \times \bR^3}  \big((1-P) \partial_t^m f\big) \cdot \big(\frac{p_j}{p_0^{+}} \chi_{i+2}^{+}, \frac{p_j}{p_0^{-}} \chi_{i+2}^{-}\big) \partial_{x_i x_j} \phi\, dx dp d\tau, \notag
\end{align}
where $\epsilon_1$ is defined in  \eqref{eq3.1.50}.


Due to our choice of $\phi$ (see \eqref{eq3.2.2})  and the elliptic estimate \eqref{eq3.2.3},  for any $\varepsilon_a \in (0, 1)$, we have
\begin{align}
    \label{eq3.2.4}
I_1 & \ge  (\kappa_1 k_b T -  \varepsilon_a) \int_s^{t}
\|\sqrt{M_{+}} \partial_t^m a^{+}
 +  \sqrt{M_{-}} \partial_t^m a^{-}\|^2_{ L_2 (\Omega) } \, d\tau\\
&- N \varepsilon^{-1}_a
\big(\int_s^{t}  \| \partial_t^m c\|^2_{ L_2 (\Omega) } \, d\tau + \int_s^{t} \cD_{||}  \, d \tau\big). \notag
\end{align}

 \textit{Estimate of the $t$-derivative term.}
By orthogonality, we have
\begin{align}
    \label{eq3.2.50}
	I_2 & =
	\int_s^{t} \int_{\Omega}
	 (\partial_t^{m} b) \cdot \nabla_{x} (\partial_t^{m+1} \phi) \, dx d\tau\\
  & \lesssim_{\Omega} \varepsilon_a \int_s^{t} \|\partial_t^{m+1} \nabla_x \phi\|_{ L_2 (\Omega) }^2 \, d\tau
+ \varepsilon_a^{-1} \int_s^{t} \|\partial_t^{m} b\|_{ L_2 (\Omega) }^2 \, d\tau. \notag
\end{align}
We note that due to \eqref{eqE.2.15}, $\partial_t^{m+1} \phi$ satisfies
     \begin{equation*}
\left\{\begin{aligned}
 & -  \Delta_x \partial_t^{m+1} \phi = -  \nabla_x \cdot(\partial_t^m \bm{j}^{+}+\partial_t^m\bm{j}^{-}),\\
 & \frac{\partial (\partial_t^{m+1} \phi) }{\partial n_x} = 0 \, \, \text{on} \, \partial \Omega.
\end{aligned}
\right.
\end{equation*}
We multiply the above equation by $\partial_t^{m+1} \phi$, integrate by parts, and use fact that $\partial_t^m \bm{j}^{\pm} \cdot n_x = 0$ on $\partial \Omega$ (see \eqref{eqE.2.16}).  Combining this argument with the estimate of $\bm{j}^{\pm}$ in \eqref{eqE.2.15.1}, we conclude that
\begin{align}
    \label{eq3.2.5.0}
  &  \|\partial_t^{m+1} \nabla_x \phi\|_{L_2 (\Omega)} \lesssim \|\partial_t^m [\bm{j}^{+}, \bm{j}^{-}]\|_{L_2 (\Omega)} \\
    & 
\lesssim \|\partial_t^m b\|_{ L_2 (\Omega) }
	+ \|(1-P) \partial_t^m f\|_{ L_2 (\Omega \times \bR^3) }. \notag
\end{align}
Hence,  we obtain
\begin{align}
    \label{eq3.2.5}
	I_2
\lesssim \varepsilon_a^{-1}
 \big(\int_s^t \|\partial_t^{m} b\|_{ L_2 (\Omega) }^2 \, dt  +  \int_s^t \cD_{||}  \,  d\tau\big).
\end{align}

\textit{The $t$-boundary term.} Proceeding as in  \eqref{eq3.1.20} and using the elliptic estimate \eqref{eq3.2.3}, we conclude that
\begin{align}
    \label{eq3.2.9}
    I_3  = \eta (t)- \eta (s)
\end{align}
with $\eta$ satisfying the bound  \eqref{eq3.1.2}.

\textit{The electric field term.} As in  \eqref{eq3.1.23}, we have
\begin{align}
    \label{eq3.2.7}
    I_4 = 0.
\end{align}

\textit{The remaining integrals.}
Repeating the argument  in  \eqref{eq3.1.25.1} and \eqref{eq3.1.10} and using the elliptic estimate \eqref{eq3.2.3}  yield
\begin{equation}
\begin{aligned}
        \label{eq3.2.8}
    &I_5 = 0, \\ 
    & I_6 \lesssim  \tilde \varepsilon_a \int_s^t \|\sqrt{M_{+}} \partial_t^m a^{+}
 +  \sqrt{M_{-}} \partial_t^m a^{-}\|^2_{ L_2 (\Omega)} \, d\tau  +  \tilde \varepsilon_a^{-1}  \varepsilon \int_s^t \cD \, d\tau. 
\end{aligned}
\end{equation}

Finally, combining the integral formulation \eqref{eq2.3} with the estimates  \eqref{eq3.2.4}--\eqref{eq3.2.8}, we obtain
\begin{align}
    \label{eq3.2.51}
&   \int_s^{t}  \|\sqrt{M_{+}} \partial_t^m a^{+}
 +  \sqrt{M_{-}} \partial_t^m a^{-}\|^2_{ L_2 (\Omega) } \, d\tau
 \lesssim_{\Omega, \theta, r_3, r_4 }  (\eta (t) -  \eta (s))\\
& +  \varepsilon_a   \int_s^{t}  \|\sqrt{M_{+}} \partial_t^m a^{+}
 +  \sqrt{M_{-}} \partial_t^m a^{-}\|^2_{ L_2 (\Omega) } \, d\tau \notag\\
& +  \varepsilon_a^{-1} \big(\int_s^{t} \|\partial_t^m [b, c]\|_{ L_2 (\Omega) }^2 \, d\tau + \int_s^{t} \cD_{||}  \, d\tau  +  \varepsilon \int_s^t \cD \, d\tau\big). \notag
\end{align}
 Choosing $\varepsilon_a$ sufficiently small,  we prove the desired bound \eqref{eq3.2.1}.
\end{proof}

Combining the estimates \eqref{eq3.1.1} and \eqref{eq3.2.1} in Lemmas \ref{lemma 3.1} and \ref{lemma 3.2}, respectively, we obtain the main result of this section.
\begin{lemma}[final estimate of $b$, cf. \eqref{eq3.3.0.1}]
\label{lemma 3.3}
     Under  Assumption  \ref{assumption 3.5},   there exists a sufficiently small constant $\varepsilon_b =\varepsilon_b (\Omega, r_3, r_4, \theta) > 0$  such that 
\begin{align}
\label{eq3.3.1}
 &   \sum_{k = 0}^m \int_s^{t} \|\partial_t^k  b\|^2_{ L_2 (\Omega) } \, d\tau
    \lesssim_{\Omega, \theta, r_3, r_4 } (\eta (t)  - \eta (s)) \\
    &+  \varepsilon_b \sum_{k = 0}^m \int_s^{t} \|\partial_t^k   c\|^2_{ L_2 (\Omega) } \, d\tau
 + \varepsilon_b^{-1} \big(\int_s^t \cD_{||}  \, d\tau   + \varepsilon \int_s^t \cD \, d\tau\big), \notag
 \end{align}
where $\eta$ is a function satisfying \eqref{eq3.1.2}.
\end{lemma}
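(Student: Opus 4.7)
The plan is to eliminate the weighted $a$-average $\sqrt{M_{+}}a^{+}+\sqrt{M_{-}}a^{-}$ from the right-hand side of the preliminary estimate \eqref{eq3.1.1} in Lemma \ref{lemma 3.1} by substituting in the bound \eqref{eq3.2.1} from Lemma \ref{lemma 3.2}. Let $C_1 = C_1(\Omega, \theta, r_3, r_4)$ and $C_2 = C_2(\Omega, \theta, r_3, r_4)$ denote the implicit constants on the right-hand sides of \eqref{eq3.1.1} and \eqref{eq3.2.1}, respectively. Applying \eqref{eq3.2.1} to the third sum in \eqref{eq3.1.1} yields, for every $\varepsilon_b \in (0,1)$,
\begin{align*}
\sum_{k=0}^m \int_s^{t} \|\partial_t^k b\|^2_{L_2(\Omega)}\, d\tau
&\leq C_1(\eta(t) - \eta(s)) + C_1\varepsilon_b \sum_{k=0}^m \int_s^t \|\partial_t^k c\|^2_{L_2(\Omega)}\, d\tau \\
&\quad + C_1 C_2 \varepsilon_b \Big[(\tilde\eta(t) - \tilde\eta(s)) + \sum_{k=0}^m \int_s^t \|\partial_t^k [b,c]\|^2_{L_2(\Omega)}\, d\tau \\
&\quad + \int_s^t \cD_{||}\, d\tau + \varepsilon \int_s^t \cD\, d\tau\Big]
+ C_1 \varepsilon_b^{-1}\Big(\int_s^t \cD_{||}\, d\tau + \varepsilon \int_s^t \cD\, d\tau\Big),
\end{align*}
where $\tilde\eta$ is another function satisfying \eqref{eq3.1.2} (the sum $\eta + \tilde\eta$ still obeys \eqref{eq3.1.2}).

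The key step is then the absorption of the extra $b$-contribution: the cross term on the right-hand side contains $C_1 C_2 \varepsilon_b \sum_{k=0}^m \int_s^t \|\partial_t^k b\|^2_{L_2(\Omega)}\, d\tau$, which can be moved to the left-hand side provided we choose $\varepsilon_b$ so that $C_1 C_2 \varepsilon_b \leq 1/2$. After this absorption, the $c$-term has coefficient $C_1\varepsilon_b(1+C_2)$, which is again of the form $\varepsilon_b' \sum_{k=0}^m \int_s^t \|\partial_t^k c\|^2_{L_2(\Omega)}$ for a new small constant $\varepsilon_b'$; renaming $\varepsilon_b' \mapsto \varepsilon_b$ and enlarging the threshold if necessary gives the stated smallness. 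The contribution $C_1 C_2 \varepsilon_b \int_s^t(\cD_{||} + \varepsilon\cD)\, d\tau$ is dominated by the $\varepsilon_b^{-1}$-term already present (since $\varepsilon_b \leq \varepsilon_b^{-1}$ for $\varepsilon_b \in (0,1)$), and the temporal boundary contributions combine into a single function of the form $\eta$ satisfying \eqref{eq3.1.2}.

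The argument is essentially an algebraic combination of the two preceding lemmas, and I do not anticipate any substantive obstacle beyond the bookkeeping above. The only point that requires care is ensuring that the final small constant in front of $\sum_{k=0}^m \int_s^t \|\partial_t^k c\|^2_{L_2(\Omega)}\, d\tau$ remains genuinely small (so that it can later be absorbed once a reverse estimate controlling $c$ by $b$, cf.\ \eqref{eq3.1.0.10}, is available); this is why the absorption threshold must be chosen depending on $C_1 C_2$, i.e., on $\Omega, \theta, r_3, r_4$, exactly as in the statement of Lemma \ref{lemma 3.3}.
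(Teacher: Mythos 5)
Your proof is correct and follows the paper's intended route exactly: the paper simply states that Lemma~\ref{lemma 3.3} follows by ``combining the estimates \eqref{eq3.1.1} and \eqref{eq3.2.1} in Lemmas \ref{lemma 3.1} and \ref{lemma 3.2}'' without spelling out the absorption, and your substitution-plus-absorption argument, with the threshold $\varepsilon_b \le (2C_1C_2)^{-1}$ and the renaming of the resulting small $c$-coefficient, is precisely that combination.
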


\section{Estimate of $c$}
    \label{section 8}
The objective of this section is to derive an estimate of $c$, an important step in obtaining the positivity estimate for $L$ (see \eqref{eq122}). Given the derivative loss for $a^{\pm}$ at the highest order, it becomes essential to control the $L_2^{t, x}$ norms of $c$ up to the top order.
\begin{lemma}
    \label{lemma 7.1}
Under  Assumption  \ref{assumption 3.5},  there exists a function $\eta$ satisfying  \eqref{eq3.1.2}   such that
\begin{align}
\label{eq6.1.4}
 &   \sum_{k = 0}^m \int_s^{t} \|\partial_t^k  c\|^2_{ L_2 (\Omega) } \, d\tau
    \lesssim_{\Omega, \theta, r_3, r_4 } (\eta (t)  - \eta (s)) \\
 &  +   \sum_{k = 0}^m \int_s^{t} \|\partial_t^k  b\|^2_{ L_2 (\Omega) } \, d\tau  +        \int_s^{t} \cD_{||} (\tau) \, d\tau +  \varepsilon \int_s^t \cD \, d\tau. \notag
 \end{align}
\end{lemma}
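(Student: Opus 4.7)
The strategy is the duality argument developed in Lemmas \ref{lemma 3.1} and \ref{lemma 3.2}: insert a carefully chosen test function $\psi$ into the integral formulation \eqref{eq2.3} of the $m$-times $t$-differentiated Landau equation so as to extract $\|\partial_t^m c\|_{L_2(\Omega)}^2$ on the left-hand side from the transport term $I_1$. The lower-order cases $k<m$ are handled identically.

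Before constructing $\psi$, I first deal with the mean value of $c$. Combining the mass constraint \eqref{eq6.1.0}, the propagated identity \eqref{eq3.1.51}, and the energy conservation \eqref{eq6.1.2}, the spatial average
\[
\bar c(t) := |\Omega|^{-1}\int_\Omega c(t,x)\,dx = -\frac{\kappa_3}{8\pi|\Omega|}\int_\Omega(|\bE|^2+|\bB|^2)(t,x)\,dx
\]
is quadratic in the perturbation, and therefore its contribution to $\int_s^t\|c\|_{L_2}^2\,d\tau$ is absorbed into $\varepsilon\int_s^t\cD\,d\tau$ using the smallness assumption \eqref{eq3.5.1}. So it suffices to bound $\tilde c := c-\bar c$, which has zero spatial mean. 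Let $\phi \in W^2_2(\Omega)$ with $\int_\Omega \phi=0$ solve the Neumann problem $-\Delta_x\phi=\tilde c$, $\partial\phi/\partial n_x|_{\partial\Omega}=0$; then $\|\phi\|_{W^2_2(\Omega)}\lesssim_\Omega \|\tilde c\|_{L_2(\Omega)}$. Set
\[
\psi(t,x,p)=A_i(p)\,\partial_{x_i}\partial_t^m \phi(t,x),
\]
where the Schwartz vector field $A=(A^+,A^-)$ is designed, via an ansatz of the form \eqref{eq3.1.40}--\eqref{eq3.1.42}, so that (i) $A^\pm \perp \sqrt{J^\pm}$ and $A^\pm\perp p_j\sqrt{J^\pm}$ for $j=1,2,3$, killing pairings with $a^\pm$ and $b$ in $I_1$, (ii) the bilinear pairing $\langle \tfrac{p_j}{p_0^\pm}A_i^\pm,\chi_6^\pm\rangle$ evaluates to $\epsilon_2\delta_{ij}$ with $\epsilon_2\neq 0$, extracting $c$, and (iii) the electric-field moment vanishes, which is automatic from (i) together with the neutrality condition \eqref{eq0}. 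The SRBC for $\psi$ is verified as in \eqref{eq3.2.11}: the $p_i$ prefactor combined with $\nabla_x\phi\cdot n_x=0$ on $\partial\Omega$ and the decomposition \eqref{eq3.1.4} reduces $\psi|_{\partial\Omega}$ to a function of $P_{||}p$ alone, which is even under $p\mapsto R_x p$.

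With this $\psi$, the six integrals in \eqref{eq2.3} are handled as follows. The transport term $I_1$, after the macro--micro split of $\partial_t^m f$ and integration by parts in $x$, yields
\[
I_1 \geq (\epsilon_2-\varepsilon_c)\int_s^t\|\partial_t^m\tilde c\|_{L_2(\Omega)}^2\,d\tau - N\varepsilon_c^{-1}\Bigl(\int_s^t\|\partial_t^m b\|_{L_2(\Omega)}^2\,d\tau + \int_s^t\cD_{||}\,d\tau\Bigr),
\]
the $b$-contribution coming from the residual pairing $\langle \tfrac{p_j}{p_0^\pm} A_i^\pm,\chi_{l+2}^\pm\rangle$, which is harmless since $\|\partial_t^m b\|_{L_2}^2$ is allowed on the right-hand side of \eqref{eq6.1.4}. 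The $t$-derivative integral $I_2$ is treated by integration by parts in $t$ followed by the Neumann estimate applied to $\partial_t^{m+1}\phi$, together with a continuity-type bound analogous to \eqref{eq3.2.5.0} arising from the macroscopic equation $\partial_t c = l_c + h_c$, yielding $|I_2|\lesssim \varepsilon_c\int\|\partial_t^m\tilde c\|^2 + \varepsilon_c^{-1}(\int\|\partial_t^m b\|^2+\int\cD_{||})$. The temporal boundary term $I_3$ gives $\eta(t)-\eta(s)$ with $|\eta|\lesssim \cI_{||}$ via the $W^2_2$-elliptic estimate exactly as in \eqref{eq3.1.20}. The electric-field term $I_4$ vanishes by (iii). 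The linear collision term $I_5$ is controlled by symmetry of $L$ and Cauchy--Schwarz, giving $\varepsilon_c\int\|\phi\|_{W^1_2}^2 + \varepsilon_c^{-1}\int\cD_{||}$, and the nonlinear term $I_6$ is absorbed into $\varepsilon\int\cD$ via Lemmas \ref{lemma 12.A.2} and \ref{lemma 12.A.3}. Choosing $\varepsilon_c$ sufficiently small to absorb the $\|\partial_t^m\tilde c\|_{L_2}^2$ contributions on the right into the left delivers \eqref{eq6.1.4}.

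The main obstacle is the explicit construction of the velocity weight $A(p)$ satisfying conditions (i)--(iii) simultaneously in the relativistic setting, where $\chi_6^\pm$ involves $p_0^\pm$ rather than the nonrelativistic $|p|^2/2$, so the ansatz from \cite{VMB_03} does not transfer directly. As in Section \ref{section J} for $B_{ij}$, one should posit $A_i(p)=p_i\,g(|p|)\bigl((\sqrt{J^+})^{-1},(\sqrt{J^-})^{-1}\bigr)$ with $g(|p|)=\mu(|p|)\sum_{j=0}^{N_0} k_j|p|^{2j}$ for a degree $N_0$ large enough that the finite linear system enforcing (i) and the scalar extraction in (ii) is solvable, with (iii) then following automatically from (i) and \eqref{eq0}. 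A secondary subtlety is the bookkeeping of the nonzero mean $\bar c(t)$, which must be handled through the conservation law \eqref{eq6.1.2} rather than pointwise-in-time dissipation, so that it contributes only to the $\varepsilon\int\cD$ budget.
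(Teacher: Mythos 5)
Your overall skeleton — a duality argument with a Neumann test function extracting $\|\partial_t^m c\|^2_{L_2}$ from the transport term, plus the absorption of the mean $\bar c$ via the energy identity \eqref{eq7.1.3.1} — matches the paper's Step 1. But there are two genuine problems.

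\textbf{The treatment of $I_2$ / $\partial_t^{m+1}\phi$ is the main gap.} Because $\psi$ carries a factor $\partial_t^m\phi$, the $t$-derivative term $I_2$ produces $\partial_t^{m+1}\phi$, and you need $\int_s^t\|\partial_t^{m+1}\phi\|^2_{W^1_2}\,d\tau$ under control by quantities of derivative count $\le m$. You propose to do this with ``a continuity-type bound analogous to \eqref{eq3.2.5.0} arising from the macroscopic equation $\partial_t c=l_c+h_c$,'' but that macroscopic equation \eqref{mac1} has $l_c$ built from a velocity average of $(\partial_t+\frac{p}{p_0}\cdot\nabla_x+L)(1-P)f$; the $\partial_t(1-P)f$ piece forces $\partial_t^{m+1}(1-P)f$ on the right, which is not controlled by $\cD_{||}$. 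For $a^\pm$ the continuity equation \eqref{eqE.2.15} is exact and purely of divergence form, which is what makes \eqref{eq3.2.5.0} work; there is no equally simple analogue stated in the paper for $c$. The paper instead goes to Step~2: it runs the duality argument a second time with $\tilde\psi = \chi_6(p)\,\partial_t^{m+1}\phi$ and the rearrangement \eqref{eq3.73}, which after an integration by parts in $t$ and in $x$ produces $-(I_2+I_3)=\int_s^t\|\partial_t^{m+1}\nabla_x\phi\|^2\,d\tau$ on the left (see \eqref{eq7.11}), so that the quadratic form in $\partial_t^{m+1}\phi$ is absorbed directly rather than estimated from the right. That second duality step is the key non-obvious ingredient and your proposal has no substitute for it. (An alternative route exists: the \emph{exact} local energy conservation $\partial_t(\kappa_3^{-1}c+\kappa_2^+\sqrt{M_+}a^++\kappa_2^-\sqrt{M_-}a^-)+\kappa_1^{-1}\nabla_x\cdot b = 4\pi\kappa_3\,\bE\cdot\bm{j}$ combined with the mass continuity equations for $a^\pm$ does put $\partial_t c$ in divergence form plus a quadratic source, and the flux has zero normal component on $\partial\Omega$; but that is not what your text invokes, and one would still need to handle the boundary terms and the quadratic source carefully.)

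\textbf{The orthogonality conditions on the test function are misstated, and this cascades.} You impose (i) $A^\pm\perp\sqrt{J^\pm}$ and $A^\pm\perp p_j\sqrt{J^\pm}$ in $L_2(\bR^3_p)$, but the pairings appearing in $I_1$ and $I_4$ are with $\frac{p_j}{p_0^\pm}A_i^\pm$, not with $A_i^\pm$. The constraint actually needed to kill $a^\pm$ in $I_1$ and make $I_4$ vanish is $\langle\frac{p_j}{p_0^\pm}A_i^\pm,\sqrt{J^\pm}\rangle=0$ (i.e.\ $\int\frac{p_1^2 g(|p|)}{p_0^\pm}\,dp=0$ for each sign), which is precisely what the paper's scalar parameter $\rho_0^\pm$ in $C_i = p_i\bigl((p_0^+-\rho_0^+)\sqrt{J^+},(p_0^--\rho_0^-)\sqrt{J^-}\bigr)$ achieves via \eqref{eq7.16}. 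Once that is imposed, the electric-field moment vanishes on its own, without the neutrality condition, contrary to your claim. Further, the pairing $\langle\frac{p_j}{p_0^\pm}A_i^\pm,\chi_{l+2}^\pm\rangle$ is $\kappa_1\int\frac{p_ip_jp_l g(|p|)}{p_0^\pm}\,dp$, which vanishes identically by parity, so there is no ``residual $b$-pairing'' in $I_1$; the $\|\partial_t^m b\|^2$ contribution enters through $I_2$ (see \eqref{eq7.5}), not $I_1$. Finally, the paper's closed form $C_i$ gives an extraction coefficient $\rho_c^\pm=\frac{\kappa_3}{3}\int\frac{|p|^2}{p_0^\pm}(p_0^\pm-\rho_0^\pm)^2 J^\pm\,dp>0$ automatically, whereas your polynomial ansatz (à la Section \ref{section J}) would require an explicit solvability and non-degeneracy check, which you leave open.
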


\begin{proof}
\textbf{Step 1: a preliminary estimate of $c$.}

\textit{Test function.} As in the proof of Lemmas \ref{lemma 3.1}--\ref{lemma 3.2}, we focus on the top derivative term $\partial_t^m c$, and our proof involves the integral formulation \eqref{eq2.3} and a duality argument.
First, thanks to the elliptic regularity theory, the  Neumann problem
 \begin{equation}
			\label{eq7.1.1}
\left\{\begin{aligned}
 &- \Delta_x \phi = c - \int_{\Omega} c \, dx,\\
 & \frac{\partial \phi}{\partial  n_x} = 0 \, \, \text{on} \, \partial \Omega,
\end{aligned}
\right.
\end{equation}
has a unique strong solution $\phi \in W^2_2 (\Omega)$ satisfying
\begin{equation}
        \label{eq7.1.2}
    \int_{\Omega} \phi \, dx  = 0,
\end{equation}
 and, in addition,
\begin{equation}
			\label{eq7.1.3}
    \|\phi\|_{ W^{2}_2 (\Omega) } \lesssim_{\Omega} \|c  - \int_{\Omega} c \, dx\|_{ L_2 (\Omega) }.
\end{equation}
Furthermore,  to estimate the average of $c$, we recall the energy identity (cf. \cite{GS_03}) 
\begin{align*}
   &  \partial_t \big(\int_{\bR^3} (p_0^{+} F^{+} + p_0^{-} F^{-})  \, dp
    + \frac{1}{8 \pi} (|\bE|^2+|\bB|^2)\big) \\
    & +  \nabla_x \cdot \big(\int_{\bR^3}  p (F^{+} + F^{-}) \, dp  + \frac{1}{4 \pi} (\bE \times \bB)\big)   = 0.
    \end{align*}
Integrating the equality over $\Omega$ and  using the SRBC and the perfect conductor BC \eqref{eq1.5}, we obtain
\begin{align*}
 \int_{\Omega}  \int_{\bR^3} (p_0^{+} F^{+} + p_0^{-} F^{-})  \, dp \, dx
    + \frac{1}{8 \pi} \int_{\Omega}  (|\bE|^2+|\bB|^2)  \, dx  = \text{const}.
    \end{align*}
Then, by the assumption on the initial data $f_0$ (see \eqref{eq6.1.2}),
\begin{align*}
 \int_{\Omega}  \int_{\bR^3} (p_0^{+} f^{+} \sqrt{J^{+}} + p_0^{-} F^{-} \sqrt{J^{-}})  \, dp\, dx
    + \frac{1}{8 \pi} \int_{\Omega}  (|\bE|^2+|\bB|^2)  \, dx  = 0.
    \end{align*}
Thus,  by the definition of $c$ in \eqref{eq6.20} and \eqref{eq3.1.51}, we have
\begin{align}
    \label{eq7.1.3.1}
    \int_{\Omega} c \, dx & = \kappa_3 \int_{\Omega} \int_{  \bR^3 } (p_0^{+} f^{+} \sqrt{J^{+}} + p_0^{-} f^{-} \sqrt{J^{-}}) \, dx dp \\
   & = - \frac{\kappa_3 }{8 \pi} \int_{\Omega}  (|\bE|^2+|\bB|^2)  \, dx. \notag
\end{align}
Combining this with \eqref{eq7.1.3} gives
\begin{align}
\label{eq7.1.4}
 & \|\partial_{t}^m \phi (\tau, \cdot)\|_{ W^{2}_2 (\Omega) } \lesssim_{\Omega} \| \partial_{t}^m c (\tau, \cdot)\|_{ L_2 (\Omega) } + \bigg|\partial_{\tau}^m  \int_{\Omega}  (|\bE (\tau, x)|^2+|\bB (\tau, x)|^2)  \, dx\bigg|.
\end{align}
Furthermore, by the estimates \eqref{eq12.A.4.1}--\eqref{eq12.A.4.2} in Lemma \ref{lemma 12.A.4} and the smallness assumption \eqref{eq3.5.1},
\begin{align}
\label{eq7.1.5}
 &\int_s^t \|\partial_t^m \phi\|^2_{W^2_2 (\Omega) }  \, d\tau \lesssim \int_s^t  \| \partial_{t}^m c\|^2_{ L_2 (\Omega) }  \, d\tau
 + \varepsilon \int_s^t \cD \, d\tau,\\
 \label{eq7.1.6}
 & \|\partial_t^m \phi (\tau, \cdot)\|^2_{W^2_2 (\Omega) } 
 \lesssim   \cI_{||} (\tau) +  \cI^{2}_{||} (\tau) \lesssim \cI_{||} (\tau).
 \end{align}

Next, let $\rho^{\pm}_0$ be a number defined by
$$
    \int_{\bR^3} \frac{|p|^2}{p_0^{\pm}} (p_0^{\pm} - \rho^{\pm}_0) J^{\pm}\, dp = 0,
$$
so that by symmetry,
\begin{equation}
            \label{eq7.16}
            \int_{\bR^3} \frac{p_i^2}{p_0^{\pm}} (p_0^{\pm} - \rho^{\pm}_0) J^{\pm}\, dp = 0, i = 1, 2, 3.
\end{equation}
In addition, by \eqref{eq7.16}, for any number $r$, we also have
\begin{equation}
            \label{eq7.17}
\int_{\bR^3} \frac{p_i^2}{p_0^{\pm}} (p_0^{\pm} - \rho^{\pm}_0) (p_0^{\pm} - r)  J^{\pm} \,  dp = \int \frac{p_i^2}{p_0^{\pm}} (p_0^{\pm} - \rho^{\pm}_0)^2 J^{\pm} \, dp.
\end{equation}
We denote
\begin{align}
    \label{eq7.1.7}
      C_i =  (C_i^{+}, C_i^{-}): =  p_i \big((p_0^{+} - \rho^{+}_0) \sqrt{J^{+}}, (p_0^{-} - \rho^{-}_0) \sqrt{J^{-}}\big)
\end{align}
and note that by oddness, \eqref{eq7.16}--\eqref{eq7.17}, and the definition of $\chi_6$ in \eqref{eq6.24},
\begin{align}
    \label{eq7.18}
 &\langle \frac{p_j}{p_0^{\pm}} C_i^{\pm}, \sqrt{J^{\pm}} \rangle =   0, \,  i, j = 1, 2, 3, \\
    \label{eq7.19}
  &  \langle \frac{p_j}{p_0^{\pm}} C_i^{\pm}, \chi_6^{\pm} \rangle =  1_{ i = j} \kappa_3 \int \frac{p_i^2}{p_0^{\pm}} (p_0^{\pm} - \rho^{\pm}_0)^2 J^{\pm} \, dp \\
  & = 1_{ i = j} \frac{1}{3} \kappa_3 \int \frac{|p|^2}{p_0^{\pm}} (p_0^{\pm} - \rho^{\pm}_0)^2 J^{\pm} \, dp =: 1_{i = j} \rho_c^{\pm}, \, i  =1, 2, 3.  \notag
\end{align}
Let $\psi$ be a test function given by
$$
     \psi (t, x, p) =  C_i   (p)
    \, \partial_{x_i} \partial_{t}^m  \phi (t, x).
$$
Repeating the argument in \eqref{eq3.2.11}, we conclude that $\psi$ satisfies the SRBC. The remaining admissibility conditions are verified in the same way as in Lemma \ref{lemma 3.1}. Hence, as in the proof of Lemma \ref{lemma 3.1}, we may use the integral formulation of the Landau equation \eqref{eq2.3}.

 \textit{Estimate of the key term.}
 By using the macro-micro decomposition in the integral $I_1$ and noticing that the terms containing $b$ and $a^{\pm}$ vanish due to oddness and \eqref{eq7.18}, respectively, we get
  \begin{align}
  \label{eq7.3}
   I_1
  &  = -   (\rho_c^{+} + \rho_c^{-})
   \int_s^{t} \int_{\Omega}  (\partial_{t}^{m}  \Delta_x \phi)    \,  (\partial_{t}^m c)  \, dx d\tau\\
  & -   \int_s^{t} \int_{\Omega \times \bR^3} (\partial_{x_i x_j} \partial_{t}^{m} \phi) \, \big((1-P)   (\partial_{t}^m f) \cdot (C_i^{+} \frac{p_j}{p_0^{+}},  C_i^{-} \frac{p_j}{p_0^{-}})\big) \, dx dp d\tau. \notag
\end{align}
Hence, by using Eq. \eqref{eq7.1.1},  the Cauchy-Schwarz inequality, and the elliptic estimate \eqref{eq7.1.5}, we get,  for any $\varepsilon_c \in (0, 1)$, 
 \begin{align*}
I_1 &\ge
 (\rho_c^{+} + \rho_c^{-}) \bigg(\int_s^{t} \|\partial_{t}^m c\|^2_{ L_2 (\Omega) } \, d\tau -\int_s^t \bigg|\partial_{t}^m  \int_{\Omega} c  \, dx\bigg|^2 \, d\tau\bigg)\\
& - \varepsilon_c  \int_s^{t}  \|\partial_{t}^m \phi\|^2_{ W^2_2 (\Omega) } \, d\tau  - N \varepsilon_c^{-1}   \int_s^{t}  \cD_{||}  \, d\tau  \notag\\
     & \ge  (\rho_c^{+} + \rho_c^{-} - N_0 \varepsilon_c)  \int_s^{t} \|\partial_{t}^m c\|^2_{ L_2 (\Omega) } \, d\tau\\
    & -  N \varepsilon_c^{-1}  \big(\int_s^{t}  \cD_{||}  \, d\tau +  \varepsilon \int_s^t \cD \, d\tau\big),
 \end{align*}
 where $N_0 = N_0 (\Omega), N = N (\Omega, \theta, r_3, r_4)$.

 \textit{Estimate of the $t$-derivative term.}
We employ the macro-micro decomposition and observe that the terms involving $a^{\pm}$ and $c$ vanish due to the choice of the test function (see \eqref{eq7.1.7}) and oddness. Hence, by the Cauchy-Schwarz inequality,
 \begin{align}
     \label{eq7.5}
& I_2  \lesssim \varepsilon_c \int_s^{t}   \|   \partial_{t}^{m+1} \phi \|^2_{ W^1_2 (\Omega)} \, d\tau  \\
& +   \varepsilon_c^{-1}  \big(\int_s^{t}  \| \partial_{t}^m b\|^2_{ L_2 (\Omega) } \, d\tau
 +     \int_s^{t}  \| \partial_{t}^m (1-P) f\|^2_{ L_2 (\Omega \times \bR^3) } \, d\tau\big). \notag
 \end{align}
We will estimate the first term on the r.h.s. of \eqref{eq7.5} in Step 2.

\textit{The $t$-boundary term.} Here we show that $I_3$, as defined  in \eqref{eq2.3}, can be represented as $\eta (t) - \eta (s)$ with $\eta$ satisfying \eqref{eq3.1.2}. Indeed, by the Cauchy-Schwarz inequality and the bound  \eqref{eq7.1.6},  we have
\begin{align}
    \label{eq7.1.8}
 & \int_{\Omega \times \bR^3} |(\psi \cdot \partial_{t}^{m} f) (\tau, x, p)| \, dx dp \\
 & \lesssim \|\partial_{t}^m f (\tau, \cdot)\|^2_{ L_2 (\Omega \times \bR^3) }   + \|\partial_{t}^m \phi (\tau, \cdot)\|^2_{ W^{1}_2 (\Omega) } 
    \lesssim_{\Omega} \cI_{||} (\tau). \notag
\end{align}


\textit{The electric field term.}
Due to  the orthogonality property \eqref{eq7.18}  of $C_i$, we have
\begin{align}
          \label{eq7.7}
 &   I_4 =   \text{const}  \int_{\bR^3} \big(e_{+} \frac{p_j}{p_0^{+}} C_i \sqrt{J^{+}} -  e_{-} \frac{p_j}{p_0^{-}} C_i \sqrt{J^{-}}\big)\, dp \\
 & \times \int_s^{t}  \int_{\Omega} (\partial_{t}^m \bE_j) \,  (\partial_{x_i} \partial_{t}^m  \phi) \, dx d\tau = 0. \notag
\end{align}

\textit{Estimate of the remaining terms.}
Repeating the argument in \eqref{eq3.1.14}--\eqref{eq3.1.10}, we conclude
\begin{align}
          \label{eq7.8}
  I_5+I_6   \lesssim \varepsilon_c \int_s^{t} \|\partial_{t}^m \phi\|^2_{ W^1_2 (\Omega) } \, d\tau  +  \varepsilon_c^{-1} \big(\int_s^{t} \cD_{||}  \, d\tau + \varepsilon \int_s^t \cD \, d\tau\big).
\end{align}


\textit{Preliminary estimate of $c$.} Combining the identity \eqref{eq2.3} with the estimates \eqref{eq7.3}--\eqref{eq7.8} and using the bounds \eqref{eq7.1.4} to handle the terms containing the test function $\phi$, we obtain
\begin{align}
    \label{eq7.20}
  &  \int_s^{t} \|\partial_{t}^m c\|^2_{ L_2 (\Omega) } \, d\tau
    \lesssim_{\Omega, \theta, r_3, r_4} (\eta (t) - \eta (s)) \\
   &  + \varepsilon_c \int_s^{t} \|\partial_{t}^m c\|^2_{ L_2 (\Omega) } \, d\tau
     +  \varepsilon_c \int_s^{t}   \| \partial_{t}^{m+1} \phi \|^2_{ W^1_2 (\Omega)} \, d\tau  \notag\\
 & + \varepsilon_c^{-1}  \big(\int_s^t \|\partial_{t}^m b\|^2_{ L_2 (\Omega) }  \,d\tau
    + \int_s^t \cD_{||}\,d\tau   +   \varepsilon \int_s^t \cD \, d\tau\big). \notag
\end{align}
Taking  $\varepsilon_c$ sufficiently small, we may drop the second term on the r.h.s.. Thus, to finish the proof of the desired estimate \eqref{eq6.1.4}, it suffices to show that
\begin{align}
    \label{eq7.15}
 &  \int_s^{t} \| \partial_{t}^{m+1}  \phi\|^2_{ W^1_2 (\Omega) } \, d\tau
  \lesssim_{\Omega, \theta, r_3, r_4}   \int_s^{t} \| \partial_{t}^{m} b\|^2_{ L_2 (\Omega) } \, d\tau \\
 &   + \int_s^{t} \cD_{||}  \, d\tau +  \varepsilon \int_s^t \cD \, d\tau.  \notag
\end{align}

\textbf{Step 2: estimate of $\partial_{t}^{m+1} \phi$.}
We  use the rearrangement \eqref{eq3.73} of the integral formulation \eqref{eq2.3} with
$$
    \psi (t, x, p) = \chi_6 (p) (\partial_t^{m+1} \phi) (t, x),
$$
which satisfies the SRBC due to the spherical symmetry of $\chi_6$ (see \eqref{eq6.24}).

\textit{The key term.}
Integrating by parts in $t$ in the integral $I_2$ and using the orthogonality property of $\chi_6$, and Eq. \eqref{eq7.1.3},  we obtain
\begin{align*}
 -(I_2 + I_3)& =  \int_s^{t} \int_{\Omega}  (\partial_{t}^{m+1} c)  (\partial_{t}^{m+1} \phi)  \, dx d\tau \\
 & =  -\int_s^{t} \int_{\Omega}  (\Delta_x \partial_{t}^{m+1}  \phi)   \, (\partial_{t}^{m+1} \phi)  \, dx d\tau
  +  \int_s^{t} \bigg(\partial_{t}^{m+1} \int_{\Omega}   \phi \, dx\bigg) \,  \bigg(\partial_{t}^{m+1} \int_{\Omega}  c \, dx\bigg) d\tau.
\end{align*}
Integrating by parts in $x$ and using the Neumann boundary condition, and the zero-average property of $\phi$ in \eqref{eq7.1.2}, we conclude
\begin{equation}
  \label{eq7.11}
     -(I_2 + I_3)  =   \int_s^{t}  \| \partial_{t}^{m+1} \nabla_x  \phi\|^2_{ L_2 (\Omega) } \, d\tau.
\end{equation}

\textit{The transport term.} We note that by oddness and the  Cauchy-Schwarz inequality, for any $\tilde \varepsilon_c \in (0, 1)$, one has
\begin{align}
 \label{eq7.12}
|I_1| & \lesssim \tilde \varepsilon_c \int_s^{t} \|\nabla_x \partial_{t}^{m+1} \phi\|^2_{ L_2 (\Omega) } \, d\tau\\
& +  \tilde \varepsilon^{-1}_c \big(\int_s^{t} \| \partial_{t}^m b\|^2_{ L_2 (\Omega) } \, d\tau  +  \int_s^{t} \cD_{||} \, d\tau\big). \notag
\end{align}

\textit{The remaining terms.} We observe that for the linear electric field term, by oddness, we have
$$
    I_4 = 0.
$$
Next, since $\chi_6$ is in the kernel of $L$, the term $I_5$ also vanishes.
Furthermore, repeating the argument in \eqref{eq3.1.10}, we get
\begin{align}
      \label{eq7.14}
    I_6  & \lesssim
     \tilde \varepsilon_c \int_s^{t}   \|\partial_t^{m+1}  \phi \|^2_{ L_2 (\Omega) }  d\tau +    \tilde \varepsilon_c^{-1}  \varepsilon \int_s^t \cD \, d\tau.
\end{align}


Finally, combining the integral identity \eqref{eq2.3} with the estimates \eqref{eq7.11}--\eqref{eq7.14}, we obtain
\begin{align}
    \label{eq7.15.1}
  & \int_s^{t} \| \partial_t^{m+1} \nabla_x  \phi\|^2_{ L_2 (\Omega) } \, d\tau \lesssim_{\Omega} \tilde \varepsilon_c  \int_s^{t} \| \partial_t^{m+1} \phi\|^2_{ W^1_2 (\Omega) } \, d\tau \\
   & + \tilde \varepsilon_c^{-1} \big(\int_s^{t} \| \partial_t^{m} b\|^2_{ L_2 (\Omega) } \, d\tau 
   + \int_s^{t} \cD_{||}  \, d\tau  + \varepsilon \int_s^t \cD \, d\tau\big). \notag
\end{align}
Using the Poincar\'e inequality and the vanishing average property of $\phi$ (see \eqref{eq7.1.2}), and choosing $\tilde \varepsilon_c$ sufficiently small, we may absorb the first term on the r.h.s. of \eqref{eq7.15.1} into the l.h.s.. Thus, \eqref{eq7.15} holds, and the desired bound \eqref{eq6.1.4} is valid.
\end{proof}

\section{Estimates of the electromagnetic field and macroscopic densities}
    \label{section 9}

The goal of this section is to prove the crucial bound \eqref{eq1.2.46}, which is stated rigorously in Proposition \ref{proposition E.4}. We follow the ``multi-step" argument sketched in Section \ref{section 4.11} (see \eqref{eq1.2.40}--\eqref{eq1.2.45}).

In this section,  $\eta$ is a function satisfying the bound \eqref{eq3.1.2}.


\begin{lemma}[estimates of the lower-order $t$-derivatives of $a^{\pm}$, cf. \eqref{eq1.2.40}]
            \label{lemma E.7}
            Under  Assumption \ref{assumption 3.5},
 there exists  a sufficiently small constant $\varepsilon_a =\varepsilon_a (\Omega, r_3, r_4, \theta) > 0$  such that
 \begin{align}
\label{eqE.5.7}
 &   \sum_{k=1}^{m-2} \int_s^{t} \|\partial_t^k [a^{+}, a^{-}]\|^2_{ L_2 (\Omega) } \, d\tau\\
&
   \lesssim_{\Omega, \theta, r_3, r_4 }  (\eta (t)  - \eta (s)) +     \varepsilon_a  \sum_{k=1}^{m-   4  }  \int_s^t   \|\partial_t^{ k} \bE\|^2_{ L_2 (\Omega) } \, d\tau \notag \\
&  +   \varepsilon_a^{-1}  \bigg(\sum_{k=1}^{m} \int_s^{t} \| \partial_t^{k}  b\|^2_{ L_2 (\Omega) } \, d\tau  +      \sum_{k=1}^{m-2} \int_s^{t} \|\partial_t^k  c\|^2_{ L_2 (\Omega) } \, d\tau  +      \int_s^t \cD_{||} \, d\tau +  \varepsilon \int_s^t \cD \, d\tau\bigg). \notag
 \end{align}
\end{lemma}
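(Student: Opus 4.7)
The plan is to implement the ``derivative gap'' strategy sketched in Section~\ref{section 4.11}, Step~1, in the duality framework of Lemmas~\ref{lemma 3.1}--\ref{lemma 7.1}. Fix $k$ with $1 \le k \le m-2$. For $k = 1$ I would differentiate the macroscopic equation \eqref{mac5} once in $t$ and bound $\|\partial_t a^{\pm}\|_{L_2(\Omega)}$ directly by $\|\partial_t c\|_{L_2(\Omega)} + \|l_a^{\pm}\|_{L_2(\Omega)} + \|h_a^{\pm}\|_{L_2(\Omega)}$: the first is already in the desired right-hand side sum, the $l$-term is absorbed by $\cD_{||}$ after Cauchy-Schwarz and the representation of $l_a^\pm$ as a velocity moment of $(\partial_t + \frac{p}{p_0^\pm}\cdot\nabla_x+L_\pm)(1-P)f$, and the cubic $h$-term is absorbed by $\varepsilon \int_s^t \cD\, d\tau$ via Lemmas~\ref{lemma 12.A.2}--\ref{lemma 12.A.3} combined with the smallness \eqref{eq3.5.1}. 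No $\bE$-contribution arises at $k=1$, which is consistent with the $\bE$-sum in \eqref{eqE.5.7} starting only at $k = 1$.

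For $2 \le k \le m-2$, the key identity is the integration by parts in $t$
\[
\int_s^t \|\partial_t^k a^{\pm}\|^2_{L_2(\Omega)} \, d\tau = \bigg[\int_{\Omega} \partial_t^{k-1} a^{\pm} \, \partial_t^k a^{\pm} \, dx\bigg]_s^t - \int_s^t \int_{\Omega} \partial_t^{k-1} a^{\pm} \, \partial_t^{k+1} a^{\pm} \, dx d\tau,
\]
whose boundary term is controlled by $\cI_{||}(s) + \cI_{||}(t)$ and absorbed into $\eta(t) - \eta(s)$ via \eqref{eq3.1.2}. I would next invoke the continuity equation \eqref{eqE.2.15}, $\sqrt{M_{\pm}} \partial_t^{k+1} a^{\pm} = - \nabla_x \cdot \partial_t^k \bm{j}^{\pm}$, integrate by parts in $x$ using the no-flux condition $\bm{j}^{\pm} \cdot n_x = 0$ on $\partial \Omega$ from \eqref{eqE.2.16}, and substitute $\nabla_x \partial_t^{k-1} a^{\pm} = \pm \partial_t^{k-1} \bE - \rho \nabla_x \partial_t^{k-1} c + \partial_t^{k-1}(l\text{- and }h\text{-terms})$ from \eqref{mac4} (with spatial index). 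The main integral becomes $\pm (M_{\pm})^{-1/2} \int_s^t \int_{\Omega} \partial_t^{k-1} \bE \cdot \partial_t^k \bm{j}^{\pm} \, dx d\tau$, with the $c$-, $l$-, and $h$-remainders placed in the good buckets as in the $k=1$ case.

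For $k = 2$ this main integral is immediately estimated by Cauchy-Schwarz with weight $\varepsilon_a$, using the macro-micro decomposition $\|\partial_t^2 \bm{j}^{\pm}\|_{L_2(\Omega)} \lesssim \|\partial_t^2 b\|_{L_2(\Omega)} + \|(1-P) \partial_t^2 f\|_{L_2(\Omega \times \bR^3)}$, since $\partial_t \bE$ already belongs to the $\bE$-sum in \eqref{eqE.5.7}. For $k \ge 3$, the plan is to perform a second integration by parts in $t$ to shift one derivative from $\bE$ onto $\bm{j}^{\pm}$:
\[
\int_s^t (\partial_t^{k-1} \bE) \cdot (\partial_t^k \bm{j}^{\pm}) \, d\tau = \bigg[\int_{\Omega}(\partial_t^{k-2}\bE)\cdot (\partial_t^k \bm{j}^{\pm}) \, dx\bigg]_s^t - \int_s^t \int_{\Omega}(\partial_t^{k-2} \bE) \cdot (\partial_t^{k+1} \bm{j}^{\pm}) \, dx d\tau.
\]
The boundary piece again enters $\eta(t) - \eta(s)$, and Cauchy-Schwarz with weight $\varepsilon_a$ bounds the remaining space-time integral by $\varepsilon_a \|\partial_t^{k-2} \bE\|^2_{L_2((s,t)\times\Omega)} + \varepsilon_a^{-1}\bigl(\|\partial_t^{k+1} b\|^2_{L_2((s,t)\times\Omega)} + \int_s^t \cD_{||}\, d\tau\bigr)$. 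Since $1 \le k-2 \le m-4$ and $k+1 \le m-1 \le m$, both orders fall within the allowed ranges in \eqref{eqE.5.7}.

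The principal technical obstacle is the Leibniz-rule bookkeeping for the remainder terms arising when $\partial_t^{k-1}$ acts on the products $\rho c$ and on the $l$- and $h$-quantities in the substitution based on \eqref{mac4}. The $l$-contributions are velocity moments of $(\partial_t + \frac{p}{p_0^{\pm}} \cdot \nabla_x + L_{\pm}) (1-P) f$, so direct Cauchy-Schwarz places them in the $\cD_{||}$ bucket; the cubic $h$-contributions, bilinear either in $f$ and $[\bE, \bB]$ or in $f$ and $f$, are handled by the trilinear estimates of Lemmas~\ref{lemma 12.A.2}--\ref{lemma 12.A.3} together with \eqref{eq3.5.1} to produce the $\varepsilon \int_s^t \cD\, d\tau$ bucket. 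Summing over $k = 1, \ldots, m-2$ and choosing $\varepsilon_a$ sufficiently small at the end yields \eqref{eqE.5.7}.
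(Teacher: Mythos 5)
Your approach is genuinely different from the paper's: the paper proves this via a duality argument, taking a test function $\psi(t,x,p)=(\sqrt{J^+},-\sqrt{J^-})\,p_i\,\partial_{x_i}\partial_t^k\phi$ where $\phi$ solves the Neumann problem \eqref{eq9.1.3}, plugging it into the integral formulation \eqref{eq2.3}, and controlling higher $t$-derivatives of $\phi$ via the continuity equation in \eqref{eq3.2.5.0}. You instead propose to use the macroscopic equations \eqref{mac4}--\eqref{mac5} directly, integrating by parts in $t$ and $x$ on the level of $a^\pm$ itself. Unfortunately, this direct route hits the exact difficulty the paper's duality framework is built to avoid.

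The gap is in your treatment of the $l$-remainders from \eqref{mac4}. According to the paper, $l^\pm_{ai}$ is a weighted momentum average of $(\partial_t + \frac{p}{p_0^\pm}\cdot\nabla_x + L_\pm)(1-P)f$. The transport piece of this produces a spatial divergence of a velocity moment of $(1-P)f$, that is, a term of the form $\nabla_x\cdot\int \chi(p)\,\frac{p}{p_0}\,(1-P)\partial_t^{k-1}f\,dp$. You claim this goes "directly into the $\cD_{||}$ bucket by Cauchy-Schwarz," but $\cD_{||}$ only controls $(1-P)\partial_t^k f$ in $L_2(\Omega)W^1_2(\bR^3)$; there is no control of $\nabla_x(1-P)\partial_t^k f$ anywhere in $\cD_{||}$, $\cD$, or the right-hand side of \eqref{eqE.5.7}. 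Attempting to shift the $\nabla_x$ onto the factor $\partial_t^k\bm{j}^\pm$ via another integration by parts does not help: $\nabla_x\bm{j}^\pm$ is controlled only in $L_3(\Omega)$ by Proposition~\ref{proposition 4.0}, not in $L_2(\Omega)$ with the correct derivative count, and in any case one returns to a spatial derivative of a moment of $f$. The paper's test-function construction keeps all spatial derivatives on $\phi$, whose $W^2_2$-norm is controlled by elliptic regularity, precisely to bypass this. A secondary, smaller issue: your substitution from \eqref{mac4} also produces $\rho\,\nabla_x\partial_t^{k-1}c$, which you dismiss as absorbed "as in the $k=1$ case"; but at $k=1$ the $c$-remainder has no spatial gradient, whereas here it does, requiring an extra round of integration by parts in $x$, the continuity equation, and integration by parts in $t$ that your sketch does not supply.
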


\begin{proof}
We fix an arbitrary integer $1 \le k \le m-2$
and  follow the argument of Lemma \ref{lemma 3.2}   closely.

\textit{Test function.}
As in the proof of Lemma \ref{lemma 3.2},  the  Neumann problem
\begin{equation}
            \label{eq9.1.3}
\left\{\begin{aligned}
 & -  \Delta_x \phi =  \sqrt{M_{+}} a^{+}
 -  \sqrt{M_{-}} a^{-},\\
 & \frac{\partial \phi }{\partial n_x} = 0 \, \, \text{on} \, \partial \Omega,
\end{aligned}
\right.
\end{equation}
has a unique strong solution $\phi \in W^2_2 (\Omega)$ satisfying
$$
    \int_{\Omega} \phi \, dx = 0,
$$
and, in addition, the following elliptic estimate is valid: 
\begin{equation}
				\label{eq9.1.4}
\|\phi\|_{ W^{  2}_2 (\Omega) } \lesssim_{\Omega} \| \sqrt{M_{+}} a^{+} - \sqrt{M_{-}} a^{-} \|_{  L_2 (\Omega) }.
\end{equation}

Next, we set
\begin{align}
    \label{eq9.1.3.1}
 \psi (t, x, p) =    (\sqrt{J^{+}}, -\sqrt{J^{-}}) p_i  \partial_{x_i} \partial_t^k \phi (t, x)
\end{align}
and note that it satisfies the SRBC (cf. \eqref{eq3.2.11}).
 Let $I_1$--$I_6$ be the terms in the integral identity \eqref{eq2.3} with such $\psi$.


\textit{The key term.} Proceeding as  in  \eqref{eq3.1.12}--\eqref{eq3.1.22} and noticing that $\kappa_1$ is absent in the definition of $\psi$ (see \eqref{eq9.1.3.1}), we get for any $\varepsilon_a \in (0, 1)$ (cf. \eqref{eq3.2.4.0}--\eqref{eq3.2.4}),
   \begin{align}
        \label{eq9.1.5}
 I_1 & \geq  (k_b T  - \varepsilon_a )\int_s^{t}  \|\sqrt{M_{+}} \partial_t^k a^{+}  - \sqrt{M_{-}}\partial_t^k a^{-}\|^2_{ L_2 (\Omega) } \, d\tau \\
 & -  N \varepsilon_a^{-1} \bigg(\int_s^{t} \|\partial_t^k c \|^2_{ L_2 (\Omega) }  \, d\tau   +  \int_s^{t} \cD_{||}  \, d\tau\bigg).\notag
\end{align}

Estimates of $I_j, j\neq 1, 4$.
The integrals $I_2$, $I_3$, and  $I_6$  can be treated  as in the proof of Lemma \ref{lemma 3.2} (see \eqref{eq3.2.50}--\eqref{eq3.2.9} and \eqref{eq3.2.8}, respectively), whereas $I_5$ is estimated as in \eqref{eq3.1.14}. Furthermore, one minor difference with \eqref{eq3.2.50} is that by the oddness and macro-micro decomposition, there is an additional  term involving $(1-P) f$. Thus, we have 
\begin{align}
        \label{eq9.9}
&   I_2+I_3+I_5+I_6\\
& \lesssim_{\Omega, \theta, r_3, r_4} (\eta (t) - \eta (s))  +  \varepsilon_a  \int_s^{t}  \| \sqrt{M_{+}} \partial_t^k a^{+}  - \sqrt{M_{-}} \partial_t^k a^{-}\|^2_{ L_2 (\Omega) } \, d\tau \notag\\
& +  \varepsilon_a^{-1}   \bigg(\int_s^{t} \|\partial_t^k [b, c]\|_{ L_2 (\Omega) }^2 \, d\tau 
   + \int_s^{t} \cD_{||}\, d\tau   +   \varepsilon \int_s^t \cD \, d\tau\bigg). \notag
\end{align}

\textit{Estimate of the electric field term $I_4$.} For
\begin{itemize}
    \item $4 \le k \le m-2$, we set $j = 2$, 
    \item $k \in \{1, 2, 3\}$, we set $j = 0$.
\end{itemize}
Proceeding as in \eqref{eq3.1.23} and integrating by parts in the temporal variable $j$ times, we obtain
\begin{align}
            \label{eq9.6}
&   \text{(const)}   I_4   =  \int_s^{t} \int_{\Omega} (\partial_t^{k } \bE)  \cdot (\nabla_{x} \partial_t^{k} \phi)  \, dx d\tau =
\big((I_{4, 1} (t) - I_{4, 1} (s)) + I_{4, 2}\big), \\
&I_{4, 1} (\tau) = 1_{j > 0}  \sum_{l = 1}^j (-1)^{l-1} \int_{ \Omega} (\partial_t^{k-l} \bE) (\tau, x) \cdot \nabla_{x} \partial_t^{k+l-1} \phi (\tau, x)\, dx, \notag \\
& I_{4, 2}  = (-1)^j \int_s^{t} \int_{\Omega} (\partial_t^{k - j } \bE)  \cdot (\nabla_{x} \partial_t^{k+j} \phi)  \, dx d\tau.\notag
\end{align}
As in \eqref{eq3.1.20}, by the Cauchy-Schwarz inequality and the elliptic estimate \eqref{eq9.1.4}, we may replace $I_{4, 1} (\tau)$ with $\eta (\tau)$, where $\eta$ is a function satisfying  \eqref{eq3.1.2}.

Furthermore, by the Cauchy-Schwarz inequality and the argument of \eqref{eq3.2.5.0}, one has
\begin{align}
     \label{eq9.8}
    &  |I_{4, 2}| \lesssim   \varepsilon_a \int_s^{t}  \|\partial_t^{k-j} \bE\|^2_{ L_2 (\Omega)} \, d\tau  \\
    & +   \varepsilon_a^{-1} \int_s^{t} \big(\|\partial_t^{k+j-1} b\|^2_{ L_2 (\Omega)}  + \|\partial_t^{k+j-1} (1-P) f\|^2_{ L_2 (\Omega \times \bR^3) }\big) \, d\tau. \notag
 \end{align}
We note that in the case when $k = m-2$, we have $j=2$, and hence $k+j-1=m-1$. Thus, by using integration by parts in $t$  and the continuity equation \eqref{eqE.2.15}, we reduced    by $2$  the order of the $t$-derivatives of $\bE$  on the r.h.s. of the estimate of $\partial_t^k a^{\pm}$, while ensuring that the derivative terms $\partial_t^k [b, c, (1-P)f]$ remain below the highest order.

Finally, gathering \eqref{eq9.9}--\eqref{eq9.8} and choosing $\varepsilon_a$ sufficiently small, we obtain the desired estimate \eqref{eqE.5.7} with the l.h.s. replaced with
$$
    \sum_{k=1}^{m-2} \int_s^t \|\sqrt{M_{+}} \partial_t^k a^{+}  - \sqrt{M_{-}}\partial_t^k a^{-}\|^2_{ L_2 (\Omega) } \, d\tau
$$
(cf. \eqref{eq3.2.51}).
Combining this with the estimate of the weighted average with the $+$ sign in \eqref{eq3.2.1} in Lemma \ref{lemma 3.2}, we prove  \eqref{eqE.5.7}.
\end{proof}

\begin{lemma}[Estimate of $\bB$, cf. \eqref{eq1.2.42}]
			\label{lemma E.1}
Invoke Assumption \ref{assumption 3.5}. Then, for any  $k \le m-3$, we have 
\begin{align}
\label{eqE.1.0}
 &  \int_s^t \| \partial_t^k \bB\|^2_{ L_2 (\Omega) }   \, d\tau \lesssim_{\Omega, \theta, r_3, r_4 }   \int_s^t \|\partial_t^{k+1} [a^{+}, c]\|^2_{ L_2 (\Omega) }    \, d\tau
  +   \int_s^t \|\partial_t^{k+2} b\|^2_{ L_2 (\Omega) }   \, d\tau  \\
 &+  \int_s^t  \int_{ \gamma_{+}  } \frac{|p \cdot n_x|^2}{ |p_0^{+}|^2 }  |\partial_t^{k+1} f^{+}|^2 \sqrt{J^{+}}  \,   dS_x dp d\tau\notag \\
    & +  \int_s^t \cD_{||}  \, d\tau + \varepsilon \int_s^t \cD \, d\tau. \notag
\end{align}
\end{lemma}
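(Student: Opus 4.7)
The plan is to implement the duality argument outlined in Section~\ref{section 4.11} (Step~3), using a divergence-free vector potential $w$ for $\bB$ to transfer the estimate from the magnetic to the electric field and thence to the Landau equation.

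\textbf{Step 1: div--curl reduction.} First, I will invoke Assumption~\ref{assumption E.2} together with the preservation of the flux condition \eqref{eq4.10} to obtain $\int_{\Sigma_j} \partial_t^k \bB \cdot n_x \, dS_x = 0$ for each $j$ and every $k \le m-3$. Combined with Assumption~\ref{assumption E.1}, Corollaries~3.2 and~3.4 in \cite{AS_13} will provide a unique $\partial_t^k w(\tau, \cdot) \in W^1_2(\Omega)$ satisfying $\nabla_x \times \partial_t^k w = \partial_t^k \bB$, $\nabla_x \cdot \partial_t^k w = 0$, $(\partial_t^k w \times n_x)|_{\partial\Omega} = 0$, and $\|\partial_t^k w\|_{W^1_2(\Omega)} \lesssim_\Omega \|\partial_t^k \bB\|_{L_2(\Omega)}$. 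Then, integration by parts (the boundary term vanishes by $w \times n_x = 0$) and the Amp\`ere--Maxwell law \eqref{eq36.16} give
\begin{align*}
\|\partial_t^k \bB\|^2_{L_2(\Omega)} = \int_\Omega (\nabla_x \times \partial_t^k \bB) \cdot \partial_t^k w \, dx = \int_\Omega \bigl(\partial_t^{k+1} \bE + 4\pi \partial_t^k \bm{j}\bigr) \cdot \partial_t^k w \, dx.
\end{align*}
After integrating in $\tau \in (s, t)$, the current term will be handled by the macro-micro decomposition $\|\partial_t^k \bm{j}\|_{L_2} \lesssim \|\partial_t^k b\|_{L_2} + \|(1-P)\partial_t^k f\|_{L_2(\Omega \times \bR^3)}$ together with Young's inequality, yielding a $\int_s^t \cD_{||}\, d\tau$ contribution plus a small multiple of the LHS to absorb.

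\textbf{Step 2: testing the Landau equation.} To handle the main integral $\mathfrak{I} := \int_s^t \int_\Omega (\partial_t^{k+1} \bE) \cdot \partial_t^k w \, dx\, d\tau$, I would rearrange \eqref{eq36.13} as
$$\frac{e_+}{k_b T} \frac{p}{p_0^+} \cdot \bE \sqrt{J^+} = \partial_t f^+ + \frac{p}{p_0^+} \cdot \nabla_x f^+ + L_+ f - \mathcal{H}^+,$$
with $\mathcal{H}^+$ collecting the Lorentz and nonlinear terms, differentiate the identity $(k+1)$ times in $t$, and test against $\psi^+(\tau, x, p) = (p \cdot \partial_\tau^k w)(p_0^+)^{-1} \sqrt{J^+}$. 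The symmetry identity $\int p_i p_j |p_0^+|^{-2} J^+ \, dp = \tfrac{1}{3}\delta_{ij}\int |p|^2 |p_0^+|^{-2} J^+ \, dp$ converts the LHS into a positive multiple of $\mathfrak{I}$. The RHS terms will be analyzed as follows: (i) the macro-micro decomposition of $\partial_\tau^{k+2} f^+$, together with oddness of $\psi^+$ in $p$, removes the $a^+$ and $c$ components and retains $\partial_\tau^{k+2} b \cdot \partial_\tau^k w$; (ii) integrating the transport term by parts in $x$, the interior contributions against $P \partial_\tau^{k+1} f$ vanish ($a^+$ and $c$ by $\nabla_x \cdot w = 0$ and the same symmetry identity, $b$ by oddness), leaving only a $(1-P)$ piece controlled by $\cD_{||}$; (iii) $\langle L_+ \partial_\tau^{k+1} f, \psi^+\rangle$ is bounded by $\cD_{||}$ via symmetry of $L_+$ and its vanishing on the kernel; (iv) the nonlinear term $\partial_\tau^{k+1} \mathcal{H}^+$ contributes $\varepsilon \int_s^t \cD \, d\tau$ by Lemmas~\ref{lemma 12.A.2}--\ref{lemma 12.A.3}.

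\textbf{Step 3: boundary trace and the $[a^+, c]$ terms.} The main obstacle will be the boundary integral on $\partial\Omega$ arising from the IBP in (ii). Since $w = (w \cdot n_x) n_x$ on $\partial\Omega$, the boundary integrand carries a factor $(p \cdot n_x)^2 |p_0^+|^{-2}$; splitting into $\gamma_\pm$ and applying SRBC with the change of variables $p \mapsto R_x p$ (which leaves $|p|$, $|p_0^+|$, and $(p \cdot n_x)^2$ invariant) collapses the two pieces into a single integral over $\gamma_+$. A Cauchy--Schwarz split then isolates precisely the weighted trace norm on the RHS of \eqref{eqE.1.0} against a factor $\lesssim \|\partial_t^k w\|_{L_2(\partial\Omega)} \lesssim \|\partial_t^k \bB\|_{L_2(\Omega)}$ via the trace theorem, the latter absorbed into the LHS. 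To produce the $\partial_t^{k+1}[a^+, c]$ contributions, I will apply the same procedure to the Landau equation for $f^-$ and combine the two resulting identities with suitable signs, then exploit the continuity equation \eqref{eqE.2.15} to trade $\partial_t^{k+2} a^\pm$ for a spatial divergence paired with $\partial_t^k w$, together with the macroscopic relation \eqref{mac1} for $\partial_t c$, which will give rise to the stated $\partial_t^{k+1}[a^+, c]$ bounds. Collecting every contribution will deliver \eqref{eqE.1.0}. The principal technical difficulty will lie in carefully orchestrating the macroscopic cancellations and the boundary reflection so as to pin down the precise derivative counts on the RHS without introducing circular control of $\partial_t^{k+1} \bB$ itself.
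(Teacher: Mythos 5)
Your overall strategy reproduces the paper's argument: build the divergence-free vector potential $\bm{w}$ from $\bB$ via the Amrouche--Seloula theory and the flux condition \eqref{eq4.10}, rewrite $\|\partial_t^k\bB\|_{L_2}^2$ through the Amp\`ere--Maxwell law, split the current $\bm{j}$ by the macro-micro decomposition, and then extract the key integral $\int\partial_t^{k+1}\bE\cdot\partial_t^kw$ by testing the Landau equation for $f^+$ against a function proportional to $p\cdot\partial_t^k w$. The paper uses $\psi^+ = (\partial_t^k\bm{w}\cdot p)\sqrt{J^+}$ while you use the extra weight $(p_0^+)^{-1}$; this is a cosmetic difference, and the spherical-symmetry identity for the key term and the boundary trace extraction via $\bm{w}\times n_x=0$, Cauchy--Schwarz, and the $W^1_2$ trace theorem all go through as you describe. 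Your item (iii)--(iv) (collision and nonlinear terms) and Step 1 also match the paper.

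Where you diverge -- and where the proposal becomes confused -- is Step 3. You correctly note in (ii) that the macroscopic $a^+$ and $c$ contributions to the transport term $\propto\int(\partial_{x_i}\partial_t^k w_j)\frac{p_ip_j}{p_0^+}\sqrt{J^+}\,P^+(\partial_t^{k+1}f)$ vanish once the spherical symmetry $\int p_ip_j(\cdots)J^+ dp = c\,\delta_{ij}$ is combined with $\nabla_x\cdot w=0$, and $b$ vanishes by oddness. This is true, and it is in fact more than the paper proves: the paper's estimate \eqref{eqE.1.7} simply applies Cauchy--Schwarz to the transport term without exploiting $\nabla_x\cdot w=0$, which is precisely why $\partial_t^{k+1}[a^+,c]$ appear on the right-hand side of the lemma. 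With your cancellation you would obtain the conclusion of the lemma with those terms absent, which is a strictly stronger (and still acceptable) statement. Having eliminated the $[a^+,c]$ contributions, there is no reason -- and no way -- to ``produce'' them, so the entire second half of your Step 3 (testing the $f^-$ equation as well, combining with signs, invoking the continuity equation to handle a $\partial_t^{k+2}a^\pm$ that never appears, and appealing to \eqref{mac1} for $\partial_t c$) is a misguided detour. None of that machinery is needed, and the description of it is vague enough that it does not amount to an actual argument; had it been necessary, it would constitute a gap. As it stands, your Steps 1--2 and the trace estimate in Step 3 already close the proof, and the remainder of Step 3 should simply be deleted.
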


\begin{proof}
\textbf{Step 1: duality argument.}
By the vanishing flux property in \eqref{eq4.10}, the function $\bB$ satisfies the assumption of Theorem 4.3 in \cite{AS_13} with $\bm{u} = \bB$, and, hence, the system
\begin{equation}
           \label{eqE.1.2}
\left\{\begin{aligned}
 & \nabla_x \times \bm{w} =   \bB,\\
 & \nabla_x \cdot \bm{w} = 0, \\
 &\bm{w} \times n_x = 0  \, \, \text{on} \, \partial \Omega,
\end{aligned}
\right.
\end{equation}
has a unique strong solution $\bm{w} \in W^1_2 (\Omega)$, and
\begin{equation}
        \label{eqE.1.3}
    \|\bm{w}\|_{W^1_2 (\Omega) } \lesssim_{\Omega}  \|\bB\|_{L_2 (\Omega) }.
\end{equation}
Then, by using integration by parts, the boundary condition $\bm{w} \times n_x = 0$, and the  Amper\'e-Maxwell law (see \eqref{eq36.16}), we conclude
\begin{align}
            \label{eqE.1.3.1}
    &\int_{\Omega} |\partial_t^k \bB|^2\, dx   = \int_{\Omega} (\partial_t^k \bB) \cdot (\nabla_x \times \partial_t^k \bm{w})  \, dx\\
   & =\int_{\Omega} (\nabla_x \times \partial_t^k \bB) \cdot (\partial_t^k \bm{w}) \, dx \notag\\
    &  =  \int_{\Omega} (\partial_t^{k+1} \bE) \cdot  (\partial_t^k \bm{w}) \, dx  + \int_{\Omega}  (\partial_t^{k} \bm{j}) \cdot (\partial_t^k \bm{w}) \, dx =: \cI_1 + \cI_2. \notag
\end{align}


\textbf{Step 2:  estimate of $\cI_1$.}  We  derive an estimate of $\partial_t^{k+1} \bE$ from the Landau equation  (cf. Lemma 9 in \cite{GS_03}).
We set 
$$
    \psi (x, p) = (\partial_t^k \bm{w} \cdot p \sqrt{J^{+}}, 0).
$$
We will apply a variant of Green's identity \eqref{eq30.1.0} in Lemma \ref{lemma 30.1}. Let us check the conditions of the lemma.  For the sake of simplicity, we temporarily set all the physical constants to $1$.
\begin{itemize}
\item Since $\partial_t^k f^{\pm}$ is a finite energy solution to the Landau equation differentiated $k$ times in $t$, one can rewrite the identity satisfied by $\partial_t^{k+1} f^{\pm}$  as a  kinetic Fokker-Planck equation \eqref{eq27.4.18} with 
 \begin{align}
 \label{eqE.1.20}
    g=(f^{+} + f^{-}) \sqrt{J}
 \end{align}
 and  $\eta \in L_2 ((0, T) \times \Omega) W^{-1}_2 (\bR^3)$.
See the details in the proof of Proposition \ref{proposition 4.0}.
\item Due to the assumption \eqref{eq3.5.1} and \eqref{eq14.4.7} in Lemma \eqref{eq14.4.7}, the regularity conditions 
\eqref{eq27.4.35} hold
 with $g$  given by \eqref{eqE.1.20}. 
\item The condition \eqref{eq27.4.16} is  valid due to the smallness of $\varepsilon$ in \eqref{eq3.5.1} and \eqref{eq14.4.7}. 

\item Since $\partial_t^l \bm{w} \in L_2 ((s, t)) W^1_2 (\Omega), l \le m-3,$ the conditions in \eqref{eq30.1.4} hold with $\psi$ in place of $\phi$. 
\end{itemize}
Next, applying  Green's formula in \eqref{eq30.1.0} and integrating by parts in $t$, we have   
\begin{align}
\label{eqE.1.5}
 &
 \underbrace{  \frac{1}{k_b T}  \int_s^t \int_{ \Omega \times \bR^3 } (\partial_t^{k+1} \mathbf{E})
  \big( e_{+}\frac{p}{p_0^{+}}   \sqrt{J^{+}} \psi^{+}
 - e_{-} \frac{p}{p_0^{-}} \sqrt{J^{-}} \psi^{-}\big) \, dx dp d\tau }_{= J_1}
\\
 &    =   \underbrace{ \int_s^t \int_{  \Omega \times \bR^3 }  \psi \cdot \partial_t^{k + 2} f \, dx dp d\tau}_{J_2}    \notag\\
  &
  -  \underbrace{\int_s^t \int_{ \Omega \times \bR^3 }
  \bigg(\frac{p}{p_0^{+}} \cdot   (\nabla_x \psi^{+})  (\partial_t^{k+1} f^{+})
  + \frac{p}{p_0^{-}} \cdot   (\nabla_x \psi^{-})  (\partial_t^{k+1} f^{-})\bigg)
  \, dx dp d\tau}_{ = J_3 } \notag \\
& \underbrace{+\int_s^t \int_{ \gamma_{+} \cup   \gamma_{-}  }
\bigg((\partial_t^{k+1} f^{+}) (\frac{p}{p_0^{+}} \cdot n_x)\psi^{+} +(\partial_t^{k+1} f^{-}) (\frac{p}{p_0^{-}} \cdot n_x) \psi^{-}\bigg)
\, dS_x dp d\tau }_{=  J_4 }  \notag \\
 & \underbrace{  +  \int_s^t\int_{  \Omega  }  \langle L \partial_t^{k+1} f, \psi  \rangle \, dx  d\tau}_{ = J_5}  \underbrace{ -  \int_s^t \int_{\Omega } \langle \partial_t^{k+1} H, \psi  \rangle \, dx  d\tau }_{= J_6},  \notag
 \end{align}
 where $H$ is defined in \eqref{eq2.3.1}.

\textit{The key term.} By symmetry, we have
$$
    J_1 = \text{const}  \int_s^t \int_{ \Omega } (\partial_t^{k+1} \mathbf{E}) \cdot (\partial_t^k \bm{w}) \, dx d\tau.
$$

\textit{Estimate of the $t$-derivative term.}
 Using the macro-micro decomposition and oddness, we get
\begin{equation}
\label{eqE.1.6}
        J_2 \lesssim (\|\partial_t^{k+2} b\|_{ L_2 ((s, t) \times \Omega)} + \| (1-P) \partial_t^{k+2} f\|_{ L_2 ((s, t) \times  \Omega \times \bR^3)}) \|\partial_t^k \bm{w}\|_{L_2 ((s, t) \times \Omega)}.
\end{equation}

\textit{Estimate of the transport term.} By the macro-micro decomposition and the Cauchy-Schwarz inequality,
\begin{align}
        \label{eqE.1.7}
   & J_3   \lesssim (\|\partial_t^{k+1} [a^{+}, c]\|_{ L_2 ((s, t) \times \Omega)} \\
   & +  \| (1-P) \partial_t^{k+1} f\|_{ L_2 ((s, t) \times \Omega \times \bR^3)}) \, \|\partial_t^k\bm{w}\|_{ L_2 (s, t)  W^1_2 (\Omega)}. \notag
\end{align}

\textit{Kinetic boundary term $J_4$.}
By the decomposition \eqref{eq3.1.4}
and  the boundary condition $\bm{w} \times n_x = 0$, we get 
\begin{equation}
        \label{eqE.1.8}
    J_4 = \int_s^t  \int_{\partial \Omega \times \bR^3} \frac{|p \cdot n_x|^2}{p_0^{+}} (\partial_t^{k+1} f^{+}) \, \sqrt{J^{+}} (\partial_t^k \bm{w} \cdot n_x) \, dS_x dp d\tau.
\end{equation}
Hence, by the Cauchy-Schwarz inequality, the trace theorem for $W^1_2 (\Omega)$ functions, and the SRBC, we have
\begin{equation}
            \label{eqE.1.9}
    J_4 \lesssim  \|\partial_t^k \bm{w}\|_{ L_2 (s, t) W^1_2 (\Omega)}  \bigg(\int_s^t \int_{ \gamma_{+}  }
  \frac{|p \cdot n_x|^2}{ |p_0^{+}|^2 }  |\partial_t^{k+1} f^{+}|^2  \sqrt{J^{+}}     dS_x dp d\tau\bigg)^{1/2}.
\end{equation}

Next, by the fact that  $L$ is a symmetric operator and the \cs, we get
\begin{align}
 \label{eqE.1.10}
    J_5  \lesssim \|(1-P) \partial_t^{k+1} f\|_{ L_2 ((s, t) \times \Omega \times \bR^3)} \|\partial_t^k \bm{w}\|_{L_2 ((s, t) \times \Omega)}.
\end{align}
Furthermore,  since $k + 1 < m$, by using the Cauchy-Schwarz inequality, the estimates \eqref{eq12.A.2.3} and \eqref{eq12.A.3.2} (cf. \eqref{eq3.1.10}), and the smallness assumption \eqref{eq3.5.1}, we obtain   
\begin{align}
  \label{eqE.1.11}
    J_6  \lesssim  \sqrt{\varepsilon} \big(\int_s^t \cD \, d\tau\big)^{1/2}  \|\partial_t^k \bm{w}\|_{L_2 ((s, t) \times \Omega)}.
\end{align}
Gathering \eqref{eqE.1.5}--\eqref{eqE.1.11}, we obtain
\begin{align}
\label{eqE.1.12}
    &\cI_1 =  \int_s^t \int_{\Omega}  (\partial_t^{k+1} \bE) \cdot (\partial_t^k \bm{w}) \, dx  d\tau \lesssim  \sqrt{\xi} \|\partial_t^k \bm{w}\|_{ L_2 ((s, t)) W^1_2 (\Omega) },   \\
    & 
    \xi:= \int_s^t \big(\|\partial_t^{k+1} [a^{+}, c]\|^2_{ L_2 (\Omega)}
    + \|\partial_t^{k+2} b\|^2_{  L_2 (\Omega)}\big) \, d\tau 
    + \int_s^t \cD_{||} \, d\tau  \notag \\
    & +\int_s^t \int_{ \gamma_{+}  }
  \frac{|p \cdot n_x|^2}{ |p_0^{+}|^2 }  |\partial_t^{k+1} f^{+}|^2 \sqrt{J^{+}}  \,   dS_x dp d\tau  + \varepsilon \int_s^t \cD \, d\tau.
   \notag
\end{align}

\textbf{Step 3:  estimate of $\cI_2$.}
By using the macro-micro decomposition and the definition of $b$ in \eqref{eq6.19}, we get
\begin{align}
    \label{eqE.1.18}
    \bm{j}_i & = \int   (e_{+} \frac{p_i}{p_0^{+}} f^{+} \sqrt{J^{+}} -  e_{-}\frac{p_i}{ p_0^{-} } \sqrt{J^{-}}) f^{-} \, dp\\
    & =   \lambda_i b_i
    + \int   (e_{+}\frac{p_i}{p_0^{+}}  \sqrt{J^{+}},  - e_{-}\frac{p_i}{ p_0^{-} } \sqrt{J^{-}}) \cdot (1-P) f \, dp.\notag
\end{align}
Furthermore, by the identity \eqref{eq3.1.13} and the neutrality condition \eqref{eq0},
\begin{align}
    \label{eqE.1.19}
   \lambda_i :& =  \kappa_1 \bigg(e_{+}   \int_{\bR^3}    \frac{p_i^2}{p_0^{+}}  J^{+} \, dp
    - e_{-}   \int_{\bR^3}    \frac{p_i^2}{p_0^{-}}  J^{-} \, dp\bigg)\\
&
     = \kappa_1 k_b T  (e_{+} \int_{\bR^3} J^{+} \, dp - e_{-} \int_{\bR^3} J^{-} \, dp) = 0. \notag
\end{align}
Therefore, by the Cauchy-Schwarz inequality,
\begin{equation}
            \label{eqE.1.13}
        \cI_2 \lesssim  \|(1-P) \partial_t^{k} f\|_{ L_2 ((s, t) \times \Omega \times \bR^3)} \|\partial_t^k \bm{w}\|_{ L_2 ((s, t) \times \Omega)}.
\end{equation}

Next, gathering  \eqref{eqE.1.3.1}, \eqref{eqE.1.12}--\eqref{eqE.1.13}, we obtain
\begin{align*}
  \int_s^t \| \partial_t^k \bB\|^2_{ L_2 (\Omega) } \, d\tau \lesssim_{\Omega} \sqrt{\xi}  \|\partial_t^k \bm{w}\|_{ L_2 ((s, t))  W^1_2 (\Omega)}.
\end{align*}
Due to the elliptic estimate \eqref{eqE.1.3}, we conclude
\begin{align*}
   \int_s^t  \| \partial_t^k \bB \|^2_{ L_2 (\Omega) } \, d\tau
   \lesssim_{\Omega}  \xi,
\end{align*}
which finishes the proof of the desired assertion \eqref{eqE.1.0}.
\end{proof}

\begin{lemma}[weighted trace estimate, cf. \eqref{eq1.2.43}]
        \label{lemma E.2}
 Under Assumption \ref{assumption 3.5},   for any $j\in \{2, \ldots, m/2\}$ and $k +1 \in \{j, m-j\}$,  any $\varepsilon_1 \in (0, 1)$, 
    \begin{align}
            \label{eqE.2.0}
&  \int_s^{t} \int_{ \gamma_{+}  }
   \frac{|p \cdot n_x|^2}{ |p_0^{\pm}|^2 }    |\partial_t^{k+1} f^{\pm}|^2 \sqrt{J^{\pm}}  \,   dS_x dp d\tau \\
  & \lesssim_{\Omega, \theta, r_3, r_4 } (\eta (t) - \eta (s))  +   \int_s^{t} \|\partial_t^{k+1} f\|^2_{ L_2 (\Omega \times \bR^3) } \, d\tau 
  +  \varepsilon_1 \int_s^{t} \|\partial_t^{k+1-j} \bE\|^2_{ W^1_2 (\Omega) } \, d\tau   \notag\\
& +  \varepsilon_1^{-1}  \bigg(\sum_{l = k+j}^{k+j+1} \int_s^{t} \|\partial_t^{l} b\|^2_{ L_2 (\Omega) } \, d\tau  + \int_s^{t} \|\partial_t^{k+j+1} c\|^2_{ L_2 (\Omega) } \, d\tau 
+ \int_s^{t} \cD_{||}  \, d\tau + \sqrt{\varepsilon} \int_s^t \cD \, d\tau\bigg).\notag
\end{align}
\end{lemma}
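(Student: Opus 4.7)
The strategy is to recover the weighted boundary norm by testing the equation satisfied by $\partial_t^{k+1} f^+$ against a carefully chosen multiplier whose trace on $\partial \Omega$ reproduces the target quantity, and then to use integration by parts in $t$ to create a derivative gap between the electric field and the resulting microscopic/macroscopic expressions of $f$. Let $\alpha \in C^\infty(\overline{\Omega}; \bR^3)$ be a smooth extension of the outward unit normal $n_x$, and set
\begin{equation*}
    \psi^+(t,x,p) = \frac{p\cdot\alpha(x)}{|p_0^+|^2}\sqrt{J^+}\,\partial_t^{k+1} f^+(t,x,p), \qquad \psi^-\equiv 0.
\end{equation*}
(For the $\pm = -$ case, one uses the symmetric choice with $\sqrt{J^-}$ and $\partial_t^{k+1}f^-$.) Writing the equation for $\partial_t^{k+1} f^+$ as the Fokker--Planck form used in the proof of Lemma~\ref{lemma E.1} (cf. the derivation leading to \eqref{eqE.1.5}), and applying the general Green identity of Lemma \ref{lemma 30.1} with $\psi$ (which does not require $\psi$ to satisfy SRBC), the boundary contribution becomes
\begin{equation*}
    \int_s^t \!\!\int_{\gamma_+\cup\gamma_-} (p\cdot n_x)\,\frac{p\cdot n_x}{|p_0^+|^2}\sqrt{J^+}|\partial_t^{k+1}f^+|^2\,dS_x dp d\tau.
\end{equation*}
The integral over $\gamma_-$ equals the one over $\gamma_+$ by the SRBC on $\partial_t^{k+1}f^+$ together with the change of variables $p\mapsto R_x p$, since the weight $|p\cdot n_x|^2/|p_0^+|^2\,\sqrt{J^+}$ is invariant under this reflection. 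Thus the boundary terms yield $2$ times the left-hand side of \eqref{eqE.2.0}.

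The main obstacle, and the central step of the proof, is the electric field contribution
\begin{equation*}
    \mathfrak{E} = \frac{1}{k_bT}\int_s^t\!\!\int_{\Omega\times\bR^3} (\partial_t^{k+1}\bE)\cdot\frac{p}{p_0^+}\sqrt{J^+}\psi^+\,dx dp d\tau,
\end{equation*}
which carries $k+1$ temporal derivatives on $\bE$---a quantity one cannot close on directly at this order. Following the mechanism sketched in Section \ref{section 4.11} and realized in \eqref{eq9.6}, we integrate by parts in $t$ exactly $j$ times, producing $t$-boundary terms at $\tau = s,t$ plus the reduced bulk integral involving $\partial_t^{k+1-j}\bE$ paired against $\partial_t^{k+1+j}\psi^+$. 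The hypotheses $k+1 \in \{j,m-j\}$ and $j \le m/2$ guarantee $k+1-j\ge 0$ and $k+1+j\le m$, so the derivative order never exceeds $m$. Since $\psi^+$ carries one factor of $\partial_t^{k+1}f^+$ and depends linearly on it, the derivatives $\partial_t^{k+1+j}\psi^+$ produce a factor $\partial_t^{k+1+j}f^+$ weighted by the smooth coefficient $(p\cdot \alpha(x))/|p_0^+|^2 \sqrt{J^+}$. Splitting this via the macro--micro decomposition $Pf+(1-P)f$, the microscopic part sits inside $\cD_{||}$, while the macroscopic part contributes the $b$ and $c$ norms at orders $k+j$ and $k+j+1$ (from the need to handle one raw factor and one factor after using Cauchy--Schwarz with a coefficient that is odd/even in $p$), which after $\varepsilon_1$-Young's inequality become the displayed terms in \eqref{eqE.2.0}. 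The $t$-boundary contributions from the repeated integration by parts yield, as in \eqref{eq3.1.20} and \eqref{eq9.6}, a term of the form $\eta(t)-\eta(s)$ with $\eta$ satisfying \eqref{eq3.1.2}, via the uniform bound $\|\partial_t^l\bE\|\|\partial_t^{l'}f\|\lesssim\cI_{||}$.

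The remaining pieces are routine. The $t$-derivative integral $\int(\partial_t\psi)(\partial_t^{k+1}f)$ generates a self-squared term which, after isolating the time boundary contribution and using the fact that $\psi$ itself is proportional to $\partial_t^{k+1}f^+$, produces exactly the first dissipation-like term $\int_s^t\|\partial_t^{k+1}f\|^2_{L_2(\Omega\times\bR^3)}\,d\tau$ in \eqref{eqE.2.0}. The transport term with $\nabla_x\psi$ is bounded by Cauchy--Schwarz using $\nabla_x\alpha\in L_\infty(\Omega)$, giving a mix of $\cD_{||}$ and macroscopic $b,c$ contributions via macro--micro decomposition. The linear collision term $\langle L\partial_t^{k+1}f,\psi\rangle$ and the nonlinear term $\langle\partial_t^{k+1}H,\psi\rangle$ are estimated as in \eqref{eqE.1.10}--\eqref{eqE.1.11} using Lemmas \ref{lemma 12.A.2}--\ref{lemma 12.A.3} and the smallness assumption \eqref{eq3.5.1}, yielding respectively $\int_s^t\cD_{||}\,d\tau$ and $\sqrt{\varepsilon}\int_s^t\cD\,d\tau$ contributions. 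Collecting all pieces, multiplying by $\tfrac{1}{2}$ (because the boundary produced twice the target), and applying $\varepsilon_1$-Young where needed yields \eqref{eqE.2.0}. The delicate bookkeeping of temporal derivatives after the $j$-fold integration by parts in $t$---and the verification that none of the resulting $f$-factors exceed order $m$---is the most technical part of the argument and dictates the split $k+1\in\{j,m-j\}$ in the statement.
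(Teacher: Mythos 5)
Your overall strategy is sound—a weighted Green/energy identity with a multiplier whose trace reproduces the target quantity, followed by $j$-fold integration by parts in time to relax the derivative count on $\bE$—and the bookkeeping of temporal derivative orders ($k+1\in\{j,m-j\}$, $j\le m/2$) is correct. The observation that the $\gamma_{\pm}$ boundary contributions both have the favorable sign is also right. However, there is a genuine gap at exactly the central step.

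After the $j$-fold integration by parts in $t$ you obtain an integral pairing $\partial_t^{k+1-j}\bE$ with $\partial_t^{k+1+j}f^{\pm}$. When you apply the macro-micro decomposition to this factor, you get, besides $b$, $c$, and $(1-P)f$, also the piece $\partial_t^{k+1+j}a^{\pm}\,\chi^{\pm}_1$, and the $p$-integral of $\chi_1^{\pm}\sqrt{J}\,\frac{p}{p_0}\,\zeta$ does \emph{not} vanish (the weight $\zeta$ contains $1_{p\cdot n_x>0}$, so it is not odd in $p$). When $k+1=m-j$, this is $\partial_t^{m}a^{\pm}$, which is strictly above the order the dissipation $\cD$ controls (only $\le m-2$, cf.\ \eqref{eq12.0.1}). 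Your proposal claims the macroscopic part contributes only $b$ and $c$ and omits this $a^{\pm}$ term without explanation—yet it is precisely because of this term that the lemma is delicate. The paper's proof eliminates it by the continuity equation \eqref{eqE.2.15}: $\partial_t^{k+1+j}a^{\pm}$ is replaced by $-\nabla_x\cdot\partial_t^{k+j}\bm{j}^{\pm}$, and an integration by parts in $x$ (using $\bm{j}^{\pm}\cdot n_x=0$ on $\partial\Omega$) transfers the divergence onto the bounded factor, producing the $W^1_2$ norm of $\partial_t^{k+1-j}\bE$ on the right-hand side and replacing $a^{\pm}$ by $\bm{j}^{\pm}$, which is controlled at order $k+j\le m$ by \eqref{eqE.2.15.1}. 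This explains why $\|\partial_t^{k+1-j}\bE\|^2_{W^1_2}$ (not $L_2$) appears in \eqref{eqE.2.0} and why no $a^{\pm}$ norm appears at all; without it the argument does not close. As a secondary point, your multiplier $\psi^+\propto\sqrt{J^+}/|p_0^+|^2$ gives a boundary weight $|p\cdot n_x|^2/|p_0^+|^3$ rather than $/|p_0^+|^2$, so one power of $p_0$ should be dropped, and using the plain Green identity \eqref{eq30.1.0} with a $\phi$ that depends on $f$ requires verifying the admissibility condition $\phi_{\pm}\in L_2(\Sigma^T_{\pm})$, which is nontrivial since the traces of $\partial_t^{k+1}f^\pm$ are only in a weighted space; the paper circumvents this by proving and invoking the tailored identity \eqref{eq30.1.7}, which requires only pointwise bounds on the fixed multiplier $\zeta(x,p)$.
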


\begin{remark}
A similar estimate was established for the linear non-relativistic kinetic Fokker-Planck equation (see  Proposition 4.3 in \cite{S_22}). The novelty of Lemma \ref{lemma E.2} lies in handling the derivative loss in  $a^{\pm}$-terms via integration by parts in $t$ in the problematic integral involving both $\bE$ and a test function at the cost of losing derivatives in $b$, $c$, and $(1-P) f$ terms, which are `good' up to the highest order due to the estimates \eqref{eq3.3.1} and \eqref{eq6.1.4}.
\end{remark}



\begin{proof}
Since the values of physical constants do not play any role in the argument, we set all these constants to $1$.

 Let $\nu$  be a Lipschitz  vector field on $\bR^3$ such that $\nu (x) = n_x$ on $\partial \Omega$. For the construction, see the proof of Lemma \ref{lemma 30.1}. 

 Next, we introduce a function 
 \begin{equation}
    \label{eqE.2.14}
         \zeta (x, p) =  \sqrt{J} \frac{p \cdot \nu (x)}{ p_0 } 1_{ p \cdot \nu (x) > 0}.
  \end{equation}
We note that 
  \begin{equation}
            \label{eqE.2.2}
    |\zeta| +  |\nabla_{x, p} \zeta|  \lesssim_{\Omega} \sqrt{J}  \, \, \text{a.e.}.
\end{equation}

We apply the energy identity \eqref{eq30.1.7} in Lemma \ref{lemma 30.1} with $f$ and  $\phi$ replaced with $\partial_t^{k+1} f$ and $\zeta$, respectively. We already verified that $\partial_t^{k+1} f$ satisfies the conditions of Lemma \ref{lemma 30.1} (see p. \pageref{eqE.1.20} in the proof of Lemma \ref{lemma E.1}).
Then, by the aforementioned energy identity and  the fact that $\nu (x) = n_x$ on $\partial \Omega$, we have
\begin{align}
            \label{eqE.2.3}
&  \frac{1}{2} \int_s^{t} \int_{ \gamma_{+}  }
  \frac{|p \cdot n_x|^2}{ |p_0|^2 } |\partial_t^{k+1} f|^2 \sqrt{J}     dS_x dp d\tau \\
  &    = -  \frac{1}{2}\int_{\Omega \times \bR^3}  \big(|\partial_t^{k+1} f (t, x, p)|^2   - |\partial_t^{k+1} f (s, x, p)|^2 \big)   \zeta (x, p) \, dx dp \notag \\
   &
  \underbrace{+  \frac{1}{2} \int_{s}^t \int_{\Omega \times \bR^3}  \nabla_x \zeta \cdot  \frac{p}{p_0} |\partial_t^{k+1} f|^2   \, dx dp  d\tau}_{ = J_1} \notag\\
 & \underbrace{ +   \int_{s}^t \int_{\Omega \times \bR^3}  (\partial_t^{k+1} f^{+}  - \partial_t^{k+1} f^{-}) \sqrt{J} \frac{p}{p_0} \cdot (\partial_t^{k+1} \bE) \, \zeta \,  dx dp d\tau  }_{= J_2 } \notag\\
 & \underbrace{  -   \int_{s}^t \int_{\Omega } \langle L (1-P) \partial_t^{k+1} f, (\partial_t^{k+1} f)  \, \zeta \rangle \, dx  d\tau }_{ = J_3} \notag \\
& \underbrace{+  \int_{s}^t \int_{\Omega}  \langle \partial_t^{k+1} H,  \zeta \partial_t^{k+1} f   \rangle \,  dx d\tau}_{= J_4 }, \notag
  \end{align}
where $H$ is a function defined in \eqref{eq2.3.1}.

\textit{The $t$-boundary term.} Since $k + 1 < m$, by using  the estimate \eqref{eqE.2.2}, we may replace the  first term on the r.h.s. of \eqref{eqE.2.3} with $\eta (t) - \eta (s)$, where $\eta$ is a function satisfying  the bound  \eqref{eq3.1.2}.

\textit{Estimate of $J_1$.}
By \eqref{eqE.2.2}, we have
\begin{equation}
        \label{eqE.2.4}
    J_1 \lesssim_{\Omega} \int_s^{t} \|\partial_t^{k+1} f\|^2_{ L_2 (\Omega \times \bR^3) } \, d\tau.
\end{equation}

\textit{Estimate of $J_2$.}
Integrating by parts $j$ times in the $t$ variable and using the macro-micro decomposition, we get (cf. \eqref{eq9.6})
\begin{align}
        \label{eqE.2.5}
  J_2  =:(J_{2, 1} (t) - J_{2, 1} (s)) + (-1)^j (J_{2, 2} + J_{2, 3}),
\end{align}
where
\begin{align*}
& J_{2, 1} (\tau) =  \sum_{l=1}^j (-1)^{l-1} \int_{\Omega \times \bR^3}  \partial_t^{k+l} \big(f^{+} (\tau, x, p) - f^{-} (\tau, x, p)\big)  \, \sqrt{J} \frac{p}{p_0} \cdot (\partial_t^{k+1-l} \mathbf{E} (\tau, x, p)) \, \zeta \, dx dp, \notag \\
       & J_{2, 2} =     \int_s^t \int_{\Omega \times \bR^3}  \partial_t^{k+1+j}(a^{+}  \chi_1^{+} - a^{-} \chi_2^{-})   \sqrt{J}  \frac{p}{p_0}   \cdot (\partial_t^{k+1-j} \mathbf{E}) \, \zeta \, dx dp d\tau, \notag \\
       & J_{2, 3}=  \int_s^t \int_{\Omega \times \bR^3}  \partial_t^{k+1+j} \big(b_i \chi_{i+2}  + c \chi_6 + (1-P) f\big) \cdot  (1, - 1)  \, \sqrt J \frac{p}{p_0}  \cdot (\partial_t^{k+1-j} \mathbf{E}) \, \zeta\, dx dp d\tau.
\end{align*}

We note that, as in the treatment of the $t$-boundary term, by the Cauchy-Schwarz inequality and \eqref{eqE.2.2}, we may replace $J_{2, 1} (t) - J_{2, 1} (s)$ with $\eta (t)-\eta (s)$, where $\eta$ satisfies \eqref{eq3.1.2}.

To estimate $J_{2, 2}$, we note that by the continuity equation \eqref{eqE.2.15}, we may replace $\partial_t^{k+1+j} a^{\pm} $ with $- \nabla_x \cdot \partial_t^{k+j} \bm{j^{\pm}}$, and,  due to the boundary condition \eqref{eqE.2.16}, integrating by parts in $x$ gives
 \begin{align}
    \label{eqE.2.17}
 J_{2, 2} =  \int_s^t \int_{\Omega \times \bR^3}   \, \partial_t^{k+j} (\bm{j}^{+} \chi_1^{+} - \bm{j}^{-}\chi_1^{-}) \cdot \nabla_{x} \partial_t^{k+1-j} (p \cdot \bE \zeta ) \,  p_0^{-1}    \sqrt{J}  \, dx dp d\tau.
 \end{align}
Then,  by the Cauchy-Schwarz inequality,  \eqref{eqE.2.17}, 
\eqref{eqE.2.15.1},  and the bound of $\zeta$ in \eqref{eqE.2.2}, we get, for any $\varepsilon_1 \in (0, 1)$,
\begin{align}
        \label{eqE.2.6}
    J_{2, 2}  & \lesssim_{\Omega} \varepsilon_1 \int_s^{t} \|\nabla_x \partial_t^{k+1-j} \bE\|^2_{L_2 (\Omega) } \, d\tau \\
    & + \varepsilon^{-1}_1 \int_s^{t} \big(\|\partial_t^{k+j} b\|^2_{L_2 (\Omega) }  + \|(1-P) \partial_t^{k+j} f\|^2_{L_2 (\Omega \times \bR^3) }\big) \, d\tau. \notag
\end{align}

We estimate the last term in \eqref{eqE.2.5}, $J_{2, 3}$, via the Cauchy-Schwarz inequality:
\begin{align}
      \label{eqE.2.7}
      J_{2, 3}  & \lesssim_{\Omega} \varepsilon_1 \int_s^{t} \|\partial_t^{k+1-j} \bE\|^2_{L_2 (\Omega) } \, d\tau \\
    & +  \varepsilon^{-1}_1 \int_s^{t} \big(\|\partial_t^{k+1+j} [b, c]\|^2_{L_2 (\Omega) }  + \|(1-P) \partial_t^{k+1+j} f\|^2_{L_2 (\Omega \times \bR^3) }\big) \, d\tau. \notag
\end{align}

\textit{Estimate of $J_3$.} 
By Lemma 7 in \cite{GS_03}, the bound \eqref{eqE.2.2},  and the macro-micro decomposition, one has 
\begin{align}
\label{eqE.2.22}
 J_3 & \lesssim  \int_s^t \|\partial_t^{k+1} (1-P)  f\|^2_{ L_2 (\Omega) W^1_2 (\bR^3) } \, d\tau + \int_s^t \|\zeta \partial_t^{k+1} f \|^2_{ L_2 (\Omega) W^1_2 (\bR^3) } \, d\tau \\
&\lesssim \int_s^t \|\partial_t^{k+1} (1-P)  f\|^2_{ L_2 (\Omega) W^1_2 (\bR^3) } \, d\tau 
+  \int_s^t \|\partial_t^{k+1} [a^{\pm}, b, c]\|^2_{ L_2 (\Omega) } \, d\tau. \notag
\end{align}

\textit{Estimate of $J_4$.} Since $k+1 \le m-2$, using the bounds \eqref{eq12.A.2.1}  in Lemma \ref{lemma 12.A.2} and \eqref{eq12.A.3.1} in Lemma \ref{lemma 12.A.3}, we get
\begin{align}
    \label{eqE.2.21}
    J_4 \lesssim \sqrt{\varepsilon} \int_s^t \cD \, d\tau.
\end{align}

Finally, gathering \eqref{eqE.2.3}--\eqref{eqE.2.21}, we obtain the desired estimate \eqref{eqE.2.0}.
\end{proof}

\begin{lemma}[estimate of $a^{\pm}$ and $\bE$, cf. \eqref{eq1.2.45}]
  \label{lemma 9.2}
Under Assumption \ref{assumption 3.5},  we have
\begin{align}
\label{eq9.2.0}
 &    \int_s^{t} \|  \sqrt{M_{+}} a^{+} - \sqrt{M_{-}} a^{-}\|^2_{ L_2 (\Omega) } \, d\tau +  \int_s^{t} \|\bE\|^2_{ W^1_2 (\Omega) } \, d\tau
   \lesssim_{\Omega, \theta, r_3, r_4 }  (\eta (t)  - \eta (s))  \\
   & +  \int_s^{t} \|\sqrt{M_{+}} a^{+}  + \sqrt{M_{-}} a^{-}\|^2_{ L_2 (\Omega) } \, d\tau +   \int_s^{t}\|  [b, c]\|^2_{ L_2 (\Omega) } \, d\tau  \notag \\
    & +         \int_s^{t} \| (1-P)  f\|^2_{ L_2 (\Omega \times \bR^3) } \, d\tau +      \int_s^{t}\|\partial_t \bB\|^2_{ L_2 (\Omega) } \, d\tau + \varepsilon \int_s^t \cD \, d\tau.\notag
     \end{align}
 \end{lemma}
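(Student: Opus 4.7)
The plan is to adapt the duality argument used in Lemmas~\ref{lemma 3.2} and~\ref{lemma E.7} to the non-derivative case $k=0$, where integration by parts in $t$ is no longer available to absorb the electric field term. The crucial new ingredient, anticipated in the sketch following \eqref{eq1.2.103}, is a Helmholtz-type decomposition of $\bE$ that converts the problematic surface integral $\int_{\partial\Omega}\phi(\bE\cdot n_x)\,dS_x$ into an absorbable volume integral plus a term involving $\partial_t\bB$.

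The starting point is the integral formulation \eqref{eq2.3} with $k=0$ and the test function $\psi=(\sqrt{J^+},-\sqrt{J^-})\,p_i\,\partial_{x_i}\phi(x)$, where $\phi\in W^2_2(\Omega)$ is the unique mean-zero solution of the Neumann problem $-\Delta\phi=\sqrt{M_+}a^+-\sqrt{M_-}a^-$ in $\Omega$, $\partial\phi/\partial n_x=0$ on $\partial\Omega$ (solvable thanks to the conservation identity~\eqref{eq3.1.51}). The SRBC for $\psi$ is verified exactly as in Lemma~\ref{lemma E.7}, and standard elliptic regularity yields $\|\phi\|_{W^2_2}\lesssim\|\sqrt{M_+}a^+-\sqrt{M_-}a^-\|_{L_2}$. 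Following the computation leading to~\eqref{eq9.1.5}, the transport term $I_1$ produces the main positive contribution comparable to $\int_s^t\|\sqrt{M_+}a^+-\sqrt{M_-}a^-\|^2_{L_2}\,d\tau$, modulo terms controlled by $\cD_{||}$ and $\int_s^t\|c\|^2$; meanwhile $I_2$, $I_3$, $I_5$, $I_6$ are handled as in the proofs of Lemmas~\ref{lemma 3.2} and~\ref{lemma E.7} (macro-micro decomposition plus the continuity equation \eqref{eqE.2.15} for $I_2$; boundary-in-$t$ trace estimate absorbed into $\eta(t)-\eta(s)$ for $I_3$; vanishing of $I_5$ since $\chi_{i+2}\in\ker L$; and control of $I_6$ via Lemmas~\ref{lemma 12.A.2}--\ref{lemma 12.A.3} together with \eqref{eq3.5.1}, yielding $\sqrt\varepsilon\int_s^t\cD\,d\tau$).

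The novelty lies in the treatment of $I_4=\text{const}\cdot\int_s^t\int_\Omega\bE\cdot\nabla\phi\,dx\,d\tau$. Invoking Assumptions~\ref{assumption E.1}--\ref{assumption E.2}, I would construct a splitting $\bE=\nabla p+\bm{w}$ as follows. Let $\bm{w}$ be the unique $W^1_2$ solution of the div-curl system
\begin{equation*}
\nabla_x\cdot\bm{w}=0,\quad \nabla_x\times\bm{w}=-\partial_t\bB,\quad \bm{w}\times n_x=0\text{ on }\partial\Omega,\quad \int_{\Sigma_j}\bm{w}\cdot n_x\,dS_x=0,
\end{equation*}
whose solvability and the bound $\|\bm{w}\|_{W^1_2}\lesssim\|\partial_t\bB\|_{L_2}$ follow from \cite{AS_13}; the vanishing flux condition on $\bm{w}$ is inherited by $t$-differentiating~\eqref{eq4.10}. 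Since $\tilde\Omega$ is simply connected and $\nabla\times(\bE-\bm{w})=0$, one has $\bE-\bm{w}=\nabla p$ for a scalar $p$; as $\partial\Omega$ is connected and $(\bE-\bm{w})\times n_x=0$, I normalize $p|_{\partial\Omega}=0$. Gauss's law then gives $\Delta p=\nabla_x\cdot\bE$, a linear combination of $\sqrt{M_+}a^+\pm\sqrt{M_-}a^-$, and Dirichlet elliptic regularity yields $\|p\|_{W^2_2}\lesssim\|\sqrt{M_+}a^+-\sqrt{M_-}a^-\|_{L_2}+\|\sqrt{M_+}a^++\sqrt{M_-}a^-\|_{L_2}$. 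Splitting $I_4=I_4^{(\bm{w})}+I_4^{(p)}$: for the $\bm{w}$-part, integration by parts using $\nabla\cdot\bm{w}=0$ leaves only $\int_{\partial\Omega}\phi(\bm{w}\cdot n_x)\,dS_x$, which the trace theorem and Young's inequality control by $\varepsilon\int_s^t\|\sqrt{M_+}a^+-\sqrt{M_-}a^-\|^2+\varepsilon^{-1}\int_s^t\|\partial_t\bB\|^2$; for the $\nabla p$-part, integration by parts using $p|_{\partial\Omega}=0$ and $-\Delta\phi=\sqrt{M_+}a^+-\sqrt{M_-}a^-$ produces $\int p(\sqrt{M_+}a^+-\sqrt{M_-}a^-)\,dx$, absorbed via Cauchy--Schwarz, the Dirichlet elliptic estimate on $p$, and Young's inequality into a small multiple of the main $I_1$-term plus $\int_s^t\|\sqrt{M_+}a^++\sqrt{M_-}a^-\|^2$, which is precisely one of the allowed right-hand side terms of \eqref{eq9.2.0}.

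Assembling all pieces and choosing the small parameters for absorption into $I_1$ gives the bound on $\int_s^t\|\sqrt{M_+}a^+-\sqrt{M_-}a^-\|^2_{L_2}\,d\tau$, and the full $\int_s^t\|\bE\|^2_{W^1_2}$ estimate follows from the div-curl inequality~\eqref{eq4.11} applied to $\bE$, since $\rho$ is a linear combination of $a^\pm$ and $\|\partial_t\bB\|$ already appears on the right-hand side of \eqref{eq9.2.0}. The main obstacle I foresee is ensuring that the constant in the $\int p(\sqrt{M_+}a^+-\sqrt{M_-}a^-)$ contribution to $I_4^{(p)}$ is strictly smaller than the coefficient of the main $I_1$-term; this requires carefully tracking the physical constants (in particular the normalization factors relating $\nabla_x\cdot\bE$ to the combination $\sqrt{M_+}a^+-\sqrt{M_-}a^-$), and may force a rescaling of $\phi$ or an auxiliary splitting that treats $\sqrt{M_+}a^++\sqrt{M_-}a^-$ (already controlled by Lemma~\ref{lemma 3.2}) separately to preserve the absorption structure.
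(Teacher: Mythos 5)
Your proposal reconstructs almost the entire proof — the test function $\psi=(\sqrt{J^+},-\sqrt{J^-})\,p_i\,\partial_{x_i}\phi$, the Neumann problem for $\phi$, the treatment of $I_1,I_2,I_3,I_5,I_6$ by the methods of Lemmas~\ref{lemma 3.2} and~\ref{lemma E.7}, and the Helmholtz-type splitting of $\bE$ into a curl-part $\bm{w}=\bE_1$ controlled by $\|\partial_t\bB\|_{L_2}$ and a gradient part $\nabla\xi$ with Dirichlet data solving Gauss's law — all match the paper's argument in substance. The paper also treats the $\bE_1$-piece more simply, by a direct Cauchy--Schwarz on $\int\bE_1\cdot\nabla\phi$ rather than your integration-by-parts-plus-trace route, but both work.

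The genuine gap is exactly where you flag the ``main obstacle.'' You attempt to absorb $\int p\,(\sqrt{M_+}a^+-\sqrt{M_-}a^-)$ via Cauchy--Schwarz and Young, but the Dirichlet elliptic estimate on $p$ gives $\|p\|_{L_2}\lesssim\|\sqrt{M_+}a^+-\sqrt{M_-}a^-\|_{L_2}+\|\sqrt{M_+}a^++\sqrt{M_-}a^-\|_{L_2}$ with an uncontrolled constant, so Young's inequality produces either a non-small coefficient of $\|\sqrt{M_+}a^+-\sqrt{M_-}a^-\|^2_{L_2}$ or fails to make the other factor small; neither can be absorbed into the positive $I_1$-contribution, and ``tracking physical constants'' does not save this because the elliptic estimate contains a domain-dependent constant you cannot choose. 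The paper sidesteps the issue entirely with a sign observation: using the algebraic identity \eqref{eq9.2.11}, one rewrites $\sqrt{M_+}a^+-\sqrt{M_-}a^-$ as $\lambda_1^{-1}(e_+\sqrt{M_+}a^+-e_-\sqrt{M_-}a^-)-\lambda_2\lambda_1^{-1}(\sqrt{M_+}a^++\sqrt{M_-}a^-)$ with $\lambda_1=\tfrac12(e_++e_-)>0$. Since $\Delta\xi=e_+\sqrt{M_+}a^+-e_-\sqrt{M_-}a^-$, the first piece becomes $\lambda_1^{-1}\int\xi\,\Delta\xi=-\lambda_1^{-1}\|\nabla\xi\|^2_{L_2}\le 0$ and is simply \emph{discarded} from the right-hand side; only the piece involving $\sqrt{M_+}a^++\sqrt{M_-}a^-$, which is already an allowed RHS term, survives. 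Without this sign structure your argument cannot close.
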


\begin{proof}
We follow the proof of Lemma \ref{lemma E.7} by using the integral identity \eqref{eq2.3} with the  test function defined by \eqref{eq9.1.3} and \eqref{eq9.1.3.1} with $k=0$. Due to the estimates \eqref{eq9.1.5}-\eqref{eq9.9}, we only need to handle the integral  $I_4$ (cf. the first equality in \eqref{eq9.6}). To this end, we use a different argument based on a  Helmholtz-type decomposition.

\textit{Helmholtz-type decomposition.}
First, we note that by the definition of $a^{\pm}$ (see \eqref{eq6.18}), the charge density $\rho$ (see \eqref{eq36.17}) satisfies  the identity
\begin{equation}
			\label{eq9.2.12}
    \rho  =   e^{+} \sqrt{M_{+}} a^{+}  - e^{-} \sqrt{M_{-}} a^{-}.
\end{equation}
Therefore, the electric field $\bE$ can be decomposed as  follows:
\begin{align}
    \label{hodge}
    \bE = \bE_1 + \nabla_x \xi,
\end{align}
where $\bE_1$ satisfies
\begin{equation}
			\label{eq9.2.2}
\left\{\begin{aligned}
 & \nabla_x \cdot \bE_1  = 0, \\
 & \nabla_x \times \bE_1 = -\partial_t \bB, \\
 & \bE_1 \times n_x = 0 \, \, \text{on} \, \partial \Omega,\\
\end{aligned}
\right.
\end{equation}
and $\xi \in W^2_2 (\Omega)$ is the strong solution to 
\begin{equation}
			\label{eq9.2.3}
\left\{\begin{aligned}
 & \Delta \xi =    e_{+} \sqrt{M_{+}} a_{+}  - e_{-} \sqrt{M_{-}} a_{-},\\
 & \xi = 0  \, \, \text{on} \, \partial \Omega,
\end{aligned}
\right.
\end{equation}

\textit{Estimates of $E_1$ and $\xi$.}
By the div-curl estimate for vector fields orthogonal to  the boundary $\partial \Omega$ (see \eqref{eq4.11}), we get
\begin{equation}
			\label{eq9.2.4}
   \|\bE_1\|_{W^1_2 (\Omega) } \lesssim_{\Omega} \|\partial_t \bB\|_{ L_2 (\Omega) },
\end{equation}
and by the standard elliptic estimate, 
\begin{equation}
			\label{eq9.2.5}
    \|\xi\|_{W^2_2 (\Omega) } \lesssim_{\Omega} \|   e_{+} \sqrt{M_{+}} a^{+}  - e_{-} \sqrt{M_{-}} a^{-}\|_{ L_2 (\Omega) }.
\end{equation}

\textit{Estimate of $I_4$} (see \eqref{eq9.6}). 
 By the decomposition \eqref{hodge}, we have
\begin{equation}
			\label{eq9.2.6}
(\text{const})\, I_4 =\int_s^{t} \int_{\Omega} \bE_1 \cdot \nabla_x \phi \, dx d\tau
    + \int_s^{t} \int_{\Omega}  \nabla_x \xi \cdot \nabla_x \phi \, dx d\tau  =: I_{4, 1} + I_{4, 2}.
\end{equation}
 First, by the Cauchy-Schwarz inequality and \eqref{eq9.2.4},   we get for any $\varepsilon_a \in (0, 1)$,
\begin{align}
        			\label{eq9.2.7}
    I_{4, 1}
& \lesssim_{\Omega} \varepsilon_a^{-1} \int_s^{t} \|\partial_t \bB\|^2_{ L_2 (\Omega) } \, d\tau  +    \varepsilon_a \int_s^{t} \|\nabla_x \phi\|^2_{ L_2 (\Omega) } \, d\tau.
\end{align}
Furthermore, by using integration by parts  and the equation \eqref{eq9.1.3},
\begin{align*}
    I_{4, 2} & = - \int_s^{t} \int_{\Omega}  \xi \Delta_x \phi \, dx d\tau \\
    &  =  \int_s^{t} \int_{\Omega}   \xi (\sqrt{M_{+}} a^{+}  - \sqrt{M_{-}} a^{-})  \, dx d\tau.
\end{align*}
 By the identity
\begin{align}
    \label{eq9.2.11}
  &  e_{+} \sqrt{M_{+}} a^{+}  - e_{-} \sqrt{M_{-}} a^{-} = \lambda_1 (\sqrt{M_{+}} a^{+}  - \sqrt{M_{-}} a^{-})\\
   & + \lambda_2 (\sqrt{M_{+}} a^{+}  + \sqrt{M_{-}} a^{-}), \quad 
    \lambda_1 = \frac{1}{2} (e_{+} + e_{-}),
    \quad  \lambda_2 = \frac{1}{2} (e_{+} - e_{-}), \notag
\end{align}
we have
\begin{align}
            			\label{eq9.2.8}
 &   I_{4, 2} = \frac{1}{\lambda_1} \int_s^{t} \int_{\Omega}  \xi  (e_{+} \sqrt{M_{+}} a^{+}  - e_{-} \sqrt{M_{-}} a^{-})     \, dx d\tau\\
&
   - \frac{\lambda_2}{\lambda_1} \int_s^{t} \int_{\Omega} \xi  (\sqrt{M_{+}} a^{+}  + \sqrt{M_{-}} a^{-})  \, dx d\tau =: I_{4, 2, 1} + I_{4, 2, 2}. \notag
\end{align}
By using the equation \eqref{eq9.2.3}, integration by parts, and the fact that $\lambda_1 > 0$, we obtain
\begin{align}
            			\label{eq9.2.9}
    I_{4, 2, 1} &= \frac{1}{\lambda_1}    \int_s^{t} \int_{\Omega}  \xi (\Delta_x \xi)   \, dx d\tau  = - \frac{1}{\lambda_1}  \int_s^{t} \|\nabla_x  \xi\|^2_{ L_2 (\Omega) } \, d\tau \le 0.
\end{align}
 Since $I_{4, 2, 1} \le 0$, we may drop this term from the r.h.s of the integral identity \eqref{eq2.3}.
Furthermore, by using the Cauchy-Schwarz inequality, the elliptic estimate \eqref{eq9.2.5} and the identity \eqref{eq9.2.11}, we conclude
\begin{align}
            			\label{eq9.2.10}
    I_{4, 2, 2}
  &  \lesssim    \varepsilon_a \int_s^{t} \|\sqrt{M_{+}} a^{+}  - \sqrt{M_{-}} a^{-}\|^2_{ L_2 (\Omega) } \, d\tau  \\
  & +  \varepsilon_a^{-1} \int_s^{t} \|\sqrt{M_{+}} a^{+}  + \sqrt{M_{-}} a^{-}\|^2_{ L_2 (\Omega) } \, d\tau. \notag  
\end{align}

Finally, gathering \eqref{eq9.1.5}--\eqref{eq9.9} with $k=0$ and  \eqref{eq9.2.6}--\eqref{eq9.2.10} and using the elliptic estimate \eqref{eq9.1.4} for the test function $\phi$, we obtain the desired estimate \eqref{eq9.2.0} for $a^{\pm}$ with the additional term on the r.h.s. given by
$$
   \varepsilon_a  \int_s^{t}  \| \sqrt{M_{+}}  a^{+}  - \sqrt{M_{-}}  a^{-}\|^2_{ L_2 (\Omega) } \, d\tau,
$$
 which is absorbed into the l.h.s. by choosing $\varepsilon_a$ is sufficiently small. 
 The estimate of $\bE$ follows from \eqref{hodge}, \eqref{eq9.2.4}--\eqref{eq9.2.5}, and the bound of $a^{\pm}$.
 \end{proof}

\begin{proposition}[final estimate of $a^{\pm}, \bE, \bB$, cf. \eqref{eq1.2.46}]
                \label{proposition E.4}
Under Assumption \ref{assumption 3.5}, we have
     \begin{align}
\label{eqE.4.0}
 &  \sum_{k=0}^{m-2}  \int_s^t \| \partial_t^k  [a^{+},  a^{-}]\|^2_{ L_2 (\Omega) } \, d\tau 
   + \sum_{k=0}^{m-3} \int_s^t \| \partial_t^k  \bB\|^2_{ L_2 (\Omega) } \, d\tau \\
 &+ \sum_{k=0}^{m-4} \int_s^t \| \partial_t^k  \bE\|^2_{ L_2 (\Omega) } \, d\tau
   \lesssim_{\Omega, \theta, r_3, r_4 }  (\eta (t)  - \eta (s)) \notag \\
&  +    \sum_{k=0}^{m} \int_s^t \| \partial_t^k  [b, c]\|^2_{ L_2 (\Omega) } \, d\tau   + \int_s^t \cD_{||} \, d\tau +  \sqrt{\varepsilon} \int_s^t \cD \, d\tau. \notag
 \end{align}
\end{proposition}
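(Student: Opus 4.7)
My plan is to carry out the multi-step descent argument outlined in Section \ref{section 4.11}: assemble Lemmas \ref{lemma E.7}, \ref{lemma E.1}, \ref{lemma E.2}, \ref{lemma 9.2}, and \ref{lemma 3.2}, together with the $W^1_r$ div-curl estimates \eqref{eq4.11}--\eqref{eq4.12}, into a closed system via absorption of terms carrying the small factors $\varepsilon_a$ or $\varepsilon_1$ into the left-hand side. Throughout, $\partial_t^k [b, c]$ for $0 \le k \le m$, $\int_s^t \cD_{||}\, d\tau$, and $\sqrt{\varepsilon} \int_s^t \cD\, d\tau$ will be treated as admissible good terms, since they already appear on the right-hand side of \eqref{eqE.4.0}.

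The core of the argument will be closing the coupled system for
\[
A := \sum_{k=1}^{m-2} \int_s^t \|\partial_t^k [a^+, a^-]\|^2_{L_2(\Omega)} \, d\tau,
\qquad
B := \sum_{k=1}^{m-3} \int_s^t \|\partial_t^k \bB\|^2_{L_2(\Omega)} \, d\tau.
\]
I will first apply Lemma \ref{lemma E.7} to bound $A$ by $(\eta(t)-\eta(s)) + \varepsilon_a \sum_{k=1}^{m-4} \int_s^t \|\partial_t^k \bE\|^2\, d\tau + \varepsilon_a^{-1}(\text{good})$, and then invoke \eqref{eq4.11} in the form $\|\partial_t^k \bE\|_{L_2} \lesssim \|\partial_t^{k+1} \bB\|_{L_2} + \|\partial_t^k [a^{\pm}]\|_{L_2}$ to rewrite its right-hand side as an $\varepsilon_a$-multiple of $A + B$ plus good terms. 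In parallel, I will express each summand of $B$ by Lemma \ref{lemma E.1} through $\partial_t^{k+1} [a^+, c]$, $\partial_t^{k+2} b$, and a kinetic trace, then bound the trace via Lemma \ref{lemma E.2} with the choice $j = k+1$ when $k \le m/2 - 1$ and $j = m-k-1$ when $k \ge m/2 - 1$, producing an $\varepsilon_1$-factor of $\|\partial_t^{k+1-j} \bE\|^2_{W^1_2}$ with $k+1-j \in \{0, \ldots, m-4\}$; a second application of \eqref{eq4.11} converts those $\bE$-terms into $\partial_t^{k+2-j} \bB$ (inside $B$) and $\partial_t^{k+1-j} a^{\pm}$ (inside $A$). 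Substituting back into the $A$-estimate leaves reappearing $A$- and $B$-contributions of coefficient at most $\varepsilon_a + \varepsilon_a\varepsilon_1$; after first fixing $\varepsilon_a$ small relative to universal constants and then $\varepsilon_1$ small relative to $\varepsilon_a$, they will be absorbed into the left, yielding $A + B \lesssim$ good.

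The $k=0$ terms will be recovered separately. For $\bE$ at $k=0$ and the combination $\sqrt{M_+}a^+ - \sqrt{M_-}a^-$, Lemma \ref{lemma 9.2} provides bounds in terms of $\int_s^t \|\partial_t \bB\|^2\, d\tau$ (already contained in $B$) and $\int_s^t \|\sqrt{M_+}a^+ + \sqrt{M_-}a^-\|^2\, d\tau$, the latter controlled at every order by Lemma \ref{lemma 3.2} through $[b, c]$, $\cD_{||}$, and $\varepsilon \int_s^t \cD$. For $\bB$ at $k=0$, since Lemma \ref{lemma E.2} requires $k+1 \ge 2$ and therefore cannot be invoked inside Lemma \ref{lemma E.1} at $k=0$, I will instead use the second div-curl inequality \eqref{eq4.12}, $\|\bB\|_{W^1_2(\Omega)} \lesssim \|\partial_t \bE\|_{L_2(\Omega)} + \|\bm{j}\|_{L_2(\Omega)}$: the term $\partial_t \bE$ is already captured in the $\bE$ range handled above, while the macro-micro decomposition \eqref{eqE.1.18}--\eqref{eqE.1.19} gives $\|\bm{j}\|_{L_2} \lesssim \|b\|_{L_2} + \|(1-P)f\|_{L_2(\Omega \times \bR^3)}$, both of which are good.

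The main obstacle will be the simultaneous bookkeeping of derivative indices across all the invoked lemmas: I must verify at each step that every $\partial_t^k$ summand appearing on the right-hand side lies within the ranges admitted by the dissipation $\cD$ or by $A$ and $B$, and that the ordered selection of $\varepsilon_a$ and $\varepsilon_1$ ensures that each reappearing $A$- or $B$-term on the right carries a coefficient strictly smaller than its counterpart on the left. The fact that Lemma \ref{lemma E.2} is unavailable at $k+1 = 1$ is precisely what forces the separate div-curl treatment of $\bB$ at $k=0$ and explains why in \eqref{eqE.4.0} $\bB$ is summed up to $m-3$ and $\bE$ only up to $m-4$, reflecting the one and two losses of derivative incurred through \eqref{eq4.11}--\eqref{eq4.12}.
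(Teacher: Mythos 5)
Your overall strategy matches the paper's: combine Lemmas \ref{lemma E.7}, \ref{lemma E.1}, \ref{lemma E.2}, \ref{lemma 9.2}, \ref{lemma 3.2} and the div-curl estimates into a closed absorption argument, then recover the $k=0$ and low-index terms. Two issues, one cosmetic and one substantive, in how you wire it together.

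First, your bookkeeping in the line ``a second application of \eqref{eq4.11} converts those $\bE$-terms into $\partial_t^{k+2-j} \bB$ (inside $B$) and $\partial_t^{k+1-j} a^{\pm}$ (inside $A$)'' is wrong at $k+1-j = 0$: your own definition has $A$ starting at $k=1$, so $\|a^\pm\|^2_{L_2(\Omega)}$ at order zero is \emph{not} in $A$. With your choice $j = k+1$ for every $k \le m/2 - 1$, the index $k+1-j$ collapses to $0$ for all those $k$, so you are systematically producing $\|\bE\|^2_{W^1_2}$ at order $0$ --- and through \eqref{eq4.11} a zeroth-order $a^\pm$ term --- inside the $B$-estimate, before either has been controlled. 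Consequently the claim ``after fixing $\varepsilon_a$, $\varepsilon_1$ small they will be absorbed into the left, yielding $A + B \lesssim$ good'' is premature: a residual $\varepsilon_1 \int_s^t \|\bE\|^2_{W^1_2}\,d\tau$ (equivalently $\varepsilon_1\int_s^t\|[a^+,a^-]\|^2_{L_2}\,d\tau$ at order zero) survives the absorption and is not a good term. The fix is available to you --- by Lemma \ref{lemma 9.2} and Lemma \ref{lemma 3.2}, $\int_s^t\|[a^+,a^-]\|^2_{L_2}\,d\tau \lesssim \int_s^t\|\partial_t\bB\|^2_{L_2}\,d\tau + \text{good}$, and the right-hand side is $B_1 + \text{good}$, so the residual becomes $\varepsilon_1 B + \text{good}$ and can be absorbed --- but this must be folded into the closure, not deferred to a later ``separate recovery.'' As written, your separate step comes after the closure has supposedly already been completed.

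Second, the varying choice of $j$ is what forces this complication. The paper uses $j = 2$ throughout, restricted to traces of order $k+1 \in \{3, \ldots, m-2\}$ (Lemma \ref{lemma E.1} only for $k \in \{2, \ldots, m-3\}$); then $k+1-j$ ranges over $\{1,\ldots,m-4\}$, never touching order $0$ or order $m-3$, so every $\bE$-term produced by the trace lemma is already inside the $[\bE,\bB, a^\pm]$ system being absorbed, and the $\bB$ at $k=0,1$ is recovered afterwards via \eqref{eq4.12} without feedback. Adopting $j=2$ would remove the need for the Lemma~\ref{lemma 9.2}/\ref{lemma 3.2} step from the absorption and leave your ``$k=0$ recovered separately'' claim actually correct. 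If you keep your choice, redo the bookkeeping at $k+1-j=0$ explicitly.
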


\begin{proof}

\textbf{Step 1: preliminary estimates of $\partial_t^k [\bE, \bB]$.}
First, by the div-curl estimate in \eqref{eq4.11} and the identity \eqref{eq9.2.12}, we have
\begin{align}
        \label{eqE.4.5}
     \sum_{k=1}^{m-4} \int_s^{t} \|\partial_t^{k} \bE\|^2_{ W^1_2 (\Omega) }  \, d\tau
   & \lesssim_{\Omega}   \sum_{k=2}^{m-3}  \int_s^{t}  \|\partial_t^{k} \bB\|^2_{ L_2 (\Omega) }  \, d\tau \\
   & + \sum_{k=1}^{m-4} \int_s^{t}  \|\partial_t^{k} [a^{+}, a^{-}]\|^2_{ L_2 (\Omega) }  \, d\tau. \notag
\end{align}
Furthermore, combining the above estimate \eqref{eqE.4.5} with 
\begin{itemize}
    \item \eqref{eqE.1.0} in Lemma \ref{lemma E.1} with $2 \le k \le m-3$,
    \item \eqref{eqE.5.7}  in Lemma \ref{lemma E.7}, 
    \item \eqref{eqE.2.0} in Lemma \ref{lemma E.2} with $j=2$ and $3 \le k+1 \le m-2$, 
\end{itemize}
we get
\begin{align}
\label{eqE.4.3}
 &    \sum_{k=1}^{m-4} \int_s^{t} \|\partial_t^{k} \bE\|^2_{ W^1_2 (\Omega) }  \, d\tau + \sum_{k=2}^{m-3}\int_0^{\tau} \| \partial_t^k \bB\|^2_{ L_2 (\Omega) } \, d\tau\\
&  \lesssim  (\eta (t) - \eta (s))
+  \varepsilon_1 \sum_{k=1}^{m-4}  \int_s^{t} \|\partial_t^{k} \bE\|^2_{ W^1_2 (\Omega) }  \, d\tau  +    \sum_{k=1}^{m-2}  \int_s^{t}  \|\partial_t^{k} [a^{+}, a^{-}]\|^2_{ L_2 (\Omega) }  \, d\tau  \notag\\
&  +    \varepsilon_1^{-1} \bigg(\sum_{k=1}^{m}  \int_s^{t}      \|\partial_t^{k} [b, c]\|^2_{ L_2 (\Omega) }  \, d\tau + \int_s^{t} \cD_{||}  \, d\tau +  \sqrt{\varepsilon} \int_s^t \cD \, d\tau\bigg). \notag
\end{align}
Then, for sufficiently small $\varepsilon_1$,  we may drop the term containing $\bE$
on the r.h.s. of \eqref{eqE.4.3}.

\textbf{Step 2: estimate of $\partial_t^k a^{\pm}, k = 1, \ldots, m-2$.}
  Combining  \eqref{eqE.5.7} in Lemma \ref{lemma E.7} with  \eqref{eqE.4.3} gives
   \begin{align}
\label{eqE.4.7}
 &   \sum_{k=1}^{m-2} \int_s^{t} \|\partial_t^k [a^{+}, a^{-}]\|^2_{ L_2 (\Omega) } \, d\tau\\
&
   \lesssim  (\eta (t)  -   \eta (s))   +   \varepsilon_a \sum_{k=1}^{m-2} \int_s^{t} \|\partial_t^k [a^{+}, a^{-}]\|^2_{ L_2 (\Omega) } \, d\tau  \notag\\
&  +  \varepsilon_a^{-1} \bigg(\sum_{k=1}^{m} \int_s^{t} \| \partial_t^{k}  [b, c]\|^2_{ L_2 (\Omega) } \, d\tau
 + \int_s^{t} \cD_{||} \, d\tau +  \sqrt{\varepsilon} \int_s^t \cD \, d\tau\bigg). \notag
 \end{align}
  Choosing $\varepsilon_a$ sufficiently small,  we absorb the sum containing $a^{\pm}$ into the l.h.s. and   obtain the desired estimate \eqref{eqE.4.0}  for the derivative terms $\partial_t^k a^{\pm}, k = 1, \ldots, m-2$.


\textbf{Step 3: estimates of  $\partial_t^k \bE, k = 1, \ldots, m-4$ and $\partial_t^k \bB, k = 0, \ldots, m-3$.} Combining \eqref{eqE.4.3} with \eqref{eqE.4.7}, we conclude
\begin{align}
\label{eqE.4.9}
 &   \sum_{k=2}^{m-3 }\int_s^{t} \| \partial_t^k \bB\|^2_{ L_2 (\Omega) } \, d\tau +  \sum_{k=1}^{m-4}\int_s^{t} \| \partial_t^k \bE\|^2_{ L_2 (\Omega) } \, d\tau
 \lesssim \text{r.h.s. of \eqref{eqE.4.0}}.
\end{align}

Furthermore, by the div-curl estimate in \eqref{eq4.12} and the fact that $\bm{j}$ is a certain velocity average of $(1-P) f$ (see \eqref{eqE.1.18}--\eqref{eqE.1.19}), we get 
\begin{equation}
        \label{eqE.4.10}
\| \partial_t^k \bB\|^2_{ L_2 (\Omega) }  \lesssim_{\Omega} \| \partial_t^{k+1} \bE\|^2_{ L_2 (\Omega) } +  \|(1-P) \partial_t^k f\|^2_{ L_2 (\Omega \times \bR^3) }, k = 0, \ldots, m-1.
\end{equation}
Combining  \eqref{eqE.4.9} with \eqref{eqE.4.10} with $k \in \{0, 1\}$, we prove   the desired estimate \eqref{eqE.4.0} for the \textit{full} sum involving $\bB$ and for all the $t$-derivative terms $\partial_t^k \bE, 1 \le k \le m-4$.

\textbf{Step 4: estimates of $a^{\pm}$ and $\bE$.}
First, gathering the estimates  \eqref{eq9.2.0} (see Lemma \ref{lemma 9.2}) and \eqref{eq3.2.1} (see Lemma \ref{lemma 3.2}), we obtain
\begin{align*}
 &    \int_s^{t} \|[a^{+}, a^{-}]\|^2_{ L_2 (\Omega) } \, d\tau +  \int_s^{t} \|\bE\|^2_{ W^1_2 (\Omega) } \, d\tau
   \lesssim   (\eta (t)  - \eta (s))  \\
&  +   \int_s^{t} \|[b, c]\|^2_{ L_2 (\Omega) } \, d\tau +  \int_s^{t} \| (1-P)  f\|^2_{ L_2 (\Omega \times \bR^3) } \, d\tau   \\
    & +      \int_s^{t} \|\partial_t \bB\|^2_{ L_2 (\Omega) } \, d\tau +   \varepsilon \int_s^t \cD \, d\tau. 
 \end{align*}
Estimating the term involving $\partial_t \bB$ via  \eqref{eqE.4.0} (see Step 3), we obtain the desired estimate \eqref{eqE.4.0} for the \textit{full} sums involving $a^{\pm}$ and $\bE$. Thus,  Proposition \ref{proposition E.4} is proved.
\end{proof}

\section{Gradient estimate of a velocity average}
    \label{section 10}
In this section, we prove the estimate \eqref{eq15.0} (see Proposition \ref{proposition 4.0}). For the sake of convenience, we set all the physical constants to $1$.

\begin{lemma}
\label{lemma 4.1}
    Let
\begin{itemize}
\item[--]   $L \ge 0$ be a nonnegative integer and
$\alpha \in (2/3, 1)$ be a constant,
\item[--] $g^l, l = 0, \ldots, L,$ be scalar functions such that
\begin{align}
    \label{eq4.1.31}
    &    \|g^0\|_{ L_{\infty} (\Omega \times \bR^3) } \le \delta, \\
\label{eq4.1.27}
 &  \sum_{l=0}^L   (\|[g^l, \nabla_p g^l]\|_{  C^{\alpha/3}_{x, p} (\Omega \times \bR^3) } )\le K,
\end{align}
where   $\delta \in (0, 1)$ and $K > 0$,

\item[--]   each $g^l$ satisfy the SRBC,

\item[--] $f^l \in S_3 (\Omega \times \bR^3)$, $l = 0, \ldots, L$,

\item[--] $\eta^l, f^l, \nabla_p f^l \in  C^{\alpha/3}_{x, p} (\Omega \times \bR^3)$, $l = 0, \ldots, L$,

\item[--]
\begin{align*}
&\sigma^0 (x, p) = \int_{\bR^3} \Phi (P, Q) (2 J + J^{1/2} (q) g^0 (x, q)) \, dq, \\
&   \sigma^l (x, p) = \int_{\bR^3} \Phi (P, Q) g^l (x, q) \, dq, \, \, l = 1, \ldots, L,
\end{align*}

\item[--] $f^0$ is a strong solution to the equation
\begin{align*}
    \frac{p}{p_0} \cdot \nabla_x f^0 - \nabla_p \cdot (\sigma^{0} \cdot \nabla_p f^0)   = \eta^0 \, \, \text{in} \, \Omega \times \bR^3,
\end{align*}
with the SRBC,

\item[--] for each $l  = 1, \ldots, k$, $f^l$ is a strong solution  to
\begin{align}
    \label{eq4.1.1}
   & \frac{p}{p_0} \cdot \nabla_x f^l - \nabla_p \cdot (\sigma^{0} \cdot \nabla_p f^l)  \\
   & - \sum_{l_1+l_2 = l, l_2 < l} c_{l_1, l_2, l} \nabla_p \cdot (\sigma^{l_1} \nabla_p f^{l_2})    = \eta^l \, \, \text{in} \, \Omega \times \bR^3, \notag
\end{align}
with the SRBC, where $c_{l_1, l_2, l}$ are certain constants,

\item[--]  $\zeta$   be a three times differentiable function  satisfying the estimate
\begin{equation}
        \label{eq1.1.0}
        \sum_{k=0}^3 |D^k  \zeta (p)|  \lesssim_{\beta}  p_0^{-\beta} \, \, \text{a.e.} \, p \in \bR^3, \, \forall \beta \ge 0.
\end{equation}
\end{itemize}

Then, if $\delta \in (0, 1)$ is sufficiently small, for
\begin{align}
    \label{eq4.1.0}
    \bar f^l  (x, p): = \int_{\bR^3} f^l (x, p) \zeta (p) \, dp,
\end{align}
we have $\nabla_x \bar f^l \in L_3 (\Omega)$, and
\begin{align}
    \label{eq4.1.2}
 &\sum_{l = 0}^L \|\nabla_x \bar f^l\|_{ L_3 (\Omega) } \\
 &\lesssim_{ \zeta, \alpha, L, \Omega, K }
  \sum_{l = 0}^L   \big(\|f^l \|_{ S_{3} (\Omega \times \bR^3) }
     +\|[\eta^l, f^l, \nabla_p f^l]\|_{  C^{\alpha/3}_{x, p} (\Omega \times \bR^3)  } \big). \notag
\end{align}
\end{lemma}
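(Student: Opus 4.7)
The argument proceeds by induction on $l$. For $l=0$, we work with the quasilinear Landau equation, whose coefficient $\sigma^0$ is uniformly elliptic by \eqref{eq4.1.31} for small $\delta$ and which has $C^{\alpha/3, \alpha}_{x,p}$ regularity thanks to \eqref{eq4.1.27} and the Landau kernel estimates (cf. Lemmas \ref{lemma B.1}--\ref{lemma B.2}). For $l \ge 1$, the cross terms $\nabla_p \cdot (\sigma^{l_1} \nabla_p f^{l_2})$ with $l_2 < l$ are absorbed into the right-hand side $\tilde \eta^l$, and by the inductive hypothesis combined with the assumed regularity of $g^{l_1}$ and the $S_3$-control of $f^{l_2}$, $\tilde \eta^l$ inherits the $L_3$-integrability and $C^{\alpha/3,\alpha}_{x,p}$ smoothness needed below.

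The main step is a two-scale localization. Interior to $\Omega$ (away from $\partial \Omega$), the steady $S_3$ estimate \eqref{eq14.C.6.6}--\eqref{eq14.C.6.7} applied to \eqref{eq4.1.1} gives $\nabla_p f^l$, $D_p^2 f^l$, and $\frac{p}{p_0}\cdot \nabla_x f^l \in L_3$ locally, and the $W^{2/3}_{3}$ hypoelliptic gain in $x$ then controls $\nabla_x f^l$ locally when combined with the smoothness of $\eta^l$. Near $\partial \Omega$, I would apply the mirror-extension procedure of \cite{VML}: in a boundary-flattening chart $(y,v)$, the pair $(f^l, f^l \circ \text{reflection})$ extends across the boundary and satisfies a non-relativistic kinetic Fokker-Planck equation on $\mathbb{R}^6$ of the form
\begin{align*}
v \cdot \nabla_y \mathfrak{F}^l - \nabla_v \cdot (\mathfrak{A}\nabla_v \mathfrak{F}^l) = \tilde{\mathfrak{\eta}}^l + \nabla_v \cdot (\mathfrak{B}\, \mathfrak{F}^l) + \text{lower-order cross terms in } l,
\end{align*}
where $\mathfrak{B}(y,v)$ is a geometric drift encoding curvature and frame rotation. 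The coefficient $\mathfrak{B}$ is bounded but discontinuous across $\{y_3=0\}$; crucially, the discontinuous pieces are \emph{odd} in $y_3$, and by Lemma \ref{lemma F.2} any such odd bounded function lies in $W^{(1/3)-}_{3,\text{loc}}$. This slight fractional regularity is the decisive input.

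Applying $\nabla_x^{(1/3)-}$ formally to the extended equation and invoking the steady Newtonian $S_3^N(\mathbb{R}^6)$ estimate \eqref{eq14.C.20}, I expect to gain $\nabla_x^{2/3}$ by hypoelliptic smoothing on top of the $\nabla_x^{(1/3)-}$ already present, yielding $\mathfrak{F}^l \in L_3(\mathbb{R}^3_v) W^{1-\varepsilon}_3(\mathbb{R}^3_y)$ for every small $\varepsilon > 0$. This is short of a full $W^1_3$ bound on $\mathfrak{F}^l$ itself, but the missing regularity is precisely what a velocity averaging lemma can recover on the average. Specifically, I would apply a variant of the DiPerna--Lions--Meyer $L_p$ velocity averaging lemma \cite{DLM_91} to the equation and the weight $\zeta$ (whose decay \eqref{eq1.1.0} guarantees the Schwartz-type tail conditions needed): this upgrades $\mathfrak{F}^l \in L_3 W^{1-\varepsilon}_3$ to $\bar{\mathfrak{F}}^l \in W^{1-\varepsilon+\vartheta}_3$ for some $\vartheta = \vartheta(\varepsilon) > \varepsilon$ (morally $\vartheta \sim 1/9$ for $\varepsilon$ small), giving a full $\nabla_y \bar{\mathfrak{F}}^l \in L_3$. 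Pulling back the chart and summing over the finitely many boundary patches and the interior contribution yields \eqref{eq4.1.2}.

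The main obstacle I anticipate is the mismatch between the $W^{(1/3)-}_3$ regularity supplied by the odd structure and the full $W^1_3$ target: the naive hypoelliptic gain alone falls short, so it is essential to separate the function $f^l$ (which stays merely fractionally smooth in $x$) from its average $\bar f^l$ (which picks up the extra gain), and to apply the averaging lemma at exactly the right exponent so that $1-\varepsilon+\vartheta \ge 1$. The inductive handling of the cross terms $\nabla_p\cdot(\sigma^{l_1}\nabla_p f^{l_2})$ is a secondary technicality: these contribute to $\tilde \eta^l$ through products of $C^{\alpha/3,\alpha}_{x,p}$ functions with $L_3$-Sobolev functions, which must be shown to lie in a suitable Sobolev-H\"older class compatible with the $S_3^N$ and averaging machinery; this is where the hypotheses \eqref{eq4.1.27} on the H\"older norms of $g^l, \nabla_p g^l$ are used.
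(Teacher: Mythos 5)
Your proposal is correct and follows essentially the same route as the paper: localize via a boundary-flattening chart and mirror extension, observe that the geometric drift $\mathfrak{B}$ is odd in $y_3$ and hence lies in $W^{(1/3)-}_{3,\text{loc}}$ (Lemma \ref{lemma F.4}), apply $(1-\Delta_x)^{s/2}$ with $s<\alpha/3$ and the Newtonian $S_3^N$ estimate (Lemma \ref{lemma 32.1}) to gain $\nabla_x^{2/3+s}$, and close the $<1$ gap via a DiPerna--Lions--Meyer averaging lemma (Lemma \ref{lemma 14.1}) with $\gamma<1/9$, which forces $\alpha>2/3$. Two small caveats: the odd coefficients are not merely bounded but Lipschitz on each half-space, which is what Lemma \ref{lemma F.4} actually requires; and the paper additionally performs a dyadic Littlewood--Paley-type decomposition $\xi_n$ in the momentum variable, with the fast decay \eqref{eq1.1.0} of $\zeta$ used precisely to sum the resulting geometric series over $n$, a detail your sketch omits.
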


\begin{remark}
    The smallness of $\delta$ is needed to control the ellipticity of the leading coefficients in Eq. \eqref{eq4.1.1} in the boundary flattening and extension argument   (see \eqref{eq4.1.35}).
\end{remark}


\begin{proof}[Proof of  Lemma \ref{lemma 4.1}]
\textbf{Step 1: localization and change of variables.}
Let  $\xi_0$ and $\xi$ be radial nonnegative functions on $\bR^3$ such that $\xi$ is supported on $\{1 \le |p| \le 3\}$, and
$$
    \xi_0 (p) + \sum_{n = 1}^{\infty} \xi (2^{-n} p) = 1 \quad \forall p.
$$
We denote
\begin{align}
    \label{eq4.1.60}
    \xi_n (p)= \xi (2^{-n} p).
\end{align}
Furthermore, let $\chi_k, k = 1, \ldots, m$ be a partition of unity in $\Omega$
such that for 
\begin{itemize}
\item for $k \ge 2$, $\chi_k \in C^{\infty}_0 (B_{r_0/2} (\mathsf{x}_k))$, $\chi_k  = 1$ in $B_{r_0/4} (\mathsf{x}_k)$, where $\mathsf{x}_k \in \partial \Omega$,
\item $|\nabla_x \chi_k|\lesssim_{\Omega} r_0^{-1}, k = 1, \ldots, m$.
\end{itemize}

We set
\begin{equation}
       \label{eq4.1.25}
        f_{k, n}^l (x, p) = f^l (x, p) \chi_k (x) \xi_n (p)  \zeta (p)
\end{equation}
and note that for  $\bar f^l$ defined in \eqref{eq4.1.0}, we have
$$
    \bar f^l = \sum_{k, n}    \int f_{k, n}^l \, dp.
$$
Furthermore, $f_{k, n}^l$   satisfies the identity
\begin{align}
			\label{eq4.1.4}
&\frac{p}{p_0} \cdot \nabla_x f_{k, n}^l   - \nabla_p \cdot (\sigma_{g^0} \nabla_p f_{k, n}^l)\\
&-  1_{l > 0}\sum_{l_1+l_2 = l, l_2 < l} c_{l_1, l_2, l} \nabla_p \cdot (\sigma_{g^{l_1}} \nabla_p f^{l_2}_{k, n}) = \eta_{k, n}^l, \notag
\end{align}
where
\begin{equation}
			\label{eq4.1.5}
 \begin{aligned}
\eta_{k, n}^l =&   (\frac{p}{p_0} \cdot \nabla_x \chi_k) f^l  \xi_n \zeta+  \eta^l \chi_k   \xi_n \zeta \\
&
  + \sum_{l_1+l_2=l } \tilde c_{l_1, l_2, l}	\chi_k \bigg( (\partial_{p_i} \sigma^{i j}_{g^{l_1}})  (\partial_{p_j} (\xi_n \zeta)) f^{l_2}  \\
&
+ 2 \sigma_{g^{l_1}}^{i j} (\partial_{p_i} f^{l_2}) (\partial_{p_j} (\xi_n  \zeta)) + \sigma_{g^{l_1}}^{i j}   \partial_{ p_i p_j} (\xi_n  \zeta) f^{l_2} \bigg),
\end{aligned}
\end{equation}
where $\tilde c_{l_1, l_2, l}$ are certain numbers.
We  denote
\begin{align}
\label{eq4.1.5.1}
        U^l = f_{k, n}^l, \quad H^l = \eta^l_{k, n}.
\end{align}
We will focus on the case when $U^l$ is supported in a boundary chart of $\Omega \cap B_{r_0/2} (\mathsf{x_k})$, as the interior estimate is more straightforward. For the sake of convenience, we relabel $\mathsf{x_k}$ as $x_0$.

We will use the argument of Lemma 5.10 in \cite{VML} and make changes of variables to reduce \eqref{eq4.1.4} to a non-relativistic kinetic Fokker-Planck equation.
First, let $\psi: \Omega \cap B_{r_0} (x_0) \times \bR^3 \to \bR^3_{-} \times \bR^3$ be a special boundary flattening local diffeomorphism that sends a normal vector at $\partial \Omega$ to a normal vector of $\bR^3_{-}$ (see p. 6633  in \cite{VML}). 
Furthermore, we recall the following formulas related to the changes of variables in the proof of Lemma 5.10 in \cite{VML}: 
\begin{align}
&    y = \psi (x), \quad w = (D \psi (x)) p,  \notag\\
 \label{eq4.1.18}
  &  W  = \frac{w}{ \big(1 +  \big|\big(\frac{\partial x}{\partial y}\big) w\big|^2\big)^{1/2}}, \\
  & G \, \text{is the even extension of the domain} \,  \psi (\Omega \cap B_{r_0} (x_0)) \, \text{across the plane} \,  \{y_3 = 0\},  \notag\\
  & \bm{R} = \, \text{diag} (1, 1, -1),  \notag\\
\label{eq2.11.3}
&  \mathcal{W} (y, w) =  \frac{w}{(1+|M (y) w|^2)^{1/2}},  \, \text{where} \\
&
\label{eq2.11.3.1}
M (y) = \begin{cases}
\big(\frac{\partial x}{\partial y}\big) (y), \, \, y \in \overline{\psi (\Omega \cap B_{r_0} (x_0))}, \\
  \big(\frac{\partial x}{\partial y}\big) (\bm{R} y)\bm{R}, \,\,  y \in  G \cap \bR^3_{+},
\end{cases}
\\
\label{eq4.1.30}
&  	\Upsilon_n (y, w) = (y, \mathcal{W} (y, w)) : G \times \{|w| < 2^{n+2} \}  \to \bR^6, \quad 	v = \mathcal{W} (y, w).
\end{align}
We now define  $\mathcal{\dtilde U}^l$  in the same way as in the proof of Lemma 5.10  in \cite{VML}.
To that end, we introduce a sequence of functions  $\widehat U^l$,  $\mathcal{U}^l$,  $\mathcal{\hathat U}^l$, $\mathcal{\dtilde U}^l$.
In particular, loosely speaking (the exact formulas are presented below),
\begin{itemize}
\item[--] $\widehat U^l$ is $U^l$ in the coordinates $y, w$,
\item[--] $\widetilde U^l$ is $\widehat U^l$ multiplied by the Jacobian  determinant of the change of variables $(x, p) \to (y, w)$,
\item[--] $\mathcal{U}^l := \overline{U}^l$ is the `mirror extension' of $\widetilde U^l$,
\item[--] $\mathcal{\hathat U}^l$ is $\mathcal{U}^l$ in the coordinates $(y, v)$,
\item[--] $\mathcal{\dtilde U}^l$ is $\mathcal{\hathat U}^l$ multiplied by the Jacobian  determinant of the change of variables $w \to v$.
\end{itemize}
To be precise,
\begin{align}
    \label{eq14.C.10}
&\widehat U^l (y, w) = U^l (x (y), p (y, w)), \\
    \label{eq14.C.11}
& \widetilde U^l  (y, w) = \widehat U^l (y, w) \bigg|\text{det} \bigg(\frac{\partial x}{\partial y}\bigg)\bigg|^2, \\
    \label{eq14.C.12}
&  \overline{U}^l (y, w)  = \begin{cases} 	\widetilde  U^l (y,  w), \, \, (y, w) \in \bR^3_{-} \times \bR^3,\\
								\widetilde U^l (\bm{R} y, \bm{R} w),  \, \, (y, w)  \in  \bR^3_{+} \times \bR^3,
\end{cases}\\
    \label{eq14.C.13}
&\mathcal{U}^l = \overline{U}^l, \\
    \label{eq14.C.14}
&  \mathcal{\hathat U}^l (y, v) = \mathcal{U}^l (y, (\mathcal{W}_y)^{-1} (v)), \, \, \text{where} \, \,  \mathcal{W}_y (w) = \mathcal{W} (y, w), \\
    \label{eq14.C.15}
& \mathcal{\dtilde U}^l (y, v) =  \mathcal{\hathat U}^l (y, v) \mathsf{J}_{\mathcal{W}}, \quad \text{where} \,  \, \mathsf{J}_{\mathcal{W}} = \bigg|\text{det} \bigg(\frac{\partial w}{\partial v}\bigg)\bigg|.
\end{align}

 We now explain the relationship between $\dtilde U^l$ and the desired estimate \eqref{eq4.1.2}.
We fix a function $\phi \in C^{\infty}_0 (\Omega)$. By changing variables $x = x (y)$ and using the identity for $\widehat \phi (y) := \phi (x (y))$
$$
    (\partial_{x_i} \phi) (x (y)) = \frac{\partial y_j}{ \partial x_i}  \partial_{y_j} \widehat \phi (y),
$$
we get
\begin{align*}
&
   I_{n} (\phi) := \int_{\Omega \times \bR^3}    U^l (x, p)   \partial_{x_i} \phi (x) \,  dx dp \\
 &  =  \int_{ \psi (\Omega \cap B_{r_0} (x_0)) \times  \{|w| <  2^{n+2}\} } \widehat U^l (y, w)
   \bigg|\text{det} \bigg(\frac{\partial x}{\partial y}\bigg)\bigg|^2 \frac{\partial y_j}{ \partial x_i}  \partial_{y_j} \widehat \phi (y)  \,dy dw \\
&     = \int_{ \psi (\Omega \cap B_{r_0} (x_0)) \times  \{|w| < 2^{n+2}\} } \widetilde U^l (y, w)  \frac{\partial y_j}{ \partial x_i}  \partial_{y_j} \widehat \phi (y)  \,dy dw \\
&     = \int_{ \Upsilon_n \big(\psi (\Omega \cap B_{r_0} (x_0)) \times  \{|w| < 2^{n+2}\}\big) } \mathcal{\hathat U}^l (y, v) \bigg|\text{det} \bigg(\frac{\partial w}{\partial v}\bigg)\bigg|  \frac{\partial y_j}{ \partial x_i}  \partial_{y_j} \widehat \phi (y)  \,dy dv \\
&     = \int_{ \psi (\Omega \cap B_{r_0} (x_0)) } \bigg(\int_{|v| < 1} \mathcal{\dtilde U}^l (y, v) \, dv\bigg)   \frac{\partial y_j}{ \partial x_i}   \partial_{y_j} \widehat \phi (y)  \,dy.
\end{align*}
In the last identity, we used the fact that $\mathcal{\dtilde U}^l$ is supported in
$$\Upsilon_n \big(\psi (\Omega \cap B_{r_0} (x_0)) \times  \{|w| < 2^{n+2}\}\big) \subset \psi (\Omega \cap B_{r_0} (x_0)) \times \{|v|<1\}.$$
We claim that if
\begin{align}
        \label{eq4.1.6}
       \bigg\|\int_{|v| < 1} \mathcal{\dtilde U}^l (y, v) \, dv\bigg\|_{ W^1_3 (\bR^3) }
       \lesssim_{ \zeta, \alpha, L, K, \Omega}
      2^{-n}  (\text{the r.h.s. of \eqref{eq4.1.2}}),
\end{align}
then \eqref{eq4.1.2} is true.
Indeed,  since $\widehat \phi$ vanishes near the boundary of $\psi (\Omega \cap B_{r_0} (x_0))$, integrating by parts, we get
$$
    |I_n (\phi)| \lesssim   2^{-n}  (\text{r.h.s. of \eqref{eq4.1.2}})  \|\phi\|_{L_{3/2} (\Omega) }.
$$
Summing up the last inequality with respect to $n$ and $k$ gives
$$
    \bigg|\int_{\Omega} \bigg(\int_{\bR^3} f^l \psi \, dp\bigg) \partial_{x_i} \phi \, dx\bigg| \lesssim   (\text{r.h.s. of \eqref{eq4.1.2}}) \|\phi\|_{L_{3/2} (\Omega) },
$$
which implies the validity of the desired assertion \eqref{eq4.1.2} via a duality argument.  In the rest of the proof, we will show that \eqref{eq4.1.6} holds.

\textbf{Step 2: higher regularity of $\mathcal{\dtilde U}^l$ in the spatial variable.} In this step, we will, loosely speaking, show that
$$
    \mathcal{\dtilde U}^l \in L_3  (\bR^3_v) W^{1-}_3 (\bR^3_y).
$$
where $\mathcal{\dtilde U}^l$ is defined in  \eqref{eq14.C.15}.
This will be done via Lemma \ref{lemma 32.1}.
First, by the argument of the proof of Lemma 5.10 in \cite{VML}, we conclude that $\mathcal{\dtilde U}^l$ satisfies the identity (see the formula $(5.65)$ therein) 
\begin{align}
\label{eq4.1.8}
& v \cdot \nabla_y \, \mathcal{\dtilde U}^l - \nabla_v \cdot (\mathfrak{A}^0 \nabla_v \, \mathcal{\dtilde U}^l)\\
&   = \underbrace{ \mathcal{\hathat H}^l \mathsf{J}_{\mathcal{W}} }_{J_1^l} + \underbrace{ \sum_{l_1+l_2=l }  \lambda_{l_1, l_2, l}
\nabla_v \cdot \big(\mathfrak{A}^{l_1}  (\nabla_v \mathsf{J}_{\mathcal{W}}) \, \mathcal{\hathat U}^{l_2}\big) }_{ J_2^l }  \notag\\
&+ \underbrace{  \nabla_v \cdot (\mathbb{X} \, \mathcal{\hathat U}^l) }_{  J_3^l }  +  \underbrace{  \nabla_v \cdot (\mathbb{G} \, \mathcal{\hathat U}^l) }_{ J_4^l  }  \notag\\
&  \underbrace{  +  1_{ l > 0} \sum_{l_1+l_2=l, l_2 < l}  \nabla_v \cdot (\mathfrak{A}^{l_1} \nabla_v \, \mathcal{\dtilde U}^{l_2}) }_{  J_5^l  }=: \text{RHS}^l. \notag
\end{align}
Here, $\mathcal{\hathat H}^l$ is defined by replacing $U^l$ with $H^l = \eta_{n, k}^l$ (see \eqref{eq4.1.5.1}) in the definition of $\mathcal{\hathat U}^l$ (see \eqref{eq14.C.14}), and $\lambda_{l_1, l_2, l}$ are constants.
We first give informal definitions of all the coefficients  $\mathbb{X}, \mathbb{G}$, and $\mathfrak{A}^l$, and then give the exact formulas. To define $\mathbb{X}$, and $\mathfrak{A}^l$, one needs to introduce several `intermediate' functions $A^l$, $X$, $\mathcal{A}^l$, $\mathcal{X}$, $\mathcal{\hathat A}^l$, $\mathcal{\hathat X}$, $\mathbb{A}^l$, $\mathbb{X}$,  and $\mathfrak{A}^l$.
In particular,
\begin{itemize}
\item[--] $A^l$  is the  diffusion  matrix  obtained after the change of variables $(x, p) \to (y, w)$,
\item[--] $\nabla_w \cdot (X \widetilde U^l)$ is an additional (`geometric') term that is due to the change of variables $(x, p) \to (y, w)$,
\item[--] $\mathcal{A}^l$ and $\mathcal{X}$
are the  diffusion and `geometric' coefficients $A^l$ and $X$  `extended' across the boundary $\{y_3 = 0\}$,
\item[--] $\mathbb{A}^l$ and $\mathbb{X}$ are the diffusion and `geometric' coefficients  obtained after the change of variables $(y, w) \to (y, v)$,
\item[--] $\nabla_v \cdot (\mathbb{G} \mathcal{\hathat U}^l)$ is an additional  term (akin to the geometric one) that we obtain after the change of variables  $(y, w) \to (y, v)$,
\item[--] $\mathfrak{A}^l$ is an `extension' of $\mathbb{A}$  to the whole space $\bR^6$, which preserves the nondegeneracy of the matrix when $l  = 0$.
\end{itemize}
We list the relevant formulas:
\begin{align}
& A^l (y, w) = \bigg(\frac{\partial y}{\partial x}\bigg)  \sigma_{g^l} (x(y), p (y, w))  \bigg(\frac{\partial y}{\partial x}\bigg)^T, \notag \\
 \label{eq4.1.12}
      &  X (y, w) =  (X_1, X_2, X_3)^T =   \bigg(\frac{\partial y}{\partial x}\bigg) \bigg(\frac{\partial p}{\partial y}\bigg) W
        =  \bigg(\frac{\partial y}{\partial x}\bigg) \frac{\partial \big(\frac{\partial x}{\partial y} w\big)} {\partial y} W, \\
     \label{eq4.1.13}
              &   \mathcal{X} (y, w) = \begin{cases} X (y, w), \, \, (y, w) \in (y, w) \in \overline{\psi (B_{r_0} (x_0))} \times \bR^3,
      \\ \bm{R} \,  X (\bm{R} y, \bm{R} w), \,\,  (y, w) \in  (G \cap \bR^3_{+}) \times \bR^3, \end{cases} \\
     & \mathcal{A}^l (y, w)
         = \begin{cases} A^l (y, w), \quad (y, w) \in  \overline{\psi (B_{r_0} (x_0))} \times \bR^3,\\
            \bm{R} \, A^l (\bm{R} y, \bm{R} w)\,   \bm{R}, \, \, (y, w) \in  (G \cap \bR^3_{+}) \times \bR^3,
            \end{cases}  \notag\\
   \label{eq4.1.14}
      & \mathbb{X} (y, v) =   \bigg(\frac{\partial v}{\partial w}\bigg)\mathcal{ X}  (y, w (y, v)) 1_{ y \in G,  |w (y, v)| < 2^{n+2}  },\\
         \label{eq4.1.15}
& \mathbb{G} (y, v)  = \bigg(\frac{\partial v}{\partial w}\bigg)\bigg(\frac{\partial w}{\partial y}\bigg) v \,  1_{ y \in G,  |w (y, v)| < 2^{n+2}  },\\
    &  \mathbb{A}^l (y, v)
=  \bigg(\frac{\partial v}{\partial w}\bigg)    \mathcal{A}^l  (y, w (y, v)) \bigg(\frac{\partial v}{\partial w}\bigg)^T, \notag \\
& \mathfrak{A}^0  = \mathbb{A}^0  \zeta_n + (1 -  \zeta_n) \bm{1}_3, \quad  \mathfrak{A}^l  = \mathbb{A}^l \zeta_n, \notag
\end{align}
where $\zeta_n = \zeta_n (y, v)$ is a smooth cutoff function  such that $0 \le \zeta_n \le 1$ and 
\begin{align*}
	&	\zeta_n  = 1 \, \,  \text{on} \,\,   \Upsilon_n (G \times \{|w| < 2^{n+2}\}), \\
&	 |\nabla_{y, v} \zeta_n| \lesssim_{\Omega} 1.
\end{align*}

Next, we check the conditions of Lemma \ref{lemma 32.1}.
First, by the smallness assumption on $g^0$ (see \eqref{eq4.1.31})  and the argument in  Appendix C in \cite{VML} (see  formula (C.1) and the line below therein),  we have
\begin{align}
    \label{eq4.1.35}
     2^{-6 n} \bm{1}_3   \lesssim_{\Omega}  \mathfrak{A}^0 \lesssim_{\Omega} \bm{1}_3 ,
\end{align}
and hence, one can take $\delta = N (\Omega) 2^{-6 n}$ in Lemma  \ref{lemma 32.1}.
Furthermore, inspecting the argument  in  Appendix C in \cite{VML} (see (C.11) and the line below therein), we get
\begin{equation}
      \label{eq4.1.7}
      \sum_{l=0}^L (\|\mathfrak{A}^l\|_{   C^{\alpha/3 }_{x, v} (\bR^6)  }  + \|\nabla_v \mathfrak{A}^l\|_{   C^{\alpha/3 }_{x, v} (\bR^6)  })\lesssim_{\alpha, L,
         \Omega, K} 2^{n}.
\end{equation}
We now check that
\begin{equation*}
    \text{RHS}^l \,  (\text{see} \, \eqref{eq4.1.8}) \, \text{belongs to} \,  L_3 (\bR^3_v) H^{s}_3 (\bR^3_x) \, \, \forall s \in (0, \alpha/3).
\end{equation*}
We note that the term $J_5^l$ does not depend on  $f^l$ and, hence, can be handled by using an induction argument.
We split the terms $J_1^l$--$J_4^l$  into two groups:
\begin{enumerate}
    \item regular (H\"older continuous) terms $J_1^l$ and $J_2^l$,
    \item singular terms with a jump discontinuity $J_3^l$ and $J_4^l$.
\end{enumerate}
The key observation is that the terms $J_3^l$ and $J_4^l$ have a jump discontinuity because  their explicit expressions involve odd functions in the variable $y_3$. By using the fact that sufficiently regular odd functions belong to $W^{1/r-}_r (\bR^3_x)$ (see \eqref{eqF.4.1} in Lemma \ref{lemma F.4}), we will show that the same holds for $J_3^l$ and $J_4^l$.
In the sequel, $\beta$ is a constant independent of $n$, $\delta$, and $K$, which might change from line to line.

\textit{Regular terms.} By Lemmas (A.2)--(A.3) in \cite{VML} and  the argument in  Appendix C in \cite{VML} (see (C.3), (C.7), and (C.10) therein), we conclude that
\begin{align}
        \label{eq4.1.9}
      &       \||D_y^k D^j_v  \big(\frac{\partial w}{\partial v}\big)|+|D_y^k D^j_v  \big(\frac{\partial v}{\partial w}\big)| + |D_y^k D^j_v  \mathsf{J}_{\mathcal{W}}|\|_{ L_{\infty} \big(\Upsilon_n (G \times \{|w| < 2^{n+2}\})\big) } \\
      & \lesssim_{k, j}  2^{\beta n}, k \in  \{0, 1\}, \, j \in \{0, 1, \ldots\}, \notag
\end{align}
where $\beta=  \beta (k, j)$. 
Furthermore, the argument of Appendix C in \cite{VML} also shows that if $F \in  C^{\alpha/3}_{x, p} (\Omega \times \bR^3)$, then,  for $\hathat F$ defined in the same way as $\mathcal{\hathat U}^l$ in \eqref{eq14.C.14}, we have
\begin{equation}
        \label{eq4.1.10}
    \| \hathat F \|_{  C^{\alpha/3}_{y, v} \big(\Upsilon_n (G \times \{|w| < 2^{n+2}\})\big) } \lesssim_{\alpha, \Omega} 2^{\beta n}  \|F\|_{ C^{\alpha/3}_{x, p} (\Omega \times \bR^3) }.
\end{equation}
This is because the mirror extension (see \eqref{eq14.C.12}) preserves the continuity across $\{y_3 = 0\}$.
Then, by the definition of $J_1^l$ in \eqref{eq4.1.8},   the  product rule inequality in H\"older spaces, and \eqref{eq4.1.9}--\eqref{eq4.1.10}, we obtain
\begin{equation}
        \label{eq4.1.11}
    \|J_1^l\|_{   C^{\alpha/3}_{y, v} \big(\Upsilon_n (G \times \{|w| < 2^{n+2}\})\big) }  \lesssim  2^{\beta n} \|H^l\|_{ C^{\alpha/3}_{x, p} (\Omega \times \bR^3) },
\end{equation}
where $H^l$ is defined in \eqref{eq4.1.5.1}.
Since $J_1$ is compactly supported in $\Upsilon_n (G \times \{|w| < 2^{n+2}\}$, the above estimate \eqref{eq4.1.11} is valid on the whole space.

Next,  using \eqref{eq4.1.7}--\eqref{eq4.1.10},  we get
\begin{align}
        \label{eq4.1.11.1}
  &  \|J_2^l\|_{  C^{\alpha/3}_{y, v} \big(\Upsilon_n (G \times \{|w| < 2^{n+2}\})\big)} \\
  &\lesssim  (\|[\nabla_v \mathsf{J}_{\mathcal{W}}, D^2_v \mathsf{J}_{\mathcal{W}}]\|) \bigg(\sum_{l=0}^L \|[\mathfrak{A}^l, \nabla_v \mathfrak{A}^l]\|\bigg) \bigg(\sum_{l=0}^L  \|[\mathcal{\dtilde U}^l, \nabla_v \, \mathcal{\dtilde U}^l]\|\bigg) \notag\\
  & \lesssim_{\alpha, \Omega, K } 2^{\beta n}  \sum_{l=0}^L \|[U^l, \nabla_p U^l]\|_{ C^{\alpha/3}_{x, p} (\Omega \times \bR^3) }, \notag
\end{align}
where $\|\cdot\|$ is the $C^{\alpha/3}_{y, v} \big(\Upsilon_n (G \times \{|w| < 2^{n+2}\})\big)$-norm.
We note that by using the embedding into $W^s_p$ space, 
in the above estimate \eqref{eq4.1.11.1}, we may replace the H\"older norm on the l.h.s. with $$W^{s}_3 (\bR^6), s \in (0, \alpha/3).$$

\textit{Singular terms $J^l_3$ and $J^l_4$.} We start with $J_3^l$ (see \eqref{eq4.1.8}). To estimate this term, we first show that
$$
    \mathbb{X}, D_v  \mathbb{X} \in W^s_3 (\bR^6), s \in (0, \alpha/3),
$$
where $\mathbb{X}$ is given by \eqref{eq4.1.14}  inside   $\Upsilon_n (G \times \{|w| < 2^{n+2}\})$ and extended by $0$ outside that region.
First, it  follows from the definitions of $X$, $\mathcal{X}$, and $W$ (see \eqref{eq4.1.12}--\eqref{eq4.1.13} and \eqref{eq4.1.18}) that $\mathcal{X}$  is a  linear combination of terms
\begin{align}
    \label{eq4.1.41}
    h (y) w_i w_j \big(1 +  \big|M (y) w\big|^2\big)^{-1/2}, 
\end{align}
where 
\begin{itemize}
\item[--] $M$ is defined in \eqref{eq2.11.3.1},
    \item[--] $h$ is either even or odd in $y_3$ and is Lipschitz continuous in $y$ up to the boundary of the lower half of its domain $G$, that is, $\psi (\Omega \cap B_{r_0} (x_0))$.
\end{itemize}
Furthermore, due to the identity
\begin{align*}
   \bigg((\frac{\partial x}{\partial y})^T (\frac{\partial x}{\partial y})\bigg)_{i 3} = 0,  \, \, i = 1, 2, \quad \text{when $y_3 = 0$},
\end{align*}
(see the formula (A.1) in \cite{VML}), for the function $M$ in \eqref{eq2.11.3.1},  we have
\begin{align*}
   M^T M =   (\frac{\partial x}{\partial y})^T (\frac{\partial x}{\partial y}) \quad \text{when $y_3 = 0$}.
\end{align*}
Hence,  
\begin{align}
    \label{eq4.1.50}
    \text{$M (y)$  is Lipschitz continuous  across the plane $\{y_3=0\}$.}
\end{align}
By using the identity
$$
    \partial_{w_j} |M w|^2 = 2 (M^T M)_{j k} w_k,
$$
we conclude that $\partial_{w_j} \mathcal{X}$ is also a linear combination of terms 
$$
    h (y) P (w) \big(1 +  \big|M (y) w\big|^2\big)^{-r/2}, \, r \in \{1, 3\},
$$
where $P (w)$ is a monomial.
Then, 
by \eqref{eqF.1.1} and \eqref{eqF.4.1} in Lemmas \ref{lemma F.1} and \ref{lemma F.4}, respectively, for the extended function $\mathcal{X}$, we have
$$
    \mathcal{X}, D_w \mathcal{X} \in W^{(1/3)-}_3 (G  \times \{|w| < 2^{n+2}\}).
$$
Next, we  recall that $\Upsilon_n$ (see \eqref{eq4.1.30}) is a  bi-Lipschitz homeomorphism onto its image with the Lipschitz constant of order $2^{\beta n}$ (see Lemma A.3 in \cite{VML} and  \eqref{eq4.1.9}). By this, the definition of $\mathbb{X}$ in \eqref{eq4.1.14}, and the bound \eqref{eq4.1.9},
we find
\begin{align}
    \label{eq4.1.40}
    \|[\mathbb{X}, D_v \mathbb{X}]\|_{ W^s_3 \big(\Upsilon_n (G \times \{|w| < 2^{n+2}\})\big) } \lesssim_{\Omega, s} 2^{ \beta n}, \, s \in (0, 1/3).
\end{align}
Hence, extending $\mathbb{X}$ by $0$ outside the region in \eqref{eq4.1.40} and using  Lemma \ref{lemma F.3},  we have
\begin{equation*}
        \|[\mathbb{X},  D_v \mathbb{X}]\|_{ W^s_3 (\bR^6) }  \lesssim_{\Omega, s} 2^{ \beta n}, s \in (0, 1/3).
\end{equation*}
Thus, combining the last inequality with  a simple bound
$$
    \|u v\|_{ W^s_3 } \lesssim_{s, s_1}  \|u\|_{ W^s_3 }  \|v\|_{ C^{s_1} }, s_1 \in (s, 1],
$$
 and using  the estimate \eqref{eq4.1.10}, we conclude that for $J^l_3$, defined in \eqref{eq4.1.8}, and any $s \in (0, \alpha/3)$,
\begin{align}
   \label{eq4.1.16}
    \|J_3^l\|_{  W^{s}_3 (\bR^6)}  &   \lesssim_{s, \alpha, \Omega}  \|[\mathbb{X},  D_v \mathbb{X}]\|_{ W^s_3 (\bR^6) } \|[\mathcal{\hathat U}^l, \nabla_v \, \mathcal{\hathat U}^l]\|_{ C^{\alpha/3}_{y, v} (\bR^6) } \\
    & \lesssim 2^{\beta n}   \|[U^l, \nabla_p U^l]\|_{C^{\alpha/3}_{x, p} (\Omega \times \bR^3)}. \notag
\end{align}

Next, we estimate $J_4^l$. We invoke the definition of $\mathbb{G}$ in \eqref{eq4.1.15}. The argument is similar to the one in the previous paragraph. We claim that the discontinuity comes from the spatial Jacobian of $w (y, v)$. In particular,  by explicit calculations (see  the proof of  Lemma A.3  in \cite{VML}),
$$
    \frac{\partial w_i}{\partial y_{r}} =   \frac{ (\partial_{y_r} c_{j j'}) v_j v_{j'} v_i }{(1-|M v|^2)^{1/2}}
     = (\partial_{y_r} c_{j j'}) v_j v_{j'} v_i (1+|M w|^2)^{1/2},
$$
where
$(c_{i j}, i, j  = 1, 2, 3) := M^T M$, and $M$ is defined in \eqref{eq2.11.3.1}.
Due to \eqref{eq4.1.50}, $\partial_{y_r} c_{j j'}$ is well defined and is either an even or an odd function in $y_3$. Hence,   by \eqref{eqF.1.1} and \eqref{eqF.4.1} in Lemmas \ref{lemma F.1} and \ref{lemma F.4},
$$
   \|\big(\frac{\partial w}{\partial y}\big)\|_{ W^s_3 \big(\Upsilon_n (G \times \{|w| < 2^{n+2}\})\big) }   \lesssim_{s, \Omega} 2^{\beta n}, \, s \in (0, 1/3),
$$
and a similar estimate holds for
$$
    D_v \big(\frac{\partial w}{\partial y}\big).
$$
Then, proceeding as in \eqref{eq4.1.16}, we obtain for $s \in (0, \alpha/3)$,
\begin{equation}
   \label{eq4.1.17}
    \|J_4^l\|_{  W^s_3  (\bR^6)  }
    \lesssim_{s, \alpha, \Omega} 2^{ \beta n} \|[U^l, \nabla_p U^l]\|_{C^{\alpha/3}_{x, p} (\Omega \times \bR^3)}.
\end{equation}
Thus, gathering \eqref{eq4.1.11}--\eqref{eq4.1.17} and using the fact that
\begin{equation}
        \label{eq4.1.29}
    W^s_3 (\bR^6) \, \text{is embedded into} \, H^{ s- }_3 (\bR^6),
\end{equation} we conclude that for any $s \in (0, \alpha/3)$,
\begin{align}
\label{eq4.1.20}
  &  \sum_{l = 0}^L \sum_{i = 1}^4 \|J^l_i\|_{  H^{ s }_3 (\bR^6)  } \\
  & \lesssim_{ s,  \alpha, \Omega, K }  2^{\beta n}   \sum_{l = 0}^L  \|[H^l, U^l, \nabla_p U^l]\|_{C^{\alpha/3}_{x, p} (\Omega \times \bR^3)} \notag
\end{align}
(see \eqref{eq4.1.5.1}).

We now use an induction argument.
\textit{Case  $l   = 0$.} We recall the definition of the steady non-relativistic kinetic Sobolev space in \eqref{eq14.C.20}.
Since  $J_5^0 = 0$, by the estimate \eqref{eq32.1.1} in Lemma \ref{lemma 32.1} with $s \in (0, \alpha/3)$ applied to Eq. \eqref{eq4.1.8} and \eqref{eq4.1.20}, one has
\begin{align}
\label{eq4.1.21}
&    \| \mathcal{\dtilde U}^0 \|_{ L_3 (\bR^3_v)  H^{ \frac 2 3 +  s}_3 (\bR^3_y) } + \|(1-\Delta_y)^{\frac{ s }{ 2} } \mathcal{\dtilde U}^0 \|_{ S_3^N (\bR^6) } \\
& \lesssim_{  \alpha, s, K, \Omega  } 2^{\beta n}  \big(\sum_{i = 1}^4 \|J^0_i\|_{  L_3 (\bR^3_v) H^{ s }_3 (\bR^3_y)  } + \|\mathcal{\dtilde U}^0\|_{ S_3^N (\bR^6) }\big). \notag
\end{align}

\textit{Induction step.}
For the induction step, we  estimate $J_5^l$ (see \eqref{eq4.1.8}) and apply the bound \eqref{eq32.1.1} in Lemma \ref{lemma 32.1}.
Let us consider the case when $l = 1$ for the sake of simplicity.
Then, by a variant of the product rule inequality in Bessel potential spaces (see \eqref{eq22.3.1}),
 \eqref{eq4.1.7}, and \eqref{eq4.1.21}, we have
\begin{align}
    \label{eq14.1.32}
   & \|J_5^1\|_{  L_3 (\bR^3_v) H^{s}_3 (\bR^3_y) }   \\
   &\lesssim  \|[\mathfrak{A}^1, \nabla_v \, \mathfrak{A}^1]\|_{ L_{\infty} (\bR^3_v) C^{\alpha/3} (\bR^3_y) }  \, \|[\nabla_v \, \mathcal{\dtilde U}^0, D^2_v \,  \mathcal{\dtilde U}^0]\|_{  L_3 (\bR^3_v) H^{s}_3 (\bR^3_y) }  \notag\\
& \lesssim 2^{\beta n}  (\sum_{i = 1}^4 \|J^0_i\|_{ L_3 (\bR^3_v) H^{ s }_3 (\bR^3_y)  } + \|\mathcal{\dtilde U}^0\|_{ S_3^N (\bR^6) }).  \notag
\end{align}
Hence, by the estimate \eqref{eq32.1.1} in Lemma \ref{lemma 32.1},  we get
\begin{align*}
 &\| \mathcal{\dtilde U}^1 \|_{ L_3 (\bR^3_v)  H^{s + \frac 2 3 }_3 (\bR^3_y) } + \|(1-\Delta_y)^{\frac{s}{ 2}} \mathcal{\dtilde U}^1 \|_{ S_3^N (\bR^6) } \\
  & \lesssim  2^{\beta n}   \big(\sum_{i=1}^5 \|J_i^1\|_{ L_3 (\bR^3_v) H^{ s }_3 (\bR^3_y) } + \|\mathcal{\dtilde U}^1\|_{ S_3^N (\bR^6) }\big) \\
   & \lesssim  2^{\beta n}   \sum_{j=0}^1  (\sum_{i=1}^4 \|J^j_i \|_{  H^{ s }_3 (\bR^6)  } + \|\mathcal{\dtilde U}^j\|_{ S_3^N (\bR^6) }).
\end{align*}
Thus, by an induction argument, we conclude that for any $s \in (0, \alpha/3)$,
\begin{align}
\label{eq4.1.22}
&   \sum_{l = 0}^L \big(\| \mathcal{\dtilde U}^l \|_{ L_3 (\bR^3_v)  H^{\frac 2 3 + s}_3 (\bR^3_y) } + \|(1-\Delta_y)^{\frac{s}{ 2}} \mathcal{\dtilde U}^l \|_{ S_3^N (\bR^6) } \\
& + \|\text{RHS}^l\|_{ L_3 (\bR^3_v)  H^{s}_3 (\bR^3_y) } \big) 
\lesssim 2^{\beta n}  \sum_{l = 0}^L \big(\sum_{i = 1}^4 \|J_i^l\|_{   H^{ s  }_3 (\bR^6)  } + \|\mathcal{\dtilde U}^l\|_{ S_3^N (\bR^6) }\big), \notag
\end{align}
where  $\text{RHS}^l$ is defined in \eqref{eq4.1.8} and the first term on the r.h.s. is estimated in \eqref{eq4.1.20}.

\textbf{Step 3: regularity of a velocity average.} We fix  $l \in \{0, \ldots, L\}$ and  denote
\begin{align*}
   & \mathsf{f} = (1-\Delta_y)^{ \frac{s}{2}  + \frac 1 3  } \mathcal{\dtilde U}^l, \\
   &\mathsf{g}  = (1-\Delta_y)^{\frac{s}{2}} \big(\nabla_v \cdot (\mathfrak{A}^0 \nabla_v \, \mathcal{\dtilde U}^l)\big)
    + (1-\Delta_y)^{\frac{s}{2}} \text{RHS}^l\big)
\end{align*}
and note that by \eqref{eq4.1.8},
$$
    v \cdot \nabla_y \,  \mathsf{f}=    (1-\Delta_y)^{ \frac 1 3 } \mathsf{g}.
$$
By a variant of the velocity averaging lemma (see \eqref{eq14.1.0} in Lemma \ref{lemma 14.1})  with $2/3$ and $3$ in place of $\alpha$ and $p$, respectively,  and \eqref{eq4.1.29},
  for any $\gamma \in (0, 1/9)$, we have
\begin{align*}
    & \bigg\| \int_{|v|<1} \mathsf{f}  \, dv \bigg\|_{ H^{\gamma}_3 (\bR^3) } \lesssim_{\gamma}  \|\mathsf{f} \|_{ L_3 (\bR^6)} + \|\mathsf{g} \|_{ L_3 (\bR^6)}\\
    & \lesssim     \| (1-\Delta_y)^{\frac{s}{2} + \frac 1 3  } \mathcal{\dtilde U}^l \|_{ L_3 (\bR^6)} \\
    & +  \|(1-\Delta_y)^{\frac{s}{2}} \big(\nabla_v \cdot (\mathfrak{A}^0 \nabla_v \, \mathcal{\dtilde U}^l))\big\|_{ L_3 (\bR^6)}  +  \|(1-\Delta_y)^{\frac{s}{2}} \text{RHS}^l \|_{ L_3 (\bR^6) }.
    \end{align*}
By using \eqref{eq4.1.22} to bound the first  and the third terms on the r.h.s. in the above inequality and  estimating the
second one as in \eqref{eq14.1.32},
we obtain
\begin{align}
    \label{eq4.1.23}
 & \sum_{l=0}^L   \bigg\| \int_{|v|<1} \mathcal{\dtilde U}^l \, dv \bigg\|_{ H^{\frac 2 3 +s +\gamma}_3 (\bR^3) }  \\
 &\lesssim 2^{\beta n}  \sum_{l = 0}^L \big(\sum_{i = 1}^4 \|J_i^l\|_{   H^{ s }_3 (\bR^6)  } + \|\mathcal{\dtilde U}^l\|_{ S_3^N (\bR^6) }\big). \notag
\end{align}
Taking $s$ and $\gamma$ close to $\alpha/3$ and $1/9$, respectively, and using the fact that $\alpha \in  (2/3, 1)$, we conclude that $\frac 2 3 +s +\gamma > 1$, and hence,
we may replace the $H^{\frac 2 3 +s +\gamma}_3 (\bR^3)$-norm on the l.h.s. with the $W^1_3 (\bR^3)$-norm.

Next, inspecting the argument of Step 6 in the proof of Lemma 5.10 in \cite{VML},  
we have
$$
     \|\mathcal{\dtilde U}^l\|_{ S_3^N (\bR^6) }  \lesssim_{\Omega} 2^{\beta n}  \|U^l\|_{ S_{3} (\Omega \times \bR^3_p) }.
$$
By combining this with \eqref{eq4.1.20} and recalling that $H^l = \eta^l_{k, n}, U^l = f^l_{k, n}$ (see \eqref{eq4.1.5.1}), we conclude that
\begin{align}
\label{eq4.1.24}
&  \sum_{l=0}^L   \bigg\| \int_{|v|<1} \mathcal{\dtilde U}^l \, dv \bigg\|_{ W^1_3 (\bR^3) }  \\
& \lesssim  2^{\beta n} \sum_{l = 0}^L  \big(\|[\eta^l_{k, n}, f^l_{k, n}, \nabla_p f^l_{k, n}]\|_{C^{\alpha/3}_{x, p} (\Omega \times \bR^3)} + \|f^l_{k, n}\|_{ S_{3} (\Omega \times \bR^3) }\big). \notag
\end{align}
We note  that by the product rule inequality in H\"older spaces,  the fast decay of $\psi$ (see the assumption \eqref{eq1.1.0}),
and the definition of $\xi_n$ in \eqref{eq4.1.60}, for  $Z  = f^{l}, \nabla_p f^l, \eta^l$ and any $\beta_1 > 0$, we have
\begin{align}
\label{eq4.1.26}
   \|Z \xi_n  \zeta\|_{ C^{\alpha/3}_{x, p} (\Omega \times \bR^3) }
       \lesssim_{\xi, \xi_0, \alpha, \beta_1,  \zeta }  2^{-\beta_1 n} \|Z\|_{ C^{\alpha/3}_{x, p} (\Omega \times \bR^3) }.
       \end{align}
Similarly, 
\begin{align}
\label{eq4.1.33}
   \|\xi_n f^l  \zeta\|_{ S_{3} (\Omega \times \bR^3) }
       \lesssim_{\xi, \xi_0, \alpha, \beta_1,  \zeta}  2^{-\beta_1 n} \|f^l\|_{ S_3 (\Omega \times \bR^3) }.
       \end{align}
Due to \eqref{eq4.1.26}--\eqref{eq4.1.33}, we may replace the r.h.s. of \eqref{eq4.1.24} with
$$
     2^{- n}  \sum_{l = 0}^L   \big(\|f^l \|_{ S_{3} (\Omega \times \bR^3) }  + \|[\eta^l,  f^l, \nabla_p f^l]\|_{  C^{\alpha/3}_{x, p} (\Omega \times \bR^3)  } \big),
$$
which gives \eqref{eq4.1.6}. Thus, the desired estimate \eqref{eq4.1.2} is valid.
\end{proof}



\begin{proposition}
    \label{proposition 4.0}
 Let $\zeta = (\zeta^{+}, \zeta^{-})$ be a three times differentiable function satisfying \eqref{eq1.1.0} and denote
 \begin{align*}
  \bar f (t, x) = \int_{\bR^3} f (t, x, p)  \cdot \zeta (p) \, dp.
     \end{align*}
Then,    under Assumption \ref{assumption 3.5},  we have
    \begin{equation}
    \label{eq15.1}
  \sum_{k=0}^{  m-9 }  \int_s^t \|D_{x}  \partial_t^k  \bar f\|^2_{  L_3 (\Omega) } \, d\tau
  \lesssim_{\Omega, \theta, r_3, r_4, \zeta}
     \int_s^t \cD\,  d\tau.
\end{equation}
\end{proposition}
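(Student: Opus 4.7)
The plan is to reduce the estimate, at almost every time $\tau \in [s,t]$, to a direct application of Lemma \ref{lemma 4.1} to the tower of steady equations satisfied by $\partial_t^l f$, $l = 0, \ldots, L:= m-9$. Treating $\tau$ as a parameter, I rewrite the RVML system \eqref{eq36.13}--\eqref{eq36.14} differentiated $l$ times in $t$ in the form
\begin{align*}
 \frac{p}{p_0^{\pm}} \cdot \nabla_x (\partial_t^l f^{\pm}) - \nabla_p \cdot \big(\sigma^0 \nabla_p (\partial_t^l f^{\pm})\big) - \sum_{\substack{l_1+l_2=l\\ l_2<l}} c_{l_1,l_2,l}\nabla_p\cdot(\sigma^{l_1}\nabla_p(\partial_t^{l_2} f^{\pm})) = \eta^l,
\end{align*}
with $\sigma^{l_1}$ obtained from $\Phi$ averaged against $g^{l_1} := \partial_t^{l_1}\big((f^++f^-)\sqrt{J}\big)$ as in \eqref{eq1.3}. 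The source $\eta^l$ collects $\partial_t^{l+1} f^\pm$ (coming from placing the time derivative on the right-hand side), the linear electric coupling $\tfrac{p}{p_0^{\pm}}\cdot(\partial_t^{j}\bE)\sqrt{J^{\pm}}$, the Lorentz terms $(\partial_t^j\bE + \tfrac{p}{p_0^{\pm}}\times\partial_t^j\bB)\cdot\nabla_p\partial_t^{l-j}f^{\pm}$, the lower-order quadratic electric term $\tfrac{p}{p_0^{\pm}}\cdot\partial_t^j\bE\,\partial_t^{l-j}f^{\pm}$, the compact piece $K_\pm \partial_t^l f$, and the bilinear collision terms $\Gamma_\pm(\partial_t^{j}f,\partial_t^{l-j}f)$. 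Each $\partial_t^l f^{\pm}$ satisfies the SRBC by differentiating \eqref{eq36.19} in $t$, and consequently so does each $g^l$.

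Next, I verify the hypotheses of Lemma \ref{lemma 4.1} at fixed $\tau$ with $L = m-9$ and $\alpha \in (2/3,1)$ fixed. The smallness bound $\|g^0\|_{L_\infty(\Omega\times\bR^3)} \lesssim \sqrt{\varepsilon}$ follows from \eqref{eq14.4.7} (and the smallness in \eqref{eq3.5.1}) combined with the embedding $S_{r_4,\theta/2^5}\hookrightarrow L_\infty$ guaranteed by the choice $r_4>36$ in \eqref{eq14.0.3}. The H\"older regularity $\sum_{l\le L}\|[g^l,\nabla_p g^l]\|_{C^{\alpha/3}_{x,p}} \le K$ follows from the bound on $\|\partial_t^l[f,\nabla_p f]\|_{C^{\alpha/3,\alpha}_{x,p}}$ in \eqref{eq14.4.7}, applied to $l\le m-8$. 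The $S_3$ control on $\partial_t^l f$ is obtained by interpolating between the $S_{r_2}$ and $S_{r_3}$ norms present in $\cD$ (see \eqref{eq12.0.1}), using $r_2 < 3 < r_3$. Finally, the $C^{\alpha/3}_{x,p}$ bound on $\eta^l$ reduces, via the product/composition rules in anisotropic H\"older spaces and Lemma \ref{lemma B.1} for the $K$-term, to H\"older bounds on $\partial_t^j f$, $\partial_t^j\nabla_p f$ (by \eqref{eq14.4.7}) and on $\partial_t^j[\bE,\bB]$ (by \eqref{eq14.4.6} or Lemma \ref{lemma 5.2}), all of which are controlled by $\sqrt{\cD(\tau)}$ times a fixed constant.

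With these ingredients, Lemma \ref{lemma 4.1} applied to $f^l = \partial_t^l f^{\pm}\cdot\zeta$-type averages yields, for each $k\le m-9$ and a.e.\ $\tau$,
\begin{align*}
\|\nabla_x\partial_t^k\bar f(\tau,\cdot)\|_{L_3(\Omega)}
\lesssim \sum_{l=0}^{k}\Big(\|\partial_t^l f(\tau,\cdot)\|_{S_3(\Omega\times\bR^3)}
+ \|[\eta^l,\partial_t^l f,\partial_t^l \nabla_p f](\tau,\cdot)\|_{C^{\alpha/3}_{x,p}}\Big).
\end{align*}
I then square this inequality and integrate over $(s,t)$. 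The $S_3$-terms are bounded by $\int_s^t\cD\,d\tau$ via interpolation and the definition of $\cD$. The H\"older terms are bounded by $\|\cdot\|_{L^\infty_\tau C^{\alpha/3}}^2$ times $(t-s)$ on the $L^\infty$ factors and by $\int_s^t\cD\,d\tau$ on the factor containing the highest temporal derivative (namely $\partial_t^{l+1}f^{\pm}$ inside $\eta^l$, estimated through the micro part plus $\|\partial_t^{l+1}[\bE,\bB]\|$, which fit in $\cD$ for $l+1\le m-8$). Since $\cI \le \varepsilon$ absorbs all the $L^\infty_\tau$ factors, we obtain \eqref{eq15.1}.

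The main obstacle will be the careful bookkeeping of the H\"older norms of the multilinear source $\eta^l$: several of its constituents couple a factor carrying a high temporal derivative (which is only $L^2_\tau$-integrable with values in $C^{\alpha/3}$) with factors that must lie in $L^\infty_\tau C^{\alpha/3}$. Allocating each product so that at most one factor is the high-order, $L^2_\tau$-integrable piece, while the remaining factors are under the $L^\infty_\tau C^{\alpha/3}$ control supplied by \eqref{eq14.4.7}--\eqref{eq14.4.6} and Lemma \ref{lemma 5.2}, is what finally converts everything into $\int_s^t\cD\,d\tau$. A secondary technical point is the need to verify that $\alpha>2/3$ can indeed be chosen within the range $\alpha<\tfrac{1}{3}(1-12/r_4)$ allowed by \eqref{eq14.4.7}, which is ensured by the condition $r_4>36$ built into \eqref{eq14.0.3}.
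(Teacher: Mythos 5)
Your proposal reproduces the paper's own proof: differentiate the Landau equations $l$ times in $t$, recast them as the steady relativistic Fokker--Planck equations \eqref{eq1.1.3} with diffusion coefficients $\sigma^l$ and a source $\eta^l$, verify the hypotheses of Lemma~\ref{lemma 4.1} at fixed $\tau$ using the H\"older bounds \eqref{eq14.4.7}--\eqref{eq14.4.6}, and integrate the resulting pointwise estimate over $(s,t)$; the allocation of high-order ($L^2_\tau$) versus low-order ($L^\infty_\tau$) factors in the source terms is exactly the paper's bookkeeping in \eqref{eq1.1.13}--\eqref{eq1.1.14}. Three small slips to correct: there is no active Lemma~5.2 in the paper (the estimates you want are in Lemma~\ref{lemma 5.1}); listing the full $\Gamma_\pm(\partial_t^j f,\partial_t^{l-j}f)$ inside $\eta^l$ double-counts the diffusion piece you already absorbed into $\sigma^0$ --- what belongs in $\eta^l$ is the $a_f$, $C_f$ remainder, i.e.\ the paper's $\eta^l_2$ in \eqref{eq1.1.4}; and the admissible range is $\alpha < 1 - 12/r_4$ (not $\alpha < \tfrac13(1-12/r_4)$, which is always $<2/3$ and hence empty), so that $r_4 > 36$ is precisely what makes $(2/3, 1-12/r_4)$ nonempty.
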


\begin{remark}
By inspecting the argument of the proof, we also obtain
\begin{align*}
     \sum_{k=0}^{m- 9  }  \int_s^t \|\nabla_x \partial_t^k f\|^2_{  L_2 (\Omega \times \bR^3) } \, d\tau \lesssim_{\Omega, \theta, r_3, r_4}  \int_s^t \cD\,  d\tau.
\end{align*}
\end{remark}

\begin{proof}[Proof of  Proposition \ref{proposition 4.0}]
We set
$$
      g^l = \partial_t^l f \cdot (1, 1), \,  l = 0, \ldots, m-9,
$$
so that
\begin{align*}
  &  \sigma^0 = \int_{\bR^3} \Phi (P, Q) \big(J (q) + J^{1/2} (q) f (t, x, q) \cdot (1, 1)\big) \, dq, \\
  & \sigma^l (t, x, p) = \int_{\bR^3} \Phi (P, Q) J^{1/2} (q) \partial_t^l f (t, x, q) \cdot (1, 1) \, dq  =  \partial_t^l \sigma^0 (t, x, p), \, l = 1, \ldots, L.
\end{align*}
Then, by Assumption \ref{assumption 3.5}, the function $u = \partial_t^l f^{\pm}, l = 0, \ldots, m-5,$ is a strong solution (see Definition \ref{definition 3.3}) to the equation (cf. (6.17) in \cite{VML})
\begin{align}
\label{eq1.1.3}
    \frac{p}{p_0} \cdot \nabla_x  u  -  \nabla_p \cdot (\sigma^0   \nabla_p u) - 1_{l > 0} c_{l_1, l_2, l} \sum_{l_1 + l_2 = l, l_2 < l}  \nabla_p \cdot ( \sigma^l \nabla_p \partial_t^{l_2} f) = \eta^l
\end{align}
with the SRBC, where (cf. (6.1)--(6.4) in \cite{VML})
\begin{align}
    &
      \eta^l =  - \partial_t^{l+1} f^{\pm}   \pm (\frac{p}{p_0} \cdot \partial_t^l \bE) \sqrt{J} + \eta^l_1 + \eta^l_2  + \eta^l_3 ,  \notag\\
       & \eta^l_1 =  \sum_{l_1 + l_2 = l} \bigg(\mp(\partial_t^{l_1} \bE + \frac{p}{p_0} \times \partial_t^{l_1} \bB) \cdot (\nabla_p \partial_t^{l_2} f)  
       \pm \frac{1}{2} (\frac{p}{p_0} \cdot \partial_t^{l_1} \bE) \partial_t^{l_2} f\bigg),  \notag\\
     \label{eq1.1.4}
     & \eta^l_2 = - \sum_{l_1 + l_2 = l}  \big((\partial_t^{l_1} C_f) (\partial_t^{l_2} f^{\pm}) - (\partial_t^{l_1} a_f) \cdot (\nabla_p \partial_t^{l_2} f^{\pm})\big), \\
          &\eta^l_3 =  K_{\pm} (\partial_t^l f),  \notag \\
    \label{eq1.1.5}
& a_f^i (t, x, p) =  -  \int \Phi^{ i j } (P, Q)   J^{1/2} (q)    \big(\frac{p_i}{2 p_0}  f (t, x, q)
 	+ \partial_{q_j}  f (t, x, q)\big) \cdot (1, 1)\, dq,\\
     \label{eq1.1.6}
 & C_f (t, x, p) =
	  -  \frac{1}{2} \sigma^{ i j} \frac{ p_i}{  p_{ 0 } } \frac{ p_j}{ p_0} + \partial_{p_i} \big(\sigma^{ i j}  \frac{ p_j}{ p_0}\big) \\
&	-  \int  \big(\partial_{p_i} 	- \frac{ p_i}{2 p_0}\big)   \Phi^{ i j } (P, Q)  J^{1/2} (q)  \partial_{q_j}  f (t, x, q) \cdot (1, 1) \, dq\notag,\\
     \label{eq1.1.7}
& K_{\pm} f (t, x, p) = - J^{-1/2} (p) \partial_{p_i}  \bigg(J (p) \int \Phi^{ i j } (P, Q)  J^{1/2} (q) \big(\partial_{q_j} f (t, x, q)  \\
&+ \frac{ q_j}{2 q_0} f (t, x, q)\big) \cdot  (1, 1)\, dq\bigg). \notag
\end{align}

%

We note that by Assumption \ref{assumption 3.5}  and the estimate  \eqref{eq14.4.7} in Lemma \ref{lemma 5.1}, the conditions \eqref{eq4.1.31}--\eqref{eq4.1.27} of Lemma \ref{lemma 4.1} hold with $\alpha$ satisfying
$$
    2/3 < 1-12/r_4 < \alpha < 1,
$$
and $\delta$ and $K$ independent of $s, t$, and $\varepsilon$.
Applying the estimate \eqref{eq4.1.2} in Lemma \ref{lemma 4.1} for a.e. $\tau \in (s, t)$, raising  the resulting inequality to the power $2$, and integrating over the interval $(s, t)$, we get
\begin{align}
        \label{eq1.1.12}
& \sum_{l = 0}^{  m-9 }  \int_s^{t} \|D_x \partial^l_t  \bar f\|^2_{  L_3 (\Omega) } \, d\tau \\
& \lesssim_{\alpha, \Omega} \sum_{l = 0}^{  m-9 }   \int_s^{t} \|\partial_t^l f  \|^2_{ S_3 (\Omega \times \bR^3) } \, d\tau  \notag\\
&   +  \sum_{l = 1}^{   m-8 }  \int_s^{t} \|\partial^l_t f \|^2_{  C^{\alpha/3}_{x, p} (\Omega \times \bR^3) } \, d\tau +    \sum_{l = 0}^{  m-9 } \int_s^{t} \|\partial^l_t \bE\|^2_{  C^{\alpha/3} (\Omega) } \, d\tau  \notag\\
& +    \sum_{l = 0}^{  m-9 } \sum_{j = 1}^3 \int_s^{t}   \|\eta^l_j\|^2_{  C^{\alpha/3}_{x, p} (\Omega \times \bR^3) } \, d\tau.\notag
\end{align}
We note that by the definition of $\cD$ (see \eqref{eq14.0.3} and \eqref{eq12.0.1}) and the estimates \eqref{eq14.4.7}-\eqref{eq14.4.6} in Lemma \ref{lemma 5.1}, the first three terms on the r.h.s. of \eqref{eq1.1.12} can be replaced with $\int_s^t \cD \, d\tau$.


\textit{Estimate of $\eta^l_1$.}
We note that by the product rule inequality in H\"older spaces,  the smallness assumption \eqref{eq3.5.1}, and the bound \eqref{eq14.4.7}, 
  we have
\begin{align}
    \label{eq1.1.13}
   & \sum_{l=0}^{  m-9 }\int_s^{t} \|\eta^l_1\|^2_{ C^{\alpha/3}_{x, p} (\Omega \times  \bR^3)} \, d\tau \\
 &  \le   \bigg(\sum_{l=0}^{ m-9 }   \|\partial_t^l[\bE, \bB] (\tau, \cdot)\|^2_{ L_{\infty} ((s, t)) C^{\alpha/3} (\Omega) }\bigg)  \bigg(\int_s^{t} \sum_{l=0}^{  m-9 } \|[\partial_t^l f, \nabla_p \partial_t^l f]\|^2_{ C^{\alpha/3}_{x, p} (\Omega \times  \bR^3)} \, d\tau\bigg) \notag \\
 &  \lesssim  \varepsilon \int_s^{t}  \cD \, d\tau. \notag
\end{align}

\textit{Estimate of $\eta^l_2$.}
 By using 
the $L_{\infty}$ estimates  of $a_f$ and $C_f$ in  \eqref{eqB.1.5}--\eqref{eqB.1.3},  the  smallness assumption \eqref{eq3.5.1}, and the estimate \eqref{eq14.4.7}, we find for $l = 0, \ldots, 4$, and $\tau \in [s, t]$, 
\begin{align}
\label{eq1.1.8}
        &  \|\partial_t^l a_f (\tau, \cdot)\|^2_{ L_{\infty} (\bR^3_p)  C^{\alpha/3} (\Omega) }  + \|\partial_t^l C_f (t, \cdot)\|^2_{ L_{\infty} (\bR^3_p)  C^{\alpha/3} (\Omega) }  \\
        & \lesssim   1 + \|[\partial_t^l f (\tau, \cdot), \nabla_p \partial_t^l  f (\tau, \cdot)]\|^2_{   C^{\alpha/3}_{ x, p } (\Omega \times \bR^3) }  \lesssim 1. \notag
\end{align}
Furthermore, by the H\"older estimates of $a_f$ and $C_f$ in  \eqref{eqB.1.6}--\eqref{eqB.1.4},  the assumption \eqref{eq3.5.1} and the bound \eqref{eq14.4.7},  for the  same  $l$ and $\tau$, we have
\begin{align}
\label{eq1.1.9}
            &  \|\partial_t^l a_f (\tau, \cdot)\|^2_{ L_{\infty} (\Omega)  C^{\alpha/3} (\bR^3_p) }  + \|\partial_t^l C_f (\tau, \cdot)\|^2_{ L_{\infty} (\Omega)  C^{\alpha/3} (\bR^3_p)  }   \\
           & \lesssim 1 + \|\partial_t^l f (\tau, \cdot)\|^2_{ L_{\infty} (\Omega) W^1_{\infty}  (\bR^3) }
           \lesssim 1. \notag
\end{align}
Hence, by using the  product rule inequality and the bounds \eqref{eq1.1.8}--\eqref{eq1.1.9},  we obtain (cf. \eqref{eq1.1.13})
\begin{align}
\label{eq1.1.10}
     &\sum_{l=0}^{ m-9 }  \int_s^{t} \|\eta^l_2\|^2_{ C^{\alpha/3}_{x, p} (\Omega \times  \bR^3)} \, d\tau 
     \lesssim  \int_s^{t} \cD\, d\tau. 
\end{align}

\textit{Estimate of $\eta^l_3$.}
By  the estimates of $K$ in \eqref{eqB.1.1} and \eqref{eqB.1.2}  
and the bound \eqref{eq14.4.7}, we
have
\begin{align}
\label{eq1.1.14}
&
  \sum_{l=0}^{ m-9 }  \int_s^{t}  \| K \partial_t^l f\|^2_{ C^{\alpha/3}_{x, p} (\Omega \times \bR^3)}  \, d\tau
     \\
  & \lesssim \sum_{l=0}^{ m-9 } \int_s^{t} \|[\partial_t^l f, \nabla_p \partial_t^l f]\|^2_{ C^{\alpha/3}_{x, p} (\Omega \times \bR^3)} \, d\tau
  \lesssim \int_s^{t}  \cD \, d\tau.    \notag
\end{align}
Finally, gathering \eqref{eq1.1.12}--\eqref{eq1.1.13} and \eqref{eq1.1.10}--\eqref{eq1.1.14}, we prove the desired estimate \eqref{eq15.1}.
\end{proof}

\section{Positivity estimate of $L$}

\label{section 15}


\begin{proposition}[cf. \eqref{eq122}]
    \label{proposition 12.1}
  Under Assumption \ref{assumption 3.5},  there exists a constant $\delta_0  = \delta_0 (\theta, \Omega, r_3, r_4) > 0$ such that for any $\delta \in (0, \delta_0)$,  one has
    \begin{align}
    \label{eq12.1.1}
&\sum_{k=0}^m \int_s^{t} \int_{  \Omega  }  \langle L (\partial_t^k f), (\partial_t^k f)\rangle \, dx  d\tau\\
& \ge \delta \bigg(\sum_{k = 0}^{m-2} \int_s^{t} \|\partial_t^k [a^{+}, a^{-}]\|^2_{L_2 (\Omega)} \, d\tau +   \sum_{k = 0}^m \int_s^{t} \||\partial_t^k  [b,  c]\|^2_{ L_2 (\Omega) } \, d\tau \notag \\
& +  \sum_{k = 0}^{m-4} \int_s^{t} \|\partial_t^k  \bE\|^2_{ L_2 (\Omega) } \, d\tau
+  \sum_{k = 0}^{m-3} \int_s^{t} \|\partial_t^k  \bB\|^2_{ L_2 (\Omega) } \, d\tau \notag  \ - (\eta (t) - \eta (s)) - \sqrt{\varepsilon} \int_s^t \cD \, d\tau\bigg), \notag
\end{align}
where $\eta$ is a function satisfying the  bound  \eqref{eq3.1.2}.
\end{proposition}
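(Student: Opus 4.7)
The plan is to combine the standard coercivity estimate for the linearized Landau operator with the macroscopic bounds of Lemmas \ref{lemma 3.3} and \ref{lemma 7.1}, and the lower-order electromagnetic estimate in Proposition \ref{proposition E.4}, in a sequence of absorbing arguments governed by two small parameters.

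First, I would invoke the classical coercivity of $L$ (see Lemma 7 in \cite{GS_03}): for each $k$,
\begin{align*}
\int_\Omega \langle L(\partial_t^k f), \partial_t^k f \rangle \, dx \gtrsim \|(1-P)\partial_t^k f\|^2_{L_2(\Omega) W^1_2(\bR^3)}.
\end{align*}
Summing over $k = 0, \ldots, m$ and integrating in $\tau$, the left-hand side of \eqref{eq12.1.1} dominates $\int_s^t \cD_{||}\,d\tau$ (recall \eqref{eq14.0.11}). This is the \emph{only} direct lower bound; every remaining piece on the right-hand side of \eqref{eq12.1.1} must be produced indirectly from $\int_s^t \cD_{||}\,d\tau$ via Lemmas \ref{lemma 3.3}, \ref{lemma 7.1}, and Proposition \ref{proposition E.4}.

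The second step is to decouple the macroscopic variables $b$ and $c$. Inserting the $c$-bound \eqref{eq6.1.4} into the $b$-bound \eqref{eq3.3.1} yields
\begin{align*}
\sum_{k=0}^m \int_s^t \|\partial_t^k b\|^2_{L_2(\Omega)} \, d\tau \lesssim (\eta(t)-\eta(s)) + \varepsilon_b \sum_{k=0}^m \int_s^t \|\partial_t^k b\|^2_{L_2(\Omega)}\,d\tau + \varepsilon_b^{-1}\Big(\int_s^t \cD_{||} \,d\tau + \varepsilon \int_s^t \cD\,d\tau\Big),
\end{align*}
and then choosing $\varepsilon_b$ sufficiently small absorbs the $b$-term into the left. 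Feeding the resulting bound back into \eqref{eq6.1.4} gives an analogous bound for $c$. Thus
\begin{align*}
\sum_{k=0}^m \int_s^t \big(\|\partial_t^k b\|^2_{L_2(\Omega)} + \|\partial_t^k c\|^2_{L_2(\Omega)}\big) \, d\tau \lesssim (\eta(t)-\eta(s)) + \int_s^t \cD_{||} \,d\tau + \varepsilon \int_s^t \cD \, d\tau.
\end{align*}

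The third step is to feed the $[b,c]$ bound into Proposition \ref{proposition E.4}, which delivers the full lower-order estimate for $a^\pm$, $\bE$, and $\bB$ up to the orders indicated in \eqref{eq12.1.1}, with right-hand side absorbed into $(\eta(t)-\eta(s)) + \int_s^t \cD_{||}\,d\tau + \sqrt{\varepsilon}\int_s^t \cD\,d\tau$. Combining the $[b,c,a^\pm,\bE,\bB]$ estimates with the coercivity lower bound from Step 1 then yields, for some $\delta_0 = \delta_0(\Omega, \theta, r_3, r_4)$ and all $\delta \in (0,\delta_0)$, the inequality \eqref{eq12.1.1}.

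The main subtlety I anticipate is bookkeeping the two small parameters: the constants hidden in Lemmas \ref{lemma 3.3}, \ref{lemma 7.1}, and Proposition \ref{proposition E.4} depend on $(\Omega, \theta, r_3, r_4)$ but are independent of $\varepsilon$ and of the absorption parameter $\varepsilon_b$, so the smallness of $\delta$ must be chosen \emph{after} $\varepsilon_b$ is fixed but \emph{before} restricting $\varepsilon$. The $\sqrt{\varepsilon}\int_s^t \cD\,d\tau$ terms from the nonlinear estimates cannot be absorbed at this stage; they persist on the right-hand side as written in \eqref{eq12.1.1} and will be absorbed later during the closure of the full energy estimate in Proposition \ref{proposition 12.2}, where $\cI$ is shown to be small.
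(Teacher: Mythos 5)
Your proposal matches the paper's proof essentially step for step: coercivity of $L$ to control $\int_s^t\cD_{||}$, mutual absorption between the $b$- and $c$-bounds via $\varepsilon_b$ small, and then Proposition \ref{proposition E.4} for $[a^\pm,\bE,\bB]$. The only quibble is a harmless citation slip: the coercivity $\langle Lu,u\rangle\gtrsim\|(1-P)u\|^2_{W^1_2}$ is Lemma 8 of \cite{GS_03}, not Lemma 7.
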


\begin{proof}
    First, by the semipositivity estimate (see Lemma 8 in \cite{GS_03}), there exists a constant $\delta_{\star} \in (0, 1)$ such that for any $u  = (u^{+}, u^{-}) \in W^1_2 (\bR^3)$,
\begin{align}
    \label{eq12.1.3.1}
     \langle L  u,  u \rangle   \ge \delta_{\star} \|(1-P) u\|^2_{W^1_2 (\bR^3)}.
\end{align}
Hence, to prove \eqref{eq12.1.1}, it suffices to show that
\begin{align}
        \label{eq12.1.3}
&\sum_{k = 0}^{m-2} \int_s^{t} \|\partial_t^k [a^{+}, a^{-}]\|^2_{L_2 (\Omega)} \, d\tau +   \sum_{k = 0}^m \int_s^{t} \|\partial_t^k  [b, c]\|^2_{ L_2 (\Omega) } \, d\tau\\
& +  \sum_{k = 0}^{m-4} \int_s^{t} \|\partial_t^k  \bE\|^2_{ L_2 (\Omega) } \, d\tau
+  \sum_{k = 0}^{m-3}  \int_s^{t} \|\partial_t^k  \bB\|^2_{ L_2 (\Omega) } \, d\tau \notag\\
& \lesssim_{\Omega, \theta, r_3,  r_4 } (\eta (t) - \eta (s)) + \int_s^t \cD_{||}  \, d\tau + \sqrt{\varepsilon} \int_s^t \cD \, d\tau. \notag
\end{align}

\textbf{Step 1: estimates of $b$ and $c$.}
First, by  \eqref{eq3.3.1} in Lemma \ref{lemma 3.3}, for sufficiently small $\varepsilon_b \in (0, 1)$, we have
\begin{align*}
&   \sum_{k = 0}^m  \int_s^{t}  \|\partial_t^k  b\|^2_{ L_2 (\Omega) } \, d\tau
 \lesssim (\eta (t) - \eta (s))
 +  \varepsilon_b \sum_{k = 0}^m  \int_s^{t}  \|\partial_t^k  c\|^2_{ L_2 (\Omega) } \, d\tau\\
&  +  \varepsilon_b^{-1} \big(\int_s^{t} \cD_{||}  \, d\tau +   \varepsilon \int_s^t \cD \, d\tau\big).
\end{align*}
Furthermore,  by \eqref{eq6.1.4} in Lemma \ref{lemma 7.1}, we have
\begin{align}
      \label{eq12.1.4}
 &   \sum_{k = 0}^m \int_s^{t} \|\partial_t^k  c\|^2_{ L_2 (\Omega) } \, d\tau
    \lesssim (\eta (t)  - \eta (s))  \\
 &  +   \sum_{k = 0}^m \int_s^{t} \|\partial_t^k  b\|^2_{ L_2 (\Omega) } \, d\tau
 +   \int_s^{t} \cD_{||}  \, d\tau  +  \varepsilon \int_s^t \cD \, d\tau.  \notag
 \end{align}
Combining the above estimates of $b$ and $c$, we obtain
\begin{align}
        \label{eq12.1.8}
  \sum_{k = 0}^m \int_s^{t} \|\partial_t^k  b\|^2_{ L_2 (\Omega) } \, d\tau
 & \lesssim (\eta (t) - \eta (s))
 +  \varepsilon_b \sum_{k = 0}^m \int_s^{t} \|\partial_t^k  b\|^2_{ L_2 (\Omega) } \, d\tau \\
&  +  \varepsilon_b^{-1}  \big(\int_s^{t} \cD_{||}  \, d\tau +   \varepsilon \int_s^t \cD \, d\tau\big).  \notag
\end{align}
Thus, by taking $\varepsilon_b$ sufficiently small, we may drop the term involving $b$ on the r.h.s. of \eqref{eq12.1.8} and conclude that \eqref{eq12.1.3} holds for the sum involving $b$.
Hence, we may also drop the term involving $b$ on the r.h.s. of \eqref{eq12.1.4}. Thus, the desired estimate  \eqref{eq12.1.3} also holds for the sum involving $c$.

\textbf{Step 2: estimates of $a^{\pm}, \bE$, and $\bB$.} By the conclusion of Step 1, we may drop the terms involving $b$ and $c$ in the estimate of $[a^{\pm}, \bE, \bB]$ in \eqref{eqE.4.0} and obtain  \eqref{eq12.1.3}. Thus, the proposition is proved.
\end{proof}


\section{Top-order energy estimate}
    \label{section 12}

\begin{proposition}[cf. \eqref{eq12.2.1.0}]
    \label{proposition 12.2}
 Under Assumption \ref{assumption 3.5}, we have  
    \begin{align}
          \label{eq12.2.1}
& \cI_{||} (t)  +  \bigg(\int_s^t \cD_{||} \, d\tau + \sum_{k = 0}^{m-2} \int_s^{t} \|\partial_t^k [a^{+}, a^{-}]\|^2_{L_2 (\Omega)} \, d\tau \\
& +   \sum_{k = 0}^m \int_s^{t}  \|\partial_t^k  [b, c]\|^2_{ L_2 (\Omega) } \, d\tau  
 +  \sum_{k = 0}^{m-4}  \int_s^{t}  \|\partial_t^k  \bE\|^2_{ L_2 (\Omega) } \, d\tau
+  \sum_{k = 0}^{m-3}  \int_s^{t}  \|\partial_t^k  \bB\|^2_{ L_2 (\Omega) } \, d\tau\bigg) \notag \\
& \lesssim_{\Omega, r_3,  r_4, \theta } 
 \cI_{||}(s)+\sqrt{\varepsilon}\big(\|\cI_{||}\|_{ L_{\infty} ((s, t)) } + \int_s^t \cD \, d\tau\big).  \notag
\end{align}
\end{proposition}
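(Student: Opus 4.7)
The plan is to sum natural energy identities for the $k$-times $t$-differentiated RVML system over $k=0,\ldots,m$, and then combine with the positivity estimate of $L$ from Proposition \ref{proposition 12.1} to absorb every term on the left-hand side of \eqref{eq12.2.1}. I would test the $k$-times differentiated Landau equations \eqref{eq36.13}--\eqref{eq36.14} with $\partial_t^k f$, and the $k$-times differentiated Amp\`ere--Maxwell and Faraday laws with $\partial_t^k\bE$ and $\partial_t^k\bB$ respectively, and sum. The SRBC eliminates the kinetic boundary term from the transport integration by parts; the perfect conductor BC $\bE\times n_x=0$ eliminates the Maxwell boundary term $\int_{\partial\Omega}(\bE\times n_x)\cdot\bB$ from the Stokes-type integration by parts coupling $\nabla_x\times\bE$ and $\nabla_x\times\bB$. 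Crucially, the linear electric-field term $\frac{e_\pm}{k_b T}\frac{p}{p_0^\pm}\cdot\bE\sqrt{J^\pm}$ in the Landau equation pairs exactly with $\int \partial_t^k\bE\cdot\partial_t^k\bm{j}$ coming from Amp\`ere--Maxwell and cancels up to the cubic remainder (this is the only place the charge signs $\pm e_\pm$ are needed in the linear cancellation). The outcome is
\[
\cI_{||}(t) - \cI_{||}(s) + \sum_{k=0}^m \int_s^t\!\int_\Omega \langle L\partial_t^k f,\partial_t^k f\rangle\, dx\,d\tau = \mathcal{N}(s,t),
\]
where $\mathcal{N}$ collects the cubic contributions from $\Gamma(f,f)$, from the Lorentz force $(\bE+\frac{p}{p_0^\pm}\times\bB)\cdot\nabla_p f$, and from the scalar term $\frac{p}{p_0^\pm}\cdot\bE\,f$.

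Next, I would extract $\int_s^t\cD_{||}\,d\tau$ directly from the left-hand $L$-integral via the semipositivity bound \eqref{eq12.1.3.1}, and then add a small multiple $\delta>0$ of Proposition \ref{proposition 12.1} to produce the macroscopic sums $\sum_k\int_s^t\|\partial_t^k[a^{+},a^{-}]\|^2_{L_2}$, $\sum_k\int_s^t\|\partial_t^k[b,c]\|^2_{L_2}$, and the electromagnetic sums $\sum_k\int_s^t\|\partial_t^k\bE\|^2_{L_2}$, $\sum_k\int_s^t\|\partial_t^k\bB\|^2_{L_2}$. The $\delta(\eta(t)-\eta(s))$ term from Proposition \ref{proposition 12.1} satisfies $|\eta|\lesssim\cI_{||}$, so it is absorbable into $(1-C\delta)\cI_{||}(t)$ on the left and into $(1+C\delta)\cI_{||}(s)\le 2\cI_{||}(s)$ on the right; the $\delta\sqrt\varepsilon\int_s^t\cD\,d\tau$ remainder is trivially absorbed into $\sqrt\varepsilon\int_s^t\cD\,d\tau$ on the right.

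The main obstacle is the estimate $\mathcal{N}(s,t)\lesssim\sqrt\varepsilon\bigl(\|\cI_{||}\|_{L_\infty((s,t))}+\int_s^t\cD\,d\tau\bigr)$, specifically the top-order cubic Lorentz integrals at $k=m$, typified by
\[
\mathfrak{I}_{\mathrm{top}} := \int_s^t\!\int_\Omega (\partial_t^m\bE)\cdot\bm{j}^\pm\,(\partial_t^m a^\pm)\,dx\,d\tau,
\]
since neither $\partial_t^m\bE$ nor $\partial_t^m a^\pm$ can be bounded by $\cD$ in isolation. Following Section \ref{section 4.10}, I would integrate by parts once in $t$, invoke the continuity equation \eqref{eqE.2.15} to replace $\partial_t^{m+1}a^\pm$ by $-(\sqrt{M_\pm})^{-1}\nabla_x\cdot\partial_t^m\bm{j}^\pm$, and then integrate by parts in $x$. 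The boundary integral vanishes identically because $\bm{j}^\pm\cdot n_x=0$ on $\partial\Omega$ (from SRBC) together with $(\partial_t^{m-1}\bE)\times n_x=0$ (from the perfect conductor BC) yields $(\partial_t^{m-1}\bE)\cdot\bm{j}^\pm=0$ on $\partial\Omega$. The resulting interior integral
\[
\int_s^t\!\int_\Omega (\partial_t^{m-1}\bE_i)(\partial_{x_l}\bm{j}^\pm_i)(\partial_t^m\bm{j}^\pm_l)\,dx\,d\tau
\]
is estimated by H\"older with exponents $(\infty,6)\cdot(2,3)\cdot(2,2)$: the first factor is bounded by $\|\cI_{||}\|^{1/2}_{L_\infty((s,t))}$ via the $W^1_2$ div-curl estimate \eqref{eq4.11} and the Sobolev embedding $W^1_2\subset L_6$; the second factor is bounded by $\bigl(\int_s^t\cD\,d\tau\bigr)^{1/2}$ via the gradient estimate of velocity averages in Proposition \ref{proposition 4.0}; the third factor is bounded by $\bigl(\int_s^t\cD\,d\tau\bigr)^{1/2}$ via macro-micro decomposition and \eqref{eqE.2.15.1}. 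The temporal boundary terms $\bigl[\int_\Omega(\partial_t^{m-1}\bE)\cdot\bm{j}^\pm(\partial_t^m a^\pm)\bigr]_s^t$ produced by the time integration by parts are bounded by $\sqrt\varepsilon\,\|\cI_{||}\|_{L_\infty((s,t))}$ using the smallness $\cI\le\varepsilon$ in \eqref{eq3.5.1}.

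All remaining pieces of $\mathcal{N}$ are either of lower temporal order (hence controlled by the product-rule inequalities in Lemmas \ref{lemma 12.A.2}--\ref{lemma 12.A.3} combined with the $L_\infty$, $C^\alpha$, and Sobolev embedding bounds in Lemma \ref{lemma 5.1}) or carry at most one $\partial_t^m$ factor paired with two lower-order factors that are estimated through $\cI_{||}^{1/2}$ and $\cD^{1/2}$. In every case the smallness $\cI\le\varepsilon$ supplies the $\sqrt\varepsilon$ prefactor. Assembling the energy identity, the semipositivity bound \eqref{eq12.1.3.1}, the $\delta$-multiple of Proposition \ref{proposition 12.1}, and the estimate of $\mathcal{N}$, then choosing $\delta$ small enough to absorb the $\delta\cI_{||}(t)$ contribution into the left, yields \eqref{eq12.2.1}.
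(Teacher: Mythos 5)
Your proposal follows essentially the same route as the paper's proof: an energy identity summed over temporal derivatives combined with Proposition \ref{proposition 12.1} and the semipositivity of $L$, then a reduction of the cubic Lorentz remainder to the top-order electric-field term via Lemmas \ref{lemma 12.A.2}--\ref{lemma 12.A.3}, followed by integration by parts in $t$, the continuity equation, spatial integration by parts with the boundary cancellation, and a H\"older estimate using the $W^1_3$ gradient bound of Proposition \ref{proposition 4.0}. The only point you leave implicit is that the spatial integration by parts also produces a companion term $\mathfrak{I}_1$ in which $\partial_{x_l}$ lands on $\partial_t^{m-1}\bE$ rather than on $\bm{j}^\pm$; that term is the easier half, estimated via the $W^1_2$ div-curl bound on $D_x \partial_t^{m-1}\bE$ and the $L_\infty^{x,p}$ control of the lower-order $\bm{j}^\pm$-factors from Lemma \ref{lemma 5.1}, so the omission does not affect the validity of the approach.
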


We will need the following lemma.

\begin{lemma}
    \label{lemma 12.4}
For any $k \in \{0, \ldots, m-1\}$, and any $\tau \in [s, t]$,
\begin{equation}
    \label{eq12.4.1}
   \|\partial_t^k \bE (\tau, \cdot)\|_{  W^1_2 (\Omega) } + \|\partial_t^k \bE (\tau, \cdot)\|_{  L_6 (\Omega) }  \lesssim_{\Omega} \cI_{||}^{1/2} (\tau).
\end{equation}
\end{lemma}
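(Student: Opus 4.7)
The plan is to apply the div-curl elliptic estimate \eqref{eq4.11} to $\partial_t^k \bE$ and then bound the resulting right-hand side by $\cI_{||}^{1/2}(\tau)$. Differentiating Maxwell's equations $k$ times in $t$ preserves the structure: $\partial_t^k \bE$ satisfies
\begin{equation*}
    \nabla_x \times \partial_t^k \bE = -\partial_t^{k+1} \bB, \quad \nabla_x \cdot \partial_t^k \bE = 4\pi \partial_t^k \rho, \quad (\partial_t^k \bE \times n_x)|_{\partial \Omega} = 0,
\end{equation*}
where the divergence identity and the perfect conductor BC are preserved for all $k \le m$ by the compatibility conditions \eqref{eq33.10.1} and item $(iii)$ of Theorem \ref{theorem 3.1}. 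Hence \eqref{eq4.11} with $r=2$ yields
\begin{equation*}
    \|\partial_t^k \bE(\tau, \cdot)\|_{W^1_2(\Omega)} \lesssim_{\Omega} \|\partial_t^{k+1} \bB(\tau, \cdot)\|_{L_2(\Omega)} + \|\partial_t^k \rho(\tau, \cdot)\|_{L_2(\Omega)}.
\end{equation*}

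Since $k \le m-1$, we have $k+1 \le m$, so the magnetic field term is directly controlled by $\cI_{||}^{1/2}(\tau)$ (see \eqref{eq14.0.10}). For the charge density term, I would use the identity $\rho = e_{+}\sqrt{M_{+}} a^{+} - e_{-}\sqrt{M_{-}} a^{-}$ from \eqref{eq9.2.12}. By the definition of $a^{\pm}$ in \eqref{eq6.18} and the Cauchy-Schwarz inequality (exploiting the Gaussian decay of $\sqrt{J^{\pm}}$), one has
\begin{equation*}
    \|\partial_t^k a^{\pm}(\tau, \cdot)\|_{L_2(\Omega)} \lesssim \|\partial_t^k f^{\pm}(\tau, \cdot)\|_{L_2(\Omega \times \bR^3)} \lesssim \cI_{||}^{1/2}(\tau),
\end{equation*}
giving $\|\partial_t^k \rho(\tau, \cdot)\|_{L_2(\Omega)} \lesssim \cI_{||}^{1/2}(\tau)$ and hence the $W^1_2$ bound.

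The $L_6$ bound then follows immediately from the Sobolev embedding $W^1_2(\Omega) \hookrightarrow L_6(\Omega)$ in three dimensions. There is no real obstacle here beyond the verification that the differentiated Maxwell system retains the divergence constraint and the perfect conductor boundary condition pointwise in $t$, which is exactly what assertion $(iii)$ of Theorem \ref{theorem 3.1} guarantees through the continuity equations $\partial_t (\partial_t^k \rho) + \nabla_x \cdot \partial_t^k \bm{j} = 0$ combined with the compatibility conditions \eqref{eq33.10}--\eqref{eq33.10.1} on the initial data.
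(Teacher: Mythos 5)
Your proof is correct and follows the same route as the paper: apply the $W^1_2$ div-curl estimate \eqref{eq4.11} to the $t$-differentiated Maxwell system, bound $\partial_t^{k+1}\bB$ and $\partial_t^k\rho$ (via Cauchy-Schwarz over $p$) by $\cI_{||}^{1/2}(\tau)$, and conclude with the Sobolev embedding $W^1_2 \hookrightarrow L_6$. The extra verification you include — that the divergence constraint and perfect conductor boundary condition persist under $\partial_t^k$ by assertion $(iii)$ of Theorem \ref{theorem 3.1} — is correctly placed and is precisely what the paper silently relies on.
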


\begin{proof}
    By the div-curl estimate \eqref{eq4.11}, for fixed $\tau$, we have
    \begin{align*}
        \|\partial_t^k \bE (\tau, \cdot) \|_{ W^1_2 (\Omega) }  \lesssim_{\Omega} \|\partial_t^k f (\tau, \cdot)\|_{ L_2 (\Omega) }
        + \|\partial_t^{k+1} \bB (\tau, \cdot) \|_{ L_2 (\Omega) } \lesssim \cI_{||}^{1/2} (\tau).
    \end{align*}
    The estimate of the $L_6^x$ norm follows from the last inequality and the Sobolev embedding theorem.
\end{proof}

\begin{proof}[Proof of Proposition \ref{proposition 12.2}]
    In this proof, $N= N (\Omega, \theta,  r_3,  r_4)$.

\textbf{Energy inequality.}
First, by the energy identity in \eqref{eq27.4.12} (see Lemma \ref{lemma 27.4}),   for any $k \in \{0, \ldots, m\}$,
\begin{align}
    \label{eq12.2.2}
& \frac 1 2 \big(\|\partial_t^k f (t, \cdot)\|^2_{ L_2 (\Omega \times \bR^3) } - \|\partial_t^k f (s, \cdot)\|^2_{ L_2 (\Omega \times \bR^3) }\big)  \\
& - \frac{1 }{k_b T} \int_s^{t} \int_{\Omega} (\partial_t^k \bE) \cdot \bigg(\int_{\bR^3} e_{+}  \frac{p}{p_0^{+}} \sqrt{J^{+}} (\partial_t^k f^{+})  - e_{+}  \frac{p}{p_0^{-}} \sqrt{J^{-}} (\partial_t^k f^{-}) \, dp\bigg) dx d\tau \notag \\
& + \int_s^{t} \int_{ \Omega  }  \langle L \partial_t^k f, \partial_t^k f \rangle \, dx d\tau \notag \\
& =  \int_s^{t} \int_{ \Omega  } \langle \partial_t^k (\Gamma (f, f)),    \partial_t^k f\rangle \,  dx d\tau \notag\\
& +  \sum_{k_1+k_2=k} \binom{k}{k_1} \bigg(\frac{ 1 }{2 k_b T}\int_s^{t} \int_{ \Omega \times \bR^3 }  \big(e_{+}  \frac{p}{p_0^{+}}  \cdot (\partial_t^{k_1} \bE) (\partial_t^{k_2} f^{+}) (\partial_t^k f^{+}) - e_{-}  \frac{p}{p_0^{-}}  \cdot (\partial_t^{k_1} \mathbf{E}) (\partial_t^{k_2} f^{-}) (\partial_t^k f^{-})\big) \, dz \notag \\
& -  \int_s^{t} \int_{ \Omega \times \bR^3 } (\partial_t^{k_1} \bE)  \cdot \big(e_{+} (\nabla_p \partial_t^{k_2}  f^{+}) (\partial_t^k f^{+}) - e_{-} (\nabla_p \partial_t^{k_2}  f^{-}) (\partial_t^k f^{-})\big) \, dz \notag\\
& -  \int_s^{t} \int_{ \Omega \times \bR^3 }  p \times  (\partial_t^{k_1}  \bB)  \cdot \big(e_{+} (p_0^{+})^{-1} (\nabla_p \partial_t^{k_2}  f^{+}) (\partial_t^k f^{+}) - e_{-} (p_0^{-})^{-1} (\nabla_p \partial_t^{k_2}  f^{-}) (\partial_t^k f^{+})\big) \, dz\bigg). \notag
\end{align}
We note that by the definition of $\bm{j}$ in  \eqref{eq36.16} and 
the energy identity for Maxwell's equations, the second term on the l.h.s.  of \eqref{eq12.2.2} (the first integral term therein) equals
\begin{align*}
&  -   \frac{1 }{k_b T} \int_s^{t}  \int_{\Omega} (\partial_t^k \bE) \cdot (\partial_t^k \bm{j}) \, dx d\tau \\
& =  \frac{ 2 \pi  }{ k_b T}  (\|\partial_t^k \bE (t, \cdot)\|^2_{ L_2 (\Omega) } + \|\partial_t^k \bB (t, \cdot)\|^2_{ L_2 (\Omega) }) - (\|\partial_t^k \bE (s, \cdot)\|^2_{ L_2 (\Omega) } + \|\partial_t^k \bB (s, \cdot)\|^2_{ L_2 (\Omega) }).
\end{align*}
Summing up the inequalities with respect to $k \in \{0, \ldots, m\}$, using the  semipositivity estimate in \eqref{eq12.1.3.1}, and the positivity estimate of $L$ \eqref{eq12.1.1} in Proposition \ref{proposition 12.1},  we obtain
\begin{align*}
& \cI_{||} (t)  - \cI_{||} (s)   +   \delta (\eta (t) - \eta (s))  \\
& + \delta \bigg(\int_s^t \cD_{||} \, d\tau \notag  + \sum_{k = 0}^{m-2} \int_s^{t} \|\partial_t^k [a^{+}, a^{-}]\|^2_{L_2 (\Omega)} \, d\tau +   \sum_{k = 0}^m \int_s^{t}  \|\partial_t^k  [b, c]\|^2_{ L_2 (\Omega) } \, d\tau  \notag\\
& +  \sum_{k = 0}^{m-4}  \int_s^{t}  \|\partial_t^k  \bE\|^2_{ L_2 (\Omega) } \, d\tau
+  \sum_{k = 0}^{m-3}  \int_s^{t}  \|\partial_t^k  \bB\|^2_{ L_2 (\Omega) } \, d\tau\bigg) \notag\\
&\lesssim \sqrt{\varepsilon} \int_s^t \cD \, d\tau +  |\text{r.h.s of \eqref{eq12.2.2}}|,
\end{align*}
where $\delta \in (0, \delta_0)$ and $\delta_0 =  \delta_0 (\Omega, r_3, r_4, \theta)  \in (0, 1)$, and 
$\eta$ is a function satisfying the bound (see \eqref{eq3.1.2}) 
$$
    |\eta (\tau)| \le N (\Omega,   r_3,  r_4, \theta) \cI_{||} (\tau), \, \tau \in [s, t].
$$
Hence, taking $\delta < 1/(2N)$, we may absorb the term $\delta \eta (t)$ into $\cI_{||} (t)$ and replace the term $\cI_{||} (s)+\delta \eta (s)$  with $ N_1 \cI_{||} (s)$.


\textbf{Collision term.}
By \eqref{eq12.A.2.2} in Lemma \ref{lemma 12.A.2} and the smallness assumption \eqref{eq3.5.1}, 
$$
   |\text{the first  term on the r.h.s of \eqref{eq12.2.2}}|  \lesssim \sqrt{\varepsilon} \int_s^t \cD \, d\tau.\notag
$$
Thus, to finish the proof of the  desired estimate \eqref{eq12.2.1}, it suffices to show that
\begin{align}
        \label{eq12.2.3}
   & |\text{the last three terms on the r.h.s. of \eqref{eq12.2.2}}| \\
   & \lesssim \sqrt{\varepsilon} \big( \|\cI_{||}\|_{ L_{\infty} ((s, t)) } + \int_s^t \cD \, d\tau\big). \notag
\end{align}

\textbf{Proof of the claim \eqref{eq12.2.3}.}
First,  by using the bound \eqref{eq12.A.3.1} in Lemma \ref{lemma 12.A.3} with $\zeta \equiv 1$, we conclude that, in the case when $k \in \{0, \ldots, m-2\}$, the desired claim \eqref{eq12.2.3} is valid.
Hence, we may assume that
$
    k \in \{m-1, m\}.
$

We split  each integral in each sum  into two terms as follows:
\begin{align}
        \label{eq12.2.15}
  & 2 k_b T I^1_{k_1, k_2} (\bE)
      :=  e_{+}  \int_s^t \int_{ \Omega  \times \bR^3} \frac{p_i}{p_0^{+}}   (\partial_t^{k_1} \bE_i) (\partial_t^{k_2} f^{+}) \partial_t^k (P^{+} f) \, dz \\
      & - e_{-}   \int_s^t \int_{ \Omega  \times \bR^3} \frac{p_i}{p_0^{-}}  (\partial_t^{k_1} \bE_i) (\partial_t^{k_2} f^{-}) \partial_t^k  (P^{-} f) \, dz  =: e_{+ } I^{1, +}_{k_1, k_2} (\bE) - e_{-} I^{1, -}_{k_1, k_2} (\bE), \notag \\
    &2 k_b T  J^1_{k_1, k_2} (\bE)  :=   \int_s^t \int_{ \Omega  \times \bR^3} p_i   (\partial_t^{k_1} \bE_i)  (\partial_t^k (1-P) f) \cdot (\frac{e_{+}}{p_0^{+}} \partial_t^{k_2} f^{+},  - \frac{e_{-}}{p_0^{-}} \partial_t^{k_2} f^{-}) \, dz,  \notag \\
    \label{eq12.2.13}
 &  I^2_{k_1, k_2} (\bE) := - e_{+}   \int_s^t \int_{ \Omega  \times \bR^3} (\partial_t^{k_1} \bE_i)  \,  (\partial_{p_i}  \partial_t^{k} P^{+}  f) (\partial_t^{k_2} f^{+}) \, dz \\
 & + e_{-}  \int_s^t \int_{ \Omega  \times \bR^3} (\partial_t^{k_1} \bE_i) \,   (\partial_{p_i} \partial_t^k P^{-}  f) (\partial_t^{k_2} f^{-}) \, dz
    =: - e_{+} I^{2, +}_{k_1, k_2} (\bE)  + e_{-} I^{2, -}_{k_1, k_2} (\bE), \notag \\
 &    J^2_{k_1, k_2} (\bE) :=
 -   \int_s^t \int_{ \Omega  \times \bR^3} (\partial_t^{k_1} \bE_i) \, (\partial_{p_i} \partial_t^k (1 - P) f) \cdot (e_{+} \partial_t^{k_2} f^{+}, - e_{-} \partial_t^{k_2} f^{-}) \, dz, \notag\\
\label{eq12.2.14}
 &  I_{k_1, k_2} (\bB) := - e_{+}  \int_s^t \int_{ \Omega  \times \bR^3}  \frac{p}{p_0^{+}} \times (\partial_t^{k_1}  \bB)   \cdot (\nabla_{p}  \partial_t^{k} P^{+}  f) (\partial_t^{k_2} f^{+}) \, dz \\
 & + e_{-}  \int_s^t \int_{ \Omega  \times \bR^3}  \frac{p}{p_0^{-}} \times (\partial_t^{k_1}  \bB)  \cdot  (\nabla_{p} \partial_t^k P^{-}  f) (\partial_t^{k_2} f^{-}) \, dz , \notag\\
 &    J_{k_1, k_2} (\bB) :=  -  \int_s^t \int_{ \Omega  \times \bR^3} \partial_t^{k_1} (p \times \bB)_i \,  (\partial_{p_i} \partial_t^k (1- P)  f) \cdot  (\frac{e_{+}}{p_0^{+}} \partial_t^{k_2} f^{+}, -\frac{e_{-}}{p_0^{-}} \partial_t^{k_2} f^{-})\, dz.\notag
 \end{align}

 \textit{Estimate of the  $J$-terms.}
By applying the Cauchy-Schwarz inequality in the $p$ variable and the $L_{\infty}^t L_2^x$-$L_{2}^t L_{\infty}^x$-$L_2^{t, x}$
H\"older's inequality, we get
 \begin{align}
    \label{eq12.2.61}
  &   |J^1_{k_1, k_2 } (\bE)| + |J^2_{k_1, k_2 } (\bE)|  +  |J_{k_1, k_2 } (\bB)|  \\
 & \lesssim \|(1-P) \partial_t^k f\|_{L_2 ((s, t) \times \Omega) W^1_2 (\bR^3)  } M_{k_1, k_2}, \notag \\
  & M_{k_1, k_2}:= \|\partial_t^{k_1} [\bE, \bB]\|_{L_{\infty} ((s, t)) L_2 (\Omega)}
 \big(1_{ k_2 \le m/2} \|\partial_t^{k_2} f\|_{ L_2 ((s, t)) L_{\infty} (\Omega) L_2 (\bR^3) }) \notag  \\
   &  + 1_{ k_1 \le m/2}  \|\partial_t^{k_1} [\bE, \bB]\|_{ L_2 ((s, t)) L_{\infty} (\Omega) }  \|\partial_t^{k_2} f\|_{L_{\infty} ((s, t)) L_2 (\Omega \times \bR^3)  }.\notag
 \end{align}
 We note that
 \begin{itemize}
     \item[--] the first factor in \eqref{eq12.2.61} is bounded by $(\int_s^t \cD_{||} \, d\tau)^{1/2}$,
     \item[--] $M_{k_1, k_2}$ is the same as in \eqref{eq12.A.3.6}, and then, by \eqref{eq12.A.3.7} and the smallness assumption \eqref{eq3.5.1},  we have
     \begin{align}
        \label{eq12.2.63}
       M_{k_1, k_2} \lesssim_{\theta, \Omega, r_3, r_4} \|\cI_{||}\|_{ L_{\infty} ((s, t)) }^{  1/2  }  \big(\int_s^t \cD\,  d\tau\big)^{1/2} \lesssim \sqrt{\varepsilon} \big(\int_s^t \cD\,  d\tau\big)^{1/2}.
     \end{align}
     
 \end{itemize}
 Hence, we conclude that
 \begin{align}
     \label{eq12.2.4}
|\text{all the  $J$-terms}|   \lesssim \sqrt{\varepsilon} \int_s^t \cD \, d\tau.
 \end{align}

 \textit{Estimate of the $I$-terms.}  We start with the explicit computation of the $I$-terms. By the definition of the projection operator $P$ in \eqref{eq6.16}--\eqref{eq6.17} and the identity \eqref{eq3.1.70},
 we have 
\begin{align}
\label{eq12.9.1}
       & \partial_{p_i} P^{\pm} f \\
        &=  - \frac{1}{ k_b T} \big(\sqrt{M_{\pm}^{-1}} a^{\pm}   + \kappa_1 p \cdot b   +    \kappa_3 c (p_0^{\pm} - \kappa_2^{\pm})\big) \frac{p_i}{p_0^{\pm}}  \sqrt{J^{\pm}} + \kappa_1 b_i  \sqrt{J^{\pm}} +\kappa_3  c \frac{p_i}{p_0^{\pm}} \sqrt{J^{\pm}}, \notag
\end{align}
which yields
\begin{equation}
\label{eq12.B.5}
    \nabla_p P^{\pm} f + \frac{1}{  k_b T} \frac{p}{p_0^{\pm}} P^{\pm} f  = \kappa_1 b  \sqrt{J^{\pm}} +\kappa_3 c \frac{p}{p_0^{\pm}} \sqrt{J^{\pm}}.
\end{equation}

\textit{Magnetic field term \eqref{eq12.2.14}.}
 By \eqref{eq12.B.5},
$$
    \frac{p}{p_0^{\pm}} \times (\partial_t^{k_1} \bB) \cdot  \nabla_p   P^{\pm} (\partial_t^{k} f) =  \kappa_1  (\frac{p}{p_0^{\pm}} \times \partial_t^{k_1} \bB)  \cdot \partial_t^{k} b  \sqrt{J^{\pm}},
$$
 and hence, by \eqref{eq12.2.14} and the definition of $\bm{j}$ in  \eqref{eq36.16}, we have
\begin{align}
   & I_{k_1, k_2} (\bB)  = - \kappa_1
    \int_{s}^t \int_{\Omega \times \bR^3} \partial_t^{k_2} (e_{+}\frac{p}{p_0^{+}} \sqrt{J^{+}} f^{+} -  e_{-}\frac{p}{p_0^{-}} \sqrt{J^{-}} f^{-}) \times  (\partial_t^{k_1} \bB)  \cdot (\partial_t^k b) \, \, dz \notag\\
   \label{eq12.B.2}
    & = -\kappa_1  \int_{s}^t \int_{\Omega } (\partial_t^{k_2} \bm{j}) \times (\partial_t^{k_1} \bB)  \cdot (\partial_t^k b)  \, dx d\tau.
\end{align}
Then, proceeding as in \eqref{eq12.2.61}, invoking the  definition of $\cD$ in \eqref{eq12.0.1}, and using the bound \eqref{eq12.2.63}, we obtain
\begin{align}
   \label{eq12.2.4.1}
  &  |I_{k_1, k_2} (\bB)|  
     \lesssim  \|\partial_t^k b\|_{ L_{2} ((s, t) \times \Omega) }  M_{k_1, k_2} 
     \lesssim \sqrt{\varepsilon} \int_s^t \cD \, d\tau. 
\end{align}

\textit{Electric field terms  \eqref{eq12.2.15} and \eqref{eq12.2.13}.}
We first consider $I_{k_1, k_2}^{2, \pm} (\bE)$ defined in \eqref{eq12.2.13}.
By the identity \eqref{eq12.9.1}, we have
\begin{align*}
 &   I_{k_1, k_2}^{2, \pm} (\bE)
 =  - \frac{1}{k_b T} \int_s^t \int_{ \Omega \times \bR^3}  (\partial_t^{k_1} \bE_i) \, \frac{p_i}{p_0^{\pm}} (\partial_t^{k_2} f^{\pm}) \sqrt{J^{\pm}} \\
& \times \partial_t^{k} \bigg(\sqrt{M_{\pm}^{-1}}  a^{\pm}   + \kappa_1 p_l   b_l   +    \kappa_3  (p_0^{\pm} - \kappa_2^{\pm} -  k_b T)  c\bigg)  \, dz \\
 &
 + \kappa_1\int_s^t \int_{ \Omega \times \bR^3}  (\partial_t^{k_1} \bE_i) \,  (\partial_t^{k_2} f) \sqrt{J^{\pm}} \,
  (\partial_t^k b_i)  \, dz. 
\end{align*}
Inspecting the above expression, we conclude that
\begin{equation}
    \label{eq12.B.4}
        I^{2, \pm}_{k_1, k_2} (\bE) \, \text{is a linear combination of terms of Type I and Type II},
\end{equation}
where
\begin{align*}
 & \text{Type I} =  \int_s^{t} \int_{\Omega} (\partial_t^{k_1} \bE_i) \,  (\partial_t^{k_2} \bm{j}^{\pm}_i)
 \,  (\partial_t^k a^{\pm}) \, dx d\tau,
   \quad  \bm{j}_i^{\pm} =\int_{\bR^3} \frac{ p_i }{p_0^{\pm}} \sqrt{J^{\pm}} f^{\pm}  \, dp,   \\
&
     \text{Type II} =   \int_s^{t} \int_{\Omega} (\partial_t^{k_1} \bE_i) \,  (\partial_t^{k_2} \bar f) \,  (\partial_t^k h)  \, dx d\tau, \, \,
     \quad  h = b_j \, \, \text{or}\,\,  c,
\end{align*}
and 
$$
    \bar f (t, x)=  \int_{\bR^3}  p_i^{n_1}  p_l^{n_2} (p_0^{\pm})^{-n_3} f^{\pm}  (t, x, p) \sqrt{J^{\pm}} \, dp, \, \, n_{j} \in \{0, 1\}, j = 1, 2, 3.
$$
The same conclusion also holds for $I^{1, \pm}_{k_1, k_2} (\bE)$.
Thus, to finish the proof of the claim \eqref{eq12.2.3}, it suffices to estimate terms of Type I and II.

\textit{Type II term.}
We observe that a term of Type II is similar to the integral $I_{k_1, k_2} (\bB)$ (see \eqref{eq12.B.2}). Then, proceeding as in \eqref{eq12.2.4.1}, we obtain
\begin{align*}
 & |\text{A term of Type II}|  \lesssim  \|\partial_t^k [b,  c]\|_{ L_{2} ((s, t) \times \Omega) }  M_{k_1, k_2}  \\
  & \lesssim   
  \sqrt{\varepsilon}  \int_s^t \cD \, d\tau.
\end{align*}

\textit{Type I term.}
To estimate a term of Type I,  we consider two cases separately: $k_1 \le m-4$ and $m-3 \le k_1 \le m$. 

\textit{Case $k_1 \le m-4$}.
By the $L_{\infty}^t L_2^x$-$L_{2}^t L_{\infty}^x$-$L_2^{t, x}$ H\"older's inequality, we have
\begin{align*}
 & |\text{A term of type I}| \lesssim  \|\partial_t^k f \|_{ L_{\infty} ((s, t)) L_2 (\Omega \times \bR^3) } \tilde M,  \\
 & \tilde M = (1_{ k_1 \le m/2}  \|\partial_t^{k_1} \bE\|_{ L_{2} ((s, t)) L_{\infty} (\Omega)})  \|\partial_t^{k_2} \bm{j}^{\pm}\|_{ L_{2} ((s, t) \times \Omega) }  \notag\\
&  +  (1_{k_2 \le m/2} \|\partial_t^{k_2} f\|_{ L_2 ((s, t)) L_{\infty} (\Omega) L_2 (\bR^3) })  \|\partial_t^{k_1} \bE\|_{ L_2 ((s, t) \times \Omega) }=: \tilde M_1 \tilde M_2 + \tilde M_3 \tilde M_4. \notag  
\end{align*}
Furthermore,
\begin{itemize}
\item[--] by \eqref{eq14.4.7}--\eqref{eq14.4.6}, 
\begin{align*}
    \tilde M_1+\tilde M_3 \lesssim (\int_s^t \cD \, d\tau)^{1/2},
\end{align*}
\item[--] by \eqref{eqE.2.15.1}, 
\begin{align*}
    \tilde  M_2 \lesssim (\int_s^t \cD \, d\tau)^{1/2},
\end{align*}
    \item[--] for $k_1 \le m-4$, one has  $\|\partial_t^{k_1} \bE\|^2_{ L_2 (\Omega) }$
is in $\cD$ (see \eqref{eq12.0.1}, and, hence, 
\begin{align*}
    \tilde  M_4 \lesssim (\int_s^t \cD \, d\tau)^{1/2}.
\end{align*}
\end{itemize}
Thus, we conclude
\begin{align}
    \label{eq12.2.16}
    |\text{A term of type I}| \lesssim
     \|\cI_{||}\|_{ L_{\infty} ((s, t)) }^{1/2}  \int_s^t \cD \, d\tau  \lesssim \sqrt{\varepsilon}  \int_s^t \cD \, d\tau.
\end{align}


\textit{Type I term: case $k_1 \ge  m-3$}.
We  consider an integral of Type I
\begin{align}
    \label{eq12.2.9.1}
    \mathfrak{I}_0  : =  \int_s^{t} \int_{\Omega } (\partial_t^{k_1} \bE_i) \,
 (\partial_t^{k_2} \bm{j}^{\pm}_i) \,    (\partial_t^k a^{\pm}) \, dx d\tau.
\end{align}
Formally integrating by parts in the $t$ variable gives
\begin{align}
    \label{eq12.2.9}
    \mathfrak{I}_0 & = \tilde \eta (t) - \tilde \eta (s)  + \mathfrak{I},
\end{align}
where
\begin{align}
 &\tilde  \eta (\tau) :=  \int_{\Omega } (\partial_t^{k_1-1} \bE_i (\tau, x))  \,  (\partial_t^{k_2} \bm{j}^{\pm}_i (\tau, x))  \,  (\partial_t^{k}  a^{\pm} (\tau, x))  \, dx, \notag\\
\label{eq12.2.9.2}
 & \mathfrak{I} : = - \int_s^{t} \int_{\Omega } (\partial_t^{k_1-1} \bE_i) \,  \big((\partial_t^{k_2} \bm{j}^{\pm}_i)  \,  (\partial_t^{k+1} a^{\pm}) + (\partial_t^{k_2+1} \bm{j}_i)  \,   (\partial_t^{k} a^{\pm})\big)  \, dx d\tau.
\end{align}
For the temporal boundary term, by the $L_2$-$L_2$-$L_{\infty}$ H\"older's inequality, the fact that $k_2 \le 3$,  the $L_{\infty}^{x, p}$ estimate of $\partial_t^{l} f, l \le m-8$ in \eqref{eq14.4.7} in Lemma \ref{lemma 5.1}, and the smallness assumption \eqref{eq3.5.1}, we have for $\tau \in [s, t]$,
\begin{align}
    \label{eq12.2.5}
    |\tilde \eta (\tau)|  &\leq  \|\partial_t^{k}  f^{\pm} (\tau, \cdot)\|_{ L_2 (\Omega \times \bR^3) }
  \|\partial_t^{k_1-1} \bE (\tau, \cdot)\|_{ L_2 (\Omega) }  \|\partial_t^{k_2} f (\tau, \cdot)\|_{L_{\infty} (\Omega \times \bR^3)  }   \\
  & \lesssim
    \|\cI\|^{1/2}_{ L_{\infty} ((s, t)) }   \cI_{||} (\tau) \lesssim  \sqrt{\varepsilon}  \cI_{||} (\tau). \notag
\end{align}

By \eqref{eqE.2.15},  we may
replace $\partial_t^{l} a^{\pm}$ with $\nabla_x \cdot \partial_t^{l-1} \bm{j}^{\pm}$ in the integral term $\mathfrak{I}$.
Furthermore, integrating by parts in $x$ and using \eqref{eqE.2.16}--\eqref{eq12.2.19}, we get
\begin{align}
\label{eq12.2.12}
& \mathfrak{I}  = (\text{const}) \, (\mathfrak{I}_1  + \mathfrak{I}_2), \\
 & \mathfrak{I}_1  :=  \int_s^{t} \int_{\Omega } (\partial_{x_l} \partial_t^{k_1-1} \bE_i) \,  \big((\partial_t^{k_2} \bm{j}^{\pm}_i)  \,   (\partial_t^{k} \bm{j}^{\pm}_l)  + (\partial_t^{k_2+1} \bm{j}^{\pm}_i) \, (\partial_t^{k-1} \bm{j}^{\pm}_l) \big)  \, dx d\tau, \notag \\
 & \mathfrak{I}_2  : =   \int_s^{t} \int_{\Omega }  (\partial_t^{k_1-1} \bE_i) \,
 \big((\partial_{x_l} \partial_t^{k_2+1}  \bm{j}^{\pm}_i\big)  (\partial_t^{k-1}  \bm{j}^{\pm}_l) + \big(\partial_{x_l} \, \partial_t^{k_2} \bm{j}^{\pm}_i) (\partial_t^{k} \bm{j}^{\pm}_l)\big) \, dx d\tau. \notag
\end{align}
Then, by the $L_{\infty}^t L_2^x$-$L_2^t L^x_{\infty}$-$L_2^{t, x}$ H\"older's  inequality, 
we have
\begin{align}
    \label{eq12.2.6}
   & |\mathfrak{I}_1|   \lesssim  \|D_x \partial_t^{k_1-1} \bE\|_{ L_{\infty} ((s, t)) L_2 (\Omega) }\\
  & \times \big( \sum_{r=k_2}^{k_2+1}  \|\partial_t^{r}  f\|_{ L_{2} ((s, t)) L_{\infty} (\Omega \times \bR^3) }\big)
\big( \sum_{r=k-1}^{k}\|\partial_t^{r} \bm{j}^{\pm}|\|_{  L_2 ((s, t) \times \Omega) }\big). \notag
\end{align}
We note that
\begin{itemize}
    \item[--] by \eqref{eq12.4.1}, the first factor on the right-hand side of \eqref{eq12.2.6} is bounded by $N (\Omega)  \|\cI_{||}\|_{ L_{\infty} ((s, t)) }^{1/2}$,
    \item[--] by \eqref{eq14.4.7}  and the fact that $k_2 + 1 \le 4 < m-8$, the second factor is bounded by $N  (\int_s^t \cD \, d\tau)^{1/2}$,
    \item[--] by \eqref{eqE.2.15.1}, 
    we may replace the third factor with $(\int_s^t \cD \, d\tau)^{1/2}$.
\end{itemize}
Hence, by combining these estimates, we obtain
\begin{align}
    \label{eq12.2.7}
    \mathfrak{I}_1  \lesssim  \|\cI_{||}\|_{ L_{\infty} ((s, t)) }^{1/2}   \int_s^t \cD \, d\tau \lesssim \sqrt{\varepsilon} \int_s^t \cD \, d\tau.
\end{align}

Next, by the $L_{\infty}^t L_6 (\Omega)$-$L_2^t L_3 (\Omega)$-$L_2^{t, x}$ H\"older's inequality, we have
\begin{align}
    \label{eq12.2.20}
    \mathfrak{I}_2  &  \lesssim  \|\partial_t^{k_1-1} \bE\|_{ L_{\infty} ((s, t)) L_6 (\Omega) }    \big(\sum_{r=k_2}^{k_2+1}\|D_x \partial_t^r \bm{j}^{\pm}\|_{ L_2 ((s, t)) L_3 (\Omega) }\big)  \\
 &  \times \big(\sum_{r=k-1}^k \|\partial_t^{r} \bm{j}^{\pm}\|_{  L_2 ((s, t) \times \Omega) }\big). \notag
\end{align}
We note that
\begin{itemize}
   \item[--] by \eqref{eq12.4.1}, the first factor on the right-hand side of \eqref{eq12.2.20} is bounded by $N \|\cI_{||}\|_{ L_{\infty} ((s, t)) }^{1/2}$,
   \item[--] since $k_2 +1 \le  4 < m-9$, by \eqref{eq15.1}, the second factor is dominated by
   $$
        N    \big(\int_s^t \cD \, d\tau\big)^{1/2}.
   $$
\end{itemize}
Thus, by this and  \eqref{eqE.2.15.1},
$$
    \mathfrak{I}_2
    \lesssim   \|\cI_{||}\|^{1/2}_{ L_{\infty} ((s, t)) } \int_s^t \cD \, d\tau  \lesssim \sqrt{\varepsilon}  \int_s^t \cD \, d\tau.
$$
Gathering  \eqref{eq12.2.9}--\eqref{eq12.2.5} and \eqref{eq12.2.7}--\eqref{eq12.2.20},   we obtain
\begin{align}
    \label{eq12.2.10}
 & \mathfrak{I}_0 =  \int_s^{t} \int_{\Omega } (\partial_t^{k_1} \bE) \,  \cdot (\partial_t^{k_2} \bm{j}^{\pm})  \,   (\partial_t^k a^{\pm})\, dx d\tau \\
 & \lesssim   \sqrt{\varepsilon} \big(\|\cI_{||}\|_{ L_{\infty} ((s, t)) } + \int_s^t \cD \, d\tau\big). \notag
 \end{align}
Thus, the bound \eqref{eq12.2.3} is valid, and this concludes the proof of the desired estimate \eqref{eq12.2.1}.
\end{proof}

\section{Proof of main results}

\begin{proof}[Proof of Theorem \ref{theorem 3.1}]
\label{section 14}
\textbf{Step 1: a priori estimate.}

First, we impose Assumption \ref{assumption 3.5}. As explained in Section \ref{section 4}, the main ingredients are 
\begin{itemize}
    \item the top-order energy estimate 
    \eqref{eq12.2.1.0}, 
    \item the lower-order weighted energy estimate \eqref{eq14.1.1.0},
    \item  the steady $S_p$ estimates and the div-curl bounds of the electromagnetic field  \eqref{eq14.2.20}--\eqref{eq14.2.21}.
\end{itemize}

\textit{The lower-order weighted energy estimate.} The desired estimate \eqref{eq14.1.1.0} is contained in Step 1 of the proof of  Proposition 3.11 in \cite{VML} (see p. 6654--6656 therein). We note that the  Landau equation \eqref{eq36.13}--\eqref{eq36.14} differs from its Picard approximation   $(3.46)$ considered in Proposition 3.11 of \cite{VML} in the nonlinear terms. In the equation (3.46) in \cite{VML}, these nonlinear terms are linearized by replacing $f$ with the previous iteration $g$. Due to the smallness assumption \eqref{eq3.5.1} in Assumption \ref{assumption 3.5} and the fact that the instant functional $\cI$ (see \eqref{eq14.0.5}) in the present paper coincides with that in \cite{VML} (see the definition of $\cI_f$ in the formula $(3.31)$ therein), the nonlinear terms in the Landau equation \eqref{eq36.13}--\eqref{eq36.14} can be treated as in the argument on p. 6655--6656 in \cite{VML}. Hence, by repeating the argument on p.  6654--6656 in \cite{VML}, we obtain 
\begin{align}
    \label{eq17.7}
& \sum_{k=0}^{ m - 4 } \bigg(\|\partial_t^k  f (t, \cdot)\|^2_{ L_{2, \theta/2^{k} } (\Omega \times \bR^3) }
+ \int_s^t  \|\partial_t^k  f (\tau, \cdot)\|^2_{ L_{2} (\Omega) W^1_{2, \theta/2^{k} } (\bR^3) } \, d\tau\bigg)\\
& \lesssim_{\Omega, \theta,  r_3, r_4} \sum_{k=0}^{ m - 4 } \bigg(\|\partial_t^k  f (s, \cdot)\|^2_{ L_{2, \theta/2^{k} } (\Omega \times \bR^3) }
+  \int_s^t \|\partial_t^k  f\|^2_{ L_{2 } (\Omega \times \bR^3) }  \, d\tau \notag \\
&+ \int_s^t \|\partial_t^k  \bE\|^2_{ L_2 (\Omega)  }  \, d\tau\bigg) + \sqrt{\varepsilon} \int_s^t \cD \, d\tau. \notag
\end{align}

\textit{Steady $S_p$ and div-curl estimates.} For the brief exposition of the argument, see Section \ref{section 4.4}. We inspect the proof of Proposition 6.3 in \cite{VML} (see p. 6649--6654 therein).
As we explained in the previous paragraph, the smallness assumption \eqref{eq3.5.1} enables us to repeat the argument of the aforementioned proposition, even though the equation considered therein is linear. Thus,  for any
$\tau \in [s, t]$, we have
\begin{align}
\label{eq17.8}
 &   \sum_{i=1}^4 \sum_{k=0}^{  m-4-i } \|\partial_t^k f (\tau, \cdot) \|^2_{  S_{r_i, \theta/2^{ k + 2i   } } (\Omega \times \bR^3) }  \\
& 
+ \sum_{i=2}^3 \sum_{k=0}^{ m  - 4 -  i} \|\partial_t^k  [\bE, \bB] (\tau, \cdot) \|^2_{  W^1_{r_i} (\Omega) }  + \sum_{k=0}^{m-1} \|\partial_t^k  [\bE_f, \bB_f] (\tau, \cdot) \|^2_{  W^1_{ 2 } (\Omega) }  \notag  \\
&  \lesssim_{\Omega, \theta, r_1, \ldots, r_4} \|\cI_{||}\|_{ L_{\infty} ((s, t)) } +  \varepsilon  \|\cI\|_{ L_{\infty} ((s, \tau)) }  \notag  \\
&
  +   \sum_{k=0}^{m-4} \|\partial_t^k f\|^2_{  L_{\infty} ((s, \tau)) L_{2, \theta/2^k} (\Omega \times \bR^3) }. \notag 
\end{align}
We note that the first and the second term on the l.h.s of \eqref{eq17.8} are contained in $\cD$. Hence, repeating the descent argument in Proposition 6.3 in \cite{VML} and using the smallness condition \eqref{eq3.5.1}, we obtain
\begin{align}
\label{eq17.9}
 &   \sum_{i=1}^4 \sum_{k=0}^{  m-4-i } \int_s^t \|\partial_t^k f  \|^2_{  S_{r_i, \theta/2^{ k + 2i   } } (\Omega \times \bR^3) }  \, d\tau \\
& 
+ \sum_{i=2}^3 \sum_{k=0}^{ m  - 4 -  i} \int_s^t \|\partial_t^k  [\bE, \bB]  \|^2_{  W^1_{r_i} (\Omega) } \, d\tau  \notag  \\
&  \lesssim_{\Omega, \theta, r_1, \ldots, r_4}  \varepsilon  \int_s^{t} \cD \, d\tau  
  +   \sum_{k=0}^{m-4} \bigg(\int_s^t \|\partial_t^k f\|^2_{   L_{2, \theta/2^k} (\Omega \times \bR^3) }  \, d\tau
 +  \int_s^t \|\partial_t^k [\bE, \bB] \|^2_{  L_{ 2 } (\Omega) }  \, d\tau\bigg), \notag 
\end{align}

Gathering \eqref{eq12.2.1}, \eqref{eq17.7}--\eqref{eq17.9},  we find
\begin{align}
    \label{eq17.1}
  & \|\cI\|_{ L_{\infty} ((s, t)) }  +  \int_s^t \cD \, d\tau \\
  &   \lesssim_{\theta, r_1, \ldots, r_4,  \Omega} \cI_{||} (s)  +   \sum_{k=0}^{ m - 4 } \|\partial_t^k  f (s, \cdot)\|^2_{ L_{2, \theta/2^{k} } (\Omega \times \bR^3) }   +    \sqrt{\varepsilon}  \big(\|\cI\|_{ L_{\infty} ((s, t)) } + \int_s^t \cD \, d\tau\big). \notag 
\end{align}

By taking $\varepsilon = \varepsilon (\Omega, \theta, r_1, \ldots, r_4)$ sufficiently small and plugging $s=0$, we obtain  the desired estimate \eqref{eq14.0.13}.

\textbf{Step 2: continuity argument.} Given the local existence and uniqueness result established in \cite{VML} (see Theorem 3.10 therein) and the global estimate \eqref{eq14.0.13}, the global existence follows from the standard continuity argument (see, for example, the proof of Theorem 1 in \cite{VMB_03}). We emphasize that to implement the argument, we use the fact that
  the total instant functional $\cI$ (see \eqref{eq14.0.5}) coincides with that in \cite{VML}, which is denoted by $\cI_f$ (see the formula $(3.31)$ p. 6622 therein). In Remark \ref{remark 3.13}, we elaborate on the relationship between the global estimate \eqref{eq14.0.13} and the local well-posedness result in \cite{VML}.
\end{proof}

\begin{proof}[Proof of Theorem \ref{theorem 3.2}]
 We first delineate the argument in Section 2 of \cite{GS_06}, which gives a polynomial temporal decay rate of the lower-order instant energy for the RVML system on $\mathbb{T}^3$.  Given a Lyapunov-type inequality
\begin{align}
        \label{eq15.2}
        \cI' + \cD \le 0,
\end{align}
one can derive an upper bound of a lower-order instant energy $\cI_{low}$ in terms of a lower-order dissipation $\cD_{low}$:
 \begin{align}
        \label{eq15.5}
    \cI_{low} \le N \cD_{low}^{1-}.
 \end{align}
 This inequality combined with the global estimate \eqref{eq15.2} with $\cI$ and $\cD$ replaced with $\cI_{low}$ and $\cD_{low}$
gives
$$
     \cI_{low}' + N \cI_{low}^{1+} \le 0,
$$
which implies a `fast' polynomial decay of $\cI_{low}$.
We point out the major differences with our setup.
\begin{itemize}
    \item Our global estimate \eqref{eq17.1} is weaker than \eqref{eq15.2}.
    \item  The argument of \cite{GS_06} involves interpolation between  Sobolev spaces with many \textit{spatial derivatives}. We stress that in our problem, the solution $f^{\pm}, \bE, \bB$ has a limited regularity in the spatial variable.
\end{itemize}
To overcome these issues, we establish an integral inequality for lower-order instant energies and dissipations on an arbitrary interval and interpolate between temporal Sobolev spaces.

Next, for $n \in \{20, \ldots m-4\}$, let $\cI_{||, n},$ $\cI_{n}$, and $\cD_{n}$ be  given by \eqref{eq14.0.10}, \eqref{eq14.0.5} and \eqref{eq12.0.1},   respectively, with $m$  replaced with $n$.
Furthermore, by $I_{0, n}$, we denote the sum of the total instant energy up to the order $n$ and the weighted instant energy  up to the order  $n-4$ (cf. \eqref{eq14.0.5}):
\begin{align}
    \label{eq15.20}
I_{0, n} (\tau) := 
\cI_{||, n} (\tau) +  \sum_{k=0}^{n-4} \|\partial_t^k f  (\tau, \cdot)\|^2_{ L_{2, \theta/2^k} (\Omega \times \bR^3) }. 
\end{align}
We observe that due to \eqref{eq14.0.12}, 
\begin{align}
    \label{eq15.21}
I_{0, m} (0)=I_0.
\end{align}

 Then, as in  \eqref{eq17.1}, by taking $\varepsilon = \varepsilon (n, \theta, \Omega, r_1, \ldots, r_4)$ sufficiently small, we obtain for any $0 \le s < t$,
\begin{align}
   \label{eq15.8}
 & \|\cI_n\|_{ L_{\infty} ((s, t)) } + \int_s^t \cD_n \, d\tau \le N I_{0, n} (s),
\end{align}
where $N = N (\Omega, n, \theta, r_1, \ldots, r_4).$
In the sequel, we will estimate $\int_s^t \cI_{n} \, d\tau$ in terms of $\int_s^t \cD_{n} \, d\tau$, which is analogous to \eqref{eq15.5}.

\textit{Estimate of $\int_s^t \cI_{||, n} \, d\tau$.}
First, we   note that for any $0 \le s < t$ such that $t-s \ge 1$, the constant in the interpolation inequality for  Sobolev spaces $W^k_2 ((s, t)), k \in \{0, 1, \ldots\}$, is independent of $s$ and $t$. Then, denoting
$$
    \gamma = \frac{n}{m-4}
$$ 
 and using the interpolation and H\"older's inequalities, we have
\begin{align}
\label{eq15.6}
 &    \sum_{k=0}^{n}   \int_{\Omega}  \int_s^t |\partial_t^k \bE|^2  \,d\tau \, dx
    \lesssim_{\gamma}  \int_{\Omega}  \|\bE\|^{2(1-\gamma)}_{ L_2 ((s, t)) }   \|\bE\|^{ 2 \gamma}_{ W^{m-4}_2 ((s, t)) } \, dx \\
& \lesssim_{\gamma} \|\bE\|^{ 2 (1-\gamma) }_{ L_2 ((s, t) \times \Omega) }  \bigg(\sum_{k=0}^{m-4} \|\partial_t^k \bE\|_{ L_2 ((s, t) \times \Omega) }^{2} \bigg)^{ \gamma}.\notag
\end{align}
We recall that $\|\bE\|_{L_2 (\Omega)}^2$ is in the dissipation $\cD_{n}$. Furthermore, by the global estimate \eqref{eq15.8} with $s=0$ and $m$ in place of $n$, and \eqref{eq15.21}, the last factor on the r.h.s. of the second inequality in \eqref{eq15.6} is bounded by $N I_{0}^{\gamma}$, which gives
\begin{align}
    \label{eq15.9}
    \sum_{k=0}^{n}   \int_s^t \|\partial_t^k \bE\|_{  L_2 (\Omega) }^2  \,d\tau \le N I_{0}^{ \gamma} \bigg(\int_s^t \cD_{n} \, d\tau\bigg)^{ (1-\gamma)}.
\end{align}
Similarly, one has
\begin{align}
    \label{eq15.22}
     \sum_{k=0}^{n}  \|\partial_t^k \bB\|^2_{  L_2 ((s, t) \times \Omega) }, \quad
      \sum_{k=0}^{n}  \|\partial_t^k f\|^2_{  L_2 ((s, t) \times \Omega \times \bR^3) }
\le N (\text{r.h.s. of} \, \eqref{eq15.9}).
\end{align}

\textit{$L_2 ((s, t)) W^1_2 (\Omega)$-norms of the electromagnetic field.}
We note that by the div-curl estimates \eqref{eq4.11}-\eqref{eq4.12} and \eqref{eq15.9}-\eqref{eq15.22}, 
\begin{align} 
    \label{eq15.23}
&\sum_{k=0}^{n-1} \int_s^t  \|\partial_t^k  [\bE, \bB]\|^2_{ W^1_{ 2 } (\Omega) } \, d\tau \lesssim_{\Omega}  \sum_{k=0}^{n} \int_s^t  \|\partial_t^k  [\bE, \bB]\|^2_{ L_{ 2 } (\Omega) } \, d\tau \\
& + \sum_{k=0}^{n-1} \int_s^t \|\partial_t^k f\|^2_{ L_2 (\Omega \times \bR^3) }\, d\tau   \le N (\text{r.h.s. of} \, \eqref{eq15.9}). \notag
\end{align}

\textit{Estimate of the remaining terms in $\int_s^t \cI_n \, d\tau$.} We note that the $L_{2, \theta/2^k}$-norms of the $t$-derivatives of $f$, the $W^1_{r_i} (\Omega), i \in \{2, 3, 4\}$ norms of $\partial_t^k [\bE, \bB]$, and the steady $S_{r_i}$-norms in $\cI_n$ (cf. \eqref{eq14.0.5}) are also present in the dissipation $\cD_n$ (cf. \eqref{eq12.0.1}). Hence, by 
this and the global estimate \eqref{eq15.8}, for each such term $h$, we have
\begin{align}
    \label{eq15.24}
 \int_s^t h \, d\tau = \big(\int_s^t h \, d\tau\big)^{\gamma} \big(\int_s^t h \, d\tau\big)^{1-\gamma}  \le N (\text{r.h.s. of} \, \eqref{eq15.9}). 
\end{align}

Thus, combining \eqref{eq15.9}--\eqref{eq15.24}, we have
\begin{align}
    \label{eq15.25}
 \int_s^t \cI_{n} \,d\tau \le N  I_{0}^{\gamma} \bigg(\int_s^t \cD_{n }  \, d\tau\bigg)^{1-\gamma},
\end{align}
which implies
\begin{align*}
  (I_{0})^{-\gamma/(1-\gamma)} \bigg(\int_s^t \cI_{n} \,d\tau\bigg)^{1/(1-\gamma)} \le N \int_s^t \cD_{n} \, d\tau.
\end{align*}
By this and \eqref{eq15.8}, we obtain
$$
      \|\cI_n\|_{ L_{\infty} ((s, t)) } +  N_1 (I_{0})^{-\gamma/(1-\gamma)} \bigg(\int_s^t \cI_{n } \, d\tau\bigg)^{1/(1-\gamma)} \le N_2  I_{0, n} (s),
$$
where
$
     N_i = N_i (n, m, \theta, \Omega, r_1, \ldots, r_4), i = 1, 2.
$
Since $t>1$ is arbitrary, we have
$$
    (I_{0})^{-\gamma/(1-\gamma)} (\mathcal{Z} (s))^{1/(1-\gamma)} \le N \, \cI_{n } (s),
$$
where
$$
    \mathcal{Z} (s)   = \int_s^{\infty} \cI_{n} (\tau) \, d\tau.
$$
Then,  
$$
    \mathcal{Z}' (s)  = -    \cI_{n } (s)  \le - N  \, (I_{0})^{-\gamma/(1-\gamma)} \mathcal{Z}^{1/(1-\gamma)} (s).
$$
Furthermore, for the sake of convenience, we denote  $r = \frac{\gamma}{1-\gamma}$, so that $1/(1-\gamma) = r+1$.
Then, dividing both sides by $\mathcal{Z}^{r+1} (s)$ gives
$$
    -\frac{d}{ds}   (\mathcal{Z} (s))^{-r} \le -  N  (I_{0})^{-r}.
$$
 Integrating, we get
$$
    (\mathcal{Z} (0))^{-r} + N  (I_{0})^{-r} s \le  (\mathcal{Z} (s))^{-r}.
$$
By \eqref{eq15.25} and the global estimate \eqref{eq15.8},
$$
  \mathcal{Z} (0)  \le N I_0.
$$
 Hence,  one has
$$
    \mathcal{Z} (s) \le  N  I_{0} (1+s)^{-1/r}.
$$
Furthermore, for any $s > 1$, applying the estimate \eqref{eq15.8} with  $\tau \in [s, 2s]$ and $3 s$ in place of $s$ and $t$, respectively, gives
$$
    \mathcal{Z} (s) \ge  s \inf_{ s \le \tau \le 2 s } \cI_{n} (\tau)  \ge  N \, s  \, \cI_{n } (3s).
$$
Using this and the fact that $1+ 1/r =  1/\gamma$, we obtain
$$
    \cI_{n } (s) \le   N I_0 (1+s)^{-1/\gamma },
$$
as claimed in \eqref{eq132.3}.
\end{proof}


\appendix



\section{Estimates of nonlinear terms}

\label{appendix B}

\begin{lemma}[$L_{\infty}$ and $C^{\alpha}$ estimates]
    \label{lemma 5.1}
    Let $f$ be a function such that $\cI (\tau), \cD (\tau)$ (see \eqref{eq14.0.5}-\eqref{eq12.0.1}) are finite for each $\tau \in [0, T],$ for some $T$, and $\partial_t^k f, k \le m-8,$ satisfy the SRBC.
Then, for any  $\alpha \in (0,  1 - \frac{12}{r_4})$,   we have
\begin{align}
    \label{eq14.4.7}
& \sum_{k=0}^{m-8} \big(\sum_{r \in \{2, \infty\} }\|\partial_t^k [f, \nabla_p f] (\tau, \cdot)\|_{  L_{\infty} (\Omega) W^1_{r, \theta/2^{k+9}} (\bR^3)  } \\
& + \|\partial_t^k f (\tau, \cdot)\|_{  C^{\alpha/3,  \alpha}_{x, p} (\Omega \times \bR^3)  }  \big)\lesssim_{\Omega, \theta, \alpha,  r_4} \cD (\tau), \cI (\tau), \notag
\end{align}
provided that $\theta$ is sufficiently large.
Furthermore, for any   $\beta \in (0, 1-\frac{3}{ r_4 }) \supset (0, \frac{11}{12})$,
\begin{align}
    \label{eq14.4.6}
  &  \sum_{k=0}^{m-7} \|\partial_t^k [\bE, \bB]  (\tau, \cdot)\|_{   L_{\infty} (\Omega) }
   + \sum_{k=0}^{m-8} \|\partial_t^k [\bE, \bB]  (\tau, \cdot)\|_{ C^{\beta} (\Omega)  } 
\lesssim_{\Omega,  \beta, \theta, r_3, r_4}  \cD (\tau), \cI (\tau). 
\end{align}
\end{lemma}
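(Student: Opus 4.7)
My plan is to obtain both \eqref{eq14.4.7} and \eqref{eq14.4.6} by combining a single kinetic embedding (for the $f$-terms) with a two-step descent through the div-curl estimates \eqref{eq4.11}--\eqref{eq4.12} (for the $\bE, \bB$ terms). Since every norm on the left-hand side is, after one such step, controlled by norms already present in $\cI(\tau)$ and $\cD(\tau)$, the argument is essentially bookkeeping once the embedding for SRBC-respecting steady $S_r$-functions (Corollary \ref{corollary 14.C.8}) and the div-curl estimates are in hand. The same computation yields the bound by $\cI(\tau)$ and the bound by $\cD(\tau)$ simultaneously, because the relevant $S_{r_i}$- and $W^1_{r_i}$-norms sit inside both functionals.

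\textbf{Step 1 (kinetic embedding).} For each $k \le m-8$, the squared $S_{r_4, \theta/2^{k+8}}(\Omega \times \bR^3)$-norm of $\partial_t^k f$ is one of the terms in both $\cI$ and $\cD$ (see \eqref{eq14.0.5} and \eqref{eq12.0.1}). Because $r_4 > 36$ and $\partial_t^k f$ obeys the SRBC, I invoke \eqref{eq14.C.8.20} in Corollary \ref{corollary 14.C.8} to pass from the $S_{r_4}$-norm to both the $L_\infty(\Omega) W^1_{r, \theta/2^{k+9}}(\bR^3)$-norms (for $r \in \{2, \infty\}$) and to the anisotropic H\"older norm $C^{\alpha/3, \alpha}_{x,p}$ for any $\alpha < 1 - 12/r_4$. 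This immediately gives \eqref{eq14.4.7}, provided $\theta$ is chosen large enough that the weight $\theta/2^{k+9}$ can absorb both the one-level weight loss in the embedding and the additional polynomial moments needed in Steps 2--3.

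\textbf{Step 2 ($L_\infty$ control of $\partial_t^k[\bE, \bB]$).} For $k \le m-7$, I apply the $W^1_{r_3}$ div-curl estimates \eqref{eq4.11}--\eqref{eq4.12} with $r = r_3 \in (3, 6)$:
\[
\|\partial_t^k[\bE, \bB]\|_{W^1_{r_3}(\Omega)} \lesssim \|\partial_t^{k+1}[\bE, \bB]\|_{L_{r_3}(\Omega)} + \|\partial_t^k[\rho, \bm{j}]\|_{L_{r_3}(\Omega)}.
\]
The first term on the right is controlled by interpolating $L_2(\Omega)$ and $L_6(\Omega)$; both are dominated by $\cI$ via the $W^1_2$-norm of $\partial_t^{k+1}[\bE, \bB]$ (for $k+1 \le m-6 \le m-1$) together with the Sobolev embedding $W^1_2 \hookrightarrow L_6$. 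The moment terms $\partial_t^k \rho$ and $\partial_t^k \bm{j}$ are weighted velocity averages of $\partial_t^k f$ and so are bounded in $L_{r_3}(\Omega)$ by the $L_\infty(\Omega) W^1_{r, \theta/2^{k+9}}(\bR^3)$ output of Step 1. Since $r_3 > 3$, the Sobolev embedding $W^1_{r_3}(\Omega) \hookrightarrow L_\infty(\Omega)$ then delivers the $L_\infty$-half of \eqref{eq14.4.6}.

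\textbf{Step 3 (H\"older regularity of $\partial_t^k[\bE, \bB]$).} For $k \le m-8$, I iterate the descent once more with $r = r_4 > 36$:
\[
\|\partial_t^k[\bE, \bB]\|_{W^1_{r_4}(\Omega)} \lesssim \|\partial_t^{k+1}[\bE, \bB]\|_{L_{r_4}(\Omega)} + \|\partial_t^k[\rho, \bm{j}]\|_{L_{r_4}(\Omega)}.
\]
Since $k+1 \le m-7$, the first term is bounded via the $L_\infty$ estimate of Step 2 combined with $|\Omega| < \infty$; the second is again controlled through Step 1. The Sobolev embedding $W^1_{r_4}(\Omega) \hookrightarrow C^\beta(\Omega)$ for $\beta < 1 - 3/r_4$ then completes \eqref{eq14.4.6}. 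The main obstacle is not analytic but arithmetic: at each descent stage, one must verify that the derivative count on the right-hand side stays within the ranges in which the required norms are already present in $\cI$ or $\cD$, and that the $p$-weight $\theta/2^{k+9}$ in Step 1 is generous enough to absorb the polynomial moments of $f$ needed for the $L_r$-bounds on $\rho$ and $\bm{j}$. Both conditions hold once $\theta$ is taken sufficiently large, as already stipulated in the statement.
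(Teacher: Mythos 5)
Your proof follows the paper's two‑sentence argument in spirit (the kinetic embedding of Corollary \ref{corollary 14.C.8} for the $f$-terms, Sobolev embeddings $W^1_{r_3}\hookrightarrow L_\infty$ and $W^1_{r_4}\hookrightarrow C^\beta$ for the field terms), but with one place where you do unnecessary work and one place where you usefully make a suppressed step explicit. For Step~2, the descent via the div-curl system is superfluous: the term $\sum_{i=2}^3\sum_{k=0}^{m-4-i}\|\partial_t^k[\bE,\bB]\|^2_{W^1_{r_i}(\Omega)}$ appears verbatim in both $\cI(\tau)$ and $\cD(\tau)$, so for $k\le m-7$ the $W^1_{r_3}$-norm of $\partial_t^k[\bE,\bB]$ is immediately available, and $W^1_{r_3}(\Omega)\hookrightarrow L_\infty(\Omega)$ (since $r_3>3$) gives the $L_\infty$ bound in one line; the interpolation between $L_2$ and $L_6$ via $W^1_2$ that you invoke is not needed and in fact would be problematic for the $\cD$-bound, since $\|\partial_t^{k+1}[\bE,\bB]\|_{W^1_2}$ does not appear in $\cD$. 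For Step~3, on the other hand, you are right to go through the div-curl estimates \eqref{eq4.11}--\eqref{eq4.12} with $r=r_4$: the $W^1_{r_4}$-norm of $\bE,\bB$ occurs in neither $\cI$ nor $\cD$, so it must be manufactured from the Step-2 $L_\infty$ control of $\partial_t^{k+1}[\bE,\bB]$ (for $k\le m-8$, $k+1\le m-7$) plus the moment bounds on $\rho,\bm{j}$ from Step~1 and $|\Omega|<\infty$. The paper's proof suppresses this descent, stating only the final embedding $W^1_{r_4}\subset C^\beta$; your version supplies the missing intermediate inequality, which is the step actually responsible for the improved H\"older exponent $\beta<1-3/r_4$ over the $\beta<1-3/r_3<1/2$ one would get from $W^1_{r_3}$ alone.
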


\begin{proof}
We note that \eqref{eq14.4.7} is a direct corollary of the embedding result for functions of class $S_{r_4  }$  satisfying the SRBC (see \eqref{eq14.C.8.20} in Corollary \ref{corollary 14.C.8}). Furthermore, the estimate \eqref{eq14.4.6} follows from the Sobolev embedding theorem  $W^{1}_{r_3} \subset L_{\infty}$ and $W^1_{r_4} \subset C^{\beta}$ and the fact that $r_4 > 36$.
\end{proof}

For the proof of the following two lemmas, see Lemmas B.3 and B.8 in \cite{VML}.

\begin{lemma}
        \label{lemma B.2}
Let $k \ge 0$ be an integer, $r \in (3/2, \infty]$, and  $g \in W^k_r (\bR^3)$.
Then, for
\begin{equation}
        \label{eqB.2.0}
  I (p) =  \int  \Phi^{i j} (P, Q) J^{1/2} (q)   g (q)  \, dq,
\end{equation}
we have
\begin{equation}
            \label{eqB.2.1}
    \|D^k_p I\|_{ L_{\infty} (\bR^3) } \lesssim_{k, r}  \|g\|_{W^k_r (\bR^3) }.
\end{equation}
\end{lemma}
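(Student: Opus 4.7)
The plan is to reduce the $L_\infty$ bound on $D_p^k I$ to a Hölder estimate involving $D_q^k g$, by trading each derivative in $p$ for one in $q$ via integration by parts, then exploiting the Gaussian factor $J^{1/2}(q)$ and the borderline integrability of the kernel singularity. First I would split $I(p)$ via a smooth cutoff $\chi(p-q)$ supported in $\{|p-q| \le 1\}$. The ``outer'' piece, where the kernel $\Phi^{ij}(P,Q)$ is smooth in $(p,q)$ jointly, can be differentiated $k$ times directly under the integral sign; polynomial growth in $p$ and $q$ of $D_p^l \Phi^{ij}$ is absorbed by $J^{1/2}(q)$, and Hölder against $g \in L_r$ (with the constraint $r \ge 1$ only) gives a uniform bound in $p$.

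The delicate part is the ``inner'' piece, supported near $p = q$, where by the earlier analysis (matching $|p-q|^2$ numerators in the trace and tensor parts of $S$ against the $|p-q|^{-3}$ blow-up of $\Lambda$) the kernel $\Phi^{ij}(P,Q)$ behaves like $|p-q|^{-1}$ times a matrix bounded in all directions. The key structural fact I would establish is that
\[
\partial_{p_i} \Phi^{ij}(P,Q) = -\partial_{q_i} \Phi^{ij}(P,Q) + R^j(P,Q),
\]
where $R^j$ is no more singular than $\Phi$ itself at $p = q$. This is verified by direct computation: differentiating $\Lambda$ and $S$ and the prefactors $m_{\pm}/p_0^{\pm}$, one sees that the $|p-q|^{-2}$-singular pieces arising from $\partial_{p_i}$ cancel against those from $-\partial_{q_i}$ (as in the non-relativistic case where $\Phi$ depends only on $p-q$), while the relativistic correction terms from differentiating $1/p_0^{\pm}$ and from the asymmetric treatment of $P,Q$ are smooth in $p$ and contribute at worst an extra $|p-q|^{-1}$ factor. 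Iterating this identity $k$ times and performing $k$ integrations by parts in $q$ rewrites $D_p^k I(p)$ as a finite sum of integrals of the form
\[
\int \chi(p-q)\, \Psi_\alpha(P,Q)\, D_q^{k_1}\!\bigl(J^{1/2}(q)\bigr)\, D_q^{k_2} g(q) \, dq, \qquad k_1+k_2 \le k,
\]
where each kernel $\Psi_\alpha$ inherits only the $|p-q|^{-1}$ singularity of $\Phi$.

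At this point I would apply Hölder's inequality in $q$ with exponents $(r',r)$, noting that $\|\Psi_\alpha(P,\cdot)\, D_q^{k_1} J^{1/2}\|_{L_{r'}(\{|p-q|\le 1\})}$ is bounded uniformly in $p$ precisely when $r' < 3$, i.e., $r > 3/2$, which is exactly the hypothesis. The Gaussian factor $J^{1/2}$ (or its derivatives, which decay equally fast) makes the tail contribution in $q$ harmless, and polynomial factors in $p$ arising from the prefactors of $\Phi$ are absorbed into the uniform-in-$p$ constant after using $J^{1/2}(q)$ to damp the growth in $q$. Combining the inner and outer estimates yields $\|D^k_p I\|_{L_\infty} \lesssim_{k,r} \|g\|_{W^k_r}$.

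The main obstacle will be the bookkeeping in the structural identity in the second paragraph: unlike the non-relativistic Landau kernel, $\Phi(P,Q)$ is not translation-invariant in $(p,q)$, so $\partial_{p_i} + \partial_{q_i}$ acting on $\Phi$ does not vanish but produces correction terms involving $\partial_{p_i}(1/p_0^{\pm})$ and analogous factors. Controlling these correction terms uniformly and showing that their singularity at $p=q$ does not exceed that of $\Phi$ is the technical core of the argument, and is what ultimately pins down the threshold $r > 3/2$.
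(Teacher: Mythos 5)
Your approach — integrating by parts to trade $p$-derivatives for $q$-derivatives, absorbing those into $J^{1/2}$ and $g$, then applying Hölder with $r' < 3$ to handle the $|p-q|^{-1}$ singularity — is substantively the same route the paper takes (the paper itself defers the proof of this lemma to Lemmas B.3 and B.8 of \cite{VML}, but the mechanism is visible in the companion estimates of Appendix~\ref{appendix E}, notably the identity \eqref{eq12.A.2.15} and the decomposition \eqref{eqB.3.17}). The threshold $r > 3/2$ and the role of the Gaussian in making the bound uniform in $p$ are correctly identified.

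The one place where you diverge, and where your argument is genuinely more delicate than the paper's, is the choice of mirror operator. You write $\partial_{p_i} = -\partial_{q_i} + (\partial_{p_i}+\partial_{q_i})$ and assert that $R := (\partial_p + \partial_q)\Phi$ is no more singular than $\Phi$. The paper instead uses $\Theta_j := \partial_{p_j} + \tfrac{q_0}{p_0}\partial_{q_j}$, which \emph{annihilates} $P\cdot Q$ exactly: $\Theta_j(P\cdot Q) = \tfrac{p_j q_0}{p_0} - q_j + \tfrac{q_0}{p_0}(\tfrac{p_0 q_j}{q_0} - p_j) = 0$. This is the precise relativistic analogue of translation invariance, and it makes $\Theta_j\Phi$ manifestly no more singular, since $\Theta_j$ never touches the singular factor $(P\cdot Q - 1)^{-m}$. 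Your version works, but only because of a fact you use without flagging it: one has $(\partial_{p_i}+\partial_{q_i})(P\cdot Q) = (q_0 - p_0)\bigl(\tfrac{p_i}{p_0} - \tfrac{q_i}{q_0}\bigr)$, a product of two factors each vanishing to first order at $p = q$, hence the quantity vanishes to \emph{second} order — exactly compensating the extra power of $(P\cdot Q - 1)^{-1}$ from the chain rule. Moreover, since $(\partial_p + \partial_q)$ kills $p_i - q_i$, it preserves this second-order vanishing under iteration, so the argument closes for all $k$. This is a real lemma that your sketch needs to state and prove; with $\Theta$ it is unnecessary. Spelling this out, together with the induction bookkeeping for mixed terms such as $\partial_q R$ and $(\partial_p+\partial_q)^m \Phi$ appearing after several integrations by parts, is the content that would need to be filled in to make the proposal a complete proof.

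Two smaller remarks. First, when the derivatives land on the factor $\tfrac{q_0}{p_0}$ (in the paper's version) or on the relativistic remainders (in yours), they produce lower-order terms of the type visible in \eqref{eq12.A.2.15}; these are harmless but should be tracked. Second, the inner/outer splitting and the use of $J^{1/2}(q)$ to absorb both the $q_0^7$-growth of the kernel and to kill the inner region for large $|p|$ are exactly right and give the claimed uniformity in $p$.
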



\begin{lemma}
            \label{lemma 12.A.1}
For  sufficiently regular functions $f_j = (f^{+}_j, f^{-}_j)$, $j = 1, 2, 3,$ on $\bR^3$ and any  $r \in (3/2, \infty]$, and $\theta \ge 0$, we have
\begin{align}
    \label{eq12.A.1.1}
    &  \big|\langle   \Gamma (f_1, f_2),  f_3  p_0^{2 \theta} \rangle\big| \\
   & \lesssim_{\theta} \big(\|\nabla_p f_1 \|_{  L_{2, \theta} (\bR^3) } \|f_2\|_{  L_{r} (\bR^3) }   + \|f_1 \|_{  L_{2, \theta} (\bR^3) } \|\nabla_p f_2\|_{  L_{r} (\bR^3) }\big) \| f_3 \|_{   W^1_{2, \theta} (\bR^3) }. \notag
\end{align}
\end{lemma}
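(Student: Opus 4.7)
The plan is to unfold the definition of $\Gamma_{\pm}$ via the Landau operator $\cC$ (see \eqref{eq0.1} and \eqref{eq36.33}), integrate by parts in $p$ to move the outer divergence onto the test function $f_3 \, p_0^{2\theta}$, and then bound the resulting double integrals in $(p,q)$ by combining the Gaussian decay of $\sqrt{J^{\pm}}$ with the kernel estimate in Lemma \ref{lemma B.2}. Writing $\nabla_p(\sqrt{J^{\pm}} f_1^{\pm}) = \sqrt{J^{\pm}}\big(\nabla_p f_1^{\pm} - \tfrac{1}{2 k_b T}\tfrac{p}{p_0^{\pm}} f_1^{\pm}\big)$ at the outset lets the troublesome factor $(J^{\pm})^{-1/2}$ in the prefactor of $\Gamma_{\pm}$ cancel against $\sqrt{J^{\pm}}(p)$ in the bracket before any estimate is made, leaving only Gaussian weights in $q$ and polynomial weights in $p$.

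First I would expand
\[
\langle \Gamma_{\pm}(f_1,f_2), f_3^{\pm} p_0^{2\theta}\rangle
= -\sum_{\ast=\pm}\int\!\!\int \nabla_p\big(f_3^{\pm} p_0^{2\theta} (J^{\pm})^{-1/2}\big) \cdot \Phi(P_{\pm},Q_{\ast}) \, \Xi^{\pm,\ast}(p,q)\, dq\, dp,
\]
where $\Xi^{\pm,\ast}$ is the bracket from $\cC$ written out. After the cancellation above and an additional integration by parts in $q$ on the $\nabla_q(\sqrt{J^{\ast}} f_2^{\ast})$ piece (using that $\nabla_q\sqrt{J^{\ast}}$ and $\nabla_q\Phi$ only contribute polynomial factors with Gaussian weight), one obtains a sum of integrals of the schematic form
\[
\int\!\!\int \Phi^{ij}(P_{\pm},Q_{\ast}) \, \sqrt{J^{\ast}}(q)\, A_1(p)\, A_2(q)\, B(p)\, dq\, dp,
\]
where $A_1 \in \{f_1^{\pm}, \partial_{p_i}f_1^{\pm}\}$, $A_2\in\{f_2^{\ast}, \partial_{q_j} f_2^{\ast}\}$, with exactly one derivative total landing on $(f_1, f_2)$, and $B(p)$ is $f_3^{\pm}$ or $\partial_{p_i}f_3^{\pm}$ multiplied by a polynomial of degree at most $2\theta$ in $p_0$.

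Next I would apply Cauchy--Schwarz in $p$ after first bounding the inner $q$-integral uniformly in $p$ via Lemma \ref{lemma B.2}: for any $h\in L_r(\bR^3)$ with $r > 3/2$,
\[
\Big|\int_{\bR^3} \Phi^{ij}(P_{\pm},Q_{\ast})\sqrt{J^{\ast}}(q) h(q)\, dq\Big| \lesssim \|h\|_{L_r(\bR^3)}
\]
uniformly in $p$. Choosing $h=A_2$ and absorbing the polynomial weight $p_0^{2\theta}$ symmetrically between $A_1$ (giving a $p_0^{\theta}$-weighted $L_2$ norm of $A_1$) and $B$ (giving $\|f_3\|_{W^1_{2,\theta}}$), the $\cs$ in $p$ produces the two contributions on the right-hand side of \eqref{eq12.A.1.1}, depending on whether the free derivative sits on $f_1$ or on $f_2$.

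The main obstacle is the careful bookkeeping of the exponential weights, since $(J^{\pm})^{-1/2}$ grows in $|p|$: every such factor must be paired with a $\sqrt{J^{\pm}}(p)$ coming from inside the bracket before any norm is taken. The product rule applied to $\nabla_p\big(f_3^{\pm} p_0^{2\theta} (J^{\pm})^{-1/2}\big)$ generates a term proportional to $f_3^{\pm} p_0^{2\theta} (J^{\pm})^{-1/2}\tfrac{p}{p_0^{\pm}}$, which is exactly cancelled by the $\sqrt{J^{\pm}}(p)$ that accompanies each of the four subterms in $\Xi^{\pm,\ast}$; the remaining $\nabla_p p_0^{2\theta}$ contribution is $O_{\theta}(p_0^{2\theta-1})$ and is absorbed into the polynomial weight already present on $A_1$. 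Once this cancellation is tracked, the estimate reduces to Lemma \ref{lemma B.2} plus Cauchy--Schwarz, as outlined.
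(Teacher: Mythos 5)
Your overall route --- peel off the $(J^{\pm})^{-1/2}$ prefactor against the $\sqrt{J^{\pm}}(p)$ that appears when expanding $\cC$, reduce to $q$-integrals of the form $\int \Phi^{ij}(P,Q)\sqrt{J}(q)\,(\cdot)\,dq$, bound those by Lemma~\ref{lemma B.2}, and finish by Cauchy--Schwarz in $p$ --- is the right skeleton, and in fact matches the representation used by the paper in the proof of Lemma~\ref{lemma 12.A.2}~$(i)$, which invokes formula (68) of \cite{GS_03}. But two steps in the middle are genuinely wrong.

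First, the additional integration by parts in $q$ is both unnecessary and harmful, and the claim that ``$\nabla_q\Phi$ only contribute[s] polynomial factors with Gaussian weight'' is false. The kernel $\Phi^{ij}(P,Q)$ has a local singularity of order $|p-q|^{-1}$ (see \eqref{eq12.A.2.60}), and $\partial_{q_j}\Phi^{ij}$ is correspondingly of order $|p-q|^{-2}$, which is worse, not negligible. Lemma~\ref{lemma B.2} is calibrated to the $|p-q|^{-1}$ singularity (hence $r > 3/2$); if you push a $q$-derivative onto the kernel you can no longer invoke it, and you would need $r > 3$ to control the $q$-integral, changing the statement. There is also no reason to integrate by parts: since the right-hand side of \eqref{eq12.A.1.1} carries $\|\nabla_p f_2\|_{L_r}$, you can leave the $\partial_{q_j} f_2$ factor alone and apply Lemma~\ref{lemma B.2} with $g = \partial_{q_j} f_2$ directly.

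Second, and more importantly, your claim that ``exactly one derivative total land[s] on $(f_1, f_2)$'' is not automatic after the Gaussian cancellation; it is a nontrivial cancellation that your argument does not supply. After writing $\nabla_p(\sqrt{J^{\pm}}f_1) = \sqrt{J^{\pm}}(\nabla_p f_1 - \tfrac{1}{2k_bT}\tfrac{p}{p_0^{\pm}}f_1)$ and $\nabla_q(\sqrt{J^{\ast}}f_2) = \sqrt{J^{\ast}}(\nabla_q f_2 - \tfrac{1}{2k_bT}\tfrac{q}{q_0^{\ast}}f_2)$, the bracket in $\cC$ contains the \emph{derivative-free} pieces $-\tfrac{p_j}{2p_0^{\pm}}f_1(p)\,f_2(q)$ and $+\tfrac{q_j}{2q_0^{\ast}}f_1(p)\,f_2(q)$, which together would produce a term $\|f_1\|_{L_{2,\theta}}\|f_2\|_{L_r}\|f_3\|_{W^1_{2,\theta}}$ that is \emph{not} dominated by the right-hand side of \eqref{eq12.A.1.1}. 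These two pieces vanish only because the relativistic Landau kernel annihilates the relative velocity, $\Phi^{ij}(P_{\pm},Q_{\ast})\big(\tfrac{p_j}{p_0^{\pm}} - \tfrac{q_j}{q_0^{\ast}}\big) = 0$ (see \cite{GS_03}; this is precisely what underlies formula (68) there). This null-space identity is the crux of the lemma and must be stated and used; without it the stated inequality does not close. Once it is in hand, the rest of your outline (outer integration by parts in $p$ onto $f_3\,p_0^{2\theta}$, Lemma~\ref{lemma B.2} on the inner integral, Cauchy--Schwarz) goes through with no $q$-integration by parts at all.
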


\begin{lemma}
        \label{lemma 12.A.2}
Let $f$ be a function such that $\sup_{s \le \tau \le t}\cI (\tau),  \int_s^t \cD \, d\tau$ are finite, and $\partial_t^k f, k \le m-8,$ satisfy the SRBC.
Then, the following assertions hold. 

$(i)$ For any  $\xi  = (\xi^{+}, \xi^{-}) \in  W^2_{2, 1} (\bR^3)$, we have
\begin{align}
    \label{eq12.A.2.3}
          &\sum_{k = 0}^m \int_s^{t}  \int_{ \Omega}  |\langle \partial_t^k \Gamma (f, f),  \xi\rangle|^2 \, dx d\tau \\
          &\lesssim_{\xi, \Omega, \theta, r_4} \|\cI\|_{ L_{\infty} ((s, t)) }  \int_s^t \cD  \, d\tau.  \notag
\end{align}

$(ii)$ Let $\zeta  = \zeta (x, p) \in L_{\infty} (\Omega \times \bR^3)$ be a function such that
$
    \nabla_p \zeta \in L_{\infty} (\Omega \times \bR^3).
$
Then, one has
\begin{align}
    \label{eq12.A.2.1}
& \sum_{k=0}^{m-2}         \bigg|\int_s^{t}  \int_{ \Omega } \langle \partial_t^k \Gamma (f, f), \partial_t^k f \zeta  \rangle   \,  dx d\tau \bigg| \\
  &   \lesssim_{\zeta, \Omega, \theta, r_4}  \|\cI\|_{ L_{\infty} ((s, t)) }^{1/2} \int_s^t \cD \, d\tau. \notag
\end{align}

$(iii)$ For any $k \le m$, 
  \begin{equation}
    \label{eq12.A.2.2}
    \int_s^{t}  \int_{ \Omega } \langle \partial_t^k \Gamma (f, f), \partial_t^k f \rangle  \, dx d\tau
    \lesssim_{ \Omega, \theta, r_4 }   \|\cI\|_{ L_{\infty} ((s, t)) }^{1/2}   \int_s^t \cD \, d\tau.
  \end{equation}
\end{lemma}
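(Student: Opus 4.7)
The plan is to expand $\partial_t^k\Gamma(f,f)=\sum_{k_1+k_2=k}\binom{k}{k_1}\Gamma(\partial_t^{k_1}f,\partial_t^{k_2}f)$ by Leibniz, apply Lemma~\ref{lemma 12.A.1} pointwise in $(\tau,x)$ with $\theta=0$, and then close the space-time integration by H\"older. In each Leibniz pair we arrange by symmetrization that the low-order factor has order $\le\lfloor k/2\rfloor\le m/2\le m-10$ (using $m\ge 20$), so that Lemma~\ref{lemma 5.1} places it in $L_\infty((s,t)\times\Omega)$ with $W^1_r(\bR^3)$-regularity bounded uniformly by $\|\cI\|_{L_\infty((s,t))}^{1/2}$. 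The remaining two factors must then be absorbed into $\int_s^t\cD\,d\tau$.

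When $k\le m-2$, both the argument $\partial_t^{k_2}f$ and the test factor (respectively $\xi$, $\zeta\partial_t^k f$, or $\partial_t^k f$ in parts (i), (ii), (iii)) are controlled in $L_2((s,t)\times\Omega)W^1_2(\bR^3)$ by $(\int_s^t\cD\,d\tau)^{1/2}$ through Remark~\ref{remark 12.1} and the macro-micro decomposition. For (ii) the cutoff is absorbed via $\|\zeta\,\partial_t^k f\|_{W^1_2(\bR^3)}\lesssim(\|\zeta\|_{L_\infty}+\|\nabla_p\zeta\|_{L_\infty})\|\partial_t^k f\|_{W^1_2(\bR^3)}$, and the triple H\"older $L_\infty\cdot L_2\cdot L_2$ closes the bound; part (iii) for $k\le m-2$ is identical. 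Part (i) follows by squaring the pointwise bound before integration and using $W^1_2(\bR^3)\hookrightarrow L_r(\bR^3)$ for $r<6$ on the higher-order factor, which yields $\|\cI\|_{L_\infty}$ rather than $\|\cI\|_{L_\infty}^{1/2}$, matching the RHS of (i).

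The main obstacle is the top order $k\in\{m-1,m\}$ in parts (i) and (iii): the Leibniz split then forces some $k_2\in\{m-1,m\}$ (with $k_1\in\{0,1\}$), and $\|\partial_t^{k_2}f\|^2_{L_2(\Omega)W^1_2(\bR^3)}$ is not dominated by $\cD$, because the macroscopic component $\partial_t^{k_2}a^\pm$ is controlled only in $L_\infty((s,t))L_2(\Omega)$ through $\cI_{||}$---precisely the top-order derivative loss driving the entire paper. Two devices resolve it. First, for (iii), the collision-invariance identity $\int_{\bR^3}\Gamma(g,h)\chi_j\,dp=0$ for every $\chi_j\in\ker L$ (a consequence of conservation of mass, momentum and energy under $\cC$) lets us replace the test by its microscopic part: $\langle\partial_t^k\Gamma(f,f),\partial_t^k f\rangle=\langle\partial_t^k\Gamma(f,f),(1-P)\partial_t^k f\rangle$, with $\|(1-P)\partial_t^k f\|^2_{L_2(\Omega)W^1_2(\bR^3)}$ controlled by $\cD_{||}$ for all $k\le m$. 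Second, for the argument-slot $\partial_t^{k_2}f$, perform a macro-micro split and isolate the residual $a^\pm$-contribution; by bilinearity of $\Gamma$,
\[\Gamma\bigl(\partial_t^{k_1}f,\,\partial_t^{k_2}a^\pm(\tau,x)\,\chi_1^\pm\bigr)=\partial_t^{k_2}a^\pm(\tau,x)\,\Gamma\bigl(\partial_t^{k_1}f,\chi_1^\pm\bigr),\]
so the scalar $\partial_t^{k_2}a^\pm$ can be pulled out of the $p$-integral. Closure is then by the H\"older triple $L_\infty((s,t))L_2(\Omega)\cdot L_2((s,t))L_\infty(\Omega)\cdot L_2((s,t))L_2(\Omega)$, which distributes as $\|\cI\|_{L_\infty}^{1/2}\cdot(\int_s^t\cD\,d\tau)^{1/2}\cdot(\int_s^t\cD_{||}\,d\tau)^{1/2}$: $\partial_t^{k_2}a^\pm$ is absorbed by $\cI_{||}$; the low-order factor $\partial_t^{k_1}f$ with $k_1\in\{0,1\}$ is bounded in $L_2((s,t))L_\infty(\Omega)W^1_2(\bR^3)$ by Lemma~\ref{lemma 5.1}; and the microscopic test part lives in $\cD_{||}$. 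Summing over Leibniz indices, over $\pm$, and over $k$ yields (i)--(iii) (with $\|\cI\|_{L_\infty}^{1/2}$ upgraded to $\|\cI\|_{L_\infty}$ in (i) via the squared pairing).
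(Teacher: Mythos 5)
Your proof is correct for all three parts. Parts (ii) and (iii) follow the paper's own argument closely: direct application of Lemma~\ref{lemma 12.A.1} plus H\"older for (ii), and the vanishing of $\langle \partial_t^k\Gamma(f,f),\,P\partial_t^k f\rangle$ (collision invariance) combined with a macro--micro split of the arguments and the $L_2^t L_\infty^x$--$L_\infty^t L_2^x$--$L_2^{t,x}$ H\"older triple for (iii).

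For part (i), however, your route is genuinely different from the paper's. Where you split the top-order argument $\partial_t^{k_2}f$ ($k_2\in\{m-1,m\}$) into $P\partial_t^{k_2}f + (1-P)\partial_t^{k_2}f$, pull the scalar macroscopic coefficients out of $\Gamma$ by bilinearity, absorb $\partial_t^{k_2}a^\pm$ via $\cI_{||}$ and $\nabla_p(1-P)\partial_t^{k_2}f$ via $\cD_{||}$ (placing the low-order factor $\partial_t^{k_1}f$, $k_1\le 1$, in $L_\infty^{t,x}W^1_2(\bR^3_p)$ via Lemma~\ref{lemma 5.1} in that case), the paper avoids any macro--micro decomposition in (i) altogether. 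Instead, it reduces the claim to the pointwise bound \eqref{eq12.A.2.4} by \emph{integrating by parts in $p$}: when the velocity derivative in Lemma~\ref{lemma 12.A.1} would land on the high-order factor, it is moved onto the smooth, rapidly decaying test function $\xi\in W^2_{2,1}(\bR^3)$. The non-trivial ingredient this requires is the identity \eqref{eq12.A.2.15} from \cite{GS_03}, which re-expresses the $\partial_{q_j}$ hitting the $q$-slot of the kernel as a $p_0\partial_{p_j}$ plus zero-order terms, so that the integration by parts in $p$ can be carried out. This yields a single clean bound $\sum_{l\le m/2}\|\partial_t^l f\|_{W^1_2(\bR^3_p)}\cdot\sum_{l\ge m/2}\|\partial_t^l f\|_{L_2(\bR^3_p)}$ with no macroscopic coefficients to track.

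Both arguments close the estimate, and the final H\"older distribution is the same. The trade-off: your macro--micro version handles (i) and (iii) by the same mechanism (good uniformity), but must track several subcases (macro $a^\pm$ versus $b,c$ versus micro, and which slot of Lemma~\ref{lemma 12.A.1} carries the $\nabla_p$). The paper's integration-by-parts version leverages the regularity of the fixed test $\xi$ to eliminate the top-order velocity derivative once and for all, at the cost of invoking the kernel identity \eqref{eq12.A.2.15}. Your write-up would benefit from stating explicitly, for (i)'s top-order micro piece, that you switch the H\"older slots ($\nabla_p(1-P)\partial_t^{k_2}f$ into $L_2^{t,x}$ from $\cD_{||}$, and $\partial_t^{k_1}f$ into $L_\infty^{t,x}W^1_2$ from $\cI$ via Lemma~\ref{lemma 5.1}), since as written the ``second device'' paragraph only spells out the (iii)-style triple.
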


\begin{proof}

$(i)$
First, we claim that, to prove \eqref{eq12.A.2.3}, it suffices to show that
\begin{align}
    \label{eq12.A.2.4}
      &   |\langle \partial_t^k \Gamma (f, f) (\tau, x, \cdot), \xi (\cdot)\rangle| \\
      & \lesssim_{\xi} \sum_{l=0}^{m/2} \|\partial_t^{l} f (\tau, x, \cdot)\|_{ W^1_2 (\bR^3) }  \sum_{l=m/2}^{m} \|\partial_t^{l} f (\tau, x, \cdot)\|_{ L_2 (\bR^3) }.\notag
\end{align}
Indeed, if this is true, then by the $L_2^t L_{\infty}^x$-$L_{\infty}^t L_2^x$ H\"older's inequality and the estimate \eqref{eq14.4.7} in Lemma \ref{lemma 5.1},
the left-hand side of \eqref{eq12.A.2.3} is dominated by
\begin{align*}
  &  \sum_{l=0}^{m/2}  \|\partial_t^{l} f\|^2_{ L_2 ((s, t)) L_{\infty} (\Omega) W^1_2 (\bR^3) }  \sum_{l=m/2}^{m} \|\partial_t^{l} f\|^2_{ L_{\infty} ((s, t))  L_2 (\Omega \times \bR^3) }  \\
& \lesssim  \|\cI\|_{ L_{\infty} ((s, t)) }    \int_s^t \cD  \, d\tau, \notag
\end{align*}
as desired.

Furthermore, for the sake of simplicity,  we assume that $f$ and $\xi$ are scalar functions and  replace the integral with a simplified expression 
(cf. formula $(68)$ on p. 290 in \cite{GS_03}): 
\begin{align}
    \label{eq12.A.1.4}
&     I =    \langle  \big(\partial_{p_i} - \frac{ p_i}{2 p_0}\big)    \int \Phi^{i j} (P, Q) J^{1/2} (q) (\partial_{p_j} f (p))  f (q)  \, dq, \xi  \rangle\\
& - \langle  \big(\partial_{p_i} - \frac{ p_i}{2 p_0}\big)    \int \Phi^{i j} (P, Q) J^{1/2} (q) f (p)  (\partial_{q_j} f (q))  \, dq, \xi \rangle. \notag
\end{align}

Next, we fix nonnegative integers $k_1+k_2 = k$.
Integrating by parts in $p$ in \eqref{eq12.A.1.4} gives
\begin{align}
    \label{eq12.A.1.2}
  &  I =    - \langle \partial_{p_j} f    \int \Phi^{i j} (P, Q) J^{1/2} (q)  f (q)  \, dq, (\partial_{p_i} + \frac{ p_i}{2 p_0}) \xi \rangle \\
 & + \langle  f    \int \Phi^{i j} (P, Q) J^{1/2} (q)   \partial_{q_j} f (q)  \, dq, (\partial_{p_i}  + \frac{ p_i}{2 p_0}) \xi\rangle
  =: I_1+I_2. \notag
\end{align}
Then, to prove \eqref{eq12.A.2.4},  it suffices to estimate two types of terms:
\begin{align}
  &  I_1 = - \int (\partial_{p_j} \partial_t^{k_1} f)    \bigg(\int \Phi^{i j} (P, Q) J^{1/2} (q) (\partial_t^{k_2} f)  \, dq\bigg) \, (\partial_{p_i} + \frac{ p_i}{2 p_0}) \xi\, dp \notag \\
  \label{eq12.A.2.17}
 & I_2 =  \int (\partial_t^{k_1} f)   \bigg(\int \Phi^{i j} (P, Q) J^{1/2} (q)   \partial_{q_j} (\partial_t^{k_2} f)  \, dq\bigg)\, (\partial_{p_i}  + \frac{ p_i}{2 p_0}) \xi \, dp. 
\end{align}

\textit{Estimate of $I_1$.}
In the case when $k_1 \le m/2$, we use \eqref{eqB.2.1} in Lemma \ref{lemma B.2} with $k=0$ and obtain
\begin{align*}
    I_1 \lesssim \|\xi\|_{W^1_2 (\bR^3)} \|\nabla_p \partial_t^{k_1} f (t, x, \cdot)\|_{ L_2 (\bR^3) }  \|\partial_t^{k_2} f (t, x, \cdot)\|_{ L_2 (\bR^3) },
\end{align*}
where the right-hand side is less than that of \eqref{eq12.A.2.4}.
In the case when $k_1 > m/2$, integrating by parts in the $p_j$ variable gives
\begin{align*}
&    I_1 = \int  \partial_t^{k_1} f    \bigg(\int \Phi^{i j} (P, Q) J^{1/2} (q) \partial_t^{k_2} f  \, dq\bigg) \, \partial_{p_j} (\partial_{p_i} + \frac{ p_i}{2 p_0}) \xi\, dp \\
    & +  \int  \partial_t^{k_1} f    \bigg(\partial_{p_j} \int \Phi^{i j} (P, Q) J^{1/2} (q) \partial_t^{k_2} f   \, dq\bigg) \, (\partial_{p_i} + \frac{ p_i}{2 p_0}) \xi\, dp.
\end{align*}
By using the Cauchy-Schwarz inequality and the estimate \eqref{eqB.2.1} in Lemma \ref{lemma B.2} with $k \in \{0, 1\}$, we get
\begin{align}
    \label{eq12.A.2.5}
  I_1    \lesssim \|\xi\|_{W^2_2 (\bR^3)} \|\partial_t^{k_1} f (t, x, \cdot)\|_{ L_2 (\bR^3) }  \|\partial_t^{k_2} f (t, x, \cdot)\|_{ W^1_2 (\bR^3) },
\end{align}
and the right-hand side is less than that  of \eqref{eq12.A.2.4} since $k_2 \le m/2$.

\textit{Estimate of $I_2$.} We only need to consider the case when $k_2 > m/2$ as the remaining case is handled as in \eqref{eq12.A.2.5}.
We first state the key idea formally. One can rewrite the integral with respect to $q$ in $I_2$  as
$$
    p_0 \partial_{p_j} \bigg(\int \Phi^{i j} (P, Q) J^{1/2} (q) \partial_t^{k_2} f  \, dq\bigg) + \text{`zero-order' terms}.
$$
Then, integrating by parts in $p_j$, we can move the derivative to the factors $\partial_t^{k_1} f$ or $(\partial_{p_i}  + \frac{ p_i}{2 p_0})  \xi$, which are `good'.
To justify this argument rigorously, we first recall the following identity on p. 281  in the proof of Theorem 3  in \cite{GS_03}:
\begin{align}
    \label{eq12.A.2.15}
	& \partial_{p_j} \int_{ \bR^3 } \Phi^{i j} (P, Q) J^{1/2} (q) h (q) \, dq\\
&	=  \int   \Phi^{i j} (P, Q) J^{1/2} (q) \frac{q_0}{p_0}\partial_{q_j} h (q) \, dq \notag \\
&\quad	+ \int   \Phi^{i j} (P, Q) J^{1/2} (q) \big(\frac{q_j}{q_0 p_0}  - \frac{q_j}{2 p_0}\big) h (q) \, dq \notag\\
 &\quad	+  \int  (\partial_{p_j} + \frac{q_0}{p_0} \partial_{q_j}) \Phi^{i j} (P, Q) J^{1/2} (q) h (q) \, dq. \notag
 \end{align}
Multiplying the last identity by $p_0$ and  replacing $h (q)$ with $\frac{1}{q_0} f (q)$, we get
 \begin{align*}
    & \int   \Phi^{i j} (P, Q) J^{1/2} (q) \partial_{q_j} f (q) \, dq = p_0  \partial_{p_j} \int   \Phi^{i j} (P, Q) J^{1/2} (q) \frac{1}{q_0} f (q) \, dq  \\
    & - \int   \Phi^{i j} (P, Q) J^{1/2} (q)  \frac{q_j}{q_0}  f (q) \, dq \\
    &  + \int   \Phi^{i j} (P, Q) J^{1/2} (q) \big(\frac{q_j}{q_0^2 }  + \frac{q_j}{2 q_0}\big)  f (q) \, dq \\
    &  - p_0 \int  \big((\partial_{p_j} + \frac{q_0}{p_0} \partial_{q_j}) \Phi^{i j} (P, Q)\big) J^{1/2} (q) \frac{1}{q_0} f (q) \, dq.
 \end{align*}
Then, by the definition of $I_2$ in \eqref{eq12.A.2.17},  the last identity, and the bound
\begin{align}
    \label{eq12.A.2.60}
    |(\partial_{p_j} + \frac{q_0}{p_0} \partial_{q_j}) \Phi^{i j} (P, Q)| + |\Phi^{i j} (P, Q)| \lesssim  q_0^7 (1+ |p-q|^{-1})
\end{align}
(see Lemma 2 on p. 277  in \cite{GS_03}),
we conclude that to handle $I_2$, it suffices to estimate the integral
$$
   \mathfrak{I} (t, x): =  \int \big(\partial_t^{k_1} f (t, x, p)\big)  \,  p_0^{n}  \, (\partial_{p_j}^{l} \cI (t, x, p)) \,  (\partial_{p_i}  + \frac{ p_i}{2 p_0}) \xi (p) \, dp,
$$
where $l, n \in \{0, 1\}$, and
$$
     \cI (t, x, p): = \int \Xi (p, q) J^{1/4} (q) \partial_t^{k_2} f (t, x, q) \, dq, \quad   |\Xi (p, q)| \lesssim  1+ |p-q|^{-1}.
$$
By the Cauchy-Schwarz inequality,
$$
    |\cI (t, x, p)| \lesssim \|\partial_t^{k_2} f (t, x, \cdot)  \|_{ L_2 (\bR^3)}.
$$
Then, integrating by parts in $p_j$ and using the Cauchy-Schwarz inequality,
we obtain
\begin{align}
\label{eq12.A.2.18}
    |\mathfrak{I}| \lesssim  \|\partial_t^{k_1} f (t, x, \cdot)  \|_{ W^1_2 (\bR^3)} \|\partial_t^{k_2} f (t, x, \cdot)  \|_{ L_2 (\bR^3)} \|\xi\|_{ W^2_{2, 1} (\bR^3)}.
\end{align}
We note that since $k_1 \le m/2$, the right-hand side of \eqref{eq12.A.2.18} is less than that in \eqref{eq12.A.2.4}.
Thus, by this and \eqref{eq12.A.2.5}, the inequality \eqref{eq12.A.2.4} is true, and hence, so is the desired estimate \eqref{eq12.A.2.3}.

$(ii)$ By the estimate \eqref{eq12.A.1.1} in Lemma \ref{lemma 12.A.1}  and the $L_{\infty}^{t, x}$-$L_2^{t, x}$-$L_2^{t, x}$ H\"older's inequality, the integral on the left-hand side of \eqref{eq12.A.2.1} is dominated by
\begin{align*}
  \|\partial_t^{k} f \zeta\|_{ L_2 ((s, t) \times \Omega) W^1_{2} (\bR^3) }
  \sum_{ l \le m/2}
     \bigg(\|\partial_t^{l} f \|_{  L_{\infty} ((s, t) \times \Omega) W^1_{2} (\bR^3)  }
    \|\partial_t^{k-l} f\|_{ L_2 ((s, t) \times \Omega) W^1_{2} (\bR^3) }\bigg).
 \end{align*}
We note that
\begin{itemize}
    \item[--]  since $\zeta, \nabla_p \zeta \in L_{\infty}^{x, p}$, we may drop this function from the above inequality,
    \item[--]  by \eqref{eq12.0.4} in Remark \ref{remark 12.1}, the factors of type
$$\|\partial_t^n f\|_{ L_2 ((s, t) \times \Omega)) W^1_2 (\bR^3)}, n \le m-2,$$ are bounded by
$
    N  \, (\int_s^t \cD  \, d\tau)^{1/2}.
$
    \item[--]  by \eqref{eq14.4.7} in Lemma \ref{lemma 5.1}, the  first factor inside the parenthesis is bounded by $N \|\cI\|_{ L_{\infty} ((s, t)) }^{1/2}$.
\end{itemize}
Thus, \eqref{eq12.A.2.1} is valid.

$(iii)$
First, we split the integral into
\begin{align}
    \label{eq12.A.2.100}
     &  \int_s^{t}  \int_{ \Omega  } \langle \partial_t^k \Gamma (f, f), P \partial_t^k f \rangle   \, dx d\tau \\
      & +  \int_s^{t}  \int_{ \Omega } \langle \partial_t^k \Gamma (P f, P f), (1-P) \partial_t^k f \rangle  \, dx d\tau \notag \\
     & + \int_s^{t}  \int_{ \Omega  } \langle\partial_t^k \Gamma ((1-P) f, P f), (1-P) \partial_t^k f \rangle  \, dx d\tau \notag \\
  &   +  \int_s^{t}  \int_{ \Omega  } \langle\partial_t^k \Gamma (P f, (1-P) f), (1-P) \partial_t^k f \rangle  \, dx d\tau  \notag\\
 & +   \int_s^{t}  \int_{ \Omega  } \langle \partial_t^k \Gamma ((1-P) f, (1-P) f), (1-P) \partial_t^k f \rangle \, dx d\tau=: I_1 + I_2 + I_3 + I_4 + I_5. \notag
       \end{align}
We note that $I_1$ vanishes due to the product rule and
the fact that
\begin{align*}
     \langle \Gamma (f_1, f_2), P f_3 \rangle  = 0,
\end{align*}
which is easily derived from the identities (see \cite{GS_03})
\begin{align*}
  &   \int_{\bR^3}   \cC (f^{\pm},  g^{\pm}) \, dp = 0,  \\
  &\int_{\bR^3} p \,  \cC (f^{\pm},  g^{\pm}) \, dp = 0, \quad \int_{\bR^3} p \,  \big(\cC (f^{+},  g^{-}) + \cC (f^{-},  g^{+})\big) \, dp = 0, \\
       & \int_{\bR^3} p_0^{\pm} \cC (f^{\pm}, g^{\pm}) \, dp = 0, \quad \int_{\bR^3} \big(p_0^{+} \cC (f^{+}, g^{-}) + p_0^{-} \cC (f^{-}, g^{+})\big) \, dp = 0.
\end{align*}

Next, by the product rule,
\begin{align*}
     &   |I_2| \lesssim \sum_{k_1+k_2  = k} \int_s^{t} \int_{\Omega} |\partial_t^{k_1} [a^{+}, a^{-}, b, c]|  \, |\partial_t^{k_2} [a^{+}, a^{-}, b, c]| \,  |W_k| \, dx d\tau,
\end{align*}
where $W_k (t, x)$ is a linear combination of terms
$$
    \int_{\bR^3} \widetilde \xi (p) (1-P^{\pm}) \partial^k_t f (t, x, p) \, dp,
$$
and $\widetilde \xi \in \{\Gamma_{\pm} (\chi_i^{\pm}, \chi_j^{\pm}), i, j = 1, \ldots, 6\}$.
We note that
\begin{itemize}
    \item[--] by \eqref{eq12.A.1.1} in Lemma \ref{lemma 12.A.1},
\begin{align*}
        |W_k (t, x)|  \lesssim \|(1-P) \partial_t^k f (t, x, \cdot)\|_{ W^1_2 (\bR^3)},
\end{align*}
\item[--] by the Cauchy-Schwarz inequality,  for any $l \le m$,
\begin{align}
    \label{eq12.A.2.10}
    |\partial_t^l [a^{\pm}, b, c]| (t, x) \le \|\partial_t^l f (t, x, \cdot)\|_{ L_2 (\bR^3) }.
\end{align}
\end{itemize}
Then, by
the $L_{2}^t L_{\infty}^x$-$L_{\infty}^t L_2^x$-$L_2^{t, x}$ H\"older's inequality and the 
estimate \eqref{eq14.4.7} in Lemma \ref{lemma 5.1}, we get
\begin{align*}
       &  |I_2| \lesssim \|\partial_t^k (1-P) f\|_{ L_2 ((s, t) \times \Omega) W^1_{2} (\bR^3) } \\
&\times  \big(\sum_{l \le m/2} \|\partial_t^{l} f\|_{ L_2 ((s, t)) L_{\infty} (\Omega)  L_2 (\bR^3) }
\|\partial_t^{k-l} f\|_{ L_{\infty} ((s, t)) L_{2} (\Omega \times \bR^3) }\big) \\
& \lesssim  \big(\int_s^t \cD_{||} \, d\tau\big)^{1/2} \big(\int_s^t \cD  \, d\tau\big)^{1/2} 
\|\cI_{||}\|_{ L_{\infty} ((s, t)) }^{1/2} \\
& \le \|\cI_{||}\|_{ L_{\infty} ((s, t)) }^{1/2} \int_s^t \cD  \, d\tau.
\end{align*}

Furthermore,     applying  \eqref{eq12.A.1.1} in Lemma \ref{lemma 12.A.1} first
 and then, using the $L_2^t L_{\infty}^x -  L_{\infty}^t L_2^x - L_2^{t, x}$
and  the $L_{\infty}^{t, x}$-$L_2^{t, x}$-$L_2^{t, x}$ H\"older's inequalities,
we have
\begin{align*}
  &  |I_3| \lesssim  \|\partial_t^k (1-P) f\|_{ L_2 ((s, t) \times \Omega) W^1_{2} (\bR^3) } \mathcal{J}, \\
&\mathcal{J}= \sum_{l \le m/2} \|\partial_t^{l}  f\|_{ L_2 ((s, t)) L_{\infty} (\Omega) W^1_2 (\bR^3) }
\| P  (\partial_t^{k-l} f)\|_{ L_{\infty} ((s, t)) L_{2} (\Omega)  W^1_2  (\bR^3) } \\
&+ \sum_{l \le m/2}  \|P \partial_t^{l} f\|_{ L_{\infty} ((s, t) \times \Omega)  W^1_2 (\bR^3) }
\|\partial_t^{k-l} (1-P) f\|_{ L_2 ((s, t) \times \Omega) W^1_{2} (\bR^3) } =: \mathcal{J}_1+\mathcal{J}_2.
    \end{align*}
We note that
\begin{itemize}
    \item[--] the factors involving $(1-P) f$ are bounded by $(\int_s^t \cD_{||} \, d\tau)^{1/2}$,
    \item[--] due to the estimate \eqref{eq14.4.7} in Lemma \ref{lemma 5.1} and the fact that $m > 16$, the first factor in $\mathcal{J}_1$  is bounded by
    $N  (\int_s^t \cD \, d\tau)^{1/2}$,
        \item[--] by \eqref{eq12.A.2.10}, the second factor in $\mathcal{J}_1$   is bounded by $N \,  \|\cI\|_{ L_{\infty} ((s, t)) }^{1/2}$,
    \item[--] again, by  the estimates \eqref{eq12.A.2.10} and \eqref{eq14.4.7}, the first factor in $\mathcal{J}_2$ is bounded by $N \,  \|\cI\|_{ L_{\infty} ((s, t)) }^{1/2}$.
\end{itemize}
Thus,  we conclude that
\begin{align*}
    |I_{3}| &\lesssim  \|\cI\|_{ L_{\infty} ((s, t)) }^{1/2}  \int_s^t \cD \, d\tau.
\end{align*}
Similarly, the above estimate holds with $I_3$ replaced with $I_4$ (see \eqref{eq12.A.2.100}).
Finally, by \eqref{eq12.A.1.1} in Lemma \ref{lemma 12.A.1}, the $L_{\infty}^{t, x}$-$L_2^{t, x}$-$L_2^{t, x}$ H\"older's inequality, the estimate \eqref{eq14.4.7}, and the fact that $m > 16$, we find
\begin{align*}
  &  |I_5| \lesssim  \|\partial_t^k (1-P) f\|_{ L_2 ((s, t) \times \Omega) W^1_{2} (\bR^3) } \\
&\times  \big(\sum_{l \le m/2} \|\partial_t^{l} f\|_{ L_{\infty} ((s, t) \times \Omega) W^1_{2} (\bR^3) }
\|\partial_t^{k-l} (1-P) f\|_{ L_2 ((s, t) \times \Omega) W^1_{2} (\bR^3) }\big) \\
      &  \lesssim   \|\cI\|_{ L_{\infty} ((s, t)) }^{1/2}  \int_s^t \cD_{||}  \, d\tau.
    \end{align*}
Thus, the desired bound \eqref{eq12.A.2.2} holds, and the lemma is proved.
\end{proof}

\begin{lemma}
        \label{lemma 12.A.3}
  Let $\xi \in W^1_2 (\bR^3)$, $\zeta$ be a function such that $\zeta, \nabla_p \zeta \in L_{\infty} (\Omega \times \bR^3)$, and
  $$
    H^{\pm}  =   (\bE  + \frac{p}{p_0^{\pm}} \times \bB) \cdot \nabla_p f^{\pm} \,  \text{or} \,   \frac{p}{p_0^{\pm}}  \cdot \bE \,  f.
  $$
  Then, under the assumptions of Lemma \ref{lemma 12.A.2},  the following estimates are valid:
\begin{align}
\label{eq12.A.3.2}
 &  I_1 =  \sum_{k=0}^m  \int_s^{t} \int_{\Omega} \bigg| \int_{\bR^3} \partial_t^k H^{\pm} \cdot \xi \, dp\bigg|^2 \, dx d\tau\\
 &   \lesssim_{\xi, \Omega, r_3, r_4, \theta} \|\cI_{ || }\|_{ L_{\infty} ((s, t)) }  \int_s^t \cD  \, d\tau, \notag\\
            \label{eq12.A.3.1}
    & I_2 =  \sum_{k = 0}^{m-2}   \bigg|\int_s^{t} \int_{ \Omega \times \bR^3  }    (\partial_t^{k} H^{\pm}) \cdot  (\partial_t^{k} f^{\pm}) \,  \zeta  \, dz\bigg| \\
    & \lesssim_{\zeta, \Omega, r_3, r_4, \theta} \|\cI_{ || }\|_{ L_{\infty} ((s, t)) }^{1/2}  \int_s^t \cD  \, d\tau. \notag
\end{align}
\end{lemma}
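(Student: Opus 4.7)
The plan is to mirror the proof strategy of Lemma \ref{lemma 12.A.2}: expand $\partial_t^k H^\pm$ via the Leibniz rule into a sum over $k_1+k_2=k$ of terms of the form $(\partial_t^{k_1}[\bE,\bB])\cdot\mathfrak{g}_{k_2}$ with $\mathfrak{g}_{k_2}\in\{\partial_t^{k_2}f^\pm,\nabla_p\partial_t^{k_2}f^\pm\}$, then distribute norms via H\"older so that one factor absorbs $\|\cI_{||}\|_{L_\infty}^{1/2}$ (placed in an $L^t_\infty$-norm controlled by the baseline energy) while the remaining factor(s) absorb $(\int_s^t\cD\,d\tau)^{1/2}$ (placed in $L^t_2$-norms contained in $\cD$). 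The key technical principle throughout is that the $[\bE,\bB]$-factors should always be controlled via $L_2(\Omega)$ or $L_6(\Omega)$ norms (which sit in $\cI_{||}$, the latter via Lemma \ref{lemma 12.4}), rather than via the $L_\infty(\Omega)$ norm of Lemma \ref{lemma 5.1}, which would only yield the weaker $\cI$.

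For $I_1$, I would first test the Leibniz expansion against $\xi$ and integrate by parts in $p$ whenever $H^\pm$ contains $\nabla_p f$, moving the momentum derivative onto $\xi$. This gives the pointwise bound
\[
\bigg|\int_{\bR^3}\partial_t^k H^\pm\cdot\xi\,dp\bigg|\lesssim_\xi\sum_{k_1+k_2=k}|\partial_t^{k_1}[\bE,\bB](\tau,x)|\,\|\partial_t^{k_2}f(\tau,x,\cdot)\|_{L_2(\bR^3)}.
\]
Squaring, integrating over $(s,t)\times\Omega$, summing over $k\le m$, and splitting into the cases $k_2\le m/2$ versus $k_1\le m/2$ (at least one always holds), the low-derivative factor is paired with an $L^t_2 L^x_\infty$-bound from \eqref{eq14.4.7} or \eqref{eq14.4.6} (contributing $(\int\cD)^{1/2}$) against an $L^t_\infty L^x_2$ or $L^t_\infty L^{x,p}_2$-bound controlled by $\cI_{||}$ (contributing $\|\cI_{||}\|_{L_\infty}^{1/2}$). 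Both cases produce the desired $\|\cI_{||}\|_{L_\infty}\int_s^t\cD\,d\tau$. The condition $k_2\le m-8$ needed in the first case is ensured by $m\ge 20$.

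For $I_2$, the constraint $k\le m-2$ forces $\min(k_1,k_2)\le(m-2)/2\le m-8$ for $m\ge 20$, so a low-derivative factor is always available. I would always place $\partial_t^k f^\pm$ in $L^{t,x,p}_2$ (bounded by $(\int\cD)^{1/2}$ via Remark \ref{remark 12.1} since $k\le m-2$), and then distribute the pair $(\partial_t^{k_1}[\bE,\bB])\cdot\mathfrak{g}_{k_2}$ into an $L^{t,x,p}_2$-bound of size $\|\cI_{||}\|_{L_\infty}^{1/2}(\int\cD)^{1/2}$. When $k_2\le m-7$, I would use Lemma \ref{lemma 12.4} to bound $\|\partial_t^{k_1}[\bE,\bB]\|_{L^t_\infty L^x_6}\lesssim\|\cI_{||}\|_{L_\infty}^{1/2}$, and combine with the $S_{r_3}$-norm in $\cD$ (recall $r_3>3$) plus H\"older in $p$ to obtain $\|\mathfrak{g}_{k_2}\|_{L^t_2 L^x_3 L^p_2}\lesssim(\int\cD)^{1/2}$. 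When $k_2\ge m-6$ (so $k_1\le 4\le m-8$), I would swap: \eqref{eq14.4.7} gives $\|\mathfrak{g}_{k_2}\|_{L^t_2 L^x_\infty L^p_2}\lesssim(\int\cD)^{1/2}$, while $\|\partial_t^{k_1}[\bE,\bB]\|_{L^t_\infty L^x_2}\lesssim\|\cI_{||}\|_{L_\infty}^{1/2}$ is immediate from the definition of $\cI_{||}$.

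The main obstacle will be the endpoint case $k_1=0$, $\mathfrak{g}_k=\nabla_p\partial_t^k f^\pm$, where both $f$-factors carry the maximum derivative count $k$ and neither can be treated pointwise in time. The remedy is to first integrate by parts in $p$ using $2(\nabla_p\partial_t^k f)(\partial_t^k f)\zeta=\nabla_p[(\partial_t^k f)^2]\zeta$ to rewrite the integrand as $-\tfrac12[\bE,\bB](\partial_t^k f)^2\nabla_p\zeta$ (the boundary term at $|p|=\infty$ vanishing by the Gaussian weights implicit in $\zeta$), then bound $\|[\bE,\bB]\|_{L^t_2 L^x_\infty}\lesssim(\int\cD)^{1/2}$ via \eqref{eq14.4.6} and interpolate $\|\partial_t^k f\|_{L^t_4 L^{x,p}_2}\le\|\partial_t^k f\|_{L^t_\infty L^{x,p}_2}^{1/2}\|\partial_t^k f\|_{L^{t,x,p}_2}^{1/2}\lesssim\|\cI_{||}\|_{L_\infty}^{1/4}(\int\cD)^{1/4}$; squaring and combining yields exactly $\|\cI_{||}\|_{L_\infty}^{1/2}\int_s^t\cD\,d\tau$, which together with the cases above completes the proof of \eqref{eq12.A.3.1}.
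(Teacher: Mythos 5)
Your treatment of $I_1$ is correct and matches the paper's: after integrating by parts in $p$ to move any momentum derivative onto the fixed test function $\xi$, you apply Cauchy--Schwarz in $p$ and then the $L_\infty^t L_2^x$--$L_2^t L_\infty^x$ H\"older split, placing whichever factor has fewer $t$-derivatives in the $L_2^t L_\infty^x$-slot (controlled by $\int\cD$ via \eqref{eq14.4.7} or \eqref{eq14.4.6}) and the other in the $L_\infty^t L_2$-slot (controlled by $\|\cI_{||}\|_{L_\infty}$). Your stated ``principle'' of avoiding $L_\infty(\Omega)$ bounds of $[\bE,\bB]$ is not actually followed here nor needed: \eqref{eq14.4.6} with the $L_\infty^x$-norm is fine precisely because it is placed in the $L_2^t$-slot, so it is absorbed by $\int_s^t\cD\,d\tau$ rather than by $\cI$.

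Your proof of $I_2$, however, has a genuine gap. You state that you will always keep $(\partial_t^k f^\pm)\zeta$ in $L_2^{t,x,p}$ and will not integrate by parts in $p$ except in the $k_1=0$ endpoint. This means the factor $\mathfrak{g}_{k_2}=\nabla_p\partial_t^{k_2}f^\pm$ retains its $p$-derivative. For Case 2 (which you define by $k_2\ge m-6$, hence $k_1\le 4$) you then invoke \eqref{eq14.4.7} to obtain $\|\mathfrak{g}_{k_2}\|_{L_2^t L_\infty^x L_2^p}\lesssim(\int_s^t\cD)^{1/2}$; but \eqref{eq14.4.7} only covers derivative orders $k\le m-8$, and $k_2\ge m-6>m-8$ is out of range, so the cited bound simply does not apply. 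More fundamentally, there is no mixed norm available for $\nabla_p\partial_t^{k_2}f$ with $k_2\in\{m-6,\ldots,m-2\}$ that you can trade against the other two factors to obtain $\|\cI_{||}\|_{L_\infty}^{1/2}\int_s^t\cD$: the baseline energy $\cI_{||}$ contains only $L_2$-norms of $\partial_t^k f$ (no $\nabla_p$), so $\|\nabla_p\partial_t^{k_2}f\|_{L_\infty^t L_2^{x,p}}$ is not controlled by $\cI_{||}$ either; and the $S_{r_i}$-norms in $\cD$ stop at $k\le m-4-i\le m-5$. Your symmetrization $2(\nabla_p\partial_t^k f)(\partial_t^k f)=\nabla_p[(\partial_t^k f)^2]$ requires the two $f$-factors to carry the \emph{same} number of $t$-derivatives, so it only handles $k_1=0$; for $k_1\in\{1,\ldots,4\}$ with $k_2\ge m-6$ the argument does not close.

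The paper's proof sidesteps all of this by integrating by parts in $p$ \emph{at the outset} for $I_2$, moving the $\nabla_p$ from $\partial_t^{k_2}f$ onto $(\partial_t^k f)\zeta$ (and, in the $\frac{p}{p_0}\times\bB$ case, onto the divergence-free Lorentz kernel, which contributes nothing). After this the factor $\partial_t^{k_2}f$ appears with no $\nabla_p$, and the test factor $(\partial_t^k f)\zeta$ is measured in $L_2^{t,x}W^1_2(\bR^3)$, which is controlled by $\int_s^t\cD\,d\tau$ for all $k\le m-2$ via Remark \ref{remark 12.1}. The remaining pair $(\partial_t^{k_1}[\bE,\bB],\partial_t^{k_2}f)$ is then handled by exactly the same two-case H\"older split as in $I_1$ (this is the quantity $M_{k_1,k_2}$). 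Once the IBP is done upfront, there is no endpoint case to treat, no interpolation in time, and no out-of-range invocation of Lemma \ref{lemma 5.1}. If you want to keep your alternative pairing (Lemma \ref{lemma 12.4}'s $L_6^x$ bound and the $S_{r_3}$-norm) for the range $k_2\le m-7$, that is a legitimate variant; but you must still integrate by parts in $p$ before the H\"older step, and for $k_2\ge m-6$ you should pair $\|\partial_t^{k_2}f\|_{L_\infty^t L_2^{x,p}}\lesssim\|\cI_{||}\|_{L_\infty}^{1/2}$ against $\|\partial_t^{k_1}[\bE,\bB]\|_{L_2^t L_\infty^x}\lesssim(\int\cD)^{1/2}$ (valid since $k_1\le 4\le m-7$), with the $W^1_2$-factor on $(\partial_t^k f)\zeta$ supplying the remaining $(\int\cD)^{1/2}$.
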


\begin{proof} 
\textit{Estimate of $I_1$.} We will consider the case when $H^{\pm} = \bE \cdot \nabla_p f^{+}$, as the remaining cases are handled similarly.
Integrating by parts in $p$ in $I_1$, we move the $p$-derivative to the factor $\xi$. Then, by using the Cauchy-Schwarz inequality in the $p$ variable first and the $L_{\infty}^t L_2^x$-$L_2^t L_{\infty}^x$ H\"older's inequality, we get
\begin{align*}
 & I_1 \lesssim_{\xi} \big(\sum_{ l  = 0}^{ m/2} \|\partial_t^{l} \bE\|^2_{ L_2 ((s, t)) L_{\infty} (\Omega) }\big)  \big(\sum_{l = m/2}^{ m} \|\partial_t^{l} f\|^2_{ L_{\infty} ((s, t)) L_2 (\Omega \times \bR^3) }\big) \\
 & + \big(\sum_{l = 0}^{ m/2}  \|\partial_t^{l} f\|^2_{ L_{2} ((s, t)) L_{\infty} (\Omega) L_2 (\bR^3) }\big)  \big(\sum_{l = m/2}^{ m}\|\partial_t^{l} \bE\|^2_{ L_{\infty} ((s, t)) L_{2} (\Omega) }\big).
\end{align*}
We note that
 by \eqref{eq14.4.7}--\eqref{eq14.4.6} in Lemma \ref{lemma 5.1}, the first factors in each term  are bounded by $N \, \int_s^t \cD \, d\tau$, and hence,
 \begin{align*}
     I_1 \lesssim  \|\cI_{||}\|_{ L_{\infty} (s, t) } \int_s^t \cD \, d\tau.
 \end{align*}

 \textit{Estimate of $I_2$.} Integrating by parts in $p$ in $I_2$,  using the Cauchy-Schwarz inequality in the $p$ variable first and then, 
 the $L_2^{t, x}$-$L_{\infty}^t L_2^x$-$L_2^t L_{\infty}^x$ H\"older's  inequality, and invoking \eqref{eq12.0.4} in Remark \ref{remark 12.1}, we have
\begin{align}
\label{eq12.A.3.5}
   & I_2  \lesssim_{\zeta}  \bigg(\sum_{k=0}^{m-2}  \|\partial_t^{ k }  f \zeta\|_{ L_2 ((s, t) \times \Omega) W^1_{2} (\bR^3) }\bigg)  \sum_{k \le m-2, k_1+k_2=k} M_{k_1, k_2}, \\
\label{eq12.A.3.6}
   & 
  M_{k_1, k_2} := \|\partial_t^{k_1} [\bE, \bB]\|_{L_{\infty} ((s, t)) L_2 (\Omega)}
\big(1_{ k_2 \le m/2} \|\partial_t^{k_2} f\|_{ L_2 ((s, t)) L_{\infty} (\Omega) L_2 (\bR^3) }\big)    \\
   &  +   \|\partial_t^{k_2} f\|_{L_{\infty} ((s, t)) L_2 (\Omega \times \bR^3)  } \big(1_{ k_1 \le m/2}  \|\partial_t^{k_1} [\bE, \bB]\|_{ L_2 ((s, t)) L_{\infty} (\Omega) }\big)=:M_{k_1, k_2, 1}+M_{k_1, k_2, 2}. \notag
\end{align}
Furthermore,
\begin{itemize}
    \item[--]  by the estimate \eqref{eq12.0.4} in Remark \ref{remark 12.1}, we may bound the first factor on the r.h.s. of \eqref{eq12.A.3.5}  by
    $N (\int_s^t \cD\,  d\tau)^{1/2}$,
   \item[--] by estimates \eqref{eq14.4.7}--\eqref{eq14.4.6} in Lemma \ref{lemma 5.1}, the second factors in  $M_{k_1, k_2, j}, j=1, 2,$ are bounded by $N (\int_s^t \cD\,  d\tau)^{1/2}$, which gives
   \begin{align}
    \label{eq12.A.3.7}
    M_{k_1, k_2} \lesssim_{\theta, \Omega, r_3, r_4} \|\cI_{||}\|_{ L_{\infty} ((s, t)) }^{1/2}  (\int_s^t \cD\,  d\tau)^{1/2}.
   \end{align}
\end{itemize}
Thus, we have
\begin{align*}
   I_2 \lesssim  \|\cI_{ || }\|_{ L_{\infty} ((s, t)) }^{1/2}  \int_s^t \cD\,  d\tau,
\end{align*}
and, hence, the desired estimate \eqref{eq12.A.3.1} is valid.
\end{proof}

\begin{lemma}
    \label{lemma 12.A.4} 
    Let $Q (x, x'), x, x' \in \bR^3,$ be a quadratic polynomial.
Then, under the assumptions of Lemma \ref{lemma 12.A.2},   we have 
\begin{align}
    \label{eq12.A.4.1}
   &  \sum_{k=0}^m \int_s^{t} \bigg|\int_{\Omega} |\partial_{\tau}^k Q (\bE (\tau, x), \bB (\tau, x))| \, dx\bigg|^2 \, d\tau \\
   & \lesssim_{\Omega, r_3}  \|\cI_{||}\|_{ L_{\infty} ((s, t)) }  \int_s^t \cD  \, d\tau,  \notag\\
      \label{eq12.A.4.2}
    &  \sum_{k=0}^m \bigg|\int_{\Omega}  |\partial_{\tau}^k Q (\bE (\tau, x), \bB (\tau, x))|   \, dx\bigg|^2  
    \lesssim_{\Omega}   \cI^{2}_{||} (\tau). 
\end{align}
\end{lemma}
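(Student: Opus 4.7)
The plan is to exploit the fact that $Q$ is (homogeneous) quadratic so that $\partial_\tau^k Q(\bE,\bB)$ expands by the Leibniz rule into a linear combination of bilinear products
\begin{align*}
\partial_\tau^k Q(\bE(\tau,x),\bB(\tau,x))=\sum_{k_1+k_2=k} c_{k_1,k_2}\, \big(\partial_\tau^{k_1}[\bE,\bB]\big)\otimes\big(\partial_\tau^{k_2}[\bE,\bB]\big),
\end{align*}
and to match each factor to the correct norm in $\cI_{||}$ or $\cD$ according to its derivative count. Throughout I will use that $m\ge 20$, so $m/2 \le m-4$.

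For the pointwise bound \eqref{eq12.A.4.2}, apply the Cauchy--Schwarz inequality in $x$ to each product term to obtain
\begin{align*}
\int_\Omega \big|\partial_\tau^{k_1}[\bE,\bB]\big|\big|\partial_\tau^{k_2}[\bE,\bB]\big|\,dx
\le \|\partial_\tau^{k_1}[\bE,\bB](\tau,\cdot)\|_{L_2(\Omega)}\,\|\partial_\tau^{k_2}[\bE,\bB](\tau,\cdot)\|_{L_2(\Omega)}.
\end{align*}
Since $k_1,k_2\le k\le m$, both $L_2(\Omega)$-norms are controlled by $\cI_{||}^{1/2}(\tau)$ directly from the definition \eqref{eq14.0.10}. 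Summing over all admissible $(k_1,k_2)$, one gets $\int_\Omega|\partial_\tau^k Q|\,dx\lesssim_\Omega \cI_{||}(\tau)$, and squaring and summing over $k$ yields \eqref{eq12.A.4.2}.

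For the time-integrated bound \eqref{eq12.A.4.1}, in each product I place the higher-derivative factor in $L_\infty^\tau L_2^x$ (controlled by $\cI_{||}$) and the lower-derivative factor in $L_2^\tau L_2^x$ (controlled by $\cD$). More precisely, assume without loss of generality that $k_1\le k_2$; then $k_1 \le k/2 \le m/2 \le m-4$, so $\|\partial_\tau^{k_1}\bE\|_{L_2(\Omega)}^2$ and $\|\partial_\tau^{k_1}\bB\|_{L_2(\Omega)}^2$ both appear in $\cD(\tau)$ (cf.\ \eqref{eq12.0.1}). Meanwhile $k_2\le m$ so $\|\partial_\tau^{k_2}[\bE,\bB](\tau,\cdot)\|_{L_2(\Omega)}^2\le \cI_{||}(\tau)$ pointwise. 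By Cauchy--Schwarz in $x$ and the $L_\infty^\tau L_2^x$--$L_2^\tau L_2^x$ Hölder inequality,
\begin{align*}
&\int_s^t\bigg(\int_\Omega\big|\partial_\tau^{k_1}[\bE,\bB]\big|\big|\partial_\tau^{k_2}[\bE,\bB]\big|\,dx\bigg)^2 d\tau\\
&\quad\le \|\partial_\tau^{k_2}[\bE,\bB]\|_{L_\infty((s,t))L_2(\Omega)}^2\int_s^t\|\partial_\tau^{k_1}[\bE,\bB](\tau,\cdot)\|_{L_2(\Omega)}^2\,d\tau\\
&\quad\lesssim \|\cI_{||}\|_{L_\infty((s,t))}\int_s^t \cD(\tau)\,d\tau.
\end{align*}
Squaring the finite sum over $(k_1,k_2)$ (and absorbing the constant $|\Omega|$ into $\lesssim_\Omega$) and summing over $k=0,\ldots,m$ yields \eqref{eq12.A.4.1}.

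There is no substantive analytical obstacle here; the only point requiring care is the bookkeeping that ensures the ``small-derivative" factor always lies in the range where $\cD$ provides $L_2^\tau L_2^x$ control of the electromagnetic field. This is automatic once $m\ge 8$, and in particular under the standing hypothesis $m\ge 20$, because the split $k_1\le k_2$ with $k_1+k_2=k\le m$ forces $k_1\le m/2\le m-4$, which lies below the cutoff $m-4$ (resp.\ $m-3$) built into the electric (resp.\ magnetic) contributions to $\cD$ in \eqref{eq12.0.1}.
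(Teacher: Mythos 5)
Your proof is correct and follows essentially the same route as the paper: expand $\partial_\tau^k Q$ by the Leibniz rule, apply Cauchy--Schwarz in $x$, bound both factors by $\cI_{||}^{1/2}$ for the pointwise estimate, and for the integrated estimate observe that the lower-derivative factor (with $k_1\le k/2\le m/2\le m-4$) contributes an $L_2^{\tau,x}$ term present in $\cD$ while the higher-derivative factor is controlled by $\|\cI_{||}\|_{L_\infty}$. The paper's write-up is terser (it simply notes the product $\cI_{||}(\tau)\cD(\tau)$ bounds the right-hand side after matching derivative counts and then integrates), but the underlying argument is identical.
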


\begin{proof}
For the sake of simplicity, let us consider the case when $Q (x, x') = x_i x_j'$. The general case can be handled by the same argument.
By the Cauchy-Schwarz inequality, we have
\begin{align}
    \label{eq12.A.4.5}
&\bigg| \int_{\Omega} |\partial_{t}^k \big(\bE_i (\tau, x) \bB_j  (\tau, x)\big)| \, dx\bigg|^2  \\
& \le \sum_{k_1 + k_2 = k}  \|\partial_{t}^{k_1} \bE (\tau,\cdot)\|^2_{L_2 (\Omega) } \, \|\partial_{t}^{k_2} \bB (\tau,\cdot)\|^2_{L_2 (\Omega) }  \le \cI_{||}^2 (\tau), \notag
\end{align}
which gives the desired estimate \eqref{eq12.A.4.2}.
By the definition of $\cD$ in \eqref{eq12.0.1},  we may also replace the r.h.s. of \eqref{eq12.A.4.5}  with $\cI_{||} (\tau) \cD (\tau)$. Integrating the last expression over $\tau \in (s, t)$  gives \eqref{eq12.A.4.1}.
\end{proof}


\begin{lemma}
    \label{lemma 12.A.7}
Invoke the assumptions of Lemma \ref{lemma 12.A.2}.
Assume that $\Omega$ is an axisymmetric domain such that its axis is parallel to some vector $\omega$ and contains a point $x_0$. Denote $R = \omega \times (x-x_0)$. Then, we have
\begin{align}
    \label{eq12.A.7.1}
  \sum_{k=0}^{m+1} \int_s^t  \bigg|\partial_{t}^k \int_{\Omega} R \cdot (\bE  \times \bB)   \, dx\bigg|^2  \, d\tau
 \lesssim_{ \Omega, \theta, r_3, r_4 }  \|\cI_{||}\|_{ L_{\infty} ((s, t)) } \,  \int_s^t \cD  \, d\tau.
\end{align}
\end{lemma}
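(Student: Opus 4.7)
The key ingredient is the angular momentum conservation law \eqref{eq4.3.4} (derived in Appendix \ref{appendix I}) combined with the initial condition \eqref{eq6.1.3}. Substituting $F^\pm = J^\pm + \sqrt{J^\pm} f^\pm$ and using that $\int_{\bR^3} p J^\pm \, dp = 0$ by oddness, one obtains for all $\tau \in [s,t]$ the pointwise-in-$\tau$ identity
\begin{align*}
\int_\Omega R \cdot (\bE \times \bB)(\tau,x) \, dx
= -4\pi \int_\Omega R \cdot \int_{\bR^3} p\bigl(\sqrt{J^+} f^+ + \sqrt{J^-} f^-\bigr)(\tau,x,p)\, dp\, dx.
\end{align*}
Differentiating this identity in $\tau$ transfers the problem of bounding $\partial_t^k \int R \cdot (\bE\times\bB) dx$ to estimating weighted $p$-moments of $\partial_t^k f^\pm$, which is exactly what the dissipation $\cD$ is designed to control.

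For $0 \le k \le m$, applying macro-micro decomposition \eqref{eq6.16}-\eqref{eq6.17} to $\partial_t^k f^\pm$ inside the $p$-moment, the $a^\pm$ and $c$ contributions vanish by oddness of $p$ while the $b$-contribution survives, yielding by Cauchy-Schwarz in $x$
\begin{align*}
\Bigl|\partial_t^k \int_\Omega R\cdot(\bE\times\bB) dx\Bigr|^2
\lesssim_\Omega \|\partial_t^k b\|_{L_2(\Omega)}^2 + \|(1-P)\partial_t^k f\|_{L_2(\Omega\times\bR^3)}^2 \lesssim \cD(\tau),
\end{align*}
since both quantities on the right sit in $\cD$ for $k \le m$ (cf.\ \eqref{eq12.0.1}). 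Integrating over $(s,t)$ even yields the stronger estimate $\int_s^t |\cdot|^2 \, d\tau \lesssim \int_s^t \cD\, d\tau$, which a fortiori gives the claimed bound.

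The main obstacle, and where the factor $\|\cI_{||}\|_{L_\infty((s,t))}$ genuinely enters, is the top-order case $k = m+1$. Here I substitute $\partial_t^{m+1} f^\pm = \partial_t^m(\partial_t f^\pm)$ using the Landau equation \eqref{eq36.13}-\eqref{eq36.14}, obtaining the sum of five contributions to the $p$-moment: (a) transport $-\frac{p}{p_0^\pm}\cdot\nabla_x \partial_t^m f^\pm$; (b) linear electric forcing $\pm\frac{e_\pm}{k_bT}\frac{p}{p_0^\pm}\cdot \partial_t^m\bE\sqrt{J^\pm}$; (c) linear collision $L_\pm \partial_t^m f$; (d) bilinear Lorentz terms; (e) $\partial_t^m \Gamma(f,f)$. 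Two structural cancellations at the top order save the day. For (a), integration by parts in $x$ produces a volume integral with coefficient $(\partial_{x_j}R_i) p_i p_j = \epsilon_{irj}\omega_r p_i p_j$ which vanishes by the symmetric/antisymmetric contraction; the boundary integral over $\partial\Omega$ vanishes by change of variable $p \to R_x p$ (valid since $f^\pm(x,R_xp) = f^\pm(x,p)$ on $\partial\Omega$ by SRBC and $J^\pm/p_0^\pm$ is reflection-invariant) combined with $R\cdot n_x = 0$, exactly as in \eqref{eqE.2.16}-\eqref{eq12.2.19}. For (b), integrating $\int \frac{p_ip_j}{p_0^\pm} J^\pm\, dp = k_b T M_\pm \delta_{ij}$ via \eqref{eq3.1.13} reduces the sum over $\pm$ to the factor $e_+M_+-e_-M_-=0$ by the global neutrality condition \eqref{eq0}—the same cancellation used in Lemma \ref{lemma 3.2}.

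The remaining terms (c), (d), (e) are bounded by routine arguments: (c) reduces to a moment of $(1-P)\partial_t^m f$ controlled by $\cD_{||}^{1/2}$; for the bilinear Lorentz terms (d), integrate by parts in $p$ (boundary terms vanish by decay of $\sqrt{J^\pm}$), decompose $\sum_{k_1+k_2=m}$, and apply $L_\infty^t L_2^x$-$L_2^{t,x}$ H\"older's inequality, placing $\partial_t^{k_1}[\bE,\bB]$ in $L_\infty^t L_2^x \leq \|\cI_{||}\|_{L_\infty}^{1/2}$ and the $\partial_t^{k_2} f$-moment in $L_2^{t,x} \leq (\int\cD\,d\tau)^{1/2}$ using macro-micro plus the odd-in-$p$ structure of the coefficient (which again kills $a^\pm, c$, leaving $\partial_t^{k_2}b$ and $(1-P)\partial_t^{k_2}f$, both in $\cD$ for $k_2 \le m$); (e) is handled by \eqref{eq12.A.2.3} in Lemma \ref{lemma 12.A.2}. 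The delicate point in the H\"older split is that $\partial_t^k \bB$ for $k > m-3$ is not in $\cD$, but for such high $k_1$ the complementary $k_2 = m-k_1 \le 2$ places the $f$-factor safely in $\cD$, so $\bB$ can always be put in $L_\infty^t$ via $\cI_{||}$. Collecting all terms gives the desired bound $\lesssim \|\cI_{||}\|_{L_\infty((s,t))} \int_s^t \cD\, d\tau$ for $k=m+1$.
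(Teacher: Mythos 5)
The proposal contains a genuine logical error for the case $k \le m$ that undercuts the whole argument. The angular-momentum conservation law \eqref{eq4.3.4} indeed converts $\int_\Omega R\cdot(\bE\times\bB)\,dx$ into a linear $p$-moment of $f^\pm$ (in fact, into a constant times $\int_\Omega R\cdot b\,dx$), and this moment is bounded by $\|\partial_t^k b\|_{L_2(\Omega)}^2 + \|(1-P)\partial_t^k f\|^2_{L_2(\Omega\times\bR^3)} \lesssim \cD(\tau)$ as you say. But the resulting bound $\int_s^t|\cdot|^2\,d\tau \lesssim \int_s^t \cD\,d\tau$ does \emph{not} ``a fortiori'' imply the claimed bound $\lesssim \|\cI_{||}\|_{L_\infty((s,t))}\int_s^t\cD\,d\tau$ --- the implication goes the other way. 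Under the standing smallness assumption \eqref{eq3.5.1} one has $\|\cI_{||}\|_{L_\infty} \le \varepsilon < 1$, so the target bound is strictly \emph{smaller} than $\int_s^t\cD\,d\tau$. The factor $\|\cI_{||}\|_{L_\infty}$ is a smallness factor, and losing it is fatal: it is exactly this smallness that is exploited when Lemma \ref{lemma 12.A.7} is invoked in the proof of Lemma \ref{lemma 3.1} (to absorb the term $N\tilde\varepsilon_b^{-1}\varepsilon\int_s^t\cD\,d\tau$). The paper retains the factor by never converting $\int_\Omega R\cdot(\bE\times\bB)\,dx$ into a linear quantity in $f$; instead it keeps the \emph{quadratic} structure of $\bE\times\bB$ and applies Lemma \ref{lemma 12.A.4}, bounding each product $\partial_t^{k_1}\bE\cdot\partial_t^{k_2}\bB$ with one factor in $\cI_{||}$ and the other in $\cD$ (this works for all $k_1+k_2 = k \le m$ since at least one of $k_1\le m-4$, $k_2\le m-3$ must hold, while $\partial_t^{k_j}[\bE,\bB]$ with $k_j \le m$ is controlled by $\cI_{||}$).

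Your treatment of $k = m+1$ is, in spirit, the same as the paper's, just executed in a more roundabout way: observing that the transport, linear electric-forcing, and linear collision contributions to $\partial_t(\int R\cdot p\,(\sqrt{J^+}f^+ + \sqrt{J^-}f^-)\,dp\,dx)$ all vanish (by, respectively, antisymmetry of $\partial_{x_j}R_i$ with SRBC, the neutrality condition \eqref{eq0}, and $\chi_{i+2}\in\ker L$) is precisely a rederivation of the Maxwell angular-momentum identity \eqref{eqH.4}, $\partial_t\int_\Omega R\cdot(\bE\times\bB)\,dx = -4\pi\int_\Omega R\cdot(\rho\bE + \bm{j}\times\bB)\,dx$, which the paper invokes directly from Appendix \ref{appendix I}. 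Once that cubic quantity is obtained, the remaining Hölder split matches the paper's. So you should keep your $k=m+1$ argument (or just cite \eqref{eqH.4}), but replace your $k\le m$ argument by the quadratic-structure argument of Lemma \ref{lemma 12.A.4}.
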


\begin{proof}
Thanks to \eqref{eq12.A.4.1} in Lemma \ref{lemma 12.A.4}, we only need to estimate the term with $k=m+1$.
By using the angular momentum identity for Maxwell's equations in \eqref{eqH.4}, the $L_{\infty}^t L_2^x$-$L_2^t L_{\infty}^x$ H\"older's  inequality,  and  \eqref{eq12.A.3.6}--\eqref{eq12.A.3.7}, we obtain
\begin{align*}
    & \int_s^t \bigg|\partial_{t}^{ m+1 } \int_{\Omega} R \cdot (\bE  \times \bB)   \, dx\bigg|^2 \, d\tau  \lesssim
   \int_s^t  \bigg|\partial_{t}^{m}  \int_{\Omega} R \cdot (\rho \bE + \bm{j} \times \bB)  \, dx\bigg|^2 \, d\tau   \\
   & \le \sum_{k_1+k_2 =  m }  M_{k_1, k_2}^2
    \lesssim_{\Omega, \theta, r_3, r_4}\|\cI_{||}\|_{ L_{\infty} ((s, t)) } \int_s^t \cD  \, d\tau.
\end{align*}
\end{proof}

\section{Green's formula}

\label{appendix C}

For the sake of simplicity, we set all the physical constants to $1$.

The following assertion can be derived from Proposition 1 on p. 382 in \cite{BP_87} via polarization (see also Theorem 5.1.2 in \cite{U_86}).
\begin{lemma}[Green's identity]
        \label{lemma E.A.1}
Let  $\theta \ge 0$ be a number and $E_{2, \theta} (\Sigma^T)$ be the  class of functions $u$ such that 
\begin{itemize}
    \item $u$, $(\partial_t + \frac{p}{p_0} \cdot \nabla_x) u  \in L_{2, \theta} (\Sigma^T)$, 
    \item either $u_0$ or $u_T$ (see \eqref{eq1.2.25}) belong to $L_{2, \theta} (\Omega \times \bR^3)$, 
    \item  either
    $$
            \int_{ \Sigma^T_{  + }  } u_{+}^2 (t, x, p) \frac{|p\cdot n_x|}{p_0} \, dS_x dp dt < \infty
    $$
   or an analogous condition holds for $u_{-}$ on $\Sigma^T_{-}$.
    \end{itemize}
Then, for any $u, \phi  \in E_{2, \theta} (\Sigma^T)$, we have 
\begin{equation}
            \label{eqE.A.1.1}
\begin{aligned}
    &    \int_{\Omega \times \bR^3} (u_T \phi_T (x, p)  - u_0 \phi_0 (x, p)) \, p_0^{2\theta } dx dp\\
&
   +  \int_{\Sigma^T_{+}}    u_{+} \phi_{+}   p_0^{2\theta }\,  \frac{|p \cdot n_x|}{p_0}  \,  dS_x dp dt
    -  \int_{ \Sigma^T_{-} }     u_{-} \phi_{-}  p_0^{2\theta }\,  \frac{|p \cdot n_x|}{p_0} \,  dS_x dp dt\\
 &
  =
     \int_{\Sigma^T} \big((\partial_t u + \frac{p}{p_0} \cdot \nabla_x u) \phi + (\partial_t \phi + \frac{p}{p_0}  \cdot \nabla_x \phi) u\big)  p_0^{2\theta } \, dz.
 \end{aligned}
\end{equation}
\end{lemma}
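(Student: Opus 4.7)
The plan is to reduce the weighted bilinear identity \eqref{eqE.A.1.1} to the unweighted energy identity already available in the kinetic PDE literature \cite{BP_87, U_86}, and then recover the bilinear form by polarization. Because the weight $p_0^{2\theta}$ depends only on the momentum variable $p$, it commutes with the free-streaming operator $\partial_t + \frac{p}{p_0}\cdot \nabla_x$. Hence, if $u \in E_{2,\theta}(\Sigma^T)$, the substitution $\tilde u := p_0^{\theta} u$ gives a function in $E_{2,0}(\Sigma^T)$ with
\begin{equation*}
    (\partial_t + \tfrac{p}{p_0}\cdot \nabla_x)\tilde u = p_0^{\theta}\,(\partial_t + \tfrac{p}{p_0}\cdot\nabla_x)u \in L_2(\Sigma^T),
\end{equation*}
and the boundary/trace integrability conditions for $u$ translate verbatim to the corresponding conditions for $\tilde u$. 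Thus, it suffices to establish \eqref{eqE.A.1.1} with $\theta = 0$.

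For the unweighted case, I would first prove the quadratic energy identity, namely \eqref{eqE.A.1.1} with $\phi = u$ (and $\theta = 0$). This is precisely the content of Proposition~1 on p.~382 in \cite{BP_87} (see also Theorem 5.1.2 in \cite{U_86}): under the stated integrability hypotheses, the function $u^2$ has a distributional divergence with respect to $(t,x)$, and integrating the identity
\begin{equation*}
    (\partial_t + \tfrac{p}{p_0}\cdot\nabla_x)(u^2) = 2u\,(\partial_t + \tfrac{p}{p_0}\cdot\nabla_x)u
\end{equation*}
against $1$ over $\Sigma^T$ and using the Gauss--Green theorem on $(0,T)\times \Omega$, with $p$ fixed as a parameter, yields
\begin{equation*}
    \int_{\Omega\times\bR^3}(u_T^2 - u_0^2)\,dxdp + \int_{\Sigma^T_+}u_+^2\frac{|p\cdot n_x|}{p_0}\,dS_x dp dt - \int_{\Sigma^T_-}u_-^2\frac{|p\cdot n_x|}{p_0}\,dS_x dp dt = 2\int_{\Sigma^T} u\,(\partial_t + \tfrac{p}{p_0}\cdot\nabla_x)u\,dz.
\end{equation*}
The only subtle point is the interpretation of traces $u_0$, $u_T$, $u_\pm$: these exist in $L_{2,\mathrm{loc}}$ on their respective domains in the sense of \cite{BP_87, U_86}, and the assumption that at least one temporal endpoint lies in $L_2(\Omega\times\bR^3)$ together with the one-sided kinetic boundary integrability bootstraps the missing endpoint/boundary to the same regularity via the energy identity itself.

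Next, I would apply polarization. Let $u,\phi \in E_{2,0}(\Sigma^T)$. The space $E_{2,0}(\Sigma^T)$ is linear in the obvious way, and the identity $4u\phi = (u+\phi)^2 - (u-\phi)^2$, combined with the fact that the collection of traces is linear, lets us apply the quadratic energy identity to $u+\phi$ and $u-\phi$ separately and subtract. Each quadratic term produces the transport integral $2\int (u\pm\phi)(\partial_t+\frac{p}{p_0}\cdot\nabla_x)(u\pm\phi)\,dz$, and when one subtracts and divides by $4$, the symmetric combination $u(\partial_t+\frac{p}{p_0}\cdot\nabla_x)\phi + \phi(\partial_t+\frac{p}{p_0}\cdot\nabla_x)u$ appears on the right, while the squared traces become the corresponding bilinear pairings on the left. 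This recovers \eqref{eqE.A.1.1} for $\theta = 0$.

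The main obstacle will be verifying cleanly that $u\pm\phi \in E_{2,0}(\Sigma^T)$ and that the polarization is admissible for traces—specifically, that the one-sided boundary integrability assumption (say, $u_+, \phi_+$ on $\Sigma^T_+$) is preserved under addition and gives well-defined $(u\pm\phi)_+$ traces so that the quadratic identity can be applied. This is routine once one recalls that the class $E_{2,0}$ is stable under linear combinations and that the trace operators $u\mapsto u_\pm$, $u\mapsto u_{0,T}$ are linear in the framework of \cite{BP_87, U_86}. Finally, undoing the substitution $\tilde u = p_0^\theta u$, $\tilde\phi = p_0^\theta \phi$ reinstates the weight $p_0^{2\theta}$ in every integral in \eqref{eqE.A.1.1}, completing the proof.
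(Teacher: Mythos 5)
Your proposal is correct and takes essentially the same approach as the paper, which gives no proof beyond the remark that the lemma "can be derived from Proposition 1 on p. 382 in [BP\_87] via polarization (see also Theorem 5.1.2 in [U\_86])." Your reduction to $\theta=0$ via the commuting substitution $\tilde u = p_0^\theta u$, the appeal to Beals--Protopopescu's quadratic energy identity, and the polarization with the bootstrap to upgrade one-sided trace integrability to two-sided are exactly the ingredients the authors had in mind.
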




For the proof of the following lemma, see Proposition 5.13 in \cite{VML}.
\begin{lemma}[Well-posedness and energy identity for finite energy solutions]
    \label{lemma 27.4}
Let 
\begin{itemize}[--]

\item $T > 0$ be a number, $\Omega$ be a $C^{  1, 1 }$ bounded domain,


\item 		for some  $\varkappa \in (0, 1]$,
\begin{align}
\label{eq27.4.35}
g \in L_{\infty} ((0, T))  C^{\varkappa/3, \varkappa}_{x, p} (\Omega \times \bR^3) \cap 
L_{\infty} ((0, T) \times \Omega) W^1_{\infty} (\bR^3),
\end{align}

\item \begin{equation}
    	\label{eq27.4.16}
\|g\|_{ L_{\infty} (\Sigma^T) }\le \frac{\varepsilon_{\star}}{2 N_0},
\end{equation}
where $\varepsilon_{\star} \in (0, 1)$ is a number such that 
\begin{align}
    \label{eq27.4.17}
    \sigma (p) := \int_{\bR^3} \Phi (P, Q) J (q) \, dq \ge \varepsilon_{\star} \bm{1}_3, \, \, p \in \bR^3
\end{align}
(see \cite{L_00}),
and $N_0$ is a constant in the estimate \eqref{eqB.2.1} with $k=0$ and $r=\infty$.
\end{itemize}
Then, for any $\theta \ge 0$ and
\begin{align*}
    f_0 \in L_{2, \theta} (\Omega \times \bR^3), \quad \eta \in L_2 ((0, T) \times \Omega) W^{-1}_{2, \theta} (\bR^3),
\end{align*}
the problem    
\begin{align}
   			\label{eq27.4.18}
	&	(\partial_t + \frac{p}{p_0} \cdot \nabla_x) f  - \nabla_p \cdot (\sigma_g \nabla_p f) = \eta, \\
\label{eq27.4.19}
 &f (t, x, p) = f (t, x, R_x p), \, z \in \Sigma^T_{-}, \, \, f (0, \cdot) = f_{0} (\cdot), 
  \end{align}
  with
\begin{align}
    \label{eq27.4.30}
     \sigma_g (t, x, p): =  \int_{\bR^3} \Phi (P, Q) \big(2 J (q)  + \sqrt{J (q)} g (t, x, q)\big)\, dq
  \end{align} 
has a unique finite energy solution (see Definition \ref{definition 27.1}) on the interval $[0, T]$. In addition, for any $t \in (0, T]$, the following energy identity is valid:
\begin{align}
\label{eq27.4.12}
       & \int_{\Omega \times \bR^3} \big(f^2 (t, x, p)  -  f^2_0 (x, p)\big) p_0^{2 \theta}\, dx dp \\
   & + \int_{\Sigma^t} (\nabla_p f)^T \sigma_g \nabla_p (f  p_0^{2 \theta})\, dz = \int_{(0, t)  \times \Omega }  \langle \eta, f p_0^{2 \theta} \rangle \notag \, dx d\tau. \notag
\end{align}
\end{lemma}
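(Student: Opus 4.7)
My plan would be to separate the three assertions---existence, uniqueness, and the energy identity \eqref{eq27.4.12}---and derive them in that order. For existence I would build a solution via a Galerkin-type approximation (or, equivalently, a Picard iteration) after mollifying $g$, $f_0$, and $\eta$ in $t$ and $p$. The key input is the coercivity of $\sigma_g$: combining the nondegeneracy bound \eqref{eq27.4.17} with the $L_\infty$ estimate of $\int\Phi(P,Q)\sqrt{J(q)}\,g(q)\,dq$ from \eqref{eqB.2.1} (applied with $k=0$, $r=\infty$) and the smallness hypothesis \eqref{eq27.4.16}, one obtains $\sigma_g\ge \tfrac{1}{2}\varepsilon_\star\bm{1}_3$ a.e.\ on $\Sigma^T$. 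This uniform ellipticity, together with the regularity \eqref{eq27.4.35} of $g$, is precisely what is needed to set up the weak formulation on the Hilbert space $L_2((0,T)\times\Omega)\,W^1_{2,\theta}(\bR^3)$.

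\textbf{Existence and uniqueness.} Given coercivity, smooth approximants $f^{(n)}$ satisfying the SRBC can be produced either by running the mirror-extension construction of Lemma 5.10 in \cite{VML} to recast the problem on $\bR^3_y\times\bR^3_w$ (where classical parabolic/$S_p$ theory applies) and pulling back, or directly by a Galerkin scheme on the $p$-variable. The standard a priori energy estimate, obtained by testing the regularized equation against $f^{(n)} p_0^{2\theta}$, yields
\[
\|f^{(n)}\|^2_{L_\infty((0,T))L_{2,\theta}(\Omega\times\bR^3)}+\|\nabla_p f^{(n)}\|^2_{L_2((0,T)\times\Omega)L_{2,\theta}(\bR^3)}\lesssim_\theta \|f_0\|^2_{L_{2,\theta}}+\|\eta\|^2_{L_2W^{-1}_{2,\theta}},
\]
from which weak compactness (plus a velocity-averaging argument for strong $L_2$ convergence of $f^{(n)}$) produces a finite energy solution in the sense of Definition \ref{definition 27.1}. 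Uniqueness is then an immediate consequence of the energy identity \eqref{eq27.4.12} applied to the difference of two solutions (whose source vanishes), together with Gronwall's inequality.

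\textbf{Energy identity.} To establish \eqref{eq27.4.12} itself, the plan is to test the equation against $\phi=f\,p_0^{2\theta}$ in Green's identity \eqref{eqE.A.1.1}. The transport part produces exactly the temporal boundary term and the two kinetic boundary integrals over $\Sigma^T_\pm$. The crucial point is that these kinetic boundary terms cancel: the SRBC gives $f_-(t,x,p)=f_+(t,x,R_xp)$, and since the reflection $p\mapsto R_xp$ preserves $|p\cdot n_x|$, $p_0$, and Lebesgue measure, the change of variables identifies the $\Sigma^T_-$ integral with the $\Sigma^T_+$ integral and they subtract to zero. The diffusion term, after integration by parts in $p$, produces the quadratic form $(\nabla_p f)^T\sigma_g\nabla_p(fp_0^{2\theta})$. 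Since $f p_0^{2\theta}$ is not quite admissible as a test function, I would first mollify $f$ in $t$ by a forward-in-time kernel (to preserve the trace at $t=0$) and truncate $p_0^{2\theta}$ by $\min(p_0^{2\theta},k)$, verify the identity for these surrogates, and pass to the limit using the a priori bound above.

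\textbf{Main obstacle.} The principal technical difficulty I anticipate is the rigorous justification of the traces for finite energy solutions and the limiting step for the weighted test function, particularly because $\eta$ is only distributional in $p$ and is paired with $fp_0^{2\theta}$ through the duality \eqref{eq1.2.16}. The coercivity of $\sigma_g$ combined with the $W^{-1}_{2,\theta}$--$W^1_{2,\theta}$ duality yields $\nabla_p f\in L_{2,\theta}$, which together with the equation gives $(\partial_t+\tfrac{p}{p_0}\cdot\nabla_x)f\in L_2((0,T)\times\Omega)\,W^{-1}_{2,\theta}(\bR^3)$; this is exactly the regularity needed to invoke the trace theory of Beals--Protopopescu \cite{BP_87} (or Ukai \cite{U_86}) to place $fp_0^{2\theta}$ into the admissible class of Lemma \ref{lemma E.A.1} and to give meaning to $f|_{\gamma_\pm}$ so that the SRBC-driven cancellation can be carried out. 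Once this weighted trace theory is in hand, the limiting argument is routine, and both uniqueness and the energy identity \eqref{eq27.4.12} follow.
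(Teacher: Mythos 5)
The paper itself supplies no proof here; it simply cites Proposition~5.13 in \cite{VML}, so there is no in-paper argument to compare your sketch against. That said, your outline is consistent with the standard approach and with the toolbox those authors developed: coercivity of $\sigma_g$ from \eqref{eq27.4.16} together with the bound \eqref{eqB.2.1} (you wrote $\sigma_g\ge\tfrac12\varepsilon_\star\bm{1}_3$; the computation actually gives $\tfrac32\varepsilon_\star\bm{1}_3$, a harmless numerical slip), mirror extension to produce SRBC-compatible smooth approximants, the weighted trace theory of \cite{BP_87}/\cite{U_86}, and cancellation of the kinetic boundary integrals via the measure-preserving change of variables $p\mapsto R_xp$.

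A few refinements are worth noting. First, you do not need velocity averaging or strong $L_2$ compactness of the $f^{(n)}$ for existence: the equation is linear with coefficients depending only on the fixed $g$, so weak convergence of $f^{(n)}$ and $\nabla_p f^{(n)}$ suffices to pass to the limit in the weak formulation, and weak lower semicontinuity handles the quadratic form in the energy estimate. Second, for uniqueness the cross term $f(\nabla_p f)^T\sigma_g\nabla_p(p_0^{2\theta})$ you would face in the weighted estimate can be sidestepped entirely by taking $\theta=0$; the diffusion term is then a nonnegative quadratic form, $f\equiv0$ follows at once, and since the solution class in Definition~\ref{definition 27.1} does not depend on $\theta$, this settles uniqueness for every $\theta\ge0$. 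Third — and this is where the substance lies — you have correctly identified the obstacle: a finite energy solution only satisfies $(\partial_t+\tfrac{p}{p_0}\cdot\nabla_x)f\in L_2((0,T)\times\Omega)W^{-1}_{2,\theta}(\bR^3)$, not $L_2(\Sigma^T)$, so Lemma~\ref{lemma E.A.1} does not apply as stated, and extending the trace theory to this weaker class (via approximation and weight truncation, as you propose, or as in Lemma~\ref{lemma 30.1}) is the nontrivial content of the result. Your sketch is sound but leaves this step at the level of an announced plan.
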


\begin{lemma}[Green's formula for finite energy solutions]
\label{lemma 30.1}
We invoke the assumptions of Lemma \ref{lemma 27.4} and let $f$ be the finite energy solution to \eqref{eq27.4.18}--\eqref{eq27.4.19}. Then, the following assertions hold.

$(i)$ (Green's identity) There exist  unique measurable functions $f_{\pm}$ on $\Sigma^T_{\pm}$, respectively, such that the following hold:
\begin{itemize}
    \item 
    \begin{align}
        \label{eq30.1.8}
                \int_{\Sigma^T_{\pm}} \frac{|p \cdot n_x|^2}{p_0^2} f^2_{\pm} \, dS_x dp dt  < \infty,  
      \end{align}
\item    for any function $\phi$ such that
\begin{align}
    \label{eq30.1.4}
&       (\partial_t + \frac{p}{p_0} \cdot \nabla_x)  \phi,  \nabla_p \phi \in L_2 (\Sigma^T), \quad \phi_{\pm} \in L_{2} (\Sigma^T_{\pm}),
\end{align}
one has for all $\tau \in [0, T]$,
\begin{align}
   \label{eq30.1.0}
    &- \int_{ \Sigma^{\tau} }  f (\partial_t \phi + \frac{p}{p_0} \cdot \nabla_x  \phi) \, dz  +  \int_{\Omega \times \bR^3} \big(f (\tau, x, p) \phi (\tau, x, p) -   f_0 (x, p) \phi (0, x, p)\big) \, dx dp \\
    & + \int_{ \Sigma^{\tau}_{+} } f_{+} \phi_{+}  \frac{|p \cdot n_x|}{p_0} \, dp dS_x dt
    - \int_{ \Sigma^{\tau}_{-} } f_{-} \phi_{-}  \frac{|p \cdot n_x|}{p_0} \, dp dS_x dt \notag\\
    &+ \int_{ \Sigma^{\tau} }(\nabla_p \phi)^T \sigma_g \nabla_p f \, dz
   = \int_{ 0 }^{\tau} \int_{\Omega} \langle \eta, \phi \rangle \, dx  dt. \notag
\end{align}
\end{itemize}

$(ii)$ (energy identity) Let $\phi (x, p)  \in W^1_{\infty} (\Omega \times \bR^3)$ be a function such that 
\begin{align}
    \label{eq30.1.10}
    |\phi (x, p)| \lesssim_{\Omega} \frac{|p \cdot n_x|}{p_0}, \, \, p \in \bR^3, x \in \partial \Omega.
\end{align}
Then, for all $0 \le s < t \le T$, we have
\begin{align}
   \label{eq30.1.7}
    & \frac{1}{2} \int_{\Omega \times \bR^3} \big(f^2 (t, x, p)-   f^2 (s, x, p) \big) \phi (x, p)\, dx dp \\
    & +  \frac{1}{2} \int_{ s }^t \int_{\gamma_{+} } f^2_{+} \phi \frac{|p \cdot n_x|}{p_0} \, dS_x dp  d\tau
    -  \frac{1}{2} \int_{ s }^t \int_{ \gamma_{-} } f^2_{-}\phi   \frac{|p \cdot n_x|}{p_0} \, dS_x  dp d\tau \notag\\
    &+\int_{ s }^t \int_{ \Omega \times \bR^3 }  \big(-\frac{1}{2} f^2 \frac{p}{p_0} \cdot \nabla_x \phi + (\nabla_p f)^T \sigma_g \nabla_p (f \phi)\big) \, dx dp d\tau
   = \int_{ s }^t \int_{ \Omega  } \langle \eta, f \phi \rangle \, dx d\tau. \notag
\end{align}
\end{lemma}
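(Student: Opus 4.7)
The plan is to first establish part $(i)$ by constructing the traces $f_{\pm}$ with the weighted integrability bound \eqref{eq30.1.8}, and then to deduce the energy identity in part $(ii)$ by testing the equation against $f\phi$, which is admissible precisely because \eqref{eq30.1.10} forces $f\phi$ to vanish on the grazing set at the rate $|p\cdot n_x|/p_0$.

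For part $(i)$, I would argue as follows. Since $f$ is a finite energy solution (see Definition \ref{definition 27.1}), the function $f$ lies in $L_2((0,T)\times\Omega) W^1_2(\bR^3)$; and because the Landau equation \eqref{eq27.4.18} may be rewritten as
\begin{equation*}
(\partial_t + \tfrac{p}{p_0}\cdot\nabla_x) f = \nabla_p\cdot(\sigma_g\nabla_p f) + \eta
\end{equation*}
with the r.h.s.\ in $L_2((0,T)\times\Omega) W^{-1}_2(\bR^3)$ (thanks to the uniform ellipticity \eqref{eq27.4.17} and the smallness assumption \eqref{eq27.4.16}), the trace theory of Beals--Protopopescu \cite{BP_87} and Ukai \cite{U_86} (invoked in \eqref{eq1.2.25}) guarantees that unique measurable traces $f_\pm$ on $\Sigma^T_\pm$ exist in a distributional sense, and that the identity \eqref{eq30.1.0} holds for every test function $\phi$ in \eqref{eq30.1.4} \emph{provided} the boundary integrals there are finite. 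The key point, and the main obstacle, is to verify the quantitative weighted bound \eqref{eq30.1.8}; once this is done, the boundary terms in \eqref{eq30.1.0} make sense for any $\phi$ satisfying \eqref{eq30.1.4}. To obtain \eqref{eq30.1.8}, I would regularize $f$ (for instance, by a backward-in-time mollification $f_\delta = f\ast_t \eta_\delta$ with $\eta_\delta\in C^\infty_0((-\delta,0))$), for which Green's formula \eqref{eqE.A.1.1} in Lemma \ref{lemma E.A.1} is directly applicable. Applying the energy identity with the even multiplier $\zeta(x,p) = \sqrt{J}\, (p\cdot\nu(x)) 1_{p\cdot\nu > 0}/p_0$ used in \eqref{eqE.2.14}--\eqref{eqE.2.3} of Lemma \ref{lemma E.2} (or more simply, $\phi = |p\cdot\nu(x)|/p_0$ with $\nu$ a Lipschitz extension of $n_x$ constructed as in the proof of Lemma \ref{lemma 30.1} itself), I would obtain an $L_2$ bound on $(|p\cdot n_x|/p_0) f_{\pm,\delta}$ uniform in $\delta$, by moving all ``interior'' terms to the right-hand side and using that $(\nabla_p f_\delta)^T \sigma_g \nabla_p(f_\delta\phi)$ is controlled by Cauchy--Schwarz via the coercivity of $\sigma_g$. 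The specular reflection condition ensures that the $\gamma_+$ and $\gamma_-$ boundary contributions have the \emph{same sign} after applying $p\mapsto R_x p$ (since $\phi$ is even in $p\cdot n_x$), so no cancellation issue arises. Passing to the limit $\delta\to 0$, the weak convergence of $f_\delta$ and lower-semicontinuity of the boundary norm give \eqref{eq30.1.8}, and uniqueness of the traces follows from the validity of \eqref{eq30.1.0} against a dense set of test functions.

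For part $(ii)$, once the traces $f_\pm$ have been constructed and satisfy \eqref{eq30.1.8}, the natural candidate test function is $\phi \cdot f$ itself: by the assumption \eqref{eq30.1.10}, one has $|\phi(x,p)f(t,x,p)|_{\gamma_\pm} \lesssim (|p\cdot n_x|/p_0) |f_\pm|$, so Cauchy--Schwarz and \eqref{eq30.1.8} ensure that $(f\phi)_\pm \in L_2(\Sigma^T_\pm)$, making $f\phi$ an admissible test function in \eqref{eq30.1.0}. To justify the differentiation $(\partial_t + \frac{p}{p_0}\cdot\nabla_x)(f\phi) = \phi(\partial_t+\frac{p}{p_0}\cdot\nabla_x) f + f\,\frac{p}{p_0}\cdot\nabla_x\phi$ rigorously, I would again regularize in $t$, plug $f_\delta\phi$ into \eqref{eq30.1.0}, and use the chain rule
\begin{equation*}
f_\delta \big(\partial_t + \tfrac{p}{p_0}\cdot\nabla_x\big)(f_\delta\phi) = \tfrac{1}{2}\big(\partial_t + \tfrac{p}{p_0}\cdot\nabla_x\big)(f_\delta^2 \phi) + \tfrac{1}{2} f_\delta^2\, \tfrac{p}{p_0}\cdot\nabla_x\phi,
\end{equation*}
which produces exactly the $\partial_t$ boundary-in-time term, the $\gamma_\pm$ flux terms (with the sign and factor $1/2$ as in \eqref{eq30.1.7}), and the drift correction $-\tfrac{1}{2} f^2 \frac{p}{p_0}\cdot\nabla_x\phi$. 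The diffusion term from \eqref{eq30.1.0} becomes $(\nabla_p f)^T \sigma_g \nabla_p(f\phi)$ after replacing $\phi$ in the bilinear form with $f_\delta\phi$ and letting $\delta\to 0$; here I use that $\sigma_g\nabla_p f \in L_2((0,T)\times\Omega\times\bR^3)$ (which follows from the energy identity \eqref{eq27.4.12} of Lemma \ref{lemma 27.4}) so no difficulty arises at the cost of a routine continuity argument in $\delta$.

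The principal technical obstacle is the rigorous passage to the limit $\delta\to 0$ in the boundary integrals: one must ensure that $f_{\pm,\delta}\to f_\pm$ in the weighted space $L_2(\Sigma^T_\pm; (|p\cdot n_x|/p_0)\, dS_x dp dt)$. This is where the weight in \eqref{eq30.1.8} plays a crucial role, because it provides the a priori compactness needed for the weak-$\ast$ (in fact strong, after a diagonal extraction) convergence of the traces. Once this convergence is in hand, both $(i)$ and $(ii)$ follow by combining the identities for $f_\delta$ with standard density arguments to extend from smooth $\phi$ to the full class specified in \eqref{eq30.1.4} and \eqref{eq30.1.10}, respectively.
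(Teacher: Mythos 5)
Your overall strategy---construct the traces $f_\pm$ via approximation, establish the weighted bound \eqref{eq30.1.8}, then test against $f\phi$ for part $(ii)$---matches the paper's. But there is a genuine gap in how you propose to produce the approximating sequence, and a related confusion about the multiplier.

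You propose a backward-in-time mollification $f_\delta = f *_t \eta_\delta$ and claim Lemma \ref{lemma E.A.1} is ``directly applicable'' to $f_\delta$. This does not work. Membership in $E_{2,\theta}(\Sigma^T)$ (and hence applicability of \eqref{eqE.A.1.1}) requires $(\partial_t + \frac{p}{p_0}\cdot\nabla_x) u \in L_{2,\theta}(\Sigma^T)$, i.e.\ genuine $L_2$-integrability in \emph{all} variables, \emph{and} one of the a priori trace bounds that make up \eqref{eq30.1.8} itself. For a finite energy solution, $(\partial_t + \frac{p}{p_0}\cdot\nabla_x)f = \nabla_p\cdot(\sigma_g\nabla_p f) + \eta$ lies only in $L_2((0,T)\times\Omega)W^{-1}_2(\bR^3)$ because of the divergence structure; temporal mollification improves nothing in the $p$ variable, so $f_\delta$ is no closer to being in $E_{2,\theta}$ than $f$ was, and the trace condition for $E_{2,\theta}$ is precisely what you are trying to prove. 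Your argument is circular at this step. The paper instead approximates the \emph{data} $(f_0,\eta_0,\eta_1)$ by smooth compactly supported $(f_{0,n},\eta_{0,n},\eta_{1,n})$ and invokes the regularity theory of \cite{VML} (Propositions 5.4 and 5.6) to obtain \emph{strong} solutions $f_n$ that are bounded with polynomial weight in $p$. For these, $\nabla_p\cdot(\sigma_g\nabla_p f_n) \in L_2(\Sigma^T)$ and \eqref{eq30.1.2} holds, so \eqref{eqE.A.1.1} is legitimately available for $u = f_n - f_m$; the trace estimate \eqref{eq30.1.6} and passage to the limit then go through. That extra regularity in $p$ coming from the data-smoothing is what makes the whole argument non-circular.

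Relatedly, your choice of multiplier $|p\cdot\nu(x)|/p_0$ (which you correctly describe as even in $p\cdot n_x$) would not extract any boundary information from a finite energy solution: multiplying $f_\delta$ by an even-in-$p\cdot n_x$ weight preserves SRBC, so the product is an admissible test function in the variational formulation \eqref{eq27.2}, in which the boundary integrals are absent by construction. The multiplier that actually isolates the outgoing trace is $\zeta(x,p)= \frac{p\cdot\nu(x)}{p_0}\,1_{p\cdot\nu(x)>0}$ (as in \eqref{eqE.2.14}), which is \emph{not} SRBC-symmetric (contrary to your parenthetical remark about it being ``even''). Because $\zeta u$ is not an SRBC test function, one cannot plug it into the finite-energy variational identity; one must use the direct Green's formula \eqref{eqE.A.1.1}, which brings us back to needing the strong-solution regularity described above. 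So the two weaknesses in your proposal are really one: without the data-approximation step, neither the trace bound \eqref{eq30.1.8} nor the use of a non-SRBC multiplier can be justified.
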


\begin{remark}
Similar results hold for a finite energy solution to a steady counterpart of \eqref{eq27.4.18}--\eqref{eq27.4.19}.
\end{remark}

\begin{remark}
\label{remark C.6.1}
The proof of the above lemma involves a weighted trace estimate, which is similar to a local trace estimate for elements of the so-called kinetic Sobolev space established in Proposition 4.3 of \cite{S_22}. In fact, the argument of the aforementioned paper involves a similar test function. 
\end{remark}

\begin{proof}[Proof of Lemma \ref{lemma 30.1}]
$(i)$ The uniqueness of traces is standard.
To prove the existence, we use an approximation argument.

Let 
$\eta_0, \eta_1 \in L_2 (\Sigma^T)$ be functions such that
$$
    \eta = \eta_0 + \nabla_p \cdot \eta_1,
$$
where $\eta_1$ is a vector field.
 Let $f_{0, n} \in C^{\infty}_0 (\Omega \times \bR^3), n \ge 1,$ and $\eta_{0, n}, \eta_{1, n}  \in C^{\infty}_0 (\Sigma^T), n \ge 1, j=1, 2,$ be  sequences such that
\begin{align}
    \label{eq30.1.17}
    f_{0, n} \to f_0 \,\, \text{in} \,\, L_2 (\Omega \times \bR^3), \quad  \eta_{0, n} \to \eta_0, \quad \eta_{1, n} \to \eta_1 \,\, \text{in} \,\, L_2 (\Sigma^T) \, \text{as} \, n \to \infty.
\end{align}
Then, by Proposition 5.4 in \cite{VML}, there exists a unique strong solution $f_n$ to the problem 
   \begin{align}
          \label{eq30.1.1}
 &	(\partial_t + \frac{p}{p_0} \cdot \nabla_x) f_{n}  - \nabla_p \cdot (\sigma_g \nabla_p f_{n})  =  \eta_{0, n} + \nabla_p \cdot  \eta_{1, n}, \\
 &f_{n} (t, x, p) = f_{n} (t, x, R_x p), \, z \in \Sigma^T_{-}, \quad f_n (0, \cdot) = f_{0, n} (\cdot). \notag
  \end{align}
  Furthermore, by Proposition 5.6 in \cite{VML}, 
  $$
    f_n \in L_{\infty, \theta} (\Sigma^T) \cap  L_{\infty} ((0, T)) C^{\varkappa/3, \varkappa}_{x, p} (\Omega \times \bR^3)
  $$
  for any $\theta > 0$. By this, we have
 \begin{align}
          \label{eq30.1.2}
    \int_{\Sigma^T_{\pm}} |(f_{n})_{\pm}|^2 \, \frac{|p \cdot n_x|}{p_0} dS_x dp dt < \infty.
 \end{align}
 Since  \eqref{eq30.1.2} holds,  a variant of energy identity   \eqref{eqE.A.1.1} is applicable.
 Furthermore, by the definition of $\sigma_g$ in \eqref{eq27.4.30}, the bound \eqref{eqB.2.1} with $k=0$, and \eqref{eq27.4.16}--\eqref{eq27.4.17}, 
 $$
    \sigma_g \ge \varepsilon_{\star} \bm{1}_3.
 $$
 Then, by a standard energy argument, we have
 \begin{align}
    \label{eq30.1.5}
    f_n \to f \, \text{in} \, C ([0, T]) L_2 (\Omega \times \bR^3) \cap L_2 ((0, T) \times \Omega) W^1_2 (\bR^3).
 \end{align}

Next, we derive an estimate for the kinetic traces of $u := f_n  - f_m$.
  To this end, we fix any  Lipschitz vector field $\nu$   such that $\nu (x) = n_x$ on $\partial \Omega$. Such a vector field can be constructed as follows. First, we define the signed distance function
$$
    \delta (x) =
    \begin{cases}
      \text{dist} (x, \partial \Omega), x \in \overline{\Omega}, \\
   -  \text{dist} (x, \partial \Omega), x \not \in \Omega.
    \end{cases}
$$
 Since $\Omega$ is a $C^{1, 1}$ domain,  $\delta$ is a $C^{1, 1}$ function in a sufficiently small neighborhood of $\partial \Omega$.
 Furthermore, there exists an open set $U$ containing $\partial \Omega$ such that $\nabla_x \delta \neq 0$.
 We set
 $$
        \nu (x)= -\frac{\nabla_x \delta (x)}{|\nabla_x \delta (x)|} \phi (x),
 $$
  where $\phi$ is a smooth cutoff function supported in $U$ such that $\phi = 1$ on $\partial \Omega$.
 It follows from the above discussion that $\nu$ is a Lipschitz function on $\bR^3$.
Furthermore, we denote
 \begin{equation}
    \label{eqE.2.14}
         \zeta (x, p) =  \frac{p}{p_0} \cdot \nu (x) 1_{ p \cdot \nu (x) > 0}
  \end{equation}
and note that 
$$
    ||\zeta| + |\nabla_{x, p} \zeta||_{L_{\infty} (\Omega \times \bR^3) } \lesssim_{\Omega} 1.
$$

Next, since $(\partial_t + \frac{p}{p_0} \cdot \nabla_x) [u, u \zeta] \in L_2 (\Sigma^T)$ and \eqref{eq30.1.2} is valid, we may use  \eqref{eqE.A.1.1}. We obtain
 \begin{align*}
  & \frac{1}{2} \int_{\Omega \times \bR^3} u^2 (\tau, x, p)  \zeta (x, p) \, dx dp
  + \frac{1}{2}\int_{\Sigma^{\tau}_{+}}   u^2_{+} \frac{|p \cdot n_x|^2}{p_0^2} \, dS_x dp dt  \\
      &  = \frac{1}{2} \int_{\Omega \times \bR^3} (f_{0, n} (x, p) - f_{0, m} (x, p))^2 \zeta (x, p) \, dx dp
      + \frac{1}{2}\int_{\Sigma^{\tau}}  u^2 \frac{p}{p_0} \cdot \nabla_x \zeta \, dz\\
      & -  \int_{\Sigma^{\tau}}  (\nabla_p u)^T \sigma_g \nabla_p (u\zeta) \, dz  + \int_{\Sigma^{\tau}} (\eta_{0, n} - \eta_{0, m}) u \zeta
      -  \nabla_p (u \zeta)  \cdot (\eta_{1, n} - \eta_{1, m}) \, \, dz.
\end{align*}
Using  the Cauchy-Schwarz inequality, we conclude that 
 \begin{align}
    \label{eq30.1.6}
& \int_{\Sigma^{\tau}_{+}}  \frac{|p \cdot n_x|^2}{p_0^2} u^2_{+} \, dS_x dp dt  \\ 
& \lesssim  \|f_{0, n} - f_{0, m}\|^2_{ L_2 (\Omega \times \bR^3) }
+  \sum_{j=0}^1 \|\eta_{j, n} - \eta_{j, m}\|^2_{L_2 (\Sigma^T) }
+ \|u\|^2_{L_2  ((0, T) \times \Omega) W^1_2 (\bR^3)} . \notag
\end{align}
Similarly, we obtain the same estimate for the trace  $u_{-}$.
Hence, by \eqref{eq30.1.6} and the convergences \eqref{eq30.1.17} and \eqref{eq30.1.5}, we conclude that $(f_n)_{\pm}$ converges to some function $f_{\pm}$ satisfying \eqref{eq30.1.8}.

Finally, to show that \eqref{eq30.1.0} holds, we note that $f \in E_{2} (\Sigma^T)$ (see Lemma \ref{lemma E.A.1}) due to \eqref{eq30.1.2}, and for any  $\phi$ satisfying \eqref{eq30.1.4}, one has $\phi \in E_{2} (\Sigma^T)$. Then,  for each $n$ and $\tau > 0$, we have 
\begin{align}
   \label{eq30.1.11}
    &- \int_{ \Sigma^{\tau} }  f_n (\partial_t \phi + \frac{p}{p_0} \cdot \nabla_x \phi) \, dz  +  \int_{\Omega \times \bR^3} \big(f_n (\tau, x, p) \phi (\tau, x, p) -   f_{0, n} (x, p) \phi (0, x, p)\big) \, dx dp \\
    & + \int_{ \Sigma^{\tau}_{+} } (f_n)_{+} \phi_{+}  \frac{|p \cdot n_x|}{p_0} \, dS_x dp dt
    - \int_{ \Sigma^{\tau}_{-} } (f_n)_{-} \phi_{-}  \frac{|p \cdot n_x|}{p_0} \, dS_x dp  d\tau \notag\\
    &+ \int_{ \Sigma^{\tau} }(\nabla_p \phi)^T \sigma_g \nabla_p f_n \, dz
   = \int_{ \Sigma^{\tau} } \langle \eta_n, \phi \rangle \, dz. \notag
\end{align}
Thanks to \eqref{eq30.1.5}, the second condition in \eqref{eq30.1.4}, and the trace estimate \eqref{eq30.1.6}, we may pass to the limit in \eqref{eq30.1.11} and obtain the desired identity \eqref{eq30.1.0}.


$(ii)$ First, we note that due to \eqref{eq30.1.2} and \eqref{eq30.1.10}, one can apply a variant of the energy identity \eqref{eqE.A.1.1} with $f_n$ and $f_n \phi$ in place of $f$ and $\phi$, respectively, and conclude that \eqref{eq30.1.7} holds with $f$ replaced with $f_n$. 
To pass to the limit in the aforementioned identity, we note that for $u = f_n  - f_m$, by the condition \eqref{eq30.1.10} and the estimate \eqref{eq30.1.6}, we have
$$
    \int_{\Sigma^T_{\pm}} u^2_{ \pm } |\phi| \frac{|p \cdot n_x|}{p_0} \, dS_x dp dt
    \lesssim_{\Omega} \int_{\Sigma^T_{\pm}} u^2_{ \pm }   \frac{|p \cdot n_x|^2}{p_0^2} \, dS_x dp dt \to 0
$$
as $n \to \infty$.  The convergence of the remaining terms follows from \eqref{eq30.1.17} and \eqref{eq30.1.5}. Thus, the desired identity \eqref{eq30.1.7} is valid.
\end{proof}

\section{Steady $S_p$ estimate for a linear relativistic Fokker-Planck equation}

\label{appendix D}

In this section, all the physical constants are set to $1$ for the sake of simplicity.

The first two results are taken from \cite{VML}.

\begin{theorem}[steady $S_r$ estimate in the presence of SRBC, cf. Proposition 5.11 in \cite{VML}]
\label{theorem 14.C.6}
 We invoke the assumptions of Lemma \ref{lemma 27.4}.
 Let $r \in [2, \infty)$ and $\kappa \in [0, 1)$ be numbers.



Then, for sufficiently small $\varepsilon_{\star} \in (0, 1)$ independent of $r, \Omega, \varkappa$, and $\kappa$, there exists a constant
$\theta = \theta (r, \varkappa, \kappa) > 0$
such that if, additionally,
\begin{equation}
    \label{eq14.C.6.1}
    	\eta \in L_{2, \theta} (\Omega \times \bR^3)  \cap   L_{r, \theta} (\Omega \times \bR^3),
     \end{equation}
and if $f$ is a strong solution to 
\begin{align*}
    	(\frac{p}{p_0} \cdot \nabla_x) f  - \nabla_p \cdot (\sigma_g \nabla_p f) = \eta
\end{align*}
with the SRBC (see Definition \ref{definition 3.3}), then, one has
\begin{align}
    \label{eq14.C.6.3}
f \in S_{2, \kappa \theta} (\Omega \times \bR^3) \cap S_{r, \kappa \theta}  (\Omega \times \bR^3),
 \end{align}
 and,
 \begin{align}
    \label{eq14.C.6.6}
	&\|f\|_{ S_{2, \kappa \theta} (\Omega \times \bR^3) } + \|f\|_{ S_{r, \kappa \theta} (\Omega \times \bR^3) } \\
& \lesssim_{\varkappa, \kappa,  r,    \theta, \Omega} \|\eta\|_{  L_{2, \theta}   (\Omega \times \bR^3) } +   \|\eta\|_{  L_{r, \theta}   (\Omega \times \bR^3) }   + \|f\|_{  L_{2, \theta}   (\Omega \times \bR^3) }. \notag
 \end{align}

Furthermore, in the case when $r < 6$,  we have
    \begin{align}
        \label{eq14.C.6.7}
        \|f\|_{L_{r_1, \kappa \theta} (\Omega \times \bR^3) } +   \|\nabla_p f\|_{L_{r_2, \kappa \theta} (\Omega \times \bR^3) } \lesssim \text{r.h.s. of \eqref{eq14.C.6.6}},
    \end{align}
    where $r_1, r_2 > 1$ are  numbers satisfying the relations
        \begin{align}
        \label{eq14.C.6.10}
        \frac{1}{r_1} > \frac{1}{r} - \frac{1}{6},  \quad   \frac{1}{r_2} >  \frac{1}{r}  - \frac{1}{12}.
        \end{align}
In the case when $r \in (6,  12)$,
\begin{align}
    \label{eq14.C.6.8}
    \|f\|_{ L_{\infty, \kappa \theta} (\Omega \times \bR^3)  }  + \|\nabla_p f\|_{L_{r_2, \kappa \theta} (\Omega \times \bR^3) } \lesssim \text{r.h.s. of \eqref{eq14.C.6.6}},
\end{align}
where $r_2$ satisfies \eqref{eq14.C.6.10}.
Finally, in the case when $r > 12$,
 \begin{align}
    \label{eq14.C.6.9}
& \|[f, \nabla_p f]\|_{ L_{\infty, \kappa \theta} (\Omega\times \bR^3)  } +	\|[f, \nabla_p f]\|_{  C^{\alpha/3,  \alpha}_{x, p} (\Omega \times \bR^3) } \\
& \lesssim \text{r.h.s. of \eqref{eq14.C.6.6}},  \notag
\end{align}
where $\alpha \in  (0,  1 - \frac{12}{r})$. In  \eqref{eq14.C.6.7}, \eqref{eq14.C.6.8}, and \eqref{eq14.C.6.9}, one needs to take into account the dependence of  the constants on the right-hand sides  on the additional parameters  $r_1, r_2$, and $\alpha$.
\end{theorem}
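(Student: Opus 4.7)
\smallskip

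My plan is to follow the boundary-flattening and mirror-extension strategy already used in Section \ref{section 10} (see the proof of Lemma \ref{lemma 4.1}), which reduces the problem to a non-relativistic kinetic Fokker–Planck equation on the whole space, for which classical $S_r$ estimates of Bouchut type are available. The first step is to fix a finite boundary atlas and a partition of unity $\{\chi_k\}$ together with a Littlewood–Paley-type decomposition $\{\xi_n\}$ in the momentum variable, and set $f_{k,n} = f \chi_k \xi_n$; since the SRBC is preserved under the spherically symmetric cutoff $\xi_n$, each $f_{k,n}$ satisfies the same equation up to commutator terms that are lower order and decay rapidly in $n$ by \eqref{eq4.1.26}. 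Near the boundary I apply the special diffeomorphism $\psi$ flattening $\partial\Omega$, together with the change of momentum variables $w = (D\psi) p$ and $v = \mathcal W(y,w)$, producing (after multiplication by Jacobians) a function $\mathcal{\dtilde U}$ on $\bR^6$ satisfying an equation of the form
\begin{equation*}
v \cdot \nabla_y \mathcal{\dtilde U} - \nabla_v \cdot (\mathfrak A \nabla_v \mathcal{\dtilde U}) = \mathcal H + \nabla_v \cdot (\mathbb X \mathcal{\dtilde U}) + \nabla_v \cdot (\mathbb G \mathcal{\dtilde U}),
\end{equation*}
exactly as in \eqref{eq4.1.8}; the smallness of $\varepsilon_\star$ ensures uniform ellipticity of $\mathfrak A$ in the sense $2^{-\beta n}\bm 1_3 \lesssim \mathfrak A \lesssim \bm 1_3$ (cf.\ \eqref{eq4.1.35}).

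Next, I would apply the whole-space steady non-relativistic $S_r^N$ estimate (as in Lemma \ref{lemma 32.1}) to $\mathcal{\dtilde U}$. The right-hand side $\mathcal H$ and the lower-order geometric terms $\nabla_v \cdot (\mathbb X \mathcal{\dtilde U})$, $\nabla_v \cdot (\mathbb G \mathcal{\dtilde U})$ are estimated as follows: $\mathcal H$ is bounded in $L_{r,\theta}$ directly in terms of $\eta$ and commutator terms involving $\nabla_p f$, while the geometric coefficients $\mathbb X, \mathbb G$ are uniformly bounded (with a harmless $2^{\beta n}$ weight from the change of variables). Absorbing these into the principal part at the cost of an additional $\|f\|_{L_{2,\theta}}$ term on the right-hand side, and then undoing the changes of variables, yields \eqref{eq14.C.6.6} for each frequency piece; summing in $n$ costs a polynomial weight in $p_0$, which is the origin of the weight parameter $\theta = \theta(r,\varkappa,\kappa)$ and the contraction factor $\kappa < 1$. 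The interior pieces are easier since no extension is needed.

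The embedding statements \eqref{eq14.C.6.7}, \eqref{eq14.C.6.8}, \eqref{eq14.C.6.9} then follow by applying anisotropic Sobolev/Morrey embeddings for the $S_r^N$ space on $\bR^6$ (in scale $1:2:3$ between $x$, $p$, $D_p^2$) to $\mathcal{\dtilde U}$, and transferring the results back to $\Omega \times \bR^3$ using the bi-Lipschitz nature of the composite map $\Upsilon_n$ (cf.\ \eqref{eq4.1.9}). The Hölder regularity in the regime $r > 12$ comes from the standard embedding $S_r^N \hookrightarrow C^{\alpha/3,\alpha}_{y,v}$ for $\alpha < 1 - 12/r$.

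The main obstacle I anticipate is the same one encountered in Section \ref{section 10}: the geometric drift coefficients $\mathbb X$ and $\mathbb G$ arising from the mirror extension are only piecewise smooth, with a jump discontinuity across $\{y_3 = 0\}$. Unlike the setting of Lemma \ref{lemma 4.1}, where $W^{1/3-}_{3,\mathrm{loc}}$ regularity of these coefficients was exploited to gain extra spatial regularity via the velocity averaging lemma, here I need only to absorb them into the $L_{r,\theta}$ right-hand side, which is much more forgiving — they are bounded with at worst a polynomial $p_0$-weight, so a sufficiently large choice of $\theta$ (and the freedom to contract to $\kappa\theta < \theta$) suffices. The delicate point is the careful bookkeeping of the weight losses at each stage (flattening, mirror extension, change of variables $w \to v$, and whole-space estimate), which determines the explicit dependence $\theta = \theta(r, \varkappa, \kappa)$; this is done by tracking the exponents in the bounds \eqref{eq4.1.9} and their analogues throughout the argument.
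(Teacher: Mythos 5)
The paper does not re-prove this statement: it is quoted verbatim from Proposition~5.11 of \cite{VML} (``The first two results are taken from \cite{VML}''), and the proof there is exactly the boundary-flattening, mirror-extension, dyadic-in-$p$ localization strategy you describe, followed by the whole-space $S_r^N$ estimate and anisotropic kinetic Sobolev embeddings. Your sketch therefore follows the same route as the cited proof, with only the standard interpolation step (absorbing the first-order geometric drift $\nabla_v\cdot(\mathbb X\,\mathcal{\dtilde U})$ via $\|\nabla_v u\|_{L_r}\le\epsilon\|D^2_v u\|_{L_r}+C_\epsilon\|u\|_{L_r}$) left implicit in the phrase ``absorbing into the principal part''.
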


\begin{corollary}[embedding in a bounded domain, cf. Corollary 5.12 in \cite{VML}]
\label{corollary 14.C.8}
Let $r \in [2, \infty)$, $\kappa \in (0, 1)$, $\theta > 0$, and $f \in S_{r, \theta} (\Omega \times \bR^3)$ be a function satisfying the SRBC.
    Then, for sufficiently large $\theta = \theta (r, \varkappa, \kappa) > 0$, the following assertions hold.

    $(i)$ If $r \in [2, 6)$, for $r_1$ and $r_2$ satisfying \eqref{eq14.C.6.10}, one has
    \begin{align}
        \label{eq14.C.8.1}
       & \|f\|_{L_{r_1, \kappa \theta} (\Omega \times \bR^3) } +   \|\nabla_p f\|_{L_{r_2, \kappa \theta} (\Omega \times \bR^3) }\\
      &  \lesssim_{\Omega, \theta,  \kappa, \varkappa,   r, r_1, r_2}
         \|f\|_{ S_{r, \theta} (\Omega \times \bR^3) }. \notag
    \end{align}

    $(ii)$ If $r \in (6, 12)$, then for $r_2$ satisfying \eqref{eq14.C.6.10},
    \begin{align}
        \label{eq14.C.8.2}
       & \|f\|_{L_{\infty, \kappa \theta} (\Omega \times \bR^3) } +   \|\nabla_p f\|_{L_{r_2, \kappa \theta} (\Omega \times \bR^3) } \\
       & \lesssim_{\Omega, \theta, \kappa,  \varkappa,  r,  r_2}
        \|f\|_{ S_{r, \theta} (\Omega \times \bR^3) }. \notag
    \end{align}

    $(iii)$ If $r > 12$,  then for any $\alpha \in (0,  1 - \frac{12}{r})$,
    \begin{align}
        \label{eq14.C.8.20}
       & \|[f, \nabla_p f]\|_{L_{\infty, \kappa \theta} (\Omega \times \bR^3) } + \|[f, \nabla_p f]\|_{  C^{\alpha/3,  \alpha}_{x, p} (\Omega \times \bR^3) }   \\
       & \lesssim_{\Omega, \theta, \varkappa,  r, \alpha}   \|f\|_{ S_{r, \theta} (\Omega \times \bR^3) }. \notag
    \end{align}
\end{corollary}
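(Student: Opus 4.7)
The plan is to reduce Corollary \ref{corollary 14.C.8} to the steady regularity Theorem \ref{theorem 14.C.6} by treating the given function $f$ as a strong solution to a linear Landau-type equation with prescribed right-hand side. Concretely, given $f\in S_{r,\theta}(\Omega\times\bR^3)$ satisfying the SRBC, set $g\equiv 0$ so that the associated diffusion matrix is $\sigma(p)=2\int_{\bR^3}\Phi(P,Q)J(q)\,dq$, and define
\begin{equation*}
\eta^{\pm} := \tfrac{p}{p_0^{\pm}}\cdot\nabla_x f^{\pm} - \nabla_p\cdot\bigl(\sigma\,\nabla_p f^{\pm}\bigr).
\end{equation*}
By the definition of $S_{r,\theta}$ (see \eqref{eq1.2.7.0}--\eqref{eq1.2.7}), all of $\tfrac{p}{p_0^{\pm}}\cdot\nabla_x f^{\pm}$, $\nabla_p f$, and $D_p^2 f$ belong to $L_{r,\theta}$. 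Since $\sigma$ and $\nabla_p\sigma$ are bounded on $\bR^3$ (Lemma \ref{lemma B.2} with $k=0,1$ applied to $g\equiv 0$), expanding $\nabla_p\cdot(\sigma\nabla_p f) = (\nabla_p\sigma):\nabla_p f + \sigma:D_p^2 f$ gives
\begin{equation*}
\|\eta\|_{L_{r,\theta}(\Omega\times\bR^3)} \lesssim \|f\|_{S_{r,\theta}(\Omega\times\bR^3)}.
\end{equation*}
Then $f$ is tautologically a strong solution of $\frac{p}{p_0^{\pm}}\cdot\nabla_x f - \nabla_p\cdot(\sigma\nabla_p f) = \eta$ with SRBC.

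To invoke Theorem \ref{theorem 14.C.6}, I need the additional bound $\|\eta\|_{L_{2,\theta_0}}$ for some $\theta_0<\theta$. When $r\geq 2$, Hölder's inequality in $p$ yields
\begin{equation*}
\|h\|_{L_{2,\theta_0}(\Omega\times\bR^3)} \lesssim_{\Omega} \|h\|_{L_{r,\theta}(\Omega\times\bR^3)}\,\bigl\|p_0^{\theta_0-\theta}\bigr\|_{L_{2r/(r-2)}(\bR^3)},
\end{equation*}
and the last factor is finite as soon as $(\theta-\theta_0)\cdot 2r/(r-2)>3$, which is arranged by taking $\theta$ larger than a constant depending on $r$ and the prescribed loss. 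Hence $\|\eta\|_{L_{2,\theta_0}}+\|f\|_{L_{2,\theta_0}} \lesssim \|f\|_{S_{r,\theta}}$ with $\theta_0$ as large as we please, provided $\theta$ is initially chosen sufficiently large in terms of $r$ and $\kappa$.

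With these inputs, I would apply Theorem \ref{theorem 14.C.6} (with $\theta_0$ in the role of $\theta$ there, and the final weight $\kappa\theta_0$ still exceeding any prescribed target $\kappa\theta$ after relabeling) to obtain the integrability and Hölder gains \eqref{eq14.C.6.7}, \eqref{eq14.C.6.8}, \eqref{eq14.C.6.9}, which match exactly the three conclusions \eqref{eq14.C.8.1}, \eqref{eq14.C.8.2}, \eqref{eq14.C.8.20} in the three ranges $r\in[2,6)$, $r\in(6,12)$, and $r>12$. The right-hand side of the resulting estimate is controlled by $\|\eta\|_{L_{r,\theta_0}}+\|\eta\|_{L_{2,\theta_0}}+\|f\|_{L_{2,\theta_0}}\lesssim\|f\|_{S_{r,\theta}}$.

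The main technical obstacle is the bookkeeping of the polynomial momentum weights: Theorem \ref{theorem 14.C.6} loses a definite amount of weight (from $\theta$ down to $\kappa\theta$) and our Hölder step in the $L_{2,\theta_0}$ reduction also loses some weight, so one must start with a value of $\theta$ that is large enough to cover both losses simultaneously while still leaving $\kappa\theta$ on the output side. Since $\theta$ in the statement is allowed to depend on $r,\varkappa,\kappa$, this is a matter of choosing the dependence correctly; no new analytic input beyond Theorem \ref{theorem 14.C.6} and Lemma \ref{lemma B.2} is required. The SRBC hypothesis on $f$ is preserved trivially and is precisely what makes Theorem \ref{theorem 14.C.6} applicable.
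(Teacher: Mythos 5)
Your reduction is correct and is essentially the standard route: given $f\in S_{r,\theta}$ with SRBC, set $g\equiv 0$, declare $\eta := \frac{p}{p_0^{\pm}}\cdot\nabla_x f^{\pm} - \nabla_p\cdot(\sigma\nabla_p f^{\pm})$ (which lies in $L_{r,\theta}$ since $\sigma,\nabla_p\sigma$ are bounded), downgrade to $L_{2,\theta_0}$ by H\"older in $p$ on the bounded domain (trivially for $r=2$), and invoke Theorem~\ref{theorem 14.C.6}. The only wrinkle is the weight bookkeeping you already flag: since $\theta_0=\theta-\delta_r<\theta$, one must apply Theorem~\ref{theorem 14.C.6} with a slightly larger ratio $\kappa'\in(\kappa,1)$ so that $\kappa'(\theta-\delta_r)\ge\kappa\theta$ holds once $\theta$ exceeds the explicit threshold $\kappa'\delta_r/(\kappa'-\kappa)$; your phrase ``exceeding any prescribed target $\kappa\theta$ after relabeling'' is the right idea but worth stating explicitly, because with the same $\kappa$ the literal inequality $\kappa\theta_0\ge\kappa\theta$ fails. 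With that clarification, the argument is complete.
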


To prove the final result of this section, we need a simple commutator estimate.
 \begin{lemma}
         \label{lemma 22.2}
Let $r \in (1, \infty)$,   $0 < \beta < \alpha  \le 1$ be numbers. For any  $f \in L_r (\bR^d)$ and $g \in  C^{\alpha} (\bR^d)$, we set
\begin{equation}
    \label{eq22.2.2}
    \text{Com}_{\beta} (f, g) =  (-\Delta_x)^{\beta/2} (f g) - ((-\Delta_x)^{\beta/2} f) g,
\end{equation}
where the above expression is understood in the sense of distributions.
Then, $\text{Com}_{\beta} (f, g) \in L_r (\bR^d)$, and
\begin{equation}
    \label{eq22.2.1}
     \|\text{Com}_{\beta} (f, g)\|_{ L_r (\bR^d) } \lesssim_{d, \alpha, \beta, r}  \|g\|_{ C^{\alpha} (\bR^d) } \|f\|_{ L_r (\bR^d) }.
\end{equation}
\end{lemma}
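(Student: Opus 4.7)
\textbf{Proof proposal for Lemma \ref{lemma 22.2}.}
The plan is to exploit the well-known singular integral representation
\begin{equation*}
   (-\Delta_x)^{\beta/2} u(x) = c_{d,\beta} \, \mathrm{p.v.} \int_{\bR^d} \frac{u(x) - u(y)}{|x-y|^{d+\beta}} \, dy,
\end{equation*}
valid in the sense of distributions for $\beta \in (0,2)$, and use it to compute $\text{Com}_{\beta}(f,g)$ pointwise. First, I would verify the formula on the Schwartz class by passing through the Fourier side, so that the identity makes sense distributionally on $L_r$. Applying this to $(-\Delta_x)^{\beta/2}(fg)$ and $(-\Delta_x)^{\beta/2} f$ and subtracting the two, the singular terms involving $f(x)g(x)$ cancel exactly, leaving the manifestly less singular expression
\begin{equation*}
 \text{Com}_{\beta}(f,g)(x) = c_{d,\beta} \int_{\bR^d} \frac{f(y)\bigl(g(x)-g(y)\bigr)}{|x-y|^{d+\beta}} \, dy,
\end{equation*}
where no principal value is needed since $\alpha > \beta$ gives absolute convergence.

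Next, I would split the integral into the near-diagonal region $\{|x-y| \le 1\}$ and the far region $\{|x-y| > 1\}$. On the near region, I invoke the H\"older semi-norm bound $|g(x)-g(y)| \le [g]_{C^\alpha} |x-y|^\alpha$, so the integrand is controlled by $[g]_{C^\alpha} |f(y)| |x-y|^{\alpha - \beta - d}$. On the far region, I simply use $|g(x)-g(y)| \le 2 \|g\|_{L_\infty}$, giving $\|g\|_{L_\infty} |f(y)| |x-y|^{-\beta - d}$. Summing,
\begin{equation*}
   |\text{Com}_{\beta}(f,g)(x)| \lesssim \|g\|_{C^{\alpha}(\bR^d)} \, (K \ast |f|)(x),
\end{equation*}
where $K(z) := |z|^{\alpha-\beta-d}\mathbf{1}_{|z|\le 1} + |z|^{-\beta-d}\mathbf{1}_{|z|>1}$.

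A short calculation in polar coordinates shows $K \in L_1(\bR^d)$: the exponent $\alpha - \beta - d + d = \alpha-\beta > 0$ yields integrability at the origin, and $-\beta - d + d = -\beta < 0$ yields integrability at infinity. Consequently, Young's convolution inequality gives $\|K \ast |f|\|_{L_r} \le \|K\|_{L_1}\|f\|_{L_r}$, and the desired estimate \eqref{eq22.2.1} follows immediately. The one point requiring a little care is the justification of the pointwise identity for general $f \in L_r$ with $g \in C^\alpha$: I would handle this by approximating $f$ by Schwartz functions $f_n \to f$ in $L_r$, applying the identity there (where all integrals converge absolutely and the Fourier-side definition of $(-\Delta)^{\beta/2}$ agrees with the singular integral formula), and then passing to the limit on both sides using the $L_r$-boundedness of $K\ast|\cdot|$ already established. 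No subtle obstacle is anticipated; the entire argument is a straightforward dual-regime splitting plus Young's inequality.
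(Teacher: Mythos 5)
Your proof is correct and lands on the same central kernel estimate that the paper's argument relies on. The route is slightly different in organization. The paper's suggestion is a duality argument: test $\text{Com}_{\beta}(f,g)$ against $\phi \in C^\infty_0$, use $\langle \text{Com}_{\beta}(f,g), \phi\rangle = -\langle f, \text{Com}_{\beta}(\phi,g)\rangle$, apply H\"older, and then estimate $\|\text{Com}_{\beta}(\phi,g)\|_{L_{r'}}$ via the absolutely convergent integral representation plus Minkowski. You instead compute the commutator kernel
\begin{equation*}
\text{Com}_{\beta}(f,g)(x) = c_{d,\beta}\int_{\bR^d}\frac{f(y)\,(g(x)-g(y))}{|x-y|^{d+\beta}}\,dy,
\end{equation*}
prove the $L_1$ bound on $K(z)=|z|^{\alpha-\beta-d}\mathbf{1}_{|z|\le 1}+|z|^{-\beta-d}\mathbf{1}_{|z|>1}$, and conclude by Young's inequality, handling the passage from Schwartz $f_n$ to general $f\in L_r$ by density. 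Both routes produce exactly the same kernel and the same splitting; the difference is whether the absolutely-convergent integral formula is invoked for the commutator with a fixed test function $\phi$ (paper) or for the commutator with Schwartz approximants $f_n$ (you). What the duality version buys is that the identification of the distributional commutator with the pointwise integral only needs to be made once, for $\text{Com}_{\beta}(\phi,g)$ with $\phi$ compactly supported, which is the most convenient place to do it.

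One small loose end in your write-up: you justify the pointwise formula by ``passing through the Fourier side'' for Schwartz functions, but for $f_n$ Schwartz the product $f_n g$ is only bounded, decaying, and $C^\alpha$, not Schwartz, so the Fourier-side verification does not apply to $(-\Delta_x)^{\beta/2}(f_n g)$ directly. One should instead appeal to the standard fact that for a bounded, globally $C^\alpha$ function with enough decay (and $\alpha>\beta$) the distributional $(-\Delta_x)^{\beta/2}$ coincides with the absolutely convergent singular integral, or, cleaner still, derive the integral formula for the commutator directly by pairing with a test function. This is a presentation issue rather than a genuine gap; the estimate you obtain is exactly \eqref{eq22.2.1} and the argument is sound.
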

The above estimate can be proved, for example, by testing $\text{Com}_{\beta} (f, g)$ with  $\phi \in C^{\infty}_0 (\bR^d)$ and using a pointwise formula for $\text{Com}_{\beta} (\phi, g)$ combined with H\"older's and Minkowski inequalities.

\begin{remark}
Invoke the assumptions of Lemma \ref{lemma 22.2} and assume, additionally, that $f \in H^{\beta}_r (\bR^d)$. Then,
\begin{equation}
    \label{eq22.3.1}
    \|f g\|_{ H^{\beta}_r (\bR^d) } \lesssim_{d, r, \alpha, \beta } \|g\|_{ C^{\alpha} (\bR^d) } \|f\|_{ H^{\beta}_r (\bR^d) }.
\end{equation}
Indeed, by using  \eqref{eq22.2.1}, we obtain
\begin{align*}
    &\|f g\|_{ H^{\beta}_r (\bR^d) }  \lesssim_{d, r}  \|f g\|_{ L_r (\bR^d) }  + \|\text{Com}_{\beta} (f, g)\|_{ L_r (\bR^d) } + \|g (-\Delta_x)^{\beta/2} f\|_{ L_r (\bR^d) } \\
    & \lesssim_{d, r, \alpha, \beta} \|g\|_{ L_{\infty} (\bR^d) }  \|f\|_{ L_r (\bR^d) } + \|g\|_{ C^{\alpha} (\bR^d) } \|f\|_{ L_r (\bR^d) } + \|g\|_{ L_{\infty} (\bR^d) }  \|(-\Delta_x)^{\beta/2} f\|_{ L_r (\bR^d) } \\
    & \lesssim  \|g\|_{ C^{\alpha} (\bR^d) } \|f\|_{ H^{\beta}_r (\bR^d) }.
\end{align*}
\end{remark}

\begin{lemma}[higher regularity in $x$]
            \label{lemma 32.1}
Let
\begin{itemize}
    \item[--]  $d \ge 1$ and $r \in (1, \infty)$, $\varkappa, \alpha \in (0, 1]$,
\item[--] $a$ be a  $d \times d$ symmetric matrix-valued  function such that  for some $\delta \in (0, 1)$,
    $$
        \delta \bm{1}_d \le a \le \delta^{-1} \bm{1}_d,
    $$
\item[--] 
for some $K> 0$,
\begin{align}
    \label{eq32.1.30}
        \|a\|_{ C^{\varkappa/3,  \varkappa}_{x, v} (\bR^{2d}) } +  \|[a, \nabla_v a]\|_{ L_{\infty} (\bR^d_v) C^{\alpha/3}  (\bR^d_x) }  \le K,
\end{align}
\item[--] $U \in S_r^N (\bR^{2d})$ (see \eqref{eq14.C.20})  is a strong solution to 
   \begin{equation}
        \label{eq32.1.0}
          v \cdot \nabla_x U  - \nabla_v \cdot (a   \nabla_{v} U)  = \eta.
\end{equation}
\end{itemize} 
Then,  if $\eta \in L_r (\bR^d_v) H^{s}_r (\bR^{d}_x)$ for some $s \in (0, \alpha/3)$, one has 
\begin{align}
\label{eq32.1.1}
  & \| (-\Delta_x)^{1/3 + s/2} U \|_{   L_r (\bR^{2d}) }  + \| (-\Delta_x)^{s/2} U\|_{ S_r^N (\bR^{2d}) }  \\
   & \lesssim_{d, r, \varkappa, \alpha, s,  K} \delta^{-\beta}  \big(\|(-\Delta_x)^{s/2}  \eta \|_{ L_r (\bR^{2d}) } +   \| U\|_{ S_r^N (\bR^{2d}) }\big), \notag
\end{align}
where $\beta = \beta (d, r, \varkappa) > 0$.
\end{lemma}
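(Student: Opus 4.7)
The plan is to apply the fractional $x$-Laplacian $\Lambda_s := (-\Delta_x)^{s/2}$ to the equation \eqref{eq32.1.0}, commute it past the non-smooth coefficient $a$, and then invoke the classical steady $S_r^N$ estimate for the whole-space kinetic Fokker--Planck operator (which includes the hypoelliptic $\tfrac{2}{3}$-gain in $x$) on the resulting equation for $\tilde U := \Lambda_s U$. The hypothesis $s < \alpha/3$ in \eqref{eq32.1.30} is precisely the gap needed to run Lemma \ref{lemma 22.2}.

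First I would rewrite \eqref{eq32.1.0} in non-divergence form as
\begin{equation*}
v\cdot\nabla_x U - a^{ij}\partial^2_{v_iv_j}U - (\partial_{v_i}a^{ij})\partial_{v_j}U = \eta,
\end{equation*}
and apply $\Lambda_s$ to both sides. Setting $\mathsf{C}_1 := [\Lambda_s,\,a^{ij}]\partial^2_{v_iv_j}U$ and $\mathsf{C}_2 := [\Lambda_s,\,\partial_{v_i}a^{ij}]\partial_{v_j}U$, the function $\tilde U$ solves an equation of the same type,
\begin{equation*}
v\cdot\nabla_x \tilde U - a^{ij}\partial^2_{v_iv_j}\tilde U - (\partial_{v_i}a^{ij})\partial_{v_j}\tilde U = \Lambda_s\eta + \mathsf{C}_1 + \mathsf{C}_2.
\end{equation*}
Applying Lemma \ref{lemma 22.2} fiber-wise in the $v$ variable (with $s < \alpha/3$) and then integrating in $v$, the assumption \eqref{eq32.1.30} yields
\begin{equation*}
\|\mathsf{C}_1\|_{L_r(\bR^{2d})} + \|\mathsf{C}_2\|_{L_r(\bR^{2d})} \lesssim_{d,r,\alpha,s} K\bigl(\|D^2_v U\|_{L_r} + \|\nabla_v U\|_{L_r}\bigr) \lesssim K\|U\|_{S_r^N}.
\end{equation*}

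Next I would invoke the standard steady whole-space $S_r^N$ estimate with hypoelliptic smoothing for the kinetic Fokker--Planck operator $\mathcal{L}W := v\cdot\nabla_x W - a^{ij}\partial^2_{v_iv_j}W + b^j\partial_{v_j}W$ with uniformly elliptic $a \in C^{\varkappa/3,\varkappa}_{x,v}(\bR^{2d})$ and bounded $b$. This is the non-relativistic analog of Theorem \ref{theorem 14.C.6} and is classical; it produces, for some $\beta = \beta(d,r,\varkappa) > 0$,
\begin{equation*}
\|W\|_{S_r^N(\bR^{2d})} + \|(-\Delta_x)^{1/3}W\|_{L_r(\bR^{2d})} \lesssim \delta^{-\beta}\bigl(\|\mathcal{L}W\|_{L_r} + \|W\|_{L_r}\bigr).
\end{equation*}
Applied to $\tilde U$ with the commutator estimate above, this gives
\begin{equation*}
\|\tilde U\|_{S_r^N} + \|(-\Delta_x)^{1/3+s/2}U\|_{L_r} \lesssim \delta^{-\beta}\bigl(\|\Lambda_s\eta\|_{L_r} + K\|U\|_{S_r^N} + \|\Lambda_s U\|_{L_r}\bigr),
\end{equation*}
using $(-\Delta_x)^{1/3}\tilde U = (-\Delta_x)^{1/3+s/2}U$. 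The residual $\|\Lambda_s U\|_{L_r}$ is then removed by Bessel-space interpolation between $U \in L_r$ and $(-\Delta_x)^{1/3+s/2}U \in L_r$: for any $\epsilon > 0$,
\begin{equation*}
\|\Lambda_s U\|_{L_r} \le \epsilon\,\|(-\Delta_x)^{1/3+s/2}U\|_{L_r} + C_\epsilon\,\|U\|_{L_r},
\end{equation*}
so choosing $\epsilon$ small (at the cost of enlarging $\beta$) and absorbing into the left-hand side yields \eqref{eq32.1.1}.

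The main obstacle is the commutator step: the argument depends crucially on the strict inequality $s < \alpha/3$, which is what makes Lemma \ref{lemma 22.2} applicable with the $C^{\alpha/3}_x$ regularity provided by \eqref{eq32.1.30}. A secondary, technical point is justifying the manipulations with $\Lambda_s$ acting on a merely Hölder coefficient; this is standard and handled by first mollifying $a$ in the $x$-variable, deriving the estimate with constants uniform in the mollification parameter, and passing to the limit — the commutator bounds of Lemma \ref{lemma 22.2} and the $S_r^N$ estimate are both stable under this limit. Tracking the $\delta$-dependence through the single application of the base $S_r^N$ estimate (the commutator bound being $\delta$-independent) produces the claimed $\delta^{-\beta}$ loss, which matches the $2^{\beta n}$ bookkeeping used downstream in the proof of Lemma \ref{lemma 4.1}.
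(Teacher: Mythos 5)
Your core idea — apply $\Lambda_s=(-\Delta_x)^{s/2}$ to the equation, write the result as a kinetic Fokker--Planck equation for $\tilde U = \Lambda_s U$ with commutator error terms controlled by Lemma \ref{lemma 22.2}, and then invoke the whole-space $S_r^N$ estimate with its $(-\Delta_x)^{1/3}$ hypoelliptic gain — is exactly the paper's strategy, and you correctly identify $s<\alpha/3$ as the crucial margin that makes the commutator lemma applicable. The point you relegate to a ``secondary, technical'' footnote, however, is where the paper does genuine work, and your proposed remedy for it does not close.

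The issue is not merely that $a$ is Hölder in $x$: the issue is that one does not know a priori that $\tilde U = \Lambda_s U$ belongs to $S_r^N(\bR^{2d})$, so the $S_r^N$ a priori estimate cannot be applied to $\tilde U$ directly. The paper handles this by a three-step existence/uniqueness argument: (i) invoke the (steady) well-posedness theorem from \cite{DY_21a} to produce a genuine $S_r^N$ solution $U_1$ of the $\lambda$-modified commuted equation, for $\lambda$ large; (ii) verify that $\Lambda_s U$ lies in the weaker class $\bS_r(\bR^{2d})$ and is a weak solution of the \emph{same} equation (this uses $(-\Delta_x)^{1/3}U\in L_r$ from $U\in S_r^N$ together with the commutator estimate to check membership in $\bS_r$); (iii) invoke the uniqueness theorem for divergence-form kinetic equations in the $\bS_r$ class from \cite{DY_22} to conclude $U_1 = \Lambda_s U$, so that $\Lambda_s U$ inherits the $S_r^N$ regularity of $U_1$. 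Your mollification alternative is not an obvious replacement: if you mollify $a\to a_\epsilon$ in $x$ and rewrite the equation as $v\cdot\nabla_x U - \nabla_v\cdot(a_\epsilon\nabla_v U) = \eta + \nabla_v\cdot((a_\epsilon-a)\nabla_v U)$, then applying $\Lambda_s$ generates the term $\Lambda_s\nabla_v\cdot((a_\epsilon-a)\nabla_v U)$, and bounding this in $L_r$ (uniformly in $\epsilon$) already presupposes fractional $x$-regularity of $\nabla_v U$ — the very thing you are trying to establish. To make the approximation route work one would effectively be re-deriving the existence-and-uniqueness machinery the paper uses.

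Two minor remarks. First, your closing absorption step is unnecessary: since $s<1/3$ and $(-\Delta_x)^{1/3}U\in L_r$ for $U\in S_r^N$ (this is \eqref{eq32.1.12} in the paper), one has directly $\|\Lambda_s U\|_{L_r}\lesssim \|U\|_{S_r^N}$, avoiding both the interpolation inequality and the resulting enlargement of $\beta$. Second, the paper works with the $\lambda$-shifted operator (adding $\lambda\tilde U$ on both sides for $\lambda$ large) so that the well-posedness and uniqueness results apply without further spectral assumptions; this is a standard bookkeeping device but worth recording.
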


\begin{proof}

To prove the lemma, we will  apply the operator $(-\Delta_x)^{s/2}$ to Eq. \eqref{eq32.1.0} and use the regularity results from \cite{DY_21a} and \cite{DY_22}.
First, for $\lambda > 0$, we consider the equation
\begin{align}
\label{eq32.1.2}
     & v \cdot \nabla_x U_1 - a^{i j}  \partial_{v_i v_j} U_1 - \partial_{v_i} a_{ i j} \partial_{v_j} U_1 + \lambda U_1  = (-\Delta_x)^{s/2}  \eta \\
    & + \text{Com}_{s} (\partial_{v_i v_j} U, a_{ i j}) +   \text{Com}_{s}  (\partial_{v_j} U, \partial_{v_i} a_{ i j}) + \lambda (-\Delta_x)^{s/2} U, \notag
 \end{align}
 where $\text{Com}_{s}$ is the operator defined in \eqref{eq22.2.2}.
By the inequality \eqref{eq22.2.1} in  Lemma \ref{lemma 22.2}, we have
\begin{align}
    \label{eq32.1.9}
   & \|\text{Com}_{s} (\partial_{v_i v_j} U, a_{ i j})\|_{ L_r (\bR^{2d})} +  \|\text{Com}_{s}  (\partial_{v_j} U, \partial_{v_i} a_{ i j})\|_{ L_r (\bR^{2d}) } \\
   & \lesssim_{ \alpha, s,   r, K}  \||\nabla_v U| + |D^2_v U|\|_{ L_r (\bR^{2d})}. \notag
\end{align}
Furthermore, we recall that $(-\Delta_x)^{1/3} u \in L_r (\bR^{2d})$  because $u \in S_r^N (\bR^{2d})$ (see Theorem 2.6   and Remark 2.11 in \cite{DY_21a}). Then, since $s < 1/3$,
\begin{align}
    \label{eq32.1.12}
   \|(-\Delta_x)^{s/2} U \|_{ L_r (\bR^{2d}) }  \lesssim_{r, d, s} \| U \|_{ S_r^N (\bR^{2d}) }.
\end{align}
Thus, the right-hand side of Eq. \eqref{eq32.1.2} belongs to $L_r (\bR^{2d})$. By this and the assumptions $a \in C^{\varkappa/3, \varkappa}_{x, v} (\bR^{2d})$,  $\nabla_v a \in L_{\infty} (\bR^{2d})$, we may apply the stationary counterpart of Theorem 2.6 in \cite{DY_21a} (see Remark 2.11 therein). We conclude that for sufficiently large $\lambda = \lambda (d, \varkappa,  K, r) > 0$, Eq. \eqref{eq32.1.2} has  a unique strong solution $U_1 \in S_r^N (\bR^{2d})$.
We will  show that $U_1 =  (-\Delta_x)^{s/2} U.$

Next, we denote
\begin{align}
    \label{eq32.1.7}
 &\bS_r (\bR^{2d}): = \{u:  u, \nabla_v u \in L_r (\bR^{2d}),  v \cdot \nabla_x u \in L_r (\bR^d_x) W^{-1}_r (\bR^d_v)\}.
\end{align}
We say that $U$ is a  $\bS_r (\bR^{2d})$ solution  to Eq. \eqref{eq32.1.0}   (see Definition 1.10 in \cite{DY_22})
if $U \in \bS_r (\bR^{2d})$, and the  identity \eqref{eq32.1.0} holds in the sense of distributions, that is, for any $\psi \in C^{\infty}_0 (\bR^{2d})$, we have
\begin{equation}
    \label{eq32.1.4}
        - \int_{\bR^{2d}} (v \cdot \nabla_x \psi) U \, dx dv + \int_{\bR^{2d}} (\nabla_v \psi)^T a \nabla_v U \, dx dv  = \int_{\bR^d_x}  \langle \eta,  \psi\rangle \, dx.
\end{equation}
    Since $C^{\infty}_0 (\bR^{2d})$ is dense in $S_r (\bR^{2d})$ for any $r \in (1, \infty)$ (cf. Lemma 4.4 in \cite{DY_21a}), the  identity \eqref{eq32.1.4} holds for any  $\psi \in S_{r/(r-1)} (\bR^{2d})$ by an approximation argument.

Furthermore, for any $\phi \in C^{\infty}_0 (\bR^{2d})$ we replace formally $\psi$ with $(-\Delta_x)^{s/2} \phi \in S_{r/(r-1)} (\bR^{2d})$ in the integral formulation \eqref{eq32.1.4} and obtain
\begin{align}
        \label{eq32.1.3}
  &  - \int_{\bR^{2d}} (v \cdot \nabla_x \phi)  (-\Delta_x)^{s/2}  U \, dx dv + \int_{\bR^{2d}}  (\nabla_v \phi)^T   (-\Delta_x)^{s/2} \big(a \nabla_v U\big) \, dx dv\\
    &= \int_{\bR^{d}_x} \langle \phi, (-\Delta_x)^{s/2} \eta \rangle \, dx dv. \notag
\end{align}
We claim that
\begin{align}
 \label{eq32.1.10}
   U_2: =   (-\Delta_x)^{s/2} U \in \bS_r (\bR^{2d})
\end{align}
(see \eqref{eq32.1.7}).
We recall that by the stationary counterpart of Theorem 2.6  in \cite{DY_21a},  since  $u \in S_r^N (\bR^{2d})$, we have
\begin{align}
 \label{eq32.1.5}
    \nabla_v  (-\Delta_x)^{1/6} U \in L_r (\bR^{2d}),
\end{align}
and hence, due to  $s < 1/3$, one has $\nabla_v U_2 \in L_r (\bR^{2d})$. Furthermore, by this and \eqref{eq22.3.1}--\eqref{eq32.1.30}, we obtain
\begin{align}
 \label{eq32.1.6}
    (-\Delta_x)^{s/2} \big(a \nabla_v U\big) \in L_r (\bR^{2d}).
\end{align}
Gathering \eqref{eq32.1.3} and \eqref{eq32.1.5}--\eqref{eq32.1.6} and using the fact that $\eta \in L_r (\bR^{d}_v) H^{s}_r (\bR^d_x)$, we conclude that
$$v \cdot \nabla_x  U_2 \in L_r (\bR^d_x) W^{-1}_r (\bR^d_v),$$
and, thus, \eqref{eq32.1.10} is true.
By this and  \eqref{eq32.1.3}, we find that $U_2$ is a $\bS_r (\bR^{2d})$ solution to the equation
\begin{align*}
     & v \cdot \nabla_x U_2 - \nabla_v \cdot  (a \nabla_{v } U_2)  + \lambda U_2  = (-\Delta_x)^{s/2}  \eta \\
    & + \text{Com}_{s} (\partial_{v_i v_j} U, a_{ i j}) +   \text{Com}_{s}  (\partial_{v_j} U, \partial_{v_i} a_{ i j}) + \lambda (-\Delta_x)^{s/2} U. \notag
 \end{align*}
 By the uniqueness theorem for divergence form kinetic Fokker-Planck equations in the class of $\bS_r (\bR^{2d})$ solutions (see Theorem 1.15 in \cite{DY_22}),  we conclude that  $U_1 = U_2$ provided that $\lambda = \lambda (d, \varkappa, r, K)$  is sufficiently large.
 Finally, we cancel the term $\lambda U_1$ on both sides in Eq. \eqref{eq32.1.2} and apply the stationary counterpart of the estimate $(2.8)$ in Corollary 2.8 of \cite{DY_21a}. Then, there exists $\beta = \beta (d, r, \varkappa) > 0$ such that
 \begin{align*}
 &   \|(-\Delta_x)^{1/3} U_1\|_{ L_r  (\bR^{2d}) } + \| U_1\|_{ S_r (\bR^{2d}) } \\
 &
  \lesssim_{r, d, \varkappa,  K} \delta^{-\beta} \bigg(\|(-\Delta_x)^{s/2}  \eta \|_{ L_r  (\bR^{2d}) }\\
 & +\||\text{Com}_{s} (\partial_{v_i v_j} U, a_{ i j})|+|\text{Com}_{s}  (\partial_{v_j} U, \partial_{v_i} a_{ i j})|\|_{ L_r (\bR^{2d})}  +  \|U_1\|_{ L_r (\bR^{2d})}\bigg).
\end{align*}
By \eqref{eq32.1.9} and \eqref{eq32.1.12}, we may replace the terms involving $\text{Com}_{s}$ and $U_1$ on the r.h.s. with $\|U\|_{S_r^N  (\bR^{2d}) }$.
The desired estimate \eqref{eq32.1.1} is proved.
\end{proof}

\section{H\"older estimates of $a_f, C_f$, and $K$. }

\label{appendix E}



For the sake of simplicity, we set all the physical constants  to $1$.

\begin{lemma}
    \label{lemma B.3}
For $g \in L_{r} (\bR^3), r \in (3/2, \infty]$,  we denote
\begin{align}
        \label{eqB.3.0}
  & I (p) =  \int  \frac{P \cdot Q}{p_0 q_0} \bigg((P \cdot Q)^2 - 1\bigg)^{-1/2}  J^{1/2} (q) g (q) \, dq, \\
  & \mathfrak{I} (p)  =  \int  \Phi^{i j} (P, Q) J^{1/2} (q)   g (q)  \, dq, \notag \\
  & \mathcal{I} (p) = \int \big((\partial_{p_k} + \frac{q_0}{p_0} \partial_{q_k}) \Phi^{i j} (P, Q)\big)  J^{1/2} (q)  g (q)  \, dq. \notag
\end{align}
Then, we have
    \begin{align}
            \label{eqB.3.5}
  &  \|I\|_{ L_{\infty} (\bR^3) } + \|\mathcal{I}\|_{ L_{\infty} (\bR^3) }  \lesssim \|g\|_{ L_{r} (\bR^3) }.
   \end{align}
   Furthermore, if $r = \infty$, then, for any $\alpha \in (0, 1),$
       \begin{align}
        \label{eqB.3.1}
& [I]_{ C^{\alpha} (\bR^3) } \lesssim_{\alpha}  \|g\|_{ L_{\infty} (\bR^3) } \\
        \label{eqB.3.9}
    &  [\mathfrak{I}]_{ C^{\alpha} (\bR^3) } \lesssim_{\alpha}   \|g\|_{ L_{\infty} (\bR^3) }, \\
        \label{eqB.3.1.1}
        &  [\mathcal{I}]_{ C^{\alpha} (\bR^3) } \lesssim_{\alpha}   \|g\|_{ L_{\infty} (\bR^3) }.
\end{align}
\end{lemma}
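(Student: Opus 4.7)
The guiding principle is that all three kernels behave like the Riesz kernel $|p-q|^{-1}$ near the diagonal (up to harmless polynomial factors in $q_0$), so the estimates reduce to classical singular-integral-type arguments once the correct pointwise bounds are in place.

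\emph{Step 1: Pointwise kernel bounds.} I would first establish two kinds of estimates: an $L_\infty$-type bound and a gradient bound. For the kernel in $I$, I would note that $P\cdot Q-1 = (p_0 q_0 - p\cdot q) - 1$ vanishes precisely at $p = q$ with $P\cdot Q-1 \gtrsim |p-q|^2/(p_0 q_0)$, whence $\bigl((P\cdot Q)^2-1\bigr)^{-1/2} \lesssim (p_0 q_0)^{1/2}|p-q|^{-1}$. Combined with the trivial bound $|P\cdot Q|/(p_0 q_0)\le 1$, this yields $|K_I(p,q)| \lesssim |p-q|^{-1}$. For the kernels appearing in $\mathfrak{I}$ and $\mathcal{I}$, the required bound $|\Phi^{ij}(P,Q)| + |(\partial_{p_k}+\tfrac{q_0}{p_0}\partial_{q_k})\Phi^{ij}(P,Q)| \lesssim q_0^7 (1+|p-q|^{-1})$ is exactly \eqref{eq12.A.2.60} (Lemma 2 in \cite{GS_03}). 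For the Hölder step I also need gradient bounds of the form $|\nabla_p K_*(p,q)| \lesssim q_0^N\, |p-q|^{-2}$ for $|p-q|\le 1$; I would derive these directly from the explicit formulas \eqref{eq1.8}--\eqref{eq1.11}, using the same cancellation structure that produces the $|p-q|^{-1}$ bound, together with differentiation of $(P\cdot Q)^{2}-1$ and of the rational factors in $S$.

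\emph{Step 2: The $L_\infty$ bound \eqref{eqB.3.5}.} Split the integration region as $\{|q-p|\le 1\}\cup \{|q-p|>1\}$. On the far region, absorb all polynomial factors of $q_0$ into the Gaussian $J^{1/2}(q)$ and use H\"older's inequality in $q$ to bound the result by $\|g\|_{L_r(\bR^3)}$. On the near region apply H\"older's inequality with conjugate exponents $r$ and $r'=r/(r-1)$; the key point is that $r > 3/2$ forces $r' < 3$, so $\int_{|q-p|\le 1}|p-q|^{-r'}\,dq$ is finite uniformly in $p$. This handles both $I$ and $\mathcal{I}$.

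\emph{Step 3: The H\"older bounds \eqref{eqB.3.1}--\eqref{eqB.3.1.1}.} Fix $p_1,p_2\in\bR^3$, set $h=|p_1-p_2|$, and restrict without loss of generality to $h\le 1$ (since $\|I\|_{L_\infty}$ controls the H\"older seminorm on $\{h\ge 1\}$ by Step~2). Denote generically by $K(p,q)$ the integration kernel and write
\begin{align*}
I(p_1)-I(p_2) &= \int_{|q-p_1|<2h}\!\!\bigl(K(p_1,q)-K(p_2,q)\bigr)J^{1/2}(q)g(q)\,dq\\
&\quad + \int_{|q-p_1|\ge 2h}\!\!\bigl(K(p_1,q)-K(p_2,q)\bigr)J^{1/2}(q)g(q)\,dq.
\end{align*}
On the near region, I estimate each kernel separately using Step~1: since $|q-p_2|\le 3h$ too, both $\int|p_i-q|^{-1}\,dq$ over a ball of radius $3h$ are $\lesssim h^{2}$, yielding a contribution $\lesssim h^{2}\|g\|_{L_\infty}$. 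On the far region, the mean value theorem combined with the gradient bound $|\nabla_p K|\lesssim q_0^N|p-q|^{-2}$ and the Gaussian decay of $J^{1/2}$ gives a bound $\lesssim h\,\|g\|_{L_\infty}$ (the integral $\int_{|q-p|\ge 2h}|q-p|^{-2}J^{1/2}(q)\,dq$ is finite uniformly in $h$ thanks to $J^{1/2}$). Since $h\le 1$, both contributions are dominated by $h^{\alpha}\|g\|_{L_\infty}$ for any $\alpha\in(0,1)$.

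\emph{Main obstacle.} The heart of the argument is verifying the pointwise bound $|\nabla_p K(p,q)|\lesssim q_0^N|p-q|^{-2}$ for the Landau--Belyaev--Budker kernel $\Phi^{ij}$ used in $\mathfrak{I}$, and, even more delicately, for its transport combination appearing in $\mathcal{I}$. A naive differentiation of $((P\cdot Q)^2-1)^{-3/2}$ (via $\Lambda$) suggests a singularity as bad as $|p-q|^{-4}$; only the specific cancellation structure encoded in the matrix $S$ keeps the actual singularity at the Riesz level $|p-q|^{-2}$. I expect to spend most of the work tracking this cancellation through $\nabla_p\Phi$, paralleling the $|p-q|^{-1}$ bound on $\Phi$ itself (which was done in \cite{GS_03}); once that gradient bound is in hand, the H\"older estimates follow routinely from the near/far decomposition above.
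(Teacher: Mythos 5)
Your proposal is correct in its broad strokes and proves the stated estimates, but it takes a genuinely different route from the paper. You use the classical near/far ball decomposition from singular-integral theory: on $\{|q-p_1|<2h\}$, $h=|p_1-p_2|$, you estimate each kernel separately with the zeroth-order bound $|K|\lesssim q_0^N|p-q|^{-1}$ and obtain $O(h^2)$; on the complement, you apply the mean-value theorem with a gradient bound $|\nabla_p K|\lesssim q_0^N|p-q|^{-2}$ (which does hold — near the diagonal $\nabla_p(P\cdot Q)=\tfrac{p}{p_0}q_0-q$ vanishes, so $|\nabla_p(P\cdot Q)|\lesssim q_0|p-q|$, and this extra factor of $|p-q|$ is exactly what saves you from the naive $|p-q|^{-3}$). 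If pushed through, your decomposition would actually yield a Lipschitz estimate, strictly stronger than the claimed $C^\alpha$, $\alpha<1$. The paper instead never differentiates the singular factor $(P\cdot Q-1)^{-1/2}$: it symmetrizes the domain into $A_1=\{|p^1-q|\ge|p^2-q|\}$ and its complement, splits the kernel as $\cR\cS$ with $\cR$ smooth and $\cS=(P\cdot Q-1)^{-1/2}$, treats the increment of $\cS$ via the algebraic identity $a_1^{-1/2}-a_2^{-1/2}=(a_2-a_1)/(a_1a_2^{1/2}+a_2a_1^{1/2})$, absorbs a fraction of $|p^1-p^2|$ using $|p^1-p^2|\le 2|p^1-q|$ on $A_1$, and closes with H\"older's inequality in $q$. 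The trade-off is clear: your route buys a cleaner, more standard argument and a stronger (Lipschitz) conclusion, but you must track the cancellation in $\nabla_p\Phi^{ij}$ (and in $\nabla_p(\partial_{p_k}+\tfrac{q_0}{p_0}\partial_{q_k})\Phi^{ij}$, which by \eqref{eqB.3.17}--\eqref{eqB.3.21} reduces to the same structure) carefully — you correctly flag this as the main obstacle, and it is exactly the computation the paper's route is designed to avoid. One small inaccuracy to fix: the far-region integral $\int_{|q-p_1|\ge 2h}q_0^N|p-q|^{-2}J^{1/2}(q)\,dq$ is bounded uniformly in $h$ and $p_1$ (not merely in $h$) thanks to the Gaussian decay, which is what makes your Lipschitz conclusion hold; state that explicitly.
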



\begin{proof}
\textbf{Proof of \eqref{eqB.3.5}.}
First, we note that the estimate of $\cI$ in \eqref{eqB.3.5} follows from \eqref{eq12.A.2.60}. 
We denote
\begin{align*}
 &   \cR (p, q) =   \frac{P \cdot Q}{p_0 q_0} \bigg(P \cdot Q + 1\bigg)^{-1/2} J^{1/4} (q), \\
 & \cS (p, q) = \bigg(P \cdot Q - 1\bigg)^{-1/2},
\quad
 \, \, \text{where} \, \,  P \cdot Q =  p_0 q_0 - p \cdot q,
\end{align*}
so that
\begin{equation}
        \label{eqB.3.8}
   I (p) =    \int  \cR (p, q) \cS (p, q) J^{1/4} (q)   g (q) \, dq.
\end{equation}
By using a simple bound
\begin{equation}
            \label{eqB.3.7}
    |\nabla_p (P \cdot Q)| \lesssim q_0
\end{equation}
and the  inequalities (see the formula $(32)$  on p. 277 in \cite{GS_03})
\begin{align}
            \label{eqB.3.6}
  & \frac 1 2 |p-q|^2 \ge     P \cdot Q -  1  \gtrsim \frac{|p-q|^2}{q_0^2} 1_{ |p-q| < (|p|+1)/2 } + \frac{p_0}{q_0}  1_{ |p-q| \ge (|p|+1)/2 },
\end{align}
we obtain the following useful estimates: 
\begin{equation}
            \label{eqB.3.2}
      |\cR (p, q)| +  |\nabla_p \cR (p, q)| \lesssim  J^{  (1/4)- } (q), \quad  (P \cdot Q -  1)^{-1/2}  \lesssim  \frac{  q_0  }{|p-q|}  + \frac{q_0^{1/2}}{p_0^{1/2}}.
\end{equation}
We note that the $L_{\infty}$-norm estimate \eqref{eqB.3.5}  follows from \eqref{eqB.3.2} and the local integrability of the function $|p-q|^{-1}$.

\textbf{Proof of \eqref{eqB.3.1}.}
We fix arbitrary  $p^1, p^2 \in \bR^3$ such that $|p^1 - p^2| < 1$
 and  split the domain of integration into
$$
    A_1 = \{q: |p^1-q| \ge |p^2-q|\}, \quad A_2 = \bR^3 \setminus A_1.
$$
By symmetry, we may replace $g$  with $g 1_{A_1}$, so that
\begin{equation}
        \label{eqB.3.16}
    |p^1- q| \ge  |p^2 - q|.
\end{equation}
By the triangle inequality, it suffices to estimate
\begin{align*}
      &  I_1 (p^1, p^2) :=  \int  |\cR (p^1, q) - \cR (p^2, q)| \cS (p^1, q) J^{1/4} (q)   |g (q)| \, dq, \\
      & I_2 (p^1, p^2) :=   \int   \cR (p^2, q) \,  |\cS (p^1, q) - \cS (p^2, q)| \, J^{1/4} (q)   |g (q)| \, dq.
\end{align*}
By the mean-value theorem and \eqref{eqB.3.2},
\begin{align}
\label{eqB.3.3}
    I_1 (p^1, p^2)  & \lesssim  |p^1-p^2|     \int \big(1+\frac{1}{|p^1 - q|}\big) J^{1/8} (q) |g (q)| \, dq \\
  & \lesssim |p^1 - p^2|  \, \|g\|_{L_{\infty} (\bR^3)}. \notag
\end{align}
Next, by the identity
$$
    a_1^{-1/2} - a_2^{-1/2} = \frac{a_2 - a_1}{a_1 a_2^{1/2}  + a_2 a_1^{1/2}}
$$
with $a_j := P^j \cdot Q - 1 = (p^j)_0  q_0 - p \cdot q - 1$ and the bounds \eqref{eqB.3.7} and \eqref{eqB.3.2}, we conclude that to estimate $I_2 (p^1, p^2)$,
it suffices to show that
\begin{align}
    \label{eqB.3.10}
  &  |p^1-p^2| \int  J^{1/8} (q) |g (q)| \big(1 + \frac{1}{|p^1 - q|^{2}}\big) \big(1+\frac{1}{|p^2 - q|}\big) \, dq \\
    &\lesssim |p^1-p^2|^{\alpha} \|g\|_{L_{\infty} (\bR^3)}.\notag
\end{align}
To prove \eqref{eqB.3.10}, we note that by the triangle inequality and \eqref{eqB.3.16}, we have
$$|p^1 - p^2| \le \min\{1,  2 |p^1 - q|\},$$
and hence,   the l.h.s. of \eqref{eqB.3.10} is dominated by
\begin{align}
\label{eqB.3.4}
 & |p^1-p^2|^{\alpha}  \|g\|_{L_{\infty} (\bR^3)}   \\
&  \times     \int J^{1/8} (q) \big(1 + \frac{1}{|p^1 - q|^{1+\alpha} }  + \frac{1}{|p^2 - q| } + \frac{1}{|p^1 - q|^{1+\alpha} |p^2-q| }   \big) \, dq.  \notag
\end{align}
We will show that the last integral (involving the product $|p^1 - q|^{1+\alpha} |p^2-q|$) is finite since the remaining terms are simpler. To this end, we use  H\"older's inequality with the exponents $\beta \in (\frac{3}{2-\alpha}, 3)$ and $\beta' =\frac{\beta}{\beta-1}$
and the fact that
$$
     \int \frac{J^{\beta/8} (q) }{|p^2-q|^{\beta} } \, dq,   \int \frac{  J^{  \beta'/8  } (q) } {|p_1 - q|^{\beta' (1+\alpha)}  } \, dq < \infty,
$$
which  is true since $\beta, \beta' (1+\alpha) < 3$.
The desired estimate now follows from  \eqref{eqB.3.3}--\eqref{eqB.3.10}.

\textbf{Proof of \eqref{eqB.3.9}.}
By the definition of $\Phi (P, Q)$ (see \eqref{eq1.11}),
\begin{align}
        \label{eqB.3.24}
       &  \Phi (P, Q) = p_0^{-1} q_0^{-1} (P \cdot Q)^2 \big((P \cdot Q)^2 - 1\big)^{-1/2}  \bm{1}_3 \\
       & +  p_0^{-1} q_0^{-1} (P \cdot Q)^2  \big((P \cdot Q) + 1\big)^{-3/2}  (p \otimes q + q \otimes p)   \big((P \cdot Q) - 1\big)^{-1/2} \notag \\
              & -  p_0^{-1} q_0^{-1} (P \cdot Q)^2   (p-q) \otimes (p-q)  \big((P \cdot Q)^2 - 1\big)^{-3/2} \notag \\
              &=:  \Phi_1 (P, Q) +  \Phi_2 (P, Q)  - \Phi_3 (P, Q). \notag
\end{align}
We will focus on the integral involving $\Phi_3$, as the remaining terms can be handled in the same way.
Next, as in \eqref{eqB.3.8}, we write
\begin{equation}
        \label{eqB.3.14}
        \int \Phi_3 (P, Q) J^{1/2} (q) g (q) \, dq = \int \mathfrak{R} (p, q) \mathfrak{S} (p, q)  g (q) \, dq,
\end{equation}
where
\begin{align*}
    	&\mathfrak{R} (p, q) = p_0^{-1} q_0^{-1} (P \cdot Q)^2   \big((P \cdot Q) + 1\big)^{-3/2} J^{1/4} (q), \\
     &\mathfrak{S} (p, q)  = (p-q) \otimes (p-q) \big((P \cdot Q) - 1\big)^{-3/2} J^{1/4} (q).
\end{align*}
Since
$$
	|\mathfrak{R} (p, q)| + |\nabla_p \mathfrak{R} (p, q)|   \lesssim   J^{  (1/4)- } (q),
$$
it suffices to estimate an increment of $\mathfrak{S}$. As in the proof of \eqref{eqB.3.1}, we fix arbitrary $|p^1-p^2| \le 1$ and assume that \eqref{eqB.3.16} holds.

By direct calculations,
\begin{align*}
   & \mathfrak{S} (p^1, q) - \mathfrak{S} (p^2, q) = \big((p^1-q) \otimes (p^1-q) - (p^2-q) \otimes (p^2-q)\big)  \big((P^1 \cdot Q) - 1\big)^{-3/2} J^{1/4} (q) \\
   & + (p^2-q) \otimes (p^2-q)  \frac{   \big((P^2 \cdot Q) - 1)^{3/2}-(P^1 \cdot Q) - 1)^{3/2}\big)  }{((P^1 \cdot Q) - 1)^{3/2} \,  ((P^2 \cdot Q) - 1)^{3/2}}  J^{1/4} (q) =: \mathfrak{S}_1 + \mathfrak{S}_2.
\end{align*}
To handle $\mathfrak{S}_1$, we note that by the mean-value theorem, the triangle inequality, and \eqref{eqB.3.16},
\begin{align}
    \label{eqE.17.1}
    |(p^1-q) \otimes (p^1-q) - (p^2-q) \otimes (p^2-q)| \lesssim |p^1-p^2| |p^1-q|.
\end{align}
By this and the bound \eqref{eqB.3.2}, we get
\begin{align*}
   &  |\mathfrak{S}_1| \lesssim |p^1-p^2| |p^1-q| \big(\frac{1}{|p^1-q|^{3}} + \frac{1}{   (p^1_0)^{3/2} }\big) J^{1/8} (q) \\
   & \lesssim  |p^1-p^2| \big(\frac{1}{|p^1-q|^{2}} + 1\big) J^{1/16} (q).
\end{align*}
Hence, we have
\begin{equation}
        \label{eqB.3.13}
        \int_{A_1} \mathfrak{R}  |\mathfrak{S}_1| \, |g (q)| \, dq \lesssim |p^1-p^2| \|g\|_{ L_{\infty} (\bR^3)}.
\end{equation}

Next, to handle $\mathfrak{S}_2$, we observe  that by \eqref{eqB.3.7}--\eqref{eqB.3.6},
$$
    |\nabla_p ((P \cdot Q) - 1)^{3/2}| \lesssim q_0 ((P \cdot Q) - 1)^{1/2} \lesssim q_0  |p-q|,
$$
so that by the mean-value theorem and \eqref{eqB.3.16}, the absolute value of the nominator of the fraction in $\mathfrak{S}_2$ is bounded by (cf. \eqref{eqE.17.1})
$$
    q_0 |p^1-p^2|   |p^1-q|.
$$
By this and the bound \eqref{eqB.3.2},
\begin{align*}
&  |\mathfrak{S}_2| \lesssim  |p^1-p^2|  |p^2-q|^2   |p^1-q| (\frac{1}{  (p^1_0)^{3/2} }+\frac{1}{|p^1-q|^{3}})  (\frac{1}{   (p^2_0)^{3/2} }+\frac{1}{|p^2-q|^{3}}) J^{1/8} (q) \\
& =:  \mathfrak{S}_2^1 + \mathfrak{S}_2^2 + \mathfrak{S}_2^3 + \mathfrak{S}_2^4,
\end{align*}
where
\begin{align*}
  &  \mathfrak{S}_2^1 = |p^1-p^2| |p^2-q|^2   |p^1-q| \frac{1}{  (p^1_0)^{3/2}   (p^2_0)^{3/2} } J^{1/8} (q), \\
    & \mathfrak{S}_2^2 = |p^1-p^2| |p^2-q|^2    |p^1-q|   \frac{1}{  (p^2_0)^{3/2} } \frac{1}{|p^1-q|^{3}} J^{1/8} (q), \\
  & \mathfrak{S}_2^3 = |p^1-p^2| |p^2-q|^2   |p^1-q|  \frac{1}{ (p^1_0)^{3/2} }  \frac{1}{|p^2-q|^{3}} J^{1/8} (q), \\
    &  \mathfrak{S}_2^4 = |p^1-p^2|  \frac{1}{|p^1-q|^{2}  \, |p^2-q| } J^{1/8} (q).
\end{align*}
By the triangle inequality and \eqref{eqB.3.16},
\begin{align*}
&
    \mathfrak{S}_2^1  \lesssim  |p^1-p^2|   \frac{|p^2-q|^{3/2}   |p^1-q|^{3/2}}{   (p^1_0)^{3/2}   (p^2_0)^{3/2}  } J^{1/8} (q)
    \lesssim |p^1-p^2| J^{1/16} (q), \\
& \mathfrak{S}_2^2 \lesssim  |p^1-p^2|   J^{1/16} (q), \\
& \mathfrak{S}_2^3 \lesssim  |p^1-p^2|   \frac{1}{|p^2-q|} J^{1/16} (q).
\end{align*}
We note that due to \eqref{eqB.3.10},
$$
    \int  \mathfrak{R} \, \mathfrak{S}_2^4 \, |g (q)| \, dq \lesssim |p^1-p^2|^{\alpha} \|g\|_{L_{\infty} (\bR^3)}.
$$
The remaining integrals involving $\mathfrak{S}_2^j, j  = 1, 2, 3,$ are  handled in a similar way.
Thus, \eqref{eqB.3.9} holds with $\Phi$ replaced with $\Phi_3$ (see \eqref{eqB.3.24}).

Finally, we note that the integral involving $\Phi_1$ (see \eqref{eqB.3.24}) is estimated in the same way as the integral $I$ (see \eqref{eqB.3.0}), and the integral involving $\Phi_2$ is handled by splitting $p \otimes q = (p-q) \otimes q + q \otimes q$ and inspecting the proof   of the estimate of $\Phi_3$.
Thus, we conclude
\begin{equation*}
    \bigg\|\int \Phi_j (\cdot, Q) J^{1/2} (q) g (q) \, dq\bigg\|_{ C^{\alpha} (\bR^3) } \lesssim_{\alpha} \|g\|_{ L_{\infty} (\bR^3) }, j = 1, 2,
\end{equation*}
and hence, the desired estimate \eqref{eqB.3.9} holds.

\textbf{Proof of \eqref{eqB.3.1.1}. }
We denote
$$
    \Theta_k = \partial_{p_k} + \frac{q_0}{p_0} \partial_{q_k}.
$$
It was shown in the proof of Lemma 2 in \cite{GS_03} that
\begin{align}
    \label{eqB.3.17}
  &  \Theta_k \Phi^{i j} (P, Q) = \tilde \Phi_1^{i j} (P, Q) + \tilde \Phi_2^{i j} (P, Q) + \tilde \Phi_3^{i j} (P, Q), \\
  & \tilde \Phi_1^{i j} (p, q)= \frac{(P \cdot Q)^2}{\big((P \cdot Q)^2 - 1\big)^{1/2}}  \Theta_k \big(\frac{\delta_{i j}}{p_0 q_0}\big), \notag \\
  & \tilde \Phi_2^{i j} (p, q) =  \frac{(P \cdot Q)^2}{\big((P \cdot Q)^2 - 1\big)^{3/2}} \big((P \cdot Q) - 1\big) \Theta_k \big(\frac{p_i q_j + p_j q_i}{p_0 q_0}\big), \notag \\
  & \tilde \Phi_3^{i j}  (p, q) =  -  \frac{(P \cdot Q)^2}{\big((P \cdot Q)^2 - 1\big)^{3/2}} \Theta_k \big(\frac{(p_i - q_i) (p_j  - q_j)}{p_0 q_0}\big). \notag
\end{align}
By direct calculations,
\begin{align*}
&
    \Theta_k \big(\frac{1}{p_0 q_0}\big) = - (\frac{p_k}{p_0}  + \frac{q_k}{q_0}) \frac{ 1 }{p_0^2 q_0 }, \\
    &  \Theta_k \big(\frac{p_i q_j}{p_0 q_0}\big) = p_i q_j \Theta_k \big(\frac{1}{p_0 q_0}\big) + \frac{ \Theta_k (p_i q_j)  }{p_0 q_0} \\
   & =  -  \frac{p_i}{p_0} \frac{q_j}{q_0} (\frac{p_k}{p_0}  + \frac{q_k}{q_0}) \frac{1}{p_0 }
     + \frac{1}{p_0 } (\delta_{i k} \frac{q_j}{q_0} + \delta_{j k} \frac{p_i}{p_0}).
\end{align*}
Hence, for any multi-index $\beta$, we have
\begin{align}
    \label{eqB.3.18}
  &   |D^{\beta}_p  \Theta_k \big(\frac{1}{p_0 q_0}\big)| \lesssim_{\beta} p_0^{-2-|\beta|} q_0^{-1}, \\
      \label{eqB.3.19}
  &  |D^{\beta}_p \Theta_k \big(\frac{p_i q_j}{p_0 q_0}\big)| \lesssim_{\beta} p_0^{-1-|\beta|}.
\end{align}
Furthermore,  by the formulas $(42)$--$(43)$ on p. 278 in \cite{GS_03},
\begin{align}
      \label{eqB.3.20}
    \Theta_k \big(\frac{(p_i - q_i)(p_j - q_j)}{p_0 q_0}\big) =  \sum_{r, s = 1}^3 (p_{r} - q_r)(p_s - q_s) \phi^{i j}_{k, r  s} (p, q),
\end{align}
where $\phi^{i j}_{k, r s }$ are smooth functions satisfying the estimate
\begin{align}
      \label{eqB.3.21}
    |D^{\beta_1}_p D^{\beta_2}_q \phi^{i j}_{k, r s} (p, q)| \lesssim_{\beta_1, \beta_2} p_0^{-2-|\beta_1|} q_0^{-1-|\beta_2|}
\end{align}
for any multi-indexes $\beta_j, j = 1, 2$.
Combining \eqref{eqB.3.17}--\eqref{eqB.3.21}, we find that $\tilde \Phi_i$ (see \eqref{eqB.3.17} resembles $\Phi_i$ defined in \eqref{eqB.3.24} for $i=1, 2, 3$, and hence, their H\"older norms are estimated by repeating the argument we used to justify \eqref{eqB.3.9}. Thus, the lemma is proved.
\end{proof}

\begin{remark}
    \label{remark B.6}
    We can replace $J^{1/2} (q)$ in the integrals in the statement of Lemma \ref{lemma B.3} with any function $\xi = \xi (q)$ such that $\xi$ and $\nabla_q \xi$ decay
    sufficiently fast at infinity.
\end{remark}

The following lemma is an immediate corollary of  Lemma 4 on p. 287 in \cite{GS_03}. 
\begin{lemma}
        \label{lemma B.5}
    For $r \in (3/2, \infty]$, $g \in W^{  1 }_r (\bR^3)$, the following identity holds in the sense of distributions:
   \begin{align}
        \label{eqB.5.1}
     &  \partial_{p_i} \int    \Phi^{ i j } (P, Q)  J^{1/2} (q)   \partial_{q_j}  g ( q) \, dq  \\
     & =   \partial_{p_i}   \int    \Phi^{ i j } (P, Q)  J^{1/2} (q)  \frac{ q_j}{2 q_0}   g (q)  \, dq  \notag\\
    &  - 4  \int  \frac{P \cdot Q}{p_0 q_0} \bigg((P \cdot Q)^2 - 1\bigg)^{-1/2}  J^{1/2} (q)   g (q) \, dq - \kappa (p) J^{1/2} (p) g (p), \notag
     \end{align}
     where $\kappa (p) = 2^{7/2} \pi p_0 \int_0^{\pi} (1+|p|^2 \sin^2 \theta)^{-3/2} \sin (\theta) \, d\theta$.
\end{lemma}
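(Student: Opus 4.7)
The plan is to reduce the identity to the cited Lemma 4 on p.~287 of \cite{GS_03} via a single integration by parts in the $q$ variable. The starting observation is the Jüttner identity $\partial_{q_j} J^{1/2}(q) = -\frac{q_j}{2 q_0} J^{1/2}(q)$, which follows directly from $J(q) \propto e^{-q_0/(k_b T)}$ and the chain rule with $\partial_{q_j} q_0 = q_j/q_0$ (with all physical constants normalized to $1$, as elsewhere in this appendix).

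First I would integrate by parts formally in $q_j$, using that $J^{1/2}(q)$ decays rapidly and that $\Phi^{ij}(P,Q)$ grows only polynomially in $q$ (cf.\ the bound $|\Phi^{ij}(P,Q)| \lesssim q_0^7(1+|p-q|^{-1})$ recorded in \eqref{eq12.A.2.60}), to obtain
\begin{align*}
\int \Phi^{ij}(P,Q)\, J^{1/2}(q)\, \partial_{q_j} g(q)\, dq
&= \int \Phi^{ij}(P,Q)\, J^{1/2}(q)\, \frac{q_j}{2 q_0}\, g(q)\, dq \\
&\quad -\int \bigl(\partial_{q_j} \Phi^{ij}(P,Q)\bigr)\, J^{1/2}(q)\, g(q)\, dq.
\end{align*}
Applying $\partial_{p_i}$ to both sides then yields the first term on the right-hand side of \eqref{eqB.5.1} and reduces the lemma to identifying
$$
-\partial_{p_i} \int \bigl(\partial_{q_j}\Phi^{ij}(P,Q)\bigr)\, J^{1/2}(q)\, g(q)\, dq
$$
with the integral containing $\frac{P\cdot Q}{p_0 q_0}\bigl((P\cdot Q)^2 - 1\bigr)^{-1/2}$ and the pointwise term $-\kappa(p) J^{1/2}(p) g(p)$. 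This identification is precisely the content of Lemma~4 on p.~287 of \cite{GS_03}; the relevant integrand $\partial_{q_j}\Phi^{ij}$ has the explicit form $2\frac{\Lambda(P,Q)}{p_0 q_0}(P\cdot Q\, p_i - q_i)$ noted in Lemma~\ref{lemma B.3}, and the angular integration implicit in Lemma~4 of \cite{GS_03} produces the constant $4$ and the factor $\kappa(p)$.

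The main subtlety is justifying that the first integration by parts, as well as the subsequent $\partial_{p_i}$ differentiation under the integral sign, are valid in the sense of distributions despite the $|p-q|^{-1}$ singularity of $\Phi^{ij}$. This is handled by a standard cutoff/mollification argument: split $\Phi^{ij} = \Phi^{ij} \eta_\varepsilon(|p-q|) + \Phi^{ij} (1-\eta_\varepsilon(|p-q|))$ with $\eta_\varepsilon$ supported away from the diagonal, perform the integration by parts and $\partial_{p_i}$ differentiation on each piece (both regular in the relevant sense), and pass to the limit $\varepsilon \to 0$ using the pointwise and H\"older bounds on the kernel established in Lemma~\ref{lemma B.3} and Remark~\ref{remark B.6}. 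The delta-type contribution $-\kappa(p) J^{1/2}(p) g(p)$ emerges in this limit from the non-integrable part of $\partial_{p_i}\partial_{q_j}\Phi^{ij}$, analogously to the distributional identity $\partial_i(x_i/|x|^3) = -4\pi\delta_0$ in $\mathbb{R}^3$.
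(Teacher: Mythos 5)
The paper's ``proof'' is just the sentence ``The following lemma is an immediate corollary of Lemma 4 on p.~287 in \cite{GS_03},'' so there is essentially nothing to compare against: you are filling in content the paper leaves to the reference. Your computation of the integration by parts and of the Jüttner identity $\partial_{q_j}J^{1/2}=-\frac{q_j}{2q_0}J^{1/2}$ is correct, as is the analogy to $\partial_i(x_i/|x|^3)=-4\pi\delta_0$ explaining the pointwise $\kappa(p)J^{1/2}(p)g(p)$ contribution, and the observation that a cutoff/mollification argument is needed to make the $|p-q|^{-3}$-type singularity of $\partial_{p_i}\partial_{q_j}\Phi^{ij}$ rigorous is a genuine and necessary point.

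One caveat worth flagging: the phrase ``immediate corollary'' strongly suggests that Lemma 4 of \cite{GS_03} is \emph{precisely} the identity \eqref{eqB.5.1} for Schwartz $h$, not the sub-identity
\begin{align*}
  -\partial_{p_i}\int (\partial_{q_j}\Phi^{ij})\,J^{1/2}(q)\,g(q)\,dq
  = -4\int \frac{P\cdot Q}{p_0q_0}\Big((P\cdot Q)^2-1\Big)^{-1/2}J^{1/2}(q)\,g(q)\,dq - \kappa(p)J^{1/2}(p)g(p)
\end{align*}
which you attribute to it. If that is the case, your integration by parts in $q$ is not a \emph{reduction to} Lemma 4 but rather a re-derivation of the first step in \cite{GS_03}'s proof \emph{of} Lemma 4, and the citation would be slightly misplaced; the paper's intended argument is then simply to cite Lemma 4 for smooth $h$ and extend to $g\in W^1_r(\bR^3)$ by density, using the bounds already gathered in Lemma~\ref{lemma B.3} and Remark~\ref{remark B.6} to pass to the limit. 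Neither of these routes is wrong, and yours is more self-contained; but you should check the precise statement of Lemma 4 in \cite{GS_03} before asserting that it coincides with the sub-identity above.
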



\begin{lemma}
        \label{lemma B.1}
Let $r \in (3/2, \infty]$, $g = (g^{+}, g^{-}) \in W^{  1 }_r (\bR^3) \cap L_{\infty} (\bR^3)$ and $a_g$, $C_g$ and $K g$ be given by \eqref{eq1.1.5},  \eqref{eq1.1.6}, and \eqref{eq1.1.7}, respectively.
    Then, one has
        \begin{align}
        \label{eqB.1.5}
                  &  \|a_g\|_{ L_{\infty } (\bR^3) } \lesssim_r     \|g\|_{ W^1_{r} (\bR^3) },  \\
                             \label{eqB.1.3}
          & \|C_g\|_{ L_{\infty } (\bR^3) } \lesssim_r   1 +      \|g\|_{ W^1_{r} (\bR^3) }, \\
               \label{eqB.1.1}
             &   |K g| (p) \lesssim_r J^{1/4} (p) \|g\|_{W^1_r (\bR^3)}.
          \end{align}
         If $r = \infty$, then, for any $\alpha \in (0, 1)$,
         \begin{align}
         \label{eqB.1.6}
            &  [a_g]_{ C^{\alpha}  (\bR^3) }  \lesssim_{\alpha}   \||g| + |\nabla_p g|\|_{ L_{\infty} (\bR^3)},   \\
                                          \label{eqB.1.4}
         &   [C_g ]_{ C^{\alpha} (\bR^3) } \lesssim_{\alpha}   1 + \||g| + |\nabla_p g|\|_{ L_{\infty} (\bR^3)}, \\
        \label{eqB.1.2}
       & [K g]_{ C^{\alpha}  (\bR^3) }  \lesssim_{\alpha}   \||g| + |\nabla_p g|\|_{ L_{\infty} (\bR^3)}.
        \end{align}
\end{lemma}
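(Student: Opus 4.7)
\textbf{Proof plan for Lemma \ref{lemma B.1}.} The three assertions concern integral operators built from the kernel $\Phi^{ij}(P,Q)$, and the strategy is to reduce each to an application of the estimates already established in Lemmas \ref{lemma B.2}, \ref{lemma B.3}, and \ref{lemma B.5}. I would organize the proof by handling $a_g$, then $C_g$, then $Kg$, in order of increasing technical complexity.

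For $a_g$, recall that \eqref{eq1.1.5} expresses $a_g^i$ as a linear combination of two integrals of the form $\int \Phi^{ij}(P,Q) J^{1/2}(q) h(q)\,dq$ with $h \in \{\tfrac{p_i}{2p_0} g^\pm, \partial_{q_j} g^\pm\}$ (the first factor $p_i/(2p_0)$ being bounded in $p$ and thus harmless). The $L_\infty$ bound \eqref{eqB.1.5} therefore follows immediately from Lemma \ref{lemma B.2} with $k=0$ applied to each integral (with $r \in (3/2,\infty]$), while the $C^\alpha$ bound \eqref{eqB.1.6} follows from \eqref{eqB.3.9} in Lemma \ref{lemma B.3} together with Remark \ref{remark B.6}.

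For $C_g$, the first two terms $-\tfrac12 \sigma^{ij}\tfrac{p_i p_j}{p_0^2} + \partial_{p_i}(\sigma^{ij}\tfrac{p_j}{p_0})$ contain only the background diffusion $\sigma = \int \Phi(P,Q) J(q)\,dq$, which by Lemma \ref{lemma B.2} is bounded with bounded first derivatives, so these contribute $O(1)$ to both the $L_\infty$ and $C^\alpha$ norms — this is the source of the ``$1+$'' in \eqref{eqB.1.3}--\eqref{eqB.1.4}. For the remaining integral, I would split
\[
    -\int \big(\partial_{p_i} - \tfrac{p_i}{2p_0}\big)\Phi^{ij}(P,Q)\, J^{1/2}(q)\,\partial_{q_j} g(q)\cdot(1,1)\,dq
\]
into the $\partial_{p_i}\Phi^{ij}$-piece and the $\tfrac{p_i}{2p_0}\Phi^{ij}$-piece. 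The second piece is handled directly by Lemma \ref{lemma B.2} (for $L_\infty$) and by \eqref{eqB.3.9} in Lemma \ref{lemma B.3} (for $C^\alpha$), since $|p_i/(2p_0)|\le 1$. Since $J$ depends only on $q$, the first piece is $\partial_{p_i}\!\int \Phi^{ij} J^{1/2}(q)\,\partial_{q_j} g\,dq$, which by Lemma \ref{lemma B.5} equals a sum of integrals with no $q$-derivatives on $g$ plus a pointwise term $\kappa(p) J^{1/2}(p) g(p)$; each integral on the right-hand side of \eqref{eqB.5.1} is then estimated by \eqref{eqB.3.5}--\eqref{eqB.3.9} of Lemma \ref{lemma B.3} (applied to both the bare kernel and to $\partial_{p_i}\Phi^{ij}$, using \eqref{eqB.3.1.1}), while the boundary-like term is trivially controlled by $\|g\|_{L_\infty}$.

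For $Kg$, I would first expand the outer derivative in \eqref{eq1.1.7} using $\partial_{p_i} J(p) = -\tfrac{p_i}{p_0} J(p)$, yielding
\[
    Kg(p) = \tfrac{p_i}{p_0} J^{1/2}(p)\!\int \Phi^{ij} J^{1/2}(q)\big(\partial_{q_j}g+\tfrac{q_j}{2 q_0}g\big)\!\cdot(1,1)\,dq
    - J^{1/2}(p)\,\partial_{p_i}\!\!\int\!\Phi^{ij} J^{1/2}(q)\big(\partial_{q_j}g+\tfrac{q_j}{2q_0}g\big)\!\cdot(1,1)\,dq.
\]
The integrals without $q$-derivatives are bounded by Lemmas \ref{lemma B.2}--\ref{lemma B.3}, and the $\partial_{p_i}\!\int\Phi^{ij}J^{1/2}\partial_{q_j} g$ term is rewritten via Lemma \ref{lemma B.5}; each resulting factor grows at most polynomially in $p$, and the prefactor $J^{1/2}(p)$ absorbs this growth into $J^{1/4}(p)$, yielding \eqref{eqB.1.1}. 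For the Hölder seminorm \eqref{eqB.1.2}, the same decomposition works, estimating the polynomially-weighted integrals in $C^\alpha$ via \eqref{eqB.3.1}--\eqref{eqB.3.9} and \eqref{eqB.3.1.1}, and then applying the product rule together with the fact that $\|p\mapsto J^{1/2}(p) p_0^N\|_{C^\alpha}\lesssim 1$ for any $N$.

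The main technical obstacle, as anticipated, lies in the $Kg$ estimate: one must carefully track the polynomial weights produced by Lemma \ref{lemma B.5} and by the expansion of $\partial_{p_i} J(p)$, and verify that the Gaussian prefactor $J^{1/2}(p)$ dominates all of them so as to produce a genuine $J^{1/4}(p)$ decay. The $C^\alpha$ version of this extraction — applied uniformly in $p$ — is where \eqref{eqB.3.1.1} (the Hölder bound for derivatives of the kernel) becomes essential.
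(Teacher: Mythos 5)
Your proposal follows essentially the same route as the paper's proof. For $a_g$, the paper likewise applies \eqref{eqB.3.9} to the $\partial_{q_j}g$-integral and then handles $\tfrac{p_i}{2p_0}\int\Phi^{ij}J^{1/2}(q)g(q)\,dq$ by interpolating \eqref{eqB.2.1} between $k=0$ and $k=1$; for $C_g$, it likewise treats the $\sigma$-terms as $O(1)$, splits the remaining integral into a $\tfrac{p_i}{2p_0}\Phi$-part and a $\partial_{p_i}\Phi$-part, and rewrites the latter via Lemma \ref{lemma B.5}; and for $Kg$, it decomposes exactly as you do and then reuses the $C_g$ pieces. For the $L_\infty$ bounds \eqref{eqB.1.5}--\eqref{eqB.1.1}, the paper simply cites Lemma B.5 of \cite{VML} rather than redoing them, but your sketch is the same underlying argument.

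There is one place where your outline is imprecise and would not survive a naive reading. After applying Lemma \ref{lemma B.5}, the first term on the right-hand side of \eqref{eqB.5.1} is still
$\partial_{p_i}\int\Phi^{ij}(P,Q)J^{1/2}(q)\tfrac{q_j}{2q_0}g(q)\,dq$: the $q$-derivative on $g$ is removed, but a $p$-derivative on the integral remains. You cannot simply ``apply Lemma \ref{lemma B.3} to $\partial_{p_i}\Phi^{ij}$,'' because $\partial_{p_i}\Phi^{ij}$ is too singular on the diagonal to be locally integrable in $3$D; the estimate \eqref{eqB.3.1.1} is only for the specific combination $\Theta_k\Phi^{ij} = (\partial_{p_k}+\tfrac{q_0}{p_0}\partial_{q_k})\Phi^{ij}$, which is regularized by that cancellation. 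The bridge is the commutation identity \eqref{eq12.A.2.15}, which rewrites $\partial_{p_i}\int\Phi^{ij}J^{1/2}h\,dq$ as a sum of an integral with $\tfrac{q_0}{p_0}\partial_{q_i}h$, an integral with lower-order factors of $h$, and an integral against $\Theta_i\Phi^{ij}$; the first two are covered by \eqref{eqB.3.9} (with Remark \ref{remark B.6}) and the last by \eqref{eqB.3.1.1}. You do cite \eqref{eqB.3.1.1}, so the right tool is named, but the intermediate commutation step is essential and should be stated rather than folded into a parenthesis.
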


\begin{proof}
The $L_{\infty}$ bounds \eqref{eqB.1.5}--\eqref{eqB.1.1} were proved in Lemma B.5 in \cite{VML}.

\textit{Estimate of $a_g$.}
Applying the estimate \eqref{eqB.3.9}  to the integral
$$
    \int \Phi^{ i j } (P, Q)   J^{1/2} (q)   \partial_{q_j}  g (q) \cdot  (1, 1)\, dq
$$
and estimating the H\"older seminorm of
$$
    \frac{p_i}{2 p_0}  \int \Phi^{ i j } (P, Q)   J^{1/2} (q)     g (q)\, dq
$$
by interpolating between the estimates \eqref{eqB.2.1}  with $k = 0$ and $k = 1$,
we prove the validity of \eqref{eqB.1.6}.

\textit{Estimate of $C_g$.}
 By the bound of the $\sigma$-function (see \eqref{eq27.4.17}) established in Lemma 5 in \cite{GS_03},
 $$
    |D^{\beta}_p \sigma| \lesssim_{|\beta|}  p_0^{ - |\beta| },
 $$
  it suffices to estimate the integral term in \eqref{eq1.1.6} given by
  \begin{align}
    \label{eqB.1.15}
   & \partial_{p_i} \int    \Phi^{ i j } (P, Q)  J^{1/2} (q)   \partial_{q_j}  g ( q) \cdot (1, 1)\, dq \\
   &  - \frac{p_i}{2 p_0}  \int  	   \Phi^{ i j } (P, Q)  J^{1/2} (q)  \partial_{q_j}  g (q) \cdot (1, 1)\, dq =: C_{g, 1} + C_{g, 2}. \notag
\end{align}
Next, by the identity \eqref{eqB.5.1},
  \begin{align}
      \label{eqB.1.16}
& C_{g, 1} =   \partial_{p_i} \int    \Phi^{ i j } (P, Q)  J^{1/2} (q)  \frac{ q_j}{2 q_0}   g (q) \cdot (1, 1) \, dq \\
    &  - 4  \int  \frac{P \cdot Q}{p_0 q_0} \bigg((P \cdot Q)^2 - 1\bigg)^{-1/2}  J^{1/2} (q)   g (q) \cdot (1, 1)\, dq \notag\\
 &    - \kappa (p) J^{1/2} (p) g (p) \cdot (1, 1)  =:   C_{g, 1, 1}  +   C_{g, 1, 2}  +   C_{g, 1, 3}. \notag
\end{align}

First, by \eqref{eqB.2.1} and \eqref{eqB.3.9},
\begin{align}
    \label{eqB.1.10}
    [C_{g, 2}]_{ C^{\alpha} (\bR^3)} \lesssim_{\alpha} \|\nabla_p g\|_{ L_{\infty} (\bR^3)}.
\end{align}
Furthermore, by \eqref{eqB.3.5} and \eqref{eqB.3.1}, one has
\begin{align}
        \label{eqB.1.11}
    [C_{g, 1, 2}]_{ C^{\alpha} (\bR^3)} \lesssim_{\alpha} \| g\|_{ L_{\infty} (\bR^3)}.
\end{align}
Due to the product rule inequality and  the bound
\begin{align*}
        |D^{\beta}  \big(\kappa (p) J^{1/2} (p)\big)| \lesssim_{|\beta|} J^{1/4} (p),
\end{align*}
 we have
\begin{align}
        \label{eqB.1.12}
    [C_{g, 1, 3}]_{ C^{\alpha} (\bR^3)} \lesssim_{\alpha} \|g\|_{ C^{\alpha} (\bR^3)}.
\end{align}
To estimate $C_{g, 1, 1}$, we recall the  identity \eqref{eq12.A.2.15}:
\begin{align}
    \label{eqB.1.8}
	& \partial_{p_i} \int_{ \bR^3 } \Phi^{i j} (P, Q) J^{1/2} (q) h (q) \, dq\\
&	=  \int   \Phi^{i j} (P, Q) J^{1/2} (q) \frac{q_0}{p_0}\partial_{q_i} h (q) \, dq \notag \\
&\quad	+ \int   \Phi^{i j} (P, Q) J^{1/2} (q) \big(\frac{q_i}{q_0 p_0}  - \frac{q_i}{2 p_0}\big) h (q) \, dq  \notag \\
 &\quad	+  \int  (\partial_{p_i} + \frac{q_0}{p_0} \partial_{q_i}) \Phi^{i j} (P, Q) J^{1/2} (q) h (q) \, dq. \notag
 \end{align}
 By using \eqref{eqB.3.9} again and Remark \ref{remark B.6}, we find that the $C^{\alpha}$-seminorms of the first two integrals on the r.h.s of \eqref{eqB.1.8} are bounded by
 $$
     N (\alpha) (\||h| + |\nabla_p h|\|_{ L_{\infty} (\bR^3)}).
 $$
 Furthermore, by \eqref{eqB.3.1.1}, the $C^{\alpha}$-seminorm  of the third term on the r.h.s. of \eqref{eqB.1.8} is dominated by
 $$
    N (\alpha) \|h\|_{ L_{\infty} (\bR^3)}.
 $$
 Replacing $h (q)$ with $\frac{q_j}{2 q_0} g (q)$ in the above argument, we conclude that
 \begin{align}
         \label{eqB.1.13}
    [C_{g, 1, 1}]_{ C^{\alpha} (\bR^3)} \lesssim_{\alpha} \||g| + |\nabla_p g|\|_{ L_{\infty} (\bR^3)}.
  \end{align}
 Combining \eqref{eqB.1.15}--\eqref{eqB.1.13} and using the interpolation inequality, we prove the desired estimate \eqref{eqB.1.4}.

\textit{Estimate of $K g$.}
First, we split  the integral in \eqref{eq1.1.7} as follows:
	\begin{align*}
	&	K g  = (\partial_{p_i} p_0) J^{1/2} (p) \int \Phi^{i j} (P, Q) J^{1/2} (q) (\partial_{q_j} g (q) + \frac{q_j}{2} g (q)) \cdot (1, 1) \, dq \,  (1, 1)\\
	&
		 -  J^{1/2} (p) \partial_{p_i}  \int \Phi^{i j} (P, Q) J^{1/2} (q)   \frac{q_j}{2 q_0} g (q) \cdot (1, 1) \, dq\,  (1, 1)  \\
   & -  J^{1/2} (p) \partial_{p_i} \int  \Phi^{i j} (P, Q) J^{1/2} (q)  \partial_{q_j} g (q) \cdot (1, 1) \, dq\,  (1, 1) =: K_1 + K_2 + K_3.
	\end{align*}
 We observe that the following terms are similar:
 \begin{itemize}
     \item[--] $K_1$ and $C_{g, 2}$ (see \eqref{eqB.1.15}),
     \item[--] $K_2$ and $C_{g, 1, 1}$ (see \eqref{eqB.1.16}),
     \item[--] $K_3$ and $C_{g, 1}$ (see \eqref{eqB.1.15}).
 \end{itemize}
  Hence, the estimate \eqref{eqB.1.2}  is proved by repeating the argument we used for $C_g$.
\end{proof}

\section{Regularity of a velocity average}

\label{appendix G}

The following result is a slightly generalized version of the averaging lemma in \cite{DLM_91} proved by inspecting the argument of the aforementioned reference. In particular, in the lemma below, $\psi$ does not need to be smooth and compactly supported. 

\begin{lemma}[cf. Theorem 2 in \cite{DLM_91}]
            \label{lemma 14.1}
Let 
\begin{itemize}
    \item[--] $d \ge 1$, $p \in [2, \infty)$, $\alpha \in [0, 1)$,
    \item[--] $f, g \in L_p (\bR^{2d})$ satisfy 
\begin{align*}
        v \cdot \nabla_x f  = (1-\Delta_x)^{\alpha/2} g,
\end{align*}
\item[--] $\chi \in L_1 (\bR^d_v)$ be a function such that for some $\beta > \frac{d-1}{2}$ and $K > 0$,
\end{itemize}
\begin{align*}
    |\chi (v)| \le K (1+|v|^2)^{-\beta} \,  \text{a.e.} \,  v \in \bR^d, \quad \|\chi\|_{L_1 (\bR^d)} \le K.
\end{align*}
Then, we have
\begin{align}
  \label{eq14.1.0}
   \bigg\| \int_{\bR^d} f (\cdot, v) \chi (v) \, dv \bigg\|_{ W^{\gamma}_p (\bR^d_x) } \lesssim_{d,  p, \alpha, \beta,   K }  \|f\|_{ L_p (\bR^{2d}) } + \|g\|_{ L_p (\bR^{2d}) },
\end{align}
where
$$
    \gamma = \frac{1-\alpha}{p}.
$$
\end{lemma}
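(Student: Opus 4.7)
The plan is to adapt the Fourier-analytic proof of the DiPerna-Lions-Meyer velocity averaging lemma \cite{DLM_91} to the present setting, tracking the effect of the factor $(1-\Delta_x)^{\alpha/2}$ on the source term and relaxing the smoothness hypothesis on $\chi$ to mere polynomial decay. The target regularity $\gamma = (1-\alpha)/p$ reflects the classical DLM gain of $1/p$ (recovered when $\alpha=0$), reduced by $\alpha/p$ in order to absorb the additional spatial derivatives on $g$.

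First I would Fourier transform in $x$, converting the equation into the algebraic relation $i(v\cdot\xi)\hat f(\xi,v) = (1+|\xi|^2)^{\alpha/2}\hat g(\xi,v)$, with velocity average $\hat\rho(\xi) = \int \hat f(\xi,v)\chi(v)\,dv$. A Littlewood-Paley decomposition $f=\sum_{j\geq -1} f_j$, $g=\sum_{j\geq -1} g_j$ in $\xi$ reduces the matter to the uniform dyadic estimate
\begin{equation*}
\|\rho_j\|_{L_p(\bR^d_x)}\lesssim 2^{-j\gamma}\bigl(\|f_j\|_{L_p(\bR^{2d})}+\|g_j\|_{L_p(\bR^{2d})}\bigr),
\end{equation*}
after which the Besov-Sobolev embedding and an $\ell^p$-square-function argument yield the $W^\gamma_p$ bound.

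For each block I would introduce a threshold $\delta_j>0$ and a smooth bump $\psi:\bR\to[0,1]$ supported near $0$, and split
\begin{equation*}
\chi(v) = \chi^\flat_j(v,\xi)+\chi^\sharp_j(v,\xi),\qquad \chi^\flat_j(v,\xi):=\chi(v)\psi\!\left(\tfrac{v\cdot\xi}{\delta_j|\xi|}\right).
\end{equation*}
The hypothesis $\beta>(d-1)/2$ enters exactly here: integrating $(1+|v|^2)^{-\beta}$ over the $(d-1)$ directions perpendicular to $\xi$ converges when $2\beta>d-1$, yielding the slab bound $\|\chi^\flat_j(\cdot,\xi)\|_{L_1(dv)}\lesssim \delta_j K$ uniformly in $\xi$. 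On the good set $\{\chi^\sharp_j\neq 0\}$ one has $|v\cdot\xi|\gtrsim \delta_j|\xi|$, so the transport operator can be inverted:
\begin{equation*}
\widehat{\rho_j^\sharp}(\xi)=(1+|\xi|^2)^{\alpha/2}\int \frac{\chi^\sharp_j(v,\xi)}{iv\cdot\xi}\,\hat g_j(\xi,v)\,dv,
\end{equation*}
and the symbol $\chi^\sharp_j(v,\xi)/(iv\cdot\xi)$ is Mihlin-regular in $\xi$ uniformly in $v$ with constants integrable against $\chi$; the Mihlin multiplier theorem combined with Minkowski's inequality then gives $\|\rho_j^\sharp\|_{L_p}\lesssim 2^{j(\alpha-1)}\delta_j^{-1}\|g_j\|_{L_p}$. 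For the bad part, combining H\"older in $v$ with the multiplier realization of $\chi^\flat_j$ yields $\|\rho_j^\flat\|_{L_p}\lesssim \delta_j^{\sigma(p)}\|f_j\|_{L_p}$ for an exponent $\sigma(p)$ obtained by real-interpolation of the $p=2$ Plancherel bound (where $\sigma=1/2$) with the trivial $L_\infty$-bound. Balancing the two estimates by choosing $\delta_j$ so that $\delta_j^{\sigma(p)}\simeq 2^{j(\alpha-1)}\delta_j^{-1}$ produces the dyadic gain $2^{-j\gamma}$ with $\gamma=(1-\alpha)/p$.

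The principal technical obstacle will be the sharp $L^p$-bound for the bad part: a naive duality/H\"older argument only gives $\sigma(p)=1/p'$, which after optimization overshoots the target exponent. Recovering the correct exponent requires a genuine multiplier-theoretic treatment of the angular cutoff $\psi(v\cdot\xi/(\delta_j|\xi|))$---viewed as a truncated Calder\'on-Zygmund-type symbol in the $x$-frequency direction---together with real-interpolation of its $L^p$-bounds, mirroring the $TT^*$ and Fefferman-Stein steps of the original \cite{DLM_91} argument. The modifications needed for the present generality (polynomial decay of $\chi$ instead of compact support, and the extra factor $2^{j\alpha}$ on the good side) are essentially bookkeeping once the DLM multiplier machinery is in place: the threshold $\beta>(d-1)/2$ is tight exactly as in the classical case, and the $(1+|\xi|^2)^{\alpha/2}$ factor is absorbed as a frequency-localized pointwise multiplier at each dyadic level.
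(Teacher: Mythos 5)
The paper does not in fact supply a proof of this lemma: the surrounding text merely states that it is ``a slightly generalized version of the averaging lemma in \cite{DLM_91} proved by inspecting the argument of the aforementioned reference.'' So your reconstruction is being compared against the DLM argument itself. Your sketch captures the right skeleton: Fourier transform in $x$, Littlewood--Paley blocks in $\xi$, the good/bad split of $\chi$ by a threshold $\delta_j|\xi|$ on $v\cdot\xi$, and the observation that $(1+|\xi|^2)^{\alpha/2}\sim 2^{j\alpha}$ on block $j$ so the exponent degrades from $1/p$ to $(1-\alpha)/p$. You also correctly isolate the role of $\beta>\tfrac{d-1}{2}$: integrating $(1+|v|^2)^{-\beta}$ over the $(d-1)$ directions orthogonal to $\xi$ converges precisely under that condition, giving the slab bound $\|\chi^\flat_j(\cdot,\xi)\|_{L_1(dv)}\lesssim \delta_j K$ uniformly in $\xi$, which is the only place the generalization from compact support to polynomial decay bites.

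The gap is in the exponent bookkeeping, and it is not where you locate it. With your good-part bound $\|\rho_j^\sharp\|_{L_p}\lesssim 2^{j(\alpha-1)}\delta_j^{-1}\|g_j\|_{L_p}$ (the pure $L_\infty$-symbol bound after Minkowski) and a bad-part bound $\delta_j^{\sigma(p)}$, the balance $\delta_j^{\sigma}\simeq 2^{j(\alpha-1)}\delta_j^{-1}$ yields $\gamma=(1-\alpha)\sigma/(\sigma+1)$, which equals $(1-\alpha)/p$ only if $\sigma=1/(p-1)$; at $p=2$ this would need $\sigma=1$, whereas Plancherel only gives $\sigma(2)=1/2$. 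In other words, with your stated good-part estimate the balance delivers at best $\gamma=(1-\alpha)/(p+1)$, strictly short of the target. The fix is symmetric: \emph{both} parts must be interpolated between the $L_2$ Plancherel endpoint and the $L_\infty$-type endpoint. Cauchy--Schwarz against the weight $\chi^\sharp$ (using $\int_{|v\cdot\xi|>\delta|\xi|}\chi^\sharp(v,\xi)|v\cdot\xi|^{-2}\,dv\lesssim (\delta|\xi|^2)^{-1}$, again needing $\beta>\tfrac{d-1}{2}$) gives the good part $2^{j(\alpha-1)}\delta_j^{-1/2}$ at $p=2$; interpolated with the $L_\infty$-scaling $\delta_j^{-1}$ this yields $2^{j(\alpha-1)}\delta_j^{-1/p'}$ for $p\in[2,\infty)$. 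Paired with the bad-part bound $\delta_j^{1/p}$ (interpolated from $\delta_j^{1/2}$ at $p=2$ and the trivial $\delta_j^0$), the balance $\delta_j^{1/p+1/p'}=\delta_j=2^{j(\alpha-1)}$ produces exactly $\gamma=(1-\alpha)/p$. You flag the bad part as ``the principal technical obstacle'' and defer to the DLM multiplier machinery for it, but you treat the good part as if the bare symbol estimate were already $L_p$-optimal; it is not, and leaving that gap in place the claimed balance does not close.
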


\section{Sobolev regularity of even and odd functions}
\label{appendix F}
\begin{lemma}[cf. Lemma 5.2 in \cite{NPV_12}]
    \label{lemma F.1}
Let $p \in [1, \infty)$, $s \in (0, 1)$, and $\Omega \subset \bR^d$ be a domain symmetric with respect to $x_d$.
For a function $u \in W^s_p (\Omega)$ (see \eqref{eq1.2.21}), we denote
$$
 u_{\text{even}} (x)
  = \begin{cases}
& u (x), x_d \ge 0, \\
& u (x_1, \ldots, x_{d-1}, -x_d), x_d < 0.
\end{cases}
$$
Then, $u_{\text{even}} \in W^s_p (\Omega)$, and
\begin{align}
    \label{eqF.1.1}
    \|u_{\text{even}}\|_{W^s_p (\Omega)} \le 4 \|u\|_{W^s_p (\Omega \cap \bR^d_{+})}.
\end{align}
\end{lemma}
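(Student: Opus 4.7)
The plan is to reduce the claim to a standard reflection argument for the Gagliardo seminorm, broken into a trivial $L_p$ estimate and a more delicate estimate of the double integral
\[
[u_{\text{even}}]_{W^s_p(\Omega)}^p = \int_{\Omega}\int_{\Omega} \frac{|u_{\text{even}}(x) - u_{\text{even}}(y)|^p}{|x-y|^{d+sp}}\, dx\, dy.
\]
First I would handle the $L_p$ part: since $\Omega$ is symmetric in $x_d$ and $u_{\text{even}}$ on $\Omega\cap \mathbb{R}^d_{-}$ is obtained by pulling back $u|_{\Omega\cap \mathbb{R}^d_{+}}$ through the measure-preserving reflection $x\mapsto(x_1,\dots,x_{d-1},-x_d)$, one immediately gets $\|u_{\text{even}}\|_{L_p(\Omega)}^p \le 2\|u\|_{L_p(\Omega\cap\mathbb{R}^d_{+})}^p$.

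Next I would split the domain of the Gagliardo integral into the four quadrants determined by the signs of $x_d$ and $y_d$. The two diagonal quadrants (both coordinates nonnegative or both negative) each reduce, after the reflection change of variables if needed, to $[u]_{W^s_p(\Omega\cap\mathbb{R}^d_{+})}^p$, since the Jacobian of the reflection is $1$ and both $u_{\text{even}}(x)-u_{\text{even}}(y)$ and $|x-y|$ are preserved.

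The main step, and the only nontrivial point, is the off-diagonal quadrant, say $x_d\ge 0, y_d<0$. Setting $\tilde y=(y_1,\dots,y_{d-1},-y_d)\in\Omega\cap\mathbb{R}^d_{+}$, one has $u_{\text{even}}(x)-u_{\text{even}}(y)=u(x)-u(\tilde y)$. The key geometric observation is
\[
|x-\tilde y|\le |x-y|,
\]
which holds because only the last coordinate changes under the reflection, and for $x_d\ge 0\ge -\tilde y_d=y_d$ we have $|x_d-\tilde y_d|\le x_d+\tilde y_d = x_d - y_d = |x_d-y_d|$, while the remaining coordinate differences are identical. Consequently the integrand is pointwise dominated by $|u(x)-u(\tilde y)|^p/|x-\tilde y|^{d+sp}$, and after changing variables $y\mapsto \tilde y$ (unit Jacobian) this quadrant contributes at most $[u]_{W^s_p(\Omega\cap\mathbb{R}^d_{+})}^p$. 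The remaining off-diagonal quadrant is handled symmetrically.

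Putting the four pieces together yields $[u_{\text{even}}]_{W^s_p(\Omega)}^p\le 4\,[u]_{W^s_p(\Omega\cap\mathbb{R}^d_{+})}^p$, and combining with the $L_p$ bound gives $\|u_{\text{even}}\|_{W^s_p(\Omega)}^p \le 4\,\|u\|_{W^s_p(\Omega\cap\mathbb{R}^d_{+})}^p$, whence \eqref{eqF.1.1} (since $p\ge 1$ and $4^{1/p}\le 4$). The only subtle issue is verifying the geometric inequality $|x-\tilde y|\le|x-y|$ in the cross quadrants; once that is in place the rest is bookkeeping.
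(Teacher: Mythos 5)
Your proof is correct, and it is in fact essentially the same reflection argument given in the cited reference (Lemma 5.2 of \cite{NPV_12}); the paper itself supplies no proof, only the citation. The decomposition into four quadrants according to the signs of $x_d$ and $y_d$, the identification of the two diagonal quadrants with $[u]_{W^s_p(\Omega\cap\mathbb{R}^d_+)}^p$ via the measure-preserving reflection, and the pointwise domination $|x-\tilde y|\le|x-y|$ in the cross quadrants (equivalently, $(x_d+y_d)^2\le(x_d-y_d)^2$ since $x_d y_d\le 0$) are exactly the ingredients used there. One small bookkeeping remark: since the norm in \eqref{eq1.2.21} is the sum $\|u\|_{L_p}+[u]_{W^s_p}$ rather than the $\ell^p$ combination, you should conclude as $\|u_{\text{even}}\|_{W^s_p}\le 2^{1/p}\|u\|_{L_p}+4^{1/p}[u]_{W^s_p}\le 4^{1/p}\|u\|_{W^s_p(\Omega\cap\mathbb{R}^d_+)}\le 4\|u\|_{W^s_p(\Omega\cap\mathbb{R}^d_+)}$, rather than raising both sides to the $p$-th power; the final constant $4$ is the same either way.
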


\begin{lemma}
        \label{lemma F.2}
Let  $p \in [1, \infty), s \in (0, 1/p)$, and $u \in W^s_p (\bR^d_{+})$. For
\begin{equation}
        \label{eqF.2.1}
     u_{\text{odd}} (x)
     = \begin{cases}
& u (x), x_d \ge 0, \\
& - u (x_1, \ldots, x_{d-1}, -x_d), x_d < 0,
     \end{cases}
\end{equation}
   we have
\begin{align*}
      [ u_{\text{odd}}]_{ W^s_p (\bR^d) }\lesssim_{d, s, p} [u]_{ W^s_p (\bR^d_{+}) }.
\end{align*}
\end{lemma}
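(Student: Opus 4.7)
The plan is to split the Gagliardo double integral
\[
[u_{\mathrm{odd}}]^p_{W^s_p(\bR^d)} = \int_{\bR^d}\int_{\bR^d} \frac{|u_{\mathrm{odd}}(x)-u_{\mathrm{odd}}(y)|^p}{|x-y|^{d+sp}}\,dx\,dy
\]
into four pieces indexed by the signs of $x_d$ and $y_d$. On the diagonal blocks $(+,+)$ and $(-,-)$, the integrand reduces to the corresponding one for $u$ on $\bR^d_+$, after the simultaneous reflection $(x,y)\mapsto(\bar x,\bar y)$ in the latter case (note that $|\bar x-\bar y|=|x-y|$), so each of these contributes at most $[u]^p_{W^s_p(\bR^d_+)}$. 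The cross blocks $(+,-)$ and $(-,+)$ are symmetric, so it suffices to handle $(+,-)$.

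On $(+,-)$, writing $\bar y := (y',-y_d) \in \bR^d_+$, oddness gives $u_{\mathrm{odd}}(x)-u_{\mathrm{odd}}(y)=u(x)+u(\bar y)$. I would then apply the elementary bound $|a+b|^p\lesssim|a-b|^p+|b|^p$ with $a=u(x)$, $b=u(\bar y)$, producing a \emph{difference term} and a \emph{trace-like term}. For the difference term, the key observation is that, since $x_d,\bar y_d>0$,
\[
|x-y|^2 = |x'-\bar y'|^2+(x_d+\bar y_d)^2 \ge |x'-\bar y'|^2+(x_d-\bar y_d)^2 = |x-\bar y|^2,
\]
so $|x-y|^{-(d+sp)}\le|x-\bar y|^{-(d+sp)}$; changing variables $y\mapsto\bar y$ then bounds the difference term by a piece of $[u]^p_{W^s_p(\bR^d_+)}$.

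The trace-like term is the crux. After changing variables $y\mapsto\bar y=:z\in\bR^d_+$, it equals
\[
\int_{\bR^d_+} |u(z)|^p \Bigl(\int_{x_d>0}\frac{dx}{(|x'-z'|^2+(x_d+z_d)^2)^{(d+sp)/2}}\Bigr)dz.
\]
The inner integral is computed explicitly: the substitution $x'=z'+(x_d+z_d)w$ pulls out a factor $(x_d+z_d)^{-1-sp}$, and the remaining $dx_d$ integral on $(0,\infty)$ evaluates to a constant multiple of $z_d^{-sp}$. Thus the trace-like term is bounded by $C_{d,s,p}\int_{\bR^d_+} |u(z)|^p z_d^{-sp}\,dz$, which is controlled by the \emph{fractional Hardy inequality}
\[
\int_{\bR^d_+} \frac{|u(z)|^p}{z_d^{sp}}\,dz \lesssim_{d,s,p} [u]^p_{W^s_p(\bR^d_+)}, \qquad 0<s<1/p,
\]
a standard consequence of the density of $C^\infty_c(\bR^d_+)$ in $W^s_p(\bR^d_+)$ in the supercritical range $sp<1$ (or, alternatively, of a direct dyadic comparison between $|u(z)|^p/z_d^{sp}$ and the finite differences of $u$ on a scale $\sim z_d$).

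The main obstacle is precisely this last step: the hypothesis $s<1/p$ enters only through the Hardy inequality, and this is genuinely necessary, since for $sp\ge 1$ the odd extension of a function with nonvanishing trace (e.g., a smooth bump that equals $1$ on the boundary) fails to lie in $W^s_p(\bR^d)$. Once the Hardy inequality is in hand, the four-quadrant decomposition assembles the desired seminorm bound with no further difficulty.
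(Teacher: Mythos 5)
Your proof is correct and follows essentially the same route as the paper: both split the Gagliardo integral into diagonal and cross blocks, reduce the cross blocks to an integral of $|u|^p$ against the weight $z_d^{-sp}$ by computing the inner integral explicitly, and close with the fractional Hardy inequality in the range $sp<1$. The only cosmetic difference is that the paper uses the simpler bound $|a+b|^p\lesssim|a|^p+|b|^p$ (yielding two Hardy-type terms, both treated symmetrically), whereas you use $|a+b|^p\lesssim|a-b|^p+|b|^p$, which forces the extra observation $|x-y|\ge|x-\bar y|$ on the cross block to absorb the difference term into $[u]^p_{W^s_p(\bR^d_+)}$; this works but is a slight detour.
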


\begin{proof}
We denote $x' = (x_1, \ldots, x_{d-1})$, $\bar x = (x', -x_d)$.
We note that by the change of variable $x_d \to -x_{d}$, we have
\begin{align}
    \label{eqF.2.2}
  & [u_{\text{odd}} ]^p_{W^s_p (\bR^d) }  = \int_{\bR^d} \int_{\bR^d} \frac{ |u_{\text{odd}} (x) -  u_{\text{odd}} (y)|^p }{|x-y|^{d+ s p}} \, dx dy \notag\\
   & = 2 \int_{\bR^d_{+}} \int_{\bR^d_{+}} \frac{ |u(x) -  u (y)|^p}{|x- y|^{d +  s p}} \, dx dy
   + 2 \int_{\bR^d_{+}} \int_{\bR^d_{+}}  \frac{ |u(x) +  u (y)|^p}{|x-\bar y|^{d+  s p}} \, dx dy \notag \\
   & \lesssim  [ u ]^p_{W^s_p (\bR^d_{+}) }  + \int_{\bR^d_{+}} |u (x)|^p \bigg(\int_{\bR^d_{+}} \frac{dy}{|x-\bar y|^{d + s p}} \bigg) \,  dx
   + \int_{\bR^d_{+}} |u (y)|^p \bigg(\int_{\bR^d_{+}} \frac{dx}{|x-\bar y|^{d + s p}}\bigg) \,  dy.
\end{align}
Furthermore, by changing variables $y' \to y'-x'$, $y_d \to y_d + x_d$, and $y' \to \frac{y'}{y_d}$, we get
\begin{align*}
        \int_{\bR^d_{+}} \frac{dy}{|x-\bar y|^{d + s p}}
    =  \int_{\bR^{d-1}} \frac{dy'}{(1+|y'|^2)^{(d + s p)/2}} \int_{x_d}^{\infty} \frac{d y_d}{y_d^{1+sp}}  = N (d, s, p) \frac{1}{x_d^{sp}}.
\end{align*}
Next, using a fractional variant of Hardy's inequality for $W^s_p (\bR^d_{+})$ functions (see, for example, \cite{FS_08}), which is valid for $s \in (0, 1/p)$,
we conclude
$$
    \int_{\bR^d_{+}} \frac{|u (x)|^p}{x_d^{sp}}\, dx \lesssim_{d, s, p}
    [u]^p_{W^s_p (\bR^d_{+}) }.
$$
The same bound holds for the last term on the r.h.s of \eqref{eqF.2.2}.
The lemma is proved.
\end{proof}

\begin{lemma}
        \label{lemma F.3}
Let $p \in [1, \infty)$, $s \in (0, 1/p)$, $\Omega$ be a Lipschitz domain, and $u \in W^s_p (\Omega)$.
Let  $\tilde u$ be the function defined as $u$ inside $\Omega$ and $0$ outside. Then, $\tilde u \in W^s_p (\bR^d)$, and
\begin{align}
    \label{eqF.3.1}
    \|\tilde u\|_{W^s_p (\bR^d)} \lesssim_{d, s, p, \Omega}  \|u\|_{W^s_p (\Omega)}.
\end{align}
\end{lemma}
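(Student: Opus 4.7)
\textbf{Proof proposal for Lemma \ref{lemma F.3}.} The plan is to decompose the $W^s_p(\bR^d)$ Gagliardo seminorm of $\tilde u$ according to whether the two integration points lie inside or outside $\Omega$, and then to reduce the mixed interior/exterior piece to a fractional Hardy inequality exactly as in the proof of Lemma \ref{lemma F.2}. Since $\|\tilde u\|_{L_p(\bR^d)} = \|u\|_{L_p(\Omega)}$ is immediate, it suffices to estimate $[\tilde u]_{W^s_p(\bR^d)}$.

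First I would write
\begin{align*}
[\tilde u]^p_{W^s_p(\bR^d)}
&= \int_\Omega\!\int_\Omega \frac{|u(x)-u(y)|^p}{|x-y|^{d+sp}}\,dx\,dy
+ 2\int_\Omega\!\int_{\Omega^c} \frac{|u(x)|^p}{|x-y|^{d+sp}}\,dx\,dy \\
&\le [u]^p_{W^s_p(\Omega)} + 2\int_\Omega |u(x)|^p \bigg(\int_{\Omega^c}\frac{dy}{|x-y|^{d+sp}}\bigg)\,dx,
\end{align*}
where the ``both points in $\Omega^c$'' contribution vanishes because $\tilde u \equiv 0$ there.

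The key step is to estimate the inner integral by $C\,\delta(x)^{-sp}$, where $\delta(x):=\mathrm{dist}(x,\partial\Omega)$. Since $x \in \Omega$, every $y \in \Omega^c$ satisfies $|x-y|\ge\delta(x)$, so passing to polar coordinates centered at $x$ gives
\begin{equation*}
\int_{\Omega^c}\frac{dy}{|x-y|^{d+sp}} \le \int_{|x-y|>\delta(x)}\frac{dy}{|x-y|^{d+sp}} = \frac{C_d}{sp}\,\delta(x)^{-sp}.
\end{equation*}
Plugging this back yields
\begin{equation*}
[\tilde u]^p_{W^s_p(\bR^d)} \lesssim_{d,s,p} [u]^p_{W^s_p(\Omega)} + \int_\Omega \frac{|u(x)|^p}{\delta(x)^{sp}}\,dx.
\end{equation*}

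The remaining step is to invoke the fractional Hardy inequality for Lipschitz domains, valid in the range $s\in(0,1/p)$ (see \cite{FS_08}; this is the same inequality used in the proof of Lemma \ref{lemma F.2}), which bounds the weighted $L_p$ integral by $C(d,s,p,\Omega)\,\|u\|^p_{W^s_p(\Omega)}$. Combining these gives \eqref{eqF.3.1}. The main obstacle is the invocation of the fractional Hardy inequality, which requires the Lipschitz regularity of $\partial\Omega$ and the strict inequality $s<1/p$; without this threshold condition the zero extension would lose regularity at the boundary, so the sharpness of the range is essential rather than a technicality.
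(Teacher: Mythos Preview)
Your proof is correct but follows a different route from the paper. The paper first localizes and flattens the boundary to reduce to $\Omega=\bR^d_{+}$, and then observes that on the half-space the zero extension satisfies $\tilde u=\tfrac12(u_{\text{even}}+u_{\text{odd}})$, so the result follows immediately from Lemmas~\ref{lemma F.1} and~\ref{lemma F.2}. Your argument instead works directly on the Lipschitz domain: you split the Gagliardo seminorm into interior and mixed pieces, bound the mixed piece by $\int_\Omega |u|^p\,\delta(x)^{-sp}\,dx$ via the elementary estimate $\int_{\Omega^c}|x-y|^{-d-sp}\,dy\lesssim\delta(x)^{-sp}$, and close with a fractional Hardy inequality on $\Omega$. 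This bypasses the even/odd trick altogether and is arguably more transparent. The one soft spot is the citation: \cite{FS_08} treats only half-spaces, so the Lipschitz-domain Hardy inequality you invoke either requires a different reference (e.g.\ Dyda) or its own localization-and-flattening step---at which point the two proofs carry essentially the same technical content, just packaged differently. Your approach is cleaner when the domain-level Hardy inequality is available off the shelf; the paper's approach is more self-contained given what has already been established in the appendix.
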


\begin{proof}
By localization and boundary flattening  (see, for example, the proof of Theorem 5.4 in \cite{NPV_12}), we may assume that $\Omega = \bR^d_{+}$. The desired assertion now follows from
the fact that $\tilde u = \frac{1}{2}(u_{\text{even}} + u_{\text{odd}})$ combined with Lemmas  \ref{lemma F.1}--\ref{lemma F.2}.
\end{proof}

The next assertion is a direct corollary of Lemmas \ref{lemma F.2}--\ref{lemma F.3}.
\begin{lemma}
        \label{lemma F.4}
Let $p \in [1, \infty)$, $s \in (0, 1/p)$, and $\Omega$ be a  Lipschitz domain symmetric with respect to  $x_d$. For a function $u \in W^s_p (\Omega \cap \bR^d_{+})$, we denote $u_{\text{odd}}$  as in  \eqref{eqF.2.1}.
Then, $u_{\text{odd}} \in W^s_p (\Omega)$, and
\begin{align}
    \label{eqF.4.1}
      [u_{\text{odd}}]_{ W^s_p (\Omega) }\lesssim_{d, s, p, \Omega} [u]_{ W^s_p (\Omega \cap \bR^d_{+}) }.
\end{align}
\end{lemma}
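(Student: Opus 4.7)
The plan is to chain Lemmas~\ref{lemma F.2} and \ref{lemma F.3} via the intermediate domain $\bR^d_{+}$: first extend $u$ by zero from $\Omega\cap\bR^d_{+}$ to all of $\bR^d_{+}$, then odd-reflect across $\{x_d=0\}$ to $\bR^d$, and finally restrict back to $\Omega$. The symmetry assumption on $\Omega$ is exactly what guarantees that the odd reflection of the zero-extended function, when restricted to $\Omega$, coincides with $u_{\text{odd}}$.

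The first step is to observe that $\Omega\cap\bR^d_{+}$ is a Lipschitz subdomain of $\bR^d_{+}$. This is standard: since $\Omega$ is symmetric about $\{x_d=0\}$, the boundary $\partial\Omega$ either avoids the hyperplane $\{x_d=0\}$ or meets it transversally, so its intersection with the closed upper half-space has Lipschitz boundary composed of a portion of $\partial\Omega$ together with a portion of $\Omega\cap\{x_d=0\}$. Applying Lemma~\ref{lemma F.3} to $\Omega\cap\bR^d_{+}$ (viewed as a Lipschitz subdomain of $\bR^d_{+}$, or equivalently by first extending to $\bR^d$ and restricting) yields a zero-extension $\tilde u\in W^s_p(\bR^d_{+})$ with
\[
\|\tilde u\|_{W^s_p(\bR^d_{+})}\lesssim_{d,s,p,\Omega}\|u\|_{W^s_p(\Omega\cap\bR^d_{+})}.
\]
Applying Lemma~\ref{lemma F.2} to $\tilde u$ produces $\tilde u_{\text{odd}}\in W^s_p(\bR^d)$ with $[\tilde u_{\text{odd}}]_{W^s_p(\bR^d)}\lesssim[\tilde u]_{W^s_p(\bR^d_{+})}$.

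It then remains to verify the pointwise identity $\tilde u_{\text{odd}}|_{\Omega}=u_{\text{odd}}$. For $x\in\Omega\cap\bR^d_{+}$ both sides equal $u(x)$. For $x\in\Omega\cap\bR^d_{-}$, the symmetry of $\Omega$ gives $(x',-x_d)\in\Omega\cap\bR^d_{+}$, so $\tilde u(x',-x_d)=u(x',-x_d)$, whence $\tilde u_{\text{odd}}(x)=-u(x',-x_d)=u_{\text{odd}}(x)$. Chaining the inequalities,
\[
[u_{\text{odd}}]_{W^s_p(\Omega)}\le[\tilde u_{\text{odd}}]_{W^s_p(\bR^d)}\lesssim[\tilde u]_{W^s_p(\bR^d_{+})}\lesssim[u]_{W^s_p(\Omega\cap\bR^d_{+})},
\]
which is \eqref{eqF.4.1}.

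The only real issue is the Lipschitz regularity of the auxiliary domain $\Omega\cap\bR^d_{+}$, needed to invoke Lemma~\ref{lemma F.3}; this is handled by the symmetry assumption on $\Omega$ as indicated above. Everything else is bookkeeping: zero-extension, odd-reflection, and restriction, in that order, commute in the required way precisely because $\Omega$ is symmetric across $\{x_d=0\}$.
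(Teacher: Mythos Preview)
Your approach is correct and is exactly what the paper intends: it states only that Lemma~\ref{lemma F.4} is a ``direct corollary of Lemmas~\ref{lemma F.2}--\ref{lemma F.3},'' and you have supplied precisely those details---zero-extend from $\Omega\cap\bR^d_{+}$ (Lipschitz by symmetry) via Lemma~\ref{lemma F.3}, odd-reflect via Lemma~\ref{lemma F.2}, then restrict to $\Omega$ and check agreement with $u_{\text{odd}}$. One cosmetic point: Lemma~\ref{lemma F.3} is stated for full norms, so your final chain should read $\lesssim\|u\|_{W^s_p(\Omega\cap\bR^d_{+})}$ rather than the pure seminorm; the seminorm version of \eqref{eqF.4.1} then follows either by noting that the proofs of Lemmas~\ref{lemma F.1}--\ref{lemma F.2} actually control seminorms directly, or is harmless in the applications since all functions involved are compactly supported.
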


\section{Regularity of the solution to the Lam\'e system with the Navier boundary condition}

\label{appendix H}

 We consider the Lam\'e system with the Navier boundary condition:
\begin{equation}
            \label{eqA.1}
\left\{\begin{aligned}
 &- \nabla \cdot S (\bm{u}) = \bm{f},\\
 & (\bm{u} \cdot n_x)_{| \partial \Omega} = 0,\\
 & \big((S (\bm{u}) n_x) \times n_x\big)_{| \partial \Omega} = 0,
\end{aligned}
\right.
\end{equation}
where $S$ is defined in \eqref{eq1.2.23}.

\begin{lemma}
For any
$\bm{u}, \bm{w} \in  W^2_{2 }  (\Omega)$ satisfying the Navier boundary condition, the following Green's identity  holds:
\begin{equation}
        \label{eqA.2.0}
   -  \int_{\Omega} \bm{w}_i \partial_{x_j} S_{i j} (\bm{u})  \, dx
    = \sum_{i, j =1}^3   \int_{\Omega}  S_{i j} (\bm{u}) S_{i j} (\bm{w})   \, dx.
\end{equation}
\end{lemma}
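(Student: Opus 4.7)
The plan is to integrate by parts in the left-hand side, extract the symmetric part of $\nabla \bm{w}$ using the $i,j$-symmetry of $S_{ij}(\bm{u})$, and verify that the resulting boundary integral vanishes thanks to the two Navier conditions being \emph{complementary} in the normal/tangential decomposition.

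First, since $\bm{u}, \bm{w} \in W^2_2(\Omega) \subset W^1_2(\Omega)$, integration by parts (valid in a $C^{1,1}$ bounded domain $\Omega$) gives
\begin{align*}
   -\int_{\Omega} \bm{w}_i \, \partial_{x_j} S_{ij}(\bm{u}) \, dx
   &= \int_{\Omega} (\partial_{x_j} \bm{w}_i) \, S_{ij}(\bm{u}) \, dx
   - \int_{\partial \Omega} \bm{w}_i \, S_{ij}(\bm{u}) \, (n_x)_j \, dS_x.
\end{align*}
For the interior term, I would write $\partial_{x_j}\bm{w}_i = S_{ij}(\bm{w}) + A_{ij}(\bm{w})$, where $A_{ij}(\bm{w}) := \frac{1}{2}(\partial_{x_j}\bm{w}_i - \partial_{x_i}\bm{w}_j)$ is antisymmetric in $i,j$. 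Contracting the symmetric tensor $S_{ij}(\bm{u})$ against the antisymmetric tensor $A_{ij}(\bm{w})$ produces zero, so the interior term collapses to $\sum_{i,j} \int_{\Omega} S_{ij}(\bm{u}) S_{ij}(\bm{w}) \, dx$, which is the right-hand side of \eqref{eqA.2.0}.

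The key step is therefore showing that the boundary integral
\begin{equation*}
   \mathcal{B} := \int_{\partial \Omega} \bm{w} \cdot \bigl(S(\bm{u}) n_x\bigr) \, dS_x
\end{equation*}
vanishes. At each point of $\partial\Omega$, decompose any vector $\bm{v}$ as $\bm{v} = (\bm{v}\cdot n_x)n_x + P_{\parallel}\bm{v}$, where $P_{\parallel}$ is the projection onto the tangent plane. The Navier condition $(\bm{w}\cdot n_x)|_{\partial\Omega} = 0$ gives $\bm{w} = P_{\parallel}\bm{w}$ on $\partial \Omega$, while $((S(\bm{u})n_x)\times n_x)|_{\partial\Omega} = 0$ forces $S(\bm{u})n_x$ to be parallel to $n_x$, i.e.\ $P_{\parallel}(S(\bm{u})n_x) = 0$. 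Hence $\bm{w} \cdot (S(\bm{u})n_x) = (P_{\parallel}\bm{w}) \cdot (n_x\cdot S(\bm{u})n_x)n_x = 0$ on $\partial\Omega$, so $\mathcal{B}=0$.

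The main (very mild) obstacle is ensuring the traces involved in $\mathcal{B}$ are well-defined and that the integration by parts is rigorous: since $\bm{u}\in W^2_2(\Omega)$ the trace of $S(\bm{u})n_x$ lies in $W^{1/2}_2(\partial\Omega)^\ast$ paired against $\bm{w}|_{\partial\Omega}\in W^{1/2}_2(\partial\Omega)$, so the boundary pairing makes sense, and the standard divergence-theorem identity applies. Everything else is pointwise linear algebra on $\partial\Omega$, so no further calculation is needed.
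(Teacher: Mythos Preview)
Your proof is correct. The paper states this lemma without proof, and your argument---integration by parts, symmetric/antisymmetric splitting of $\nabla\bm{w}$, and the normal/tangential decomposition showing the boundary term vanishes under the Navier conditions---is exactly the standard derivation one would supply.
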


\begin{remark}
    \label{remark A.1}
Let $\bm{u}$ be a solution to \eqref{eqA.1} with $\bm{f} = 0$. Then, by \eqref{eqA.2.0}, $S (\bm{u}) = 0$. By this and the boundary condition $\bm{u} \cdot n_x = 0$, we conclude that $\bm{u} \in \mathcal{R} (\Omega)$ (see \eqref{eq1.1}).
\end{remark}


The proof of the following variant of Korn's inequality is standard (cf., for example, \cite{KO_88}).
\begin{lemma}
        \label{lemma A.2}
Let $\Omega$ be a  $C^{1}$ domain.
Then,  for any $\bm{u} \in W^1_{2 } (\Omega)$ such that  $\bm{u} \cdot n_x = 0$ on $\partial \Omega$ and  $\bm{u} \perp \mathcal{R} (\Omega)$ (see \eqref{eq1.1}) in the $L_2 (\Omega)$ sense,  we have
\begin{align}
    \label{eqA.2.1}	
      \|\bm{u}\|_{ W^1_2 (\Omega) } \lesssim_{\Omega} \|S (\bm{u})\|_{ L_2 (\Omega) }.
\end{align}
\end{lemma}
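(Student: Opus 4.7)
The plan is to prove Lemma A.2 by a standard compactness-and-contradiction argument combined with the usual (second) Korn inequality. First I would recall that for any $C^1$ bounded domain $\Omega$ and any $\bm{u} \in W^1_2(\Omega)$, the second Korn inequality yields
\begin{equation}
        \label{eqA.2.3}
    \|\bm{u}\|_{W^1_2(\Omega)} \lesssim_{\Omega} \|\bm{u}\|_{L_2(\Omega)} + \|S(\bm{u})\|_{L_2(\Omega)}.
\end{equation}
The goal is therefore to absorb the $L_2$ term using the two hypotheses $\bm{u}\cdot n_x = 0$ on $\partial\Omega$ and $\bm{u}\perp \mathcal{R}(\Omega)$ in $L_2(\Omega)$.

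Next I would argue by contradiction. Suppose \eqref{eqA.2.1} fails. Then there exists a sequence $\bm{u}_n \in W^1_2(\Omega)$ with $\bm{u}_n \cdot n_x = 0$ on $\partial\Omega$ in the trace sense, $\bm{u}_n \perp \mathcal{R}(\Omega)$ in $L_2(\Omega)$, $\|\bm{u}_n\|_{W^1_2(\Omega)} = 1$, and $\|S(\bm{u}_n)\|_{L_2(\Omega)} \to 0$. By the Rellich--Kondrachov theorem, a subsequence (still denoted $\bm{u}_n$) converges strongly in $L_2(\Omega)$ to some $\bm{u}$ and weakly in $W^1_2(\Omega)$ to the same limit. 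Applying \eqref{eqA.2.3} to $\bm{u}_n - \bm{u}_m$ shows that $\{\bm{u}_n\}$ is Cauchy in $W^1_2(\Omega)$, hence $\bm{u}_n \to \bm{u}$ strongly in $W^1_2(\Omega)$ and $\|\bm{u}\|_{W^1_2(\Omega)} = 1$. By strong convergence and continuity of the trace map, $\bm{u}\cdot n_x = 0$ on $\partial\Omega$ and $\bm{u}\perp \mathcal{R}(\Omega)$ in $L_2(\Omega)$. Moreover, $S(\bm{u}) = 0$ in $L_2(\Omega)$.

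The key structural step is then to invoke the classical fact that any distributional solution of $S(\bm{u}) = 0$ on a connected domain is of the form $\bm{u}(x) = Ax + \bm{b}$ with $A$ a constant skew-symmetric matrix and $\bm{b} \in \mathbb{R}^3$; combined with the boundary condition $\bm{u} \cdot n_x = 0$ on $\partial\Omega$, this places $\bm{u} \in \mathcal{R}(\Omega)$ by the very definition in \eqref{eq1.1}. But then the orthogonality $\bm{u} \perp \mathcal{R}(\Omega)$ in $L_2(\Omega)$ forces $\bm{u} = 0$, contradicting $\|\bm{u}\|_{W^1_2(\Omega)} = 1$.

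The main technical obstacle is ensuring the second Korn inequality \eqref{eqA.2.3} is available in the stated $C^1$ regularity, and justifying passage to the limit in the two side conditions (trace and $L_2$-orthogonality) under strong $W^1_2$ convergence; the former is completely standard and the latter is immediate from continuity of the trace operator $W^1_2(\Omega) \to L_2(\partial\Omega)$ and of the $L_2(\Omega)$-inner product against the finite-dimensional space $\mathcal{R}(\Omega)$. Finiteness of $\dim \mathcal{R}(\Omega) \le 6$ is clear from its embedding into the space of affine Killing fields, so the orthogonal projection onto $\mathcal{R}(\Omega)$ is continuous and the contradiction argument closes cleanly.
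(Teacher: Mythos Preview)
Your proof is correct and is precisely the standard compactness-and-contradiction argument the paper has in mind: the paper does not give a proof but simply remarks that ``the proof of the following variant of Korn's inequality is standard (cf., for example, \cite{KO_88}).'' Your write-up supplies exactly that argument, with the second Korn inequality plus Rellich--Kondrachov and the characterization of $\{S(\bm{u})=0\}$ as infinitesimal rigid motions, so there is nothing to add.
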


\begin{lemma}
            \label{lemma A.1}
Let $\bm{f} \in L_2 (\Omega)$ be a function such that $\bm{f} \perp  \mathcal{R} (\Omega)$ in the $L_2 (\Omega)$ sense.
Then,  the system \eqref{eqA.1} has a  unique strong solution $\bm{u} \in W^2_{2} (\Omega)$ satisfying $\bm{u} \perp \mathcal{R} (\Omega)$ in $L_2 (\Omega)$, and
\begin{align}
    \label{eqA.1.10}
    \|\bm{u}\|_{W^2_2 (\Omega)} \lesssim_{\Omega} \|\bm{f}\|_{ L_2 (\Omega)}.
\end{align}
\end{lemma}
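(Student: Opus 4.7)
The plan is to first obtain a weak $W^1_2$ solution by the Lax–Milgram theorem on a suitable closed subspace, then use elliptic regularity up to the boundary to upgrade it to a $W^2_2$ strong solution. I would set
\[
V = \{\bm{u} \in W^1_2(\Omega) : (\bm{u}\cdot n_x)|_{\partial\Omega} = 0,\ \bm{u} \perp \mathcal{R}(\Omega)\ \text{in}\ L_2(\Omega)\},
\]
equipped with the $W^1_2$ norm, and consider the bilinear form $a(\bm{u},\bm{v}) = \sum_{i,j} \int_\Omega S_{ij}(\bm{u})S_{ij}(\bm{v})\,dx$ suggested by Green's identity \eqref{eqA.2.0}. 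Continuity of $a$ is immediate, and coercivity on $V$ is exactly the content of Korn's inequality \eqref{eqA.2.1} in Lemma \ref{lemma A.2}. Since $\bm{f}\in L_2(\Omega)$ defines a bounded linear functional on $V$, the Lax–Milgram theorem produces a unique $\bm{u}\in V$ with $a(\bm{u},\bm{v}) = \int_\Omega \bm{f}\cdot\bm{v}\,dx$ for all $\bm{v}\in V$.

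Next I would promote this variational identity to hold for all test fields $\bm{v}\in W^1_2(\Omega)$ with $\bm{v}\cdot n_x=0$ on $\partial\Omega$ (without the orthogonality constraint). For any such $\bm{v}$, write $\bm{v} = \bm{v}_0 + \bm{v}_1$ with $\bm{v}_0\in V$ and $\bm{v}_1\in\mathcal{R}(\Omega)$; since $S(\bm{v}_1)=0$ by Remark \ref{remark A.1}, the left side reduces to $a(\bm{u},\bm{v}_0)$, while the right side equals $\int \bm{f}\cdot\bm{v}_0$ by the hypothesis $\bm{f}\perp\mathcal{R}(\Omega)$. Testing against compactly supported $\bm{v}$ then yields $-\nabla\cdot S(\bm{u}) = \bm{f}$ in $\Omega$ in the distributional sense; testing against general admissible $\bm{v}$ and integrating by parts formally recovers the natural (Navier) boundary condition $(S(\bm{u})n_x)\times n_x = 0$ on $\partial\Omega$ once $\bm{u}$ is regular enough. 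Uniqueness of the strong solution then follows: if $\bm{u}_1,\bm{u}_2$ are two strong solutions with the prescribed orthogonality, then $\bm{w}=\bm{u}_1-\bm{u}_2$ solves the homogeneous problem, so \eqref{eqA.2.0} gives $S(\bm{w})=0$, hence $\bm{w}\in\mathcal{R}(\Omega)$ by Remark \ref{remark A.1}, and the orthogonality forces $\bm{w}=0$.

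The main obstacle is the $W^2_2$ regularity estimate \eqref{eqA.1.10}. The strategy is to verify that the Lamé operator together with the Navier boundary conditions $\bm{u}\cdot n_x=0$ and $(S(\bm{u})n_x)\times n_x=0$ forms a properly elliptic boundary value problem satisfying the Lopatinski–Shapiro complementing condition. One then applies the Agmon–Douglis–Nirenberg a priori estimate
\[
\|\bm{u}\|_{W^2_2(\Omega)} \lesssim_\Omega \|\bm{f}\|_{L_2(\Omega)} + \|\bm{u}\|_{L_2(\Omega)},
\]
valid on a $C^{1,1}$ domain, and absorbs the lower order term by a contradiction/compactness argument: if the desired inequality \eqref{eqA.1.10} failed, a normalized sequence $\bm{u}_n$ with $\|\bm{u}_n\|_{W^2_2}=1$ and $\|\bm{f}_n\|_{L_2}\to 0$ would, by Rellich's theorem and the a priori bound, admit a $W^2_2$-convergent subsequence whose limit solves the homogeneous problem, satisfies $\bm{u}\perp\mathcal{R}(\Omega)$, and therefore vanishes by the uniqueness argument above—contradicting $\|\bm{u}\|_{W^2_2}=1$. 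I expect the verification of the complementing condition for the Navier boundary pair to be the technically delicate step, since one boundary condition is Dirichlet-type (on the normal component) and the other is Neumann-type (on the tangential component of the traction), and one must check its behavior under freezing of coefficients at a boundary point and rotation of coordinates; this is standard but must be done carefully.
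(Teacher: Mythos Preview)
Your proposal is correct and complete; the route differs from the paper's in how existence is obtained and how the sharp estimate is reached. You argue Lax--Milgram on the subspace $V$ (coercivity from Korn's inequality, Lemma~\ref{lemma A.2}) to produce a weak solution directly at $\lambda=0$, then invoke the ADN a~priori estimate and a compactness argument to absorb the lower-order term. The paper instead first establishes \emph{full} $W^2_2$ solvability with the quantitative bound $\|\lambda \bm{u}\| + \|\lambda^{1/2}\nabla\bm{u}\| + \|D^2\bm{u}\| \lesssim \|\bm{f}\|$ for the system $-\nabla\cdot S(\bm{u}) + \lambda\bm{u} = \bm{f}$ with large $\lambda$ (using \cite{DG_20}), and then passes to $\lambda=0$ by the Fredholm alternative; Korn's inequality is not invoked in their proof at all. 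Your approach is more self-contained, while the paper's borrows a ready-made existence theorem but packages the kernel identification into the Fredholm step.

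Both routes require the Lopatinski--Shapiro condition, which you flag as the delicate point. The paper sidesteps the direct symbol computation with a clean trick worth knowing: on the half-space $\bR^3_-$ the Navier conditions read $\bm{u}_3=0$, $\partial_{x_3}\bm{u}_1=\partial_{x_3}\bm{u}_2=0$, so the odd extension of $\bm{u}_3$ and even extensions of $\bm{u}_1,\bm{u}_2$ produce a $W^2_2(\bR^3)$ function solving the Lam\'e system on the whole space. The whole-space $W^2_2$ estimate for this strongly elliptic system then gives the half-space a~priori estimate, and since the L--S condition is \emph{necessary} for that estimate, it is automatically verified. This reflection argument replaces the freezing-and-rotation check you anticipated.
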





\begin{proof}
\textbf{Step 1: well-posedness in $W^2_2$  for large $\lambda$. }
We will prove that there exists a constant $\lambda_0 = \lambda_0 (\Omega) > 0$ such that the system
\begin{equation}
      \label{eqA.1.1}
\left\{\begin{aligned}
 &- \nabla \cdot S (\bm{u}) + \lambda \bm{u} = \bm{f},\\
 & (\bm{u} \cdot n_x)_{| \partial \Omega} = 0,\\
 & \big((S (\bm{u}) n_x) \times n_x\big)_{| \partial \Omega} = 0,
\end{aligned}
\right.
\end{equation}
has a unique strong solution $\bm{u} \in W^2_2 (\Omega)$, and
\begin{align}
    \label{eqA.1.4}
        \|  \lambda |\bm{u}|  + \lambda^{1/2} |\nabla \bm{u}| + |D^2 \bm{u}|\|_{ L_2 (\Omega) } \lesssim_{\Omega} \|- \nabla \cdot S (\bm{u}) + \lambda \bm{u}\|_{ L_2 (\Omega) }.
\end{align}
To this end, we invoke the classical elliptic regularity theory established by Agmon, Douglis, and Nirenberg (see \cite{ADN_1}--\cite{ADN_2}) and further developed by many researchers.

We check the Lopatinskii-Shapiro (L-S) condition for our system \eqref{eqA.1} (see \cite{ADN_1}--\cite{ADN_2},  \cite{DG_20}). We may assume that the domain is the half-space $\bR^3_{-}$. Then, according to Section 10 in \cite{ADN_1}, the L-S condition is a necessary condition for the  $W^2_2 (\bR^3_{-})$ a priori estimate to hold. To verify this a priori estimate, we fix any $\bm{u} \in W^2_2 (\bR^3_{-})$ satisfying the system \eqref{eqA.1} on $\bR^3_{-}$ and observe the Navier boundary condition becomes
$$
    \bm{u}_3 (x_1, x_2, 0) = 0, \quad \partial_{x_3 }  \bm{u}_i (x_1, x_2, 0) = 0, i = 1, 2.
$$
Furthermore, let
\begin{itemize}
\item[--] $\overline{\bm{ u}}_3$ and $\overline{\bm{f}}_3$ be the odd extensions of $\bm{u}_3$ and $\bm{f}_3$, respectively, across $\{x_3 = 0\}$,
\item[--] $\overline{\bm{u}_i}, \overline{\bm{f}_i},  i = 1, 2,$ be the even extensions of $\bm{u}_i, \bm{f}_i, i = 1, 2$, respectively.
\end{itemize}
We note that $\overline{\bm{u}} = (\overline{\bm{u}}_1,  \overline{ \bm{u}}_2,  \overline{ \bm{u}}_3) \in W^2_2 (\bR^3)$, so that $\overline{\bm{u}}$ satisfies the identity
\begin{align}
    \label{eqA.1.3}
        -\nabla_x \cdot S (\overline{\bm{u}}) = \overline{\bm{f}}
\end{align}
on $\bR^3$.
We observe that the Lam\'e system \eqref{eqA.1.3} is strongly elliptic in the Legendre sense, and hence, the $W^2_2 (\bR^3)$ a priori estimate is true for
$\overline{ \bm{u} }$ (see, for example, Theorem 3.1 in \cite{DK_11}).  Thus, on $\bR^3_{-}$, the system \eqref{eqA.1} satisfies the $W^2_2 (\bR^3_{-})$ a priori estimate, and therefore, the L-S condition holds.

Next, due to Theorem 3.2 and Remark 3.4 $(i)$ in \cite{DG_20}, there exists $\lambda_0 = \lambda_0 (\Omega) > 0$ such that for any $\lambda \ge \lambda_0$, the system  \eqref{eqA.1.1} has a unique strong solution $\bm{u} \in W^2_2 (\Omega)$, and the estimate \eqref{eqA.1.4} holds.

\textbf{Step 2: $W^2_2$ solvability of the original system.}
Finally, the desired assertion of the lemma follows from the unique solvability of the system \eqref{eqA.1.1} in the class of $W^2_2 (\Omega)$ solutions for large $\lambda$ via a standard argument involving the Fredholm alternative. See, for example, the proof of Theorem 6.2.4  in \cite{E_10}.
\end{proof}

\begin{remark}
  See  \cite{CK_23}  for a different proof of Lemma \ref{lemma A.1}.
\end{remark}

\section{Derivation of the angular momentum conservation}
\label{appendix I}
Let $F, \bE, \bB$ be a sufficiently regular solution to the RVML system in \eqref{RVML} and $\Omega$ be an axisymmetric domain such that its axis is parallel to $\omega$ and contains a point $x_0$.  The goal of this section is to verify the conservation of angular momentum identity \eqref{eq4.3.4}.

We claim that the following momentum identity for the RVML system is true (cf. $(7)$ in Section 9 in \cite{WS_89}):
\begin{align}
    \label{eqH.7}
 &\partial_t \big(\int_{\bR^3} p (F^{+} + F^{-})  \, dp  + \frac{1}{4 \pi } (\bE \times \bB)\big)  \\
 &   +  \nabla_{x} \cdot    \big(\int_{\bR^3} p \otimes (\frac{p}{p_0^{+}} F^{+} + \frac{p}{p_0^{-}} F^{-}) \, dp - \bm{T} \big) = 0, \notag
\end{align}
where
\begin{align*}
    \bm{T}_{i j} = \frac{1}{4 \pi } \big(\bE_i \bE_j + \bB_i \bB_j  - \frac{1}{2} \delta_{i j} (|\bE|^2+|\bB|^2) \big)
\end{align*}
is the Maxwell stress tensor.
The above identity is derived by multiplying the Landau equations by $p$ and using the momentum identity for Maxwell's equations given by  (see Section 5.3 in \cite{S_87})
\begin{align*}
  \frac{1}{4 \pi }  \partial_t  (\bE \times \bB) - \nabla_x \cdot \bm{T}  =  - (\rho \bE  + \bm{j} \times \bB).
\end{align*}

Next, we will verify that
\begin{equation}
        \label{eqH.2}
     \int_{\Omega} R  (x)\cdot \nabla_x \cdot \bm{T} \, dx =  0.
    \end{equation}
We may assume that $\omega = e_1$ and $x_0 = 0$, so that $R (x): = \omega \times (x-x_0) = (-x_2, x_1, 0)^T$.
By the divergence theorem,  the r.h.s. of \eqref{eqH.2} equals
\begin{align*}
       \int_{\partial \Omega}  R^T \bm{T} n_x \, dS_x -  \int_{\Omega}  (\bm{T}_{1 2} -  \bm{T}_{2 1})  \, dx.
\end{align*}
Clearly, the second integral on the r.h.s. is $0$. Since $\bE$ and $\bB$ satisfy the perfect conductor boundary condition, $\bm{T} n_x$ is parallel to $n_x$, and hence, since
\begin{align}
    \label{eqH.14}
    R \cdot n_x = 0,
\end{align} the surface integral in the above identity vanishes. Thus, \eqref{eqH.2} is valid, and we obtain the ``angular momentum identity" for the electromagnetic field:
\begin{align}
        \label{eqH.4}
 \frac{1}{4 \pi }   \partial_t  \int_{\Omega}  R  \cdot (\bE \times \bB) \, dx  =  -   \int_{\Omega} R \cdot (\rho \bE  + \bm{j} \times \bB) \, dx.
\end{align}

Next, by  using the identity
$$
p =  P_{||} p + p_{\perp} n_x
 $$
(see \eqref{eq3.1.4}) and  \eqref{eqH.14}, we get
\begin{align}
    \label{eqH.6}
   & \int_{\Omega} R (x) \cdot \nabla_x \cdot \bigg(\int_{\bR^3} p \otimes (\frac{p}{p_0^{+}} F^{+} + \frac{p}{p_0^{-}} F^{-}) \, dp\bigg) \, dx \\
  &  =  \int_{\partial \Omega}  \int_{\bR^3} \big(R \cdot (P_{||} p)\big) p_{\perp}  (\frac{1}{p_0^{+}} F^{+} + \frac{1}{p_0^{-}} F^{-}) \, dp dx. \notag
\end{align}
The last integral vanishes because $(p_0^{\pm})^{-1} F^{\pm}$ satisfy the SRBC.

Finally, we obtain the desired conservation law \eqref{eq4.3.4} by multiplying the momentum identity \eqref{eqH.7} by  $R (x)$, integrating the result over $\Omega$,  using the identities \eqref{eqH.2} and \eqref{eqH.6},
and the assumption on the initial data \eqref{eq6.1.3}.

\section{Construction of the  functions $B_{i j}$ }

\label{section J}

For convenience, we set all the physical constants to $1$.

\subsection{Conditions on the function $h$.} First, we derive certain conditions on $h$ in \eqref{eq3.1.40} that imply  that the desired properties \eqref{eq3.1.6}--\eqref{eq3.1.8} hold.

\textit{The property \eqref{eq3.1.8}.} We will show that for \eqref{eq3.1.8} to be valid,  it suffices to impose two conditions on $h$ (see  \eqref{eq51.5} and \eqref{eq51}).
To compute
\begin{align*}
  \mathcal{A}:= \big(\int_{\bR^3} B_{i j} \frac{p_k}{p_0} p_l \, dp\big) \partial_{x_k} S_{i j} (\bm{\phi}) \xi_l, \, \xi \in \bR^3,
\end{align*}
in terms of $h$, we denote
$$
    \lambda_1 = \int \frac{p_1^2 p_2^2}{p_0} h (|p|) \sqrt{J} \, dp, 
    \quad 
    \lambda_2 = \int \frac{p_1^4}{p_0} h (|p|) \sqrt{J} \, dp, \quad \lambda_3  = \int  \frac{p_1^2}{p_0} h (|p|)  \sqrt{J}\, dp.
$$
By direct calculations, 
\begin{align}
    \label{eq51.2}
& \int B_{i j}  \frac{p_k}{p_0} p_l \sqrt{J} \, dp  = \int  (p_i p_j    - \delta_{i j})  h (|p|)   \frac{p_k}{p_0} p_l \sqrt{J} \, dp \\
&= 1_{i \neq j}\big(1_{k=i, l=j}
 + 1_{k=j, l=i}\big) \lambda_1
 + 1_{ i=j \neq  k =  l } (\lambda_1 - \lambda_3) 
  +1_{i=j=k=l} (\lambda_2 - \lambda_3). \notag
\end{align}
Imposing the condition $\lambda_1 = \lambda_3$, that is,
\begin{align}
    \label{eq51.5}
    \int_{\bR^3} \frac{p_1^2 p_2^2}{p_0} h (|p|) \sqrt{J} \, dp = \int_{\bR^3}  \frac{p_1^2}{p_0}  h (|p|) \sqrt{J} \, dp,
\end{align}
we may cancel the second term on the r.h.s. of \eqref{eq51.2} and obtain
\begin{align}
    \label{eq51.3}
& \mathcal{A} = \lambda_1 \sum_{i\neq j} \big(\partial_{x_i} S_{i j} (\bm{\phi}) \xi_j 
+ \partial_{x_j} S_{i j} (\bm{\phi}) \xi_i\big) 
+ (\lambda_2 - \lambda_1) \sum_{i=1}^3 \partial_{x_i}^2 \bm{\phi}^i \xi_i  \\
& =  2 \lambda_1 \partial_{x_i} S_{i j} (\bm{\phi}) \xi_j 
+ (\lambda_2 - 3 \lambda_1) \sum_{i=1}^3 \partial_{x_i}^2 \bm{\phi}^i \xi_i.  \notag
\end{align}
To cancel the last term, we impose the condition
\begin{equation}
    \label{eq51}
\lambda_1  = \frac{1}{2}.
\end{equation}
We claim that under \eqref{eq51},
\begin{align}
    \label{3times}
    \lambda_2 = 3 \lambda_1 = \frac{3}{2}.
\end{align}
To verify this, we use the spherical coordinates:
\begin{align*}
     &   \lambda_1 = \Lambda \int_0^{2\pi} \cos^2 (\theta) \sin^2 (\theta) \, d\theta, \quad \lambda_2 = \Lambda \int_0^{2\pi} \cos^4 (\theta)  \, d\theta, \\
    &\text{where} \,\,  \Lambda:= \int_0^{\infty} r^6 h (r) \sqrt{J} \, dr \int_0^{ \pi} \sin^5 (\phi) \, d\phi.
\end{align*}
Hence,  the identity \eqref{3times}  follows from 
$$
    \int_0^{2\pi} \cos^4 (\theta)  \, d\theta =  \frac{3 \pi}{4}, 
    \quad \int_0^{2\pi} \cos^2 (\theta) \sin^2 \theta \, d\theta = \frac{\pi}{4}.
$$
Thus, by \eqref{eq51.3}--\eqref{3times}
\begin{align*}
    \mathcal{A}    = \partial_{x_i} S_{i j} (\bm{\phi}) \xi_j,
    \end{align*}
which is the desired identity \eqref{eq3.1.8}.

\textit{The properties \eqref{eq3.1.6}--\eqref{eq3.1.7}.}
We note that by oddness,  the first identity in \eqref{eq3.1.7} is true. Furthermore, the second one in \eqref{eq3.1.7} follows from the second condition in \eqref{eq3.1.6}. 

Next, we note that
 \begin{align*} 
 B_{i j} (p)  = (p_i p_j - \frac{1}{3}\delta_{i j} |p|^2) h (|p|)  
 +\delta_{i j} \frac{1}{3} (|p|^2-3)   h (|p|) =: \tilde B_{i j} (p) + \dtilde B_{i j } (p),
 \end{align*}
 and by symmetry and oddness,
 \begin{align*}
   \tilde   B_{i j} \perp \sqrt{J}, p_k \sqrt{J}, p_0^{\pm} \sqrt{J}, \quad \dtilde B_{i j} \perp  p_k \sqrt{J}.
 \end{align*}
 Hence, for \eqref{eq3.1.6} to hold, it suffices to impose the conditions
\begin{align}
    \label{eq54}
&\int h (|p|)  (|p|^2-3) \sqrt{J} \, dp = 0, \\
\label{eq55}
&\int h (|p|)   (|p|^2-3) p_0 \sqrt{J} \, dp = 0.
\end{align}

\subsection{Construction of the function $h$} 
Gathering the conditions  \eqref{eq54}, \eqref{eq55}, \eqref{eq51}, and \eqref{eq51.5}), plugging the ansatz \eqref{eq3.1.41}
 \begin{align*}
    h (r) = \mu (r) \sqrt{1+r^2} e^{\frac{1}{2} \sqrt{1+r^2}} (k_1 r^2 +  k_2 r^4 + k_3 r^6 + k_4 r^8), \quad \mu (r) = \frac{1}{\sqrt{2\pi}}  e^{-r^2/2},
 \end{align*}
 and using spherical coordinates, we obtain a set of equations on $k_j, j = 1, \ldots, 4$:
\begin{align}
\label{eq52.5}
   &k_j  \int_0^{\infty}  r^2 r^{2j} \mu (r)\, dr = \text{const}_1, \\
  \label{eq52.6}
     &k_j  \int_0^{\infty} r^4 r^{2j}  \mu (r)\, dr = \text{const}_2,\\
  \label{eq52.7}
   &k_j  \int_0^{\infty} \underbrace{ r^2 (r^2-3) (r^2+1)}_{ = r^6 - 2  r^4 - 3  r^2 } r^{2j} \mu (r) \, dr = 0, \\
  \label{eq52.8}
   &k_j  \int_0^{\infty} r^2 (r^2-3) r^{2j}  \sqrt{1+r^2}  \mu (r) \, dr = 0.
\end{align}
By evenness, we may assume that all the above integrals are over $\bR$.

Next, by $m_n$, we denote the even Gaussian moments
$$
    m_n = \int_{\bR} \mu (r) r^{2n} \, dr, \, \,  n = 0, 1, 2, \ldots.
$$
It is well known that
\begin{align}
    \label{eq52.15}
   m_0 = 1, \quad  m_{n} = (2n-1)!!=(2n-1) (2n-3) \ldots 1.
\end{align}
Then,  the system \eqref{eq52.5}--\eqref{eq52.8} can be rewritten as
\begin{align}
    \label{eq52.9}
& \sum_{j=1}^4 k_j m_{j+1} = \text{const}_1 \neq 0, \\
    \label{eq52.10}
&\sum_{j=1}^4  k_j m_{j+2} = \text{const}_2 \neq 0, \\
        \label{eq52.12}
& \sum_{j=1}^4 k_j (m_{j+3}-2 m_{j+2}-3m_{j+1}) = 0, \\
    \label{eq52.11}
&\sum_{j=1}^4  k_j  (i_{j+2}-3i_{j+1}) = 0,
\end{align}
where 
\begin{align*}
    i_j := \int_{\bR} r^{2j} \sqrt{1+r^2}  \mu (r) \, dr.
\end{align*} 
The next lemma shows that all the integrals $i_j, j \ge 2$, can be computed in terms of $i_0$ and $i_1$.

\begin{lemma}
    \label{lemma 52.1}
\begin{align}
    \label{eq52.1.1}
 &   i_{j}= (2j-1) i_{j-1}+(2j-3) i_{j-2}, \, j \ge 2, \\
    \label{eq52.1.2}
 &  i_1 > i_0.
\end{align}

\end{lemma}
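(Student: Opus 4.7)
The plan is to prove both identities in Lemma \ref{lemma 52.1} by the same integration-by-parts trick, exploiting the identity $r\mu(r) = -\mu'(r)$ that is built into the Gaussian density.

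For the recurrence \eqref{eq52.1.1}, I would set
\[
a_j := \int_{\bR} \frac{r^{2j}}{\sqrt{1+r^2}}\mu(r)\,dr, \qquad j\ge 0,
\]
and rewrite
\[
i_j = \int_{\bR} r^{2j-1}\sqrt{1+r^2}\,(r\mu(r))\,dr = -\int_{\bR} r^{2j-1}\sqrt{1+r^2}\,\mu'(r)\,dr.
\]
Integrating by parts (the boundary terms vanish by Gaussian decay) and using $\frac{d}{dr}[r^{2j-1}\sqrt{1+r^2}] = (2j-1)r^{2j-2}\sqrt{1+r^2} + \frac{r^{2j}}{\sqrt{1+r^2}}$ gives the identity
\begin{equation}\label{eqJ.plan.1}
i_j = (2j-1)\, i_{j-1} + a_j, \qquad j\ge 1.
\end{equation}
Next, the algebraic splitting $\frac{r^{2j}}{\sqrt{1+r^2}} = r^{2j-2}\sqrt{1+r^2} - \frac{r^{2j-2}}{\sqrt{1+r^2}}$ yields $a_j = i_{j-1} - a_{j-1}$. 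Plugging this into \eqref{eqJ.plan.1} and then using \eqref{eqJ.plan.1} at index $j-1$ to eliminate $a_{j-1} = i_{j-1} - (2j-3)i_{j-2}$ would produce
\[
i_j = 2j\, i_{j-1} - a_{j-1} = 2j\, i_{j-1} - i_{j-1} + (2j-3)i_{j-2} = (2j-1)i_{j-1} + (2j-3)i_{j-2},
\]
which is exactly \eqref{eq52.1.1}.

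For the inequality \eqref{eq52.1.2}, I would simply specialize \eqref{eqJ.plan.1} to $j=1$, which gives
\[
i_1 = i_0 + a_1 = i_0 + \int_{\bR} \frac{r^{2}}{\sqrt{1+r^2}}\mu(r)\,dr.
\]
Since the integrand defining $a_1$ is nonnegative and strictly positive on a set of positive Lebesgue measure, $a_1>0$ and hence $i_1>i_0$.

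There is no real obstacle here: the argument is entirely formal integration by parts together with the trivial algebraic identity for $r^{2j}/\sqrt{1+r^2}$. The only minor point to verify is that the boundary terms in the integration by parts vanish, which is immediate from the super-exponential decay of $\mu$. Thus the full write-up should be short and self-contained.
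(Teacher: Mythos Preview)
Your proof is correct and follows essentially the same approach as the paper: both rely on integration by parts via $r\mu(r)=-\mu'(r)$, and for \eqref{eq52.1.2} the two arguments are identical. The only cosmetic difference for the recurrence \eqref{eq52.1.1} is that the paper integrates by parts using $(\sqrt{1+r^2})'=r/\sqrt{1+r^2}$ in a single step, whereas you introduce the auxiliary sequence $a_j=\int r^{2j}(1+r^2)^{-1/2}\mu\,dr$ and combine the relation $i_j=(2j-1)i_{j-1}+a_j$ with $a_j=i_{j-1}-a_{j-1}$; the content is the same.
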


\begin{proof}
\textit{Proof of \eqref{eq52.1.1}.} By direct calculation involving integration by parts, we have
\begin{align*}
 &   i_n = \int_{\bR} r^{2n} \sqrt{1+r^2}  \mu (r) \, dr
  =  \int_{\bR} \frac{r^{2n} (1+r^2) }{\sqrt{1+r^2}}  \mu (r) \, dr \\
&  = \int_{\bR} (\sqrt{1+r^2})'  (r^{2n-1} +r^{2n+1}) \mu (r) \, dr
  = -  \int_{\bR} \big((2n-1) r^{2n-2} + (2n+1) r^{2n}\big) \sqrt{1+r^2} \mu (r) \, dr \\ 
 &+ \int_{\bR}  (r^{2n} +r^{2n+2}) \sqrt{1+r^2} \mu (r) \, dr 
  = -(2n-1) i_{n-1} - (2n+1)  i_n + i_n + i_{n+1}.
    \end{align*}
    Hence, 
    \begin{align*}
        i_{n+1} = (2n+1) i_n + (2n-1) i_{n-1}.
    \end{align*}
    Replacing $n$ with $j-1$, we obtain \eqref{eq52.1.1}.

\textit{Proof of \eqref{eq52.1.2}.} Integrating by parts gives
\begin{align*}
    i_1 = - \int_{\bR} \sqrt{1+r^2}  \, r \mu' \, dr= \int_{\bR} (r \sqrt{1+r^2})' \mu \, dr =  i_0 + \int_{\bR} \frac{r^2}{\sqrt{1+r^2}}  \mu \, dr  > i_0.
\end{align*}
\end{proof}

Finally, we compute the coefficient matrix of the system \eqref{eq52.9}--\eqref{eq52.11} explicitly. First, by  \eqref{eq52.15} and \eqref{eq52.1.1}, 
\begin{align*}
  &  m_2 = 3, \quad m_3= 15, \quad m_4= 105,  \quad m_5= 945, \quad m_6= 10395, \quad m_7 =  135135, \\
  &i_2 = i_0 + 3 \, i_1, \quad i_3 =  5 \, i_0 + 18 \, i_1, \quad i_4 =  40 \, i_0 + 141 \, i_1, \\
  & i_5=  395 \, i_0 + 1395 \, i_1, \quad  i_6 = 4705 \, i_0 + 16614 \, i_1.
\end{align*}
Hence, by \eqref{eq52.9}--\eqref{eq52.11},  the aforementioned  coefficient matrix  is given by
\begin{align*}
    C = \begin{pmatrix}       
    3      &       15   &             105        &         945\\
    15     &       105  &             945        &      10395 \\
    66    &       690   &         8190           & 111510 \\
    2 \, i_0 + 9 \, i_1 & 25 \, i_0 + 87\, i_1 & 275 \, i_0 + 972\, i_1 & 3520\, i_0 + 12429\, i_1.
\end{pmatrix}
\end{align*}
Due to \eqref{eq52.1.2}, we conclude
\begin{align*}
\text{det} \, C = 14364000 \, i_0 - 15649200 \, i_1 < 0,
\end{align*}
as desired.

{}

\end{document}